\definecolor{dullmagenta}{rgb}{0.4,0,0.4}
\definecolor{darkblue}{rgb}{0,0,0.4}
\newtheorem{theorem}{Theorem}[section]
\newtheorem{lemma}[theorem]{Lemma}
\newtheorem{proposition}[theorem]{Proposition}
\newtheorem{corollary}[theorem]{Corollary}
\theoremstyle{definition}
\newtheorem{definition}[theorem]{Definition}
\newtheorem{example}[theorem]{Example}
\newtheorem{remark}[theorem]{Remark}
\begin{document}

\title[Quasi-maximal ideals and ring extensions ]{Quasi-maximal ideals and ring extensions}

\author[G. Picavet and M. Picavet]{Gabriel Picavet* and Martine Picavet-L'Hermitte}
\address{Math\'ematiques \\
63670, Le Cendre\\
 France}
\email{picavet.gm (at) wanadoo.fr}

\begin{abstract} Alan and al. defined and studied quasi-maximal ideals. We add a comprehensive characterization of these ideals, introducing submaximal ideals. The conductor of a finite minimal extension $R\subset S$ is quasi-maximal in $S$. This allows us to give a new characterization of these extensions. We also examine the links between quasi-maximal ideals and Badawi 2-absorbing ideals.
\end{abstract}

\subjclass[2010]{Primary:13A15, 13B02, 13B21;  Secondary: 13B30}

\keywords {minimal extension, conductor of an extension, quasi-maximal, submaximal, 2-absorbing, primary, primal ideals, integral extension, covering ideal, support of a module.\\ *Corresponding author}

\maketitle

\section{Introduction and Notation}

The paper being rather lengthy, we will give a brief introduction. Any unexplained terminology can be found by the reader in the ad hoc section.  
We consider the category of commutative and unital rings. If $R\subseteq S$ is a (ring) extension, $([R,S],\subseteq)$ denotes the lattice
 of all $R$-subalgebras of $S$.

In \cite[Definition 1]{AKKT}, the notion of quasi-maximal ideals was   introduced: a proper ideal $I$ of a ring $R$ (i.e. distinct from $R$) is said to be a quasi-maximal ideal if, for any $a\in R\setminus I$, either $I+Ra=R$ or $I+Ra$ is a maximal ideal of $R$. In particular, it was shown \cite[Proposition 3]{AKKT} that there are three types of quasi-maximal ideals. 

In Section 2, we give a lot of equivalences for an ideal to be quasi-maximal. A characterization of quasi-maximal ideals is given by Theorem \ref{2.6} which says that a non maximal ideal $I$ of a ring $R$ is quasi-maximal if and only if $I$ is strictly contained only in maximal ideals, 
 in other words, $I$ is submaximal.

Section 3 is devoted to the behavior of quasi-maximal ideals in extensions or through ring morphisms. Let $f:R\to S$ be a ring morphism. If $I$ is a quasi-maximal ideal of $R$ or $J$ a quasi-maximal ideal of $S$, we give conditions in order that $f(I)$ or $f^{-1}(J)$ be quasi-maximal ideals.

We consider in Section 4 minimal extensions and examine how they are linked to quasi-maximal ideals. This section is quite long because we consider the different types of quasi-maximal ideals and minimal extensions. In Theorem \ref{4.1} we get that if $R\subset S$ is a finite minimal extension, then the conductor of the extension is a quasi-maximal ideal of $S$ of the same type as the minimal extension $R\subset S$. This Theorem has a kind of converse with Theorem \ref{4.15}. 
 
In Section 5, we consider the link between quasi-maximal ideals and 2-absorbing ideals. Indeed, it is shown in \cite[Theorem 1]{AKKT} that a quasi-maximal ideal is a 2-absorbing ideal, a notion introduced by Badawi in \cite{Ba}. With Theorem \ref{5.2}, we get a converse for a  
 2-absorbing ideal to be quasi-maximal with additional assumptions.
  
2-absorbing ideals are primal ideals, and so quasi-maximal ideals are primal ideals. The last section of this paper is an Appendix that uses a saturated multiplicative set introduced by the first author in \cite{PICANAL} and  specially adapted to quasi-maximal ideals.

A {\it (semi)-local} ring is a ring with (finitely many) one maximal ideal(s). Now, some notation for a ring $R$: $\mathrm{Spec}(R)$ and $\mathrm {Max}(R)$ are the set of prime and maximal ideals of a ring $R$ and $\mathrm{t}(R)$ is the total quotient ring of $R$. We denote by $\mathrm {QMax}(R)$ the set of quasi-maximal ideals of $R$. For an extension $R\subseteq S$ and an ideal $I$ of $R$, we write $\mathrm{V}_S(I):=\{P\in\mathrm{Spec}(S)\mid I\subseteq P\}$ and $\mathrm{D}_S(I):=\mathrm {Spec}(S)\setminus\mathrm{V}_S(I)$. The support of an $R$-module $E$ is $\mathrm{Supp}_R(E):=\{P\in\mathrm{Spec}(R)\mid E_P\neq 0\}$, and we set $\mathrm{MSupp}_R(E): =\mathrm{Supp}_R(E)\cap\mathrm{Max}(R)$. We denote the length of $E$ by $\mathrm{L}_R(E)$. When $R\subseteq S$ is an extension, we set $\mathrm{Supp}(T/R):=\mathrm {Supp}_R(T/R)$ and $\mathrm{Supp}(S/T):=\mathrm{Supp}_R(S/T)$ for each $T\in[R,S]$, unless otherwise specified. The set of zero divisors of $ R$ is denoted by $\mathrm Z(R)$, $\mathrm{U}(R)$ is the set of units of $ R$, $\mathrm{Nil}(R)$ is the set of nilpotents elements of $R$ and $\mathrm J(R)$ is its Jacobson radical. For an ideal $ I$ of $R$, the radical of $I$ is denoted by $\sqrt[R]I$ (or $\sqrt I$ if there is no confusion), and if the set of proper ideals of $R$ containing $I$ (including $I$) is finite, we denote its cardinality by $\mathrm o(I)$. If $R$ has finitely many ideals, we denote by $\mathcal O(R)$ the cardinality of the set of proper ideals of $R$ (actually $\mathcal O(R)=\mathrm o(0)$). We recall that a ring $R$ is $\mathrm J$-{\it regular} if $R/\mathrm J(R)$ is regular. At last, $(R:S)$ is the conductor of an extension $R\subseteq S$. 

An extension $R\subseteq S$ is called an {\it i-extension} if the natural map $\mathrm{Spec}(S)\to\mathrm{Spec}(R)$ is injective. An integral extension $R\subseteq S$ is called {\it infra-integral} \cite{Pic 2}, (resp. {\it subintegral} \cite{S}) if all its residual extensions are isomorphisms (resp$.$; and is an {\it i-extension}).

Finally, $|X|$ is the cardinality of a set $X$, $\subset$ denotes proper inclusion and for a positive integer $n$, we set $\mathbb{N}_n:=\{1,\ldots,n \}$. If $R$ and $S$ are two isomorphic rings, we will write $R\cong S$. The characteristic of a ring $R$ is denoted by $\mathrm{c}(R)$.
  
  \section{Properties of quasi-maximal ideals}
  
Let $I$ and $J$ be two ideals of a ring $R$. Following \cite[D\'efinition, p.10-02]{GU}, we say that $J$ {\it covers} $I$, that we denote by $I\prec J$ (or $J\succ I$), if $I\subset J$ and there is no ideal properly between $I$ and $J$. This property is characterized by \cite[Corollaries 1 and 2, p.237]{ZS}. 

\begin{proposition}\label{2.7} For any $P\subset Q$ in $\mathrm{Spec}(R)$, then  $Q\nsucc P$. 
\end{proposition}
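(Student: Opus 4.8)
The plan is to prove the statement by exhibiting, for arbitrary primes $P \subset Q$, an ideal lying strictly between them; by the very definition of the covering relation this shows $Q \nsucc P$. Since the inclusion $P \subset Q$ is proper, I fix an element $x \in Q \setminus P$ and consider the ideal $J := P + Rx^2$. By construction $J \subseteq Q$, because $P \subseteq Q$ and $x^2 \in Q$.

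The first strict inclusion $P \subsetneq J$ is immediate: were $x^2 \in P$, primeness of $P$ would force $x \in P$, contrary to the choice of $x$; hence $x^2 \in J \setminus P$ and $P \subsetneq J$.

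The heart of the argument, and the step I expect to need the most care, is the strict inclusion $J \subsetneq Q$. I would argue by contradiction: suppose $J = Q$, so in particular $x \in J$, say $x = p + rx^2$ with $p \in P$ and $r \in R$. Rewriting gives $x(1 - rx) = p \in P$; since $x \notin P$ and $P$ is prime, this forces $1 - rx \in P \subseteq Q$. But $rx \in Q$ because $x \in Q$, so $1 = (1 - rx) + rx \in Q$, contradicting that $Q$ is a proper ideal. Therefore $x \notin J$, whence $J \subsetneq Q$.

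Combining the two strict inclusions $P \subsetneq J \subsetneq Q$ shows that $J$ lies properly between $P$ and $Q$, so no covering relation $P \prec Q$ can hold; that is, $Q \nsucc P$. The one subtlety worth flagging is the passage to $x^2$ rather than $x$: taking $J = P + Rx$ would guarantee $P \subsetneq J \subseteq Q$ but not the strictness $J \subsetneq Q$, since one might well have $Q = P + Rx$; squaring $x$ separates the membership of $x^2$ from that of $x$ while preserving all the properties needed for both inclusions.
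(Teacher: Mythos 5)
Your proof is correct, and it takes a genuinely different route from the paper's. The paper argues by contradiction using the characterization of coverings from Zariski--Samuel: if $Q\succ P$, then there exists $M\in\mathrm{Max}(R)$ with $MQ\subseteq P$, and primeness of $P$ forces either $M\subseteq P$ (so $P$ is maximal, contradicting $P\subset Q$ with $Q$ proper) or $Q\subseteq P$, a contradiction either way. You instead exhibit an explicit intermediate ideal: for $x\in Q\setminus P$, the ideal $J:=P+Rx^2$ satisfies $P\subset J\subset Q$, the first inclusion being strict because $x^2\notin P$ (primeness of $P$), and the second because $x\notin J$ --- your computation $x(1-rx)\in P$ forcing $1-rx\in P\subseteq Q$ and hence $1\in Q$ is exactly right, and your remark that $P+Rx$ would not work (since $Q$ may equal $P+Rx$) correctly identifies why the squaring is essential. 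The trade-off: your argument is entirely self-contained and elementary, needing nothing beyond the definition of a prime ideal, and it even shows slightly more (it constructs a concrete ideal strictly between any two comparable primes); the paper's argument is shorter on the page but leans on the covering lemma of Zariski--Samuel, which the paper cites and reuses repeatedly elsewhere, so for the authors that machinery comes at no extra cost.
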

\begin{proof} If  there exists $P\subset Q$ in $\mathrm{Spec}(R)$ such that $Q\succ P$, by \cite[Corollary 1, p.237]{ZS}, there is  some $M\in\mathrm{Max}(R)$ such that $MQ\subseteq P$. This implies that either $ M\subseteq P$, so that $P=M$, a contradiction with $P\subset Q$, or $Q\subseteq P$ again a contradiction for the same reason.
\end{proof}

We recall the three types of quasi-maximal ideals:
\begin{proposition}\label{2.70} \cite[Proposition 3]{AKKT} A quasi-maximal ideal $I$ of a ring $R$ satisfies one of the following mutually exclusive properties  :
\begin{enumerate}
\item $I$ is a maximal ideal of $R$.
\item $I$ is the intersection of two (distinct) maximal ideals of $R$.
\item There exists a maximal ideal $M$ of $R$ such that $I$ is $M$-primary and satisfies $M^2\subseteq I\subset M$.
\end{enumerate}
\end{proposition}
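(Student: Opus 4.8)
The plan is to isolate at the outset the single mechanism that drives the whole statement: if $I$ is quasi-maximal but \emph{not} maximal, then for every maximal ideal $M\supseteq I$ and every $a\in M\setminus I$ one has $I+Ra=M$. Indeed $I+Ra\subseteq M\subsetneq R$ is proper, so quasi-maximality forces it to be maximal, and a maximal ideal contained in $M$ must equal $M$. I would record this as the key observation and invoke it repeatedly. The case $I$ maximal is case (1), so from here on I assume $I$ is not maximal.

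Next I would pin down the dimension by showing every prime $P\supseteq I$ is maximal. If $P\supsetneq I$ is a non-maximal prime, pick $a\in P\setminus I$; then $I+Ra\subseteq P$ is proper and cannot be maximal (a maximal ideal inside $P$ would equal $P$), contradicting quasi-maximality. The one remaining possibility, that $I$ itself is a non-maximal prime, I would rule out by a squaring trick: choosing a maximal $M\supsetneq I$ and $a\in M\setminus I$, primeness gives $a^2\in M\setminus I$, so the key observation yields $a\in M=I+Ra^2$; hence $a(1-ra)\in I$ for some $r$, and since $a\notin I$ with $I$ prime we get $1-ra\in I\subseteq M$, forcing $1\in M$, absurd. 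Thus $\sqrt{I}$ is an intersection of maximal ideals.

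I would then bound the number of maximal ideals over $I$ by prime avoidance. If three distinct maximal ideals $M_1,M_2,M_3\supseteq I$ existed, then $M_1\cap M_2\nsubseteq M_3$, so I could choose $a\in(M_1\cap M_2)\setminus M_3\subseteq R\setminus I$; now $I+Ra\subseteq M_1\cap M_2$ is proper hence maximal, but being contained in both $M_1$ and $M_2$ it would equal each, contradicting $M_1\neq M_2$. So exactly one or exactly two maximal ideals contain $I$. In the two-ideal case $M_1,M_2$, the same argument applied to a hypothetical $a\in(M_1\cap M_2)\setminus I$ shows no such $a$ can exist, forcing $I=M_1\cap M_2$, which is case (2).

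Finally, in the one-ideal case there is a unique maximal $M\supseteq I$; together with the dimension step this gives $\sqrt{I}=M$, so $I$ is $M$-primary, and $I\subsetneq M$ because $I$ is not maximal. The remaining and, I expect, most delicate point is $M^2\subseteq I$. Passing to the local ring $\bar R=R/I$ with maximal ideal $\mathfrak m=M/I$, the key observation says that every nonzero $\bar a\in\mathfrak m$ generates $\mathfrak m$. Fixing such an $\bar a$, if $\bar a^2\neq 0$ then $\bar a^2$ would also generate $\mathfrak m$, so $\bar a\in(\bar a^2)$ and $\bar a(1-\bar r\bar a)=0$; since $1-\bar r\bar a$ is a unit in the local ring, $\bar a=0$, a contradiction. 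Hence $\bar a^2=0$, and since $\mathfrak m=(\bar a)$ we obtain $\mathfrak m^2=(\bar a^2)=0$, i.e. $M^2\subseteq I$, which is case (3). Mutual exclusivity then follows by counting the maximal ideals strictly containing $I$ (none, one, or two). The unit-trick forcing $M^2\subseteq I$ is where I expect the real work to lie; everything else reduces to the key observation combined with prime avoidance.
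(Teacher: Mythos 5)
Your proof is correct, but there is nothing in the paper to compare it against: Proposition \ref{2.70} is stated as a quoted result from \cite[Proposition 3]{AKKT} and the paper gives no proof of it (indeed, the paper's own proof of Theorem \ref{2.3}, in the step (3)$\Rightarrow$(4), \emph{uses} this trichotomy by appealing to the same external reference). So your argument is not redundant with the text; it would make this part of the paper self-contained. The closest in-paper analogue is the implication (6)$\Rightarrow$(7) of Theorem \ref{2.3}: there, in the case $\mathrm o(I)=2$, the paper notes that $M'^2\in\{0,M'\}$ for the principal maximal ideal $M'=R'\overline a$ of $R'=R/I$ and kills the case $M'^2=M'$ by Nakayama's lemma; your unit trick (from $\bar a=\bar r\bar a^2$ one gets $\bar a(1-\bar r\bar a)=0$ with $1-\bar r\bar a$ a unit of the local ring, so $\bar a=0$) is precisely the elementary proof of Nakayama in this principal situation, so the two treatments of the ramified case agree in substance, yours being marginally more self-contained. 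The remaining ingredients of your argument all check out: the key observation that $I+Ra=M$ for every maximal $M\supseteq I$ and $a\in M\setminus I$; the exclusion of non-maximal primes strictly over $I$; the squaring trick ruling out $I$ itself being a non-maximal prime (a point that is genuinely needed, since otherwise $\sqrt I=M$ and hence $M$-primariness could fail, and which is easy to overlook); the bound of at most two maximal ideals over $I$ via primeness of $M_3$ applied to $M_1M_2\subseteq M_3$; and the identification $I=M_1\cap M_2$ in the two-ideal case. The mutual exclusivity count (zero, one, or two maximal ideals strictly containing $I$) is also correct, since a decomposed ideal lies under exactly the two maximal ideals $M_1,M_2$ and a ramified ideal, having radical $M$, lies under exactly one.
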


Now the conductor of a minimal integral extension has exactly one of the three types of a quasi-maximal ideal, as we will see in Section 4. This suggested to us to give the next definition because \cite[Proposition 3]{AKKT} has a kind of converse. In order to know what type of quasi-maximal ideals we are dealing with, we will term them with the same name as the one of the corresponding minimal integral extension. See Section 4 for the link between these two notions.

\begin{definition}\label{2.2} An ideal $I$ of a ring $R$ is said to be 
\begin{enumerate}
\item {\it inert} if $I\in\mathrm{Max}(R)$.
\item {\it decomposed} if $I$ is the intersection of two (distinct) maximal ideals of $R$.
\item {\it ramified} if there exists $M\in\mathrm{Max}(R)$ such that $I$ is $ M$-primary, $M^2\subseteq I\subset M$ and $M=I+Ra$ for any $a\in M\setminus I$, so that $M=\sqrt I$.
\end{enumerate}
\end{definition}

In the proofs considering the different types, we often drop the inert case since they are obvious.

Theorem \ref{2.3} gives a lot of equivalent statements for an ideal to be quasi-maximal. It completes \cite[Proposition 3]{AKKT} by giving a converse and explains why Definition \ref{2.2} characterizes quasi-maximal ideals. More precisely, we write $I\in\mathrm{Q_iMax}(R)$ (resp. $\mathrm {Q_r Max}(R),\ \mathrm{Q_dMax}(R)$) when $I$ is inert, (resp. ramified, decomposed) in $R$. In fact, $\mathrm{Q_iMax}(R)=\mathrm{Max}(R)$. In Section 5, Theorem \ref{5.2} gives an additional equivalence using 2-absorbing ideals introduced by Badawi in \cite{Ba}. If $R\subseteq S$ is an extension, $I\in\mathrm{QMax(R)}$ and $J\in\mathrm{QMax(S)}$, we say that $I$ and $J$ are {\it homotypic} if they have the same type.

\begin{theorem}\label{2.3} The statements below are equivalent for a proper ideal $I$ of a ring $R$ and imply $\mathrm{V}_R(I)\subseteq\mathrm{Max}(R)$:
\begin{enumerate}
\item  $I\in\mathrm{QMax}(R)$.
\item Any proper ideal of $R/I$ is in $\mathrm{QMax}(R/I)$.
\item $0\in\mathrm{QMax}(R/I)$.
\item $1\leq\mathrm{L}_R(R/I)\leq 2$ and $|\mathrm{V}_{R/I}(0)|\leq 2$. 
\item $\mathcal O(R/I)\leq 3$ with $\mathcal O(R/I)=3$ if and only if $|\mathrm{Max}(R/I)|=2$.
\item $\mathrm o(I)\leq 3$ with $\mathrm o(I)=3$ if and only if $|\mathrm {V}_R(I)\cap \mathrm{Max}(R)|= 2$.
\item $I\in\mathrm{Q_iMax}(R)\cup\mathrm{Q_rMax}(R)\cup\mathrm {Q_dMax}( R)$. 
\end{enumerate}
\end{theorem}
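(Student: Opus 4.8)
The plan is to funnel every condition through the quotient ring $A := R/I$ and its zero ideal. Since $I$ is proper, $A \ne 0$, and for $a \notin I$ one has $(I+Ra)/I = A\bar a$, with $I+Ra$ maximal (resp. equal to $R$) exactly when $A\bar a$ is maximal (resp. equal to $A$); hence $I \in \mathrm{QMax}(R)$ iff $0 \in \mathrm{QMax}(A)$, which is $(1)\Leftrightarrow(3)$. The ideal correspondence for $R\to A$ gives $\mathrm o(I) = \mathcal O(A)$ and $|\mathrm V_R(I)\cap\mathrm{Max}(R)| = |\mathrm{Max}(A)|$, so $(5)\Leftrightarrow(6)$, while $\mathrm L_R(R/I) = \mathrm L_A(A)$. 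Thus it suffices to prove the equivalences for the zero ideal of an arbitrary nonzero ring $A$, and I would take condition $(7)$ as the hub.

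First I would pin down the shape of $A$ under $(3)\Leftrightarrow(7)$. Applying Proposition \ref{2.70} to the zero ideal of $A$ forces $A$ into one of three forms: (i) $A$ is a field (inert); (ii) $0 = M_1\cap M_2$ with $M_1,M_2$ distinct maximal, whence $M_1+M_2=A$ and $A\cong A/M_1\times A/M_2$ is a product of two fields (decomposed); (iii) $A$ is local with maximal ideal $M$ satisfying $M^2=0\subset M$ --- and here quasi-maximality upgrades this to ramified in the sense of Definition \ref{2.2}, since any $0\ne a\in M$ gives $Ra\subset A$, forcing $Ra=M$. The converse $(7)\Rightarrow(3)$ is the routine check that in each of (i)--(iii) every nonzero principal ideal is maximal or all of $A$.

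Once $A$ is known to be of one of these three explicit forms, conditions $(2),(4),(5),(6)$ are read off. I would record, for the three cases, $\mathrm L_A(A) = 1,2,2$, then $\mathcal O(A) = 1,3,2$, and $|\mathrm{Max}(A)| = 1,2,1$; this yields $(7)\Rightarrow(4),(5),(6)$ together with the biconditional clauses, and finite length makes $A$ Artinian, so $\mathrm{Spec}(A)=\mathrm{Max}(A)$, giving the asserted $\mathrm V_R(I)\subseteq\mathrm{Max}(R)$. For $(7)\Rightarrow(2)$ I would list the proper ideals of $A$ in each case --- $\{0\}$; the two factor ideals together with $0$; and $\{0,M\}$ --- and check each is quasi-maximal; then $(2)\Rightarrow(3)$ is immediate because $0$ is a proper ideal of $A$.

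The only genuinely structural steps are the converses $(4)\Rightarrow(7)$ and $(5)\Rightarrow(7)$, where the shape of $A$ must be recovered from the numerics. For $(4)$, $\mathrm L_A(A)=1$ gives a field, while $\mathrm L_A(A)=2$ presents $A$ as a product of local Artinian rings of lengths summing to $2$ --- either two fields or a single local $(A,M)$ with $M$ minimal and $M^2=0$ (Nakayama excluding $M^2=M$) --- so $(4)$ lands in (i)--(iii) and the clause $|\mathrm V_{R/I}(0)|\le 2$ is automatic. For $(5)$, $\mathcal O(A)\le 3$ again makes $A$ Artinian with at most four ideals; the cases $\mathcal O(A)=1,2$ reproduce the field and ramified pictures, and when $\mathcal O(A)=3$ the two proper nonzero ideals are either comparable (a uniserial local ring such as $\mathbb Z/p^3\mathbb Z$, one maximal ideal) or incomparable (forcing $A$ to split as a product of two fields, two maximal ideals). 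I expect this last dichotomy to be the \emph{main obstacle}: separating the decomposed ring from a length-three chain by ideal count alone is impossible, and it is precisely the imposed equivalence $\mathcal O(A)=3\Leftrightarrow|\mathrm{Max}(A)|=2$ that discards the uniserial case and closes the argument.
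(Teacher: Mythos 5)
Your proposal is correct and the logic closes: $(1)\Leftrightarrow(3)$, $(3)\Leftrightarrow(7)$, $(7)\Rightarrow(2)\Rightarrow(3)$, $(7)\Leftrightarrow(4)$, $(7)\Leftrightarrow(5)\Leftrightarrow(6)$, together with the Artinian observation for $\mathrm{V}_R(I)\subseteq\mathrm{Max}(R)$, and the individual steps (the upgrade of Proposition \ref{2.70}(3) to the ramified condition of Definition \ref{2.2}, the Nakayama exclusion of $M^2=M$, the comparable/incomparable dichotomy when $\mathcal O(R/I)=3$) are all sound. Your architecture, however, is genuinely different from the paper's. The paper proves a single cycle $(1)\Rightarrow(2)\Rightarrow\cdots\Rightarrow(7)\Rightarrow(1)$: it cites \cite{AKKT} for $(1)\Rightarrow(2)$, it uses Proposition \ref{2.7} (no prime covers a prime) inside $(4)\Rightarrow(5)$ exactly where you invoke the structure theorem for Artinian rings, and it places the Nakayama step inside $(6)\Rightarrow(7)$, which is essentially your $(5)\Rightarrow(7)$ --- same dichotomy at three proper ideals, same use of the biconditional clause to discard the uniserial ring such as $\mathbb Z/p^3\mathbb Z$. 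You instead make $(7)$ a hub: you first pin down the isomorphism type of $R/I$ (field, product of two fields, or local ring with square-zero simple maximal ideal), getting $(3)\Rightarrow(7)$ from Proposition \ref{2.70} plus the one-line upgrade showing $M/I$ is generated by any of its nonzero elements --- a point the paper also needs and handles inside its proof of $(3)\Rightarrow(4)$ --- and then read every other condition off that classification. Your route buys modularity, the observation that the clause $|\mathrm{V}_{R/I}(0)|\le 2$ in $(4)$ is in fact redundant, and an explicit description of $R/I$ that the paper only records afterwards (Proposition \ref{2.5} and Remark \ref{2.4}); the paper's route is leaner (seven arrows versus roughly ten) and avoids the Artinian structure theorem, staying closer to first principles and to the cited results of \cite{AKKT}.
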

\begin{proof} (1)$\Rightarrow$(2) by \cite[Corollary 2]{AKKT}.

(2)$\Rightarrow$(3): Obvious.

(3)$\Rightarrow$(4): We consider the three cases for $0\in\mathrm {QMax} (R/I)$.

$0\in\mathrm{Q_iMax}( R/I)$. Then, $R/I$ is a field and $\mathrm{L}_R(R/I)=1$ with $|\mathrm{V}_{R/I}(0)|=1 $.

$0\in\mathrm{Q_dMax}( R/I)$. Then $0:=M_1\cap M_2$,  intersection of two (distinct) maximal ideals $M_1,M_2$ of $R/I$, so that $|\mathrm{V}_ {R/I}(0)|=2$ and $\mathrm{L}_R(R/I)=2$ because $\mathrm{L}_R(R/M_i)= 1$ and $M_i\succ 0$ for $i\in\mathbb N_2$ by \cite[Remark 2]{AKKT}. 

$0\in\mathrm{Q_rMax}( R/I)$. 
 Let $M\in\mathrm{Max}(R/I)$ be such that $M^2= 0\subset M$, so that $|\mathrm{V}_{R/I}(0)|=1$ since $0$ is $M$-primary by \cite[Proposition 3]{AKKT}. Moreover, $\mathrm{L}_R(R/M)=1$ and $M\succ 0$ by \cite[Corollary 2, p.237]{ZS} since $M=(R/I)a$ for any $a\in M\setminus\{0\}$. Then, $\mathrm{L}_R(R/I)=2$.

(4)$\Rightarrow$(5): We have $1\leq\mathrm{L}_R(R/I)\leq 2$ and $|\mathrm{V}_{R/I}(0)|\leq 2$. 

If $\mathrm{L}_R(R/I)=1$, then $0$ is the only proper ideal of $R/I$, whence $\mathcal O(R/I)=1$. 

If $\mathrm{L}_R(R/I)=2$ and $|\mathrm{V}_{R/I}(0)|=2$, we claim that $ R/I$ has two maximal ideals $M_1$ and $M_2$. Otherwise, there exists a non maximal prime ideal $P$ contained in a maximal ideal $M$. In this case, we have the chain $0\subseteq P\subset M\subset R/I$ giving $P=0 $ since $\mathrm{L}_R(R/I)=2$, a contradiction by Proposition \ref{2.7} because in this case, $M\succ P$. Then, $0=M_1\cap M_2$ because of $\mathrm{L}_R(R/I)=2$. So $0,M_1$ and $M_2$ are the only proper ideals of $R/I$ and $\mathcal O(R/I)=3$ with $|\mathrm{Max}(R/I)|=2$. Conversely, if $|\mathrm{Max}(R/I)|=2$, we get that $R/I$ has two maximal ideals and we are reduced to the previous situation, so that $\mathcal O(R/I)=3$. 

At last, $\mathrm{L}_R(R/I)=2$ and $|\mathrm{V}_{R/I}(0)|=1$ show that $0$ and $M$ are the only proper ideals of $R/I$ because of the chain $0\subset M\subset R$.  Then, $\mathcal O(R/I)=2$.

(5)$\Rightarrow$(6): Obvious.

(6)$\Rightarrow$(7): Assume that $\mathrm o(I)\leq 3$ with $\mathrm o(I) =3$ if and only if $|\mathrm{V}_R(I)\cap \mathrm{Max}(R)|= 2$.

If $\mathrm o(I)=1$, then $I\in\mathrm{Q_iMax}( R)$. 

If $\mathrm o(I)=2$, there is only one proper ideal $M$ such that $I\subset M$, so that $M\in\mathrm{Max}(R)$. Let $a\in M\setminus I$, then $I+Ra$ is an ideal of $R$ such that $I\subset I+Ra\subseteq M$. Therefore, $I+Ra =M$, the only proper ideal of $R$ containing strictly $I$. Set $R':= R/I,\ M': =M/I$ and let $\overline a$ be the class of $a$ in $R'$. Then $R'$ is a local ring with $\mathcal O(R')=2$ and maximal ideal $M'= R'\overline a$ which is a finitely generated $R'$-module. This implies that either $M'^2=0\ (*)$ or $M'^2=M'\ (**)$. In case $(**)$, Nakayama Lemma gives that $M'=0$, that is $I=M$, a contradiction and only $(*)$ holds, which means that $a^2 \in I,\ M^2\subseteq I\subset M$ and $I$ is an $M$-primary ideal. To conclude, $I\in\mathrm{Q_rMax}( R)$. 
 
Let $\mathrm o(I)=3$, so that $I$ is contained in two (distinct) maximal ideals $M_1$ and $M_2$ such that $I,M_1$ and $M_2$ are the only proper ideals of $R$ containing $I$ whence $I=M_1\cap M_2\in\mathrm {Q_dMax}( R)$. 
  
(7) $\Rightarrow$(1): If $I\in\mathrm{Q_rMax}(R)$, there is only one proper ideal $M$ such that $I\subset M$. Let $a\in M\setminus I$, so that $I+Ra$ is an ideal of $R$ such that $I\subset I+Ra\subseteq M$. Therefore, $I+Ra =M$, the only proper ideal of $R$ containing strictly $I$ and $M$ is a maximal ideal. If $a\in R\setminus M$, then $Ra$ and $M$ are comaximal, so that $Ra$ and $M^2$ are also comaximal, which implies that $I+Ra=R$ and $I\in\mathrm{QMax}(R)$ by the definition of a quasi-maximal ideal.

If $I\in\mathrm{Q_dMax}( R)$, then $I=M_1\cap M_2$. We infer from \cite[Remark 2]{AKKT} that $I\in\mathrm{QMax}(R)$.

If $I\in\mathrm{Q_iMax}(R)$ then $I\in\mathrm{QMax}(R)$ by \cite[Proposition 3]{AKKT} because we already saw that $\mathrm{Q_iMax}(R)=\mathrm{Max}(R)$. 

The proof of (6)$\Rightarrow$(7) shows that $\mathrm{V}_R(I)\subseteq\mathrm{Max}(R)$ for any type.
 \end{proof}

Recall that a {\it special principal ideal ring} (SPIR) is a principal ideal ring $R$ with a unique nonzero prime ideal $M=Rt$, such that $M$ is nilpotent of index $p>0$.

\begin{proposition}\label{2.5} An ideal $I$ of a ring $R$ is in $\mathrm {Q_rMax}(R)$ if and only if $R/I$ is a SPIR whose maximal ideal is nilpotent of index 2.
\end{proposition}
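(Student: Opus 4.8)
The plan is to pass to the quotient $R':=R/I$ and translate everything into a statement about $R'$, writing $\overline{M}:=M/I$ for the relevant maximal ideal. The key observation, used in both directions, is that being ramified forces the lattice of ideals of $R'$ to be exactly $\{0,\overline{M},R'\}$, which is precisely the ideal lattice of a SPIR whose maximal ideal has nilpotency index $2$ (this also matches the structural analysis already carried out in the $\mathrm o(I)=2$ case of (6)$\Rightarrow$(7) in Theorem \ref{2.3}).

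For the forward implication, suppose $I\in\mathrm{Q_rMax}(R)$, so by Definition \ref{2.2} there is $M\in\mathrm{Max}(R)$ with $I$ being $M$-primary, $M^2\subseteq I\subset M$, and $M=I+Ra$ for every $a\in M\setminus I$. First I would record three facts in $R'$: that $\overline{M}^2=0$ (from $M^2\subseteq I$) while $\overline{M}\neq 0$ (from $I\subset M$), so $\overline{M}$ is nilpotent of index exactly $2$; that $R'$ is local with unique prime $\overline{M}$, since $I$ is $M$-primary forces $\overline{M}=\sqrt 0$ to be a maximal ideal equal to the nilradical; and that $\overline{M}=R'\overline{a}$ for every nonzero $\overline{a}\in\overline{M}$, which in particular makes $\overline{M}$ principal. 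It then remains to see that $R'$ is a principal ideal ring: any ideal $J'$ with $0\subset J'\subset R'$ lies in $\overline{M}$, and choosing a nonzero $\overline{a}\in J'$ gives $\overline{M}=R'\overline{a}\subseteq J'\subseteq\overline{M}$, whence $J'=\overline{M}$. Thus the only ideals of $R'$ are $0$, $\overline{M}$ and $R'$, all principal, so $R'$ is a SPIR with maximal ideal nilpotent of index $2$.

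For the converse, assume $R'=R/I$ is such a SPIR, say $\overline{M}=R'\overline{t}$ with $\overline{M}\neq 0=\overline{M}^2$, and let $M$ be the preimage of $\overline{M}$ in $R$. Since $\overline{M}$ is nonzero and nilpotent, $R'$ is not a domain, so $0$ is not prime and $\overline{M}$ is the unique (hence maximal) prime of $R'$; therefore $M\in\mathrm{Max}(R)$. Pulling back, $M^2\subseteq I\subset M$ is immediate, and $I$ is $M$-primary because $R'$ is local with nilpotent maximal ideal, so every zero divisor of $R'$ lies in $\overline{M}$ and is nilpotent. Finally I would verify the covering condition $M=I+Ra$ for every $a\in M\setminus I$: the ideals of the SPIR $R'$ are exactly the powers $R'\supset\overline{M}\supset\overline{M}^2=0$, so for a nonzero $\overline{a}\in\overline{M}$ the ideal $R'\overline{a}$ is a nonzero ideal contained in $\overline{M}$ and must equal $\overline{M}$. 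Translating back yields $I\in\mathrm{Q_rMax}(R)$.

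The only delicate point is this last equivalence between the ramified condition ``$M=I+Ra$ for all $a\in M\setminus I$'' and the SPIR structure; everything else is a direct dictionary between $I$ and $R/I$. I expect no serious obstacle, since both directions reduce to the single fact that the unique proper nonzero ideal $\overline{M}$ is generated by any of its nonzero elements, which is forced once the ideal lattice of $R'$ collapses to $\{0,\overline{M},R'\}$.
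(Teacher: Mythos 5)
Your proof is correct, and it follows the same basic strategy as the paper (pass to $R':=R/I$ and pin down its ideal lattice), but it executes it in a self-contained way where the paper argues by citation. In the forward direction the paper invokes Theorem \ref{2.3} to conclude that $R'$ has exactly two proper ideals, $0$ and $M':=M/I$ with $M'^2=0$, and then appeals to Hungerford's structure result \cite[Proposition 4]{Hun} to get that $R'$ is a principal ideal ring; in the converse it again routes through Theorem \ref{2.3} and the $\mathrm o(I)=2$ case of its proof to identify the type. You instead work directly from Definition \ref{2.2}: the covering condition $M=I+Ra$ for all $a\in M\setminus I$ immediately gives $\overline{M}=R'\overline{a}$ for every nonzero $\overline{a}\in\overline{M}$, which collapses the ideal lattice to $\{0,\overline{M},R'\}$ and makes every ideal visibly principal, so no external theorem is needed; and for the converse you verify each clause of Definition \ref{2.2} by hand (maximality of $M$, $M^2\subseteq I\subset M$, $M$-primariness via the fact that every zero divisor of a local ring with nilpotent maximal ideal is nilpotent, and the covering condition from the SPIR ideal chain $R'\supset\overline{M}\supset\overline{M}^2=0$). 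The trade-off is clear: your argument is elementary and independent of Theorem \ref{2.3}, which could matter if one wanted this proposition available before that theorem; the paper's version is shorter on the page precisely because it leans on the equivalences already established there. Both arguments are sound.
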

\begin{proof} Assume that $I\in\mathrm{Q_rMax}( R)$ and let $M\in\mathrm{Max}(R)$ be such that $I$ is $M$-primary. We may remark that the inclusion $M^2\subseteq I\subset M$ given in Definition \ref{2.2} implies that $I$ is $M$-primary. According to Theorem \ref{2.3}, $ R':=R/I$ has only two proper ideals: $0$ and $M':=M/I$ with $M'^2=0$. Then, $R'$ is a local Noetherian ring. Moreover, $M'$ is principal with $M'= R'x$ for any $x\in M'\setminus 0$. Therefore $R'$ is principal by \cite[Proposition 4]{Hun}, so that $R'$ is a SPIR whose maximal ideal is  nilpotent of index 2.

Conversely, assume that $R':=R/I$ is a SPIR whose maximal ideal $M'$ is  nilpotent of index 2. Then, $M'^2=0$ and $R'$ has only two proper ideals: $0$ and $M'$. Hence $I\in\mathrm{QMax}(R)$ by Theorem \ref{2.3} and its proof shows that $I\in\mathrm{Q_rMax}( R)$. 
\end{proof}

\begin{remark}\label{2.4} (1) The condition (5) of Theorem \ref{2.3} asserts that for a quasi-maximal ideal $I$ of a ring $R$, the ring $R/I$ has finitely many ideals. In \cite[Corollary 2.4]{AC}, Anderson and Chun characterize a ring with finitely many ideals. A ring has only finitely many ideals if and only if it is a finite direct product of finite local rings, SPIRs, and fields. If $I\in\mathrm{QMax}(R)$, then $R/I$ has finitely many ideals. Each type of quasi-maximal ideal determines the nature of the ring $R/I$ as follows: If $I \in\mathrm{Q_iMax}(R)$, then $R/I$ is a field. If $I\in\mathrm{Q_dMax}(R)$, that is an intersection of two distinct maximal ideals $M_1$ and $M_2$, then $R/I\cong R/M_1\times R/M_2$ is a product of two fields. If $I\in\mathrm{Q_rMax}(R)$, then $R/I$ is a SPIR by Proposition \ref{2.5}. In any case, $R/I$ is an Artinian ring.
 
We may remark that the condition $\mathrm{V}_R(I)\subseteq  \mathrm{Max}(R)$ is equivalent to $\dim (R/I)=0$.
 
(2) Since a ramified ideal is $M$-primary for a maximal ideal $M$, a product of quasi-maximal ideals is a primary decomposition of ideals.

(3) If $I\in\mathrm{QMax}(R)$, then $(\sqrt I)^2\subseteq I$ holds, with $\sqrt I=I$ in the inert and decomposed cases. In the ramified case, $\sqrt I=M\in\mathrm{Max}(R)$ shows that $(\sqrt I)^2= M^2\subseteq I$ is satisfied by the definition of a ramified ideal. We infer that if $I\in\mathrm{QMax}(R)$,  so is $\sqrt I$. 
\end{remark}

\begin{definition}\label{2.72}We say that an ideal $I$ of a ring $R$ is {\it submaximal} if there exists $M\in\mathrm{Max}(R)$ such that $I\prec M$.
 Therefore,  a submaximal ideal is not maximal. 
 \end{definition}

\begin{proposition}\label{2.71} A proper ideal $I$ of a ring $R$ is submaximal if and only if $M\succ I$ for any $M\in\mathrm{Max}(R)$ such that $I\subset M$.
\end{proposition}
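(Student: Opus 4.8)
The plan is to prove both implications by passing to the quotient ring $R/I$ and computing lengths, the heart of the argument being the observation that being covered by a single maximal ideal already forces $R/I$ to have length $2$.

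For the forward implication, suppose $I$ is submaximal, so there is some $M_0\in\mathrm{Max}(R)$ with $I\prec M_0$. First I would translate this into the quotient: since no ideal lies strictly between $I$ and $M_0$, the submodule $M_0/I$ is a simple $R/I$-module, while $(R/I)/(M_0/I)\cong R/M_0$ is a field and hence also simple. Thus $0\subset M_0/I\subset R/I$ is a composition series and $\mathrm{L}_R(R/I)=2$. Now let $M\in\mathrm{Max}(R)$ be arbitrary with $I\subset M$; since $I\neq M$ and $M\neq R$, the module $M/I$ is a proper nonzero submodule of $R/I$. Additivity of length along $0\to M/I\to R/I\to R/M\to 0$ gives $\mathrm{L}_R(M/I)+1=2$, so $\mathrm{L}_R(M/I)=1$; that is, $M/I$ is simple and $M\succ I$, as required.

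For the converse, assume every $M\in\mathrm{Max}(R)$ with $I\subset M$ satisfies $M\succ I$. Here I would first dispose of the degenerate case: if $I$ were maximal there would be no maximal ideal strictly above it and the hypothesis would be vacuous, so (consistently with Definition \ref{2.72}, where a submaximal ideal is not maximal) the equivalence is to be read for a non-maximal proper ideal $I$. Granting this, $I$ is contained in at least one maximal ideal $M$, and the inclusion is proper because $I$ is not maximal; the hypothesis then yields $M\succ I$, i.e. $I\prec M$, which is exactly submaximality.

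The genuine content, and the step I expect to be the main obstacle, is the length computation in the forward direction: the claim that a single covering relation $I\prec M_0$ with $M_0$ maximal pins down $\mathrm{L}_R(R/I)=2$ \emph{without} any Noetherian hypothesis. The point is that one exhibits an explicit composition series, so finiteness of length (and hence the Jordan--H\"older additivity used afterwards) comes for free rather than being assumed. A secondary delicate point is the vacuous-truth issue for maximal $I$ flagged above, which must be acknowledged for the equivalence to hold literally. If one prefers to avoid lengths, the forward step can instead be run through the characterization of the covering relation in \cite[Corollaries 1 and 2, p.237]{ZS} already invoked in Proposition \ref{2.7}, but the length bookkeeping is cleaner.
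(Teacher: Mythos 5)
Your proof is correct, but it takes a genuinely different route from the paper's. For the nontrivial direction (submaximal implies every maximal ideal properly containing $I$ covers $I$), the paper argues by contradiction: if some $M'\in\mathrm{Max}(R)$ with $I\subset M'$ admitted an ideal $J$ with $I\subset J\subset M'$, then the given covering $I\prec M$ forces $I=M\cap J=M\cap M'$, and localizing this identity at $M$, at $M'$, and at every other maximal ideal $N$ shows $J=M'$, a contradiction. You instead argue directly: the single covering $I\prec M_0$ exhibits $0\subset M_0/I\subset R/I$ as an explicit composition series, so $\mathrm{L}_R(R/I)=2$ by Jordan--H\"older, and additivity of length along $0\to M/I\to R/I\to R/M\to 0$ then forces $\mathrm{L}_R(M/I)=1$, i.e.\ $M\succ I$, for every maximal $M\supset I$. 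Both arguments are valid without any Noetherian hypothesis --- as you note, exhibiting the composition series makes finiteness of length free --- and yours is arguably the more structural one, since it makes visible the length condition $1\leq\mathrm{L}_R(R/I)\leq 2$ of Theorem \ref{2.3}(4), whereas the paper's is more elementary, using only ideal arithmetic and localization. A further point in your favor: you address explicitly the degenerate case that the paper passes over with ``one implication is obvious'' --- if $I$ is maximal, the covering condition holds vacuously while $I$ is not submaximal by Definition \ref{2.72}, so the equivalence must indeed be read for non-maximal proper ideals, exactly as you say.
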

\begin{proof} One implication is obvious. 

Conversely, assume that $I$ is submaximal, so that there exists $M\in\mathrm{Max}(R)$ such that $I\prec M$, whence $I\not\in\mathrm{Max}(R)$. We claim that $M'\succ I$ for any $M'\in\mathrm{Max}(R)$ such that $I\subset M'$. Otherwise, there exists $M'\in\mathrm{Max}(R)$ such that $I\subset M'$ with $M'\nsucc I$, giving that $M'\neq M$. This means that there exists an ideal $J$ of $R$ such that $I\subset J\subset M'$. Then, $I\subseteq M\cap J\subseteq M\cap M'\subset M$. We infer that $I=M\cap J =M\cap M'\ (*)$ since $M\cap M'\neq M$. Therefore, $J=M'$, a contradiction. Indeed, it is enough to localize $(*)$ at $M,M'$ and $N$ for any $N\in\mathrm{Max}(R),\ N\neq M,M'$ to get $I_M=M_M$, and $J_M= M'_M=R_M$ because $J\not\subseteq M$ since $I=J$ if $J\subseteq M$ by $(*)$, a contradiction, $I_{M'}=M'_{M'}=J_{M'},\ M_{M'}=R_{M'}$ and $I_N=M_N =J_N=M'_N=R_N$.
\end{proof}
  
\begin{theorem}\label{2.6} Let $I$ be a non maximal ideal  of $R$. The following statements are equivalent:
\begin{enumerate}
\item $I\in\mathrm{QMax}(R)$.
\item $I$ is submaximal.
\item $I$ is strictly contained only in maximal ideals.
\end{enumerate} 
\end{theorem}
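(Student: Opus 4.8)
The plan is to establish the cyclic chain $(1)\Rightarrow(3)\Rightarrow(2)\Rightarrow(1)$, using only the definition of a quasi-maximal ideal together with the covering characterization from Proposition \ref{2.71}; the standing hypothesis that $I$ is non-maximal will be used throughout.

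First I would prove $(1)\Rightarrow(3)$ directly from the definition rather than through Theorem \ref{2.3}. Given any proper ideal $J$ with $I\subset J$, I choose $a\in J\setminus I$; then $I+Ra\subseteq J\subsetneq R$, so $I+Ra\neq R$, and quasi-maximality forces $I+Ra$ to be maximal. Since a maximal ideal contained in the proper ideal $J$ must equal it, $J=I+Ra$ is maximal, which is exactly statement (3).

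Next, for $(3)\Rightarrow(2)$, because $I$ is proper and non-maximal it lies strictly inside some maximal ideal $M$. Any ideal strictly between $I$ and $M$ would be a proper ideal strictly containing $I$, hence maximal by (3), which is impossible below the proper ideal $M$; so $I\prec M$ and $I$ is submaximal. Finally, for $(2)\Rightarrow(1)$, Proposition \ref{2.71} upgrades submaximality to $M'\succ I$ for every maximal ideal $M'\supset I$. Given $a\in R\setminus I$, the ideal $J:=I+Ra$ either equals $R$ or lies in some maximal $M'$; in the latter case $I\subset J\subseteq M'$ together with $M'\succ I$ forces $J=M'$, a maximal ideal. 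Hence $I\in\mathrm{QMax}(R)$.

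Each implication is short, so I do not expect a genuine obstacle; the one point demanding care is that statement (3) quantifies over all ideals containing $I$, not merely over the primes in $\mathrm{V}_R(I)$. This is precisely why I argue $(1)\Rightarrow(3)$ straight from the definition, which controls every ideal of the form $I+Ra$, and why the covering reformulation supplied by Proposition \ref{2.71} is the natural tool for closing the loop in $(2)\Rightarrow(1)$.
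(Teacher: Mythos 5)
Your proof is correct, but it follows a genuinely different route from the paper's. The paper proves $(1)\Rightarrow(2)\Rightarrow(3)\Rightarrow(1)$ and leans on its structure theory at both ends: $(1)\Rightarrow(2)$ is deduced from the type classification (inert/decomposed/ramified) established in the proof of Theorem \ref{2.3}, and $(3)\Rightarrow(1)$ again routes through Theorem \ref{2.3} and Definition \ref{2.2}, by showing that under (3) either $I=J\cap M$ is an intersection of two maximal ideals (hence decomposed) or $\mathrm{o}(I)=2$. You instead cycle in the opposite direction, $(1)\Rightarrow(3)\Rightarrow(2)\Rightarrow(1)$, working straight from the definition of quasi-maximality: for $(1)\Rightarrow(3)$ you observe that any proper ideal $J\supset I$ equals $I+Ra$ for any $a\in J\setminus I$ (a maximal ideal inside a proper ideal must equal it), and for $(2)\Rightarrow(1)$ you use Proposition \ref{2.71} to force $I+Ra$ to be either $R$ or a maximal ideal. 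Both proofs invoke Proposition \ref{2.71} exactly once, but in different implications (the paper in $(2)\Rightarrow(3)$, you in $(2)\Rightarrow(1)$). What each buys: your argument is more elementary and self-contained, showing the equivalence is a purely lattice-theoretic consequence of the definition, independent of Theorem \ref{2.3}; the paper's detour through Theorem \ref{2.3} is longer but simultaneously identifies the \emph{type} of the non-maximal quasi-maximal ideal (decomposed versus ramified), information the paper exploits repeatedly in later sections. Your handling of the one delicate point --- that (3) quantifies over all ideals containing $I$, not just primes --- is exactly right and is why the direct-from-definition approach closes cleanly.
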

\begin{proof} (1)$\Rightarrow$(2): The different types of quasi-maximal ideals gotten in the proof of Theorem \ref{2.3} show that a quasi-maximal ideal of $R$ which is not maximal is obviously submaximal.

(2)$\Rightarrow$(3): If $I$ is submaximal, $M\succ I$ for any $M\in\mathrm {Max}(R)\cap\mathrm{V}(I)$ by Proposition \ref{2.71}. Hence $I$ is strictly contained only in maximal ideals.

(3)$\Rightarrow$(1): Assume that $I$ is strictly contained only in maximal ideals. We claim that $I$ is contained in at most two proper ideals different from $I$, with exactly two if they are maximal. Let $M\in\mathrm{Max}(R)$ be such that $I\subset M$ and assume that there is an ideal $J\neq M$ such that $I\subset J$. Then $J\in\mathrm{Max}(R)$. But $I\subseteq J\cap M\subset J,M$ shows that $I=J\cap M$ because $J\cap M$ is not maximal. Therefore $I\in\mathrm{Q_dMax}(R)$ by Definition \ref{2.2}. If $I$ is contained in only one (maximal) ideal, Theorem \ref{2.3} says that $I\in\mathrm{QMax}(R)$ because $\mathrm o(I)=2$.
 \end{proof}

This Theorem shows that any proper ideal containing strictly a quasi-maximal ideal is maximal. 

According to \cite[Corollary 8.28]{Lam}, a ring $R$ is a {\it Kasch} ring if any maximal ideal of $R$ has the form $(0:x)$ for some $x\in R$, or equivalently, for any proper ideal $I$ of $R$, then $(0:I)\neq 0$. Therefore $\mathrm{t}(R)=R$ since a regular element of $R$ is necessarily a unit, because contained in no maximal ideal of $R$.  Quasi-maximal ideals allows to construct Kasch rings with the next Proposition.

\begin{proposition}\label{2.8} If $I\in\mathrm{QMax}(R)$, then $R/I$ is a  Kasch ring and $\mathrm{t}(R/I)=R/I$. 
\end{proposition}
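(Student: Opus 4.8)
The plan is to set $A:=R/I$ and prove the two assertions in order, the second being essentially free once the first is in hand. Indeed, the paragraph preceding the statement already records the general fact that in a Kasch ring every regular element is a unit, whence $\mathrm{t}(A)=A$; so once $A$ is shown to be Kasch the assertion on the total quotient ring follows at once. For the Kasch property I would use the form of the definition asking that every maximal ideal of $A$ be an annihilator ideal $(0:x)$. By Theorem \ref{2.3} we know that $0\in\mathrm{QMax}(A)$, that $A$ has finitely many ideals, and that $A$ has at most two maximal ideals; this is exactly the amount of structure the argument needs.

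The engine of the proof is the relation $(\sqrt 0)^2\subseteq 0$, valid in $A$ by Remark \ref{2.4}(3), together with the explicit description of $A$ recorded in Remark \ref{2.4} and Proposition \ref{2.5}. I would then split into the three types of Definition \ref{2.2}. If $I\in\mathrm{Q_iMax}(R)$, then $A$ is a field, its only maximal ideal is $0$, and $0=(0:1)$. If $I\in\mathrm{Q_rMax}(R)$, then by Proposition \ref{2.5} $A$ is local with maximal ideal $M$ satisfying $M^2=0$; choosing any $x\in M\setminus\{0\}$ gives $Mx\subseteq M^2=0$, so $M\subseteq(0:x)$, and since $x\neq 0$ the ideal $(0:x)$ is proper, forcing $M=(0:x)$ by maximality.

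The one case that needs a different twist is the decomposed one, which is where I expect the minor obstacle to lie: there the maximal ideals are idempotent rather than nilpotent, so the device of the ramified case does not apply verbatim. Here $0=M_1\cap M_2$ with $M_1\neq M_2$ maximal, hence $M_1M_2\subseteq M_1\cap M_2=0$. Picking $x\in M_2\setminus M_1$, which is nonzero since $0\in M_1$, gives $M_1x\subseteq M_1M_2=0$, so $M_1\subseteq(0:x)\subset A$ and thus $M_1=(0:x)$ by maximality; the symmetric choice handles $M_2$. This exhausts all maximal ideals in every type, so $A$ is Kasch and, as noted, $\mathrm{t}(A)=A$. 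Apart from remembering to treat the idempotent maximal ideals of the decomposed case through the annihilation $M_1M_2=0$ rather than through a square, the proof is a routine unwinding of the structure theorems already available.
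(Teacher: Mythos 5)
Your proof is correct and follows essentially the same route as the paper's: in each of the three types you exhibit every maximal ideal of $R/I$ as an annihilator $(0:x)$ — using $N^2=0$ in the ramified case and $N N'=0$ with $x\in N'\setminus N$ in the decomposed case — and then conclude $\mathrm{t}(R/I)=R/I$ from the general Kasch-ring fact recorded before the statement. The only cosmetic difference is that you route the structural facts through Proposition \ref{2.5} and Remark \ref{2.4}, whereas the paper works directly from $M^2\subseteq I$ and $I=M\cap M'$; the annihilator computations are identical.
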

\begin{proof} Any maximal ideal of $R/I$ is of the form $N:=M/I$ where $M\in\mathrm{Max}(R)\cap\mathrm{V}(I) $. If $I\in\mathrm{Q_iMax}(R)$, then $N=0=(0:x)$ for any nonzero $x$ of the field $R/I$. If $I\in\mathrm {Q_dMax}(R)$, then $I=M\cap M'$ for some $M'\in\mathrm{Max}(R),\ M' \neq M$. Set $N':=M'/I$. We deduce that $N=(0:x)$ for any $x\in N' \setminus N$. If $I\in\mathrm{Q_rMax}(R)$, then $M^2\subseteq I\subset M$ shows that $N=(0:x)$ for any $x\in N,\ x\neq 0$. So, in any case, $R/I$ is a Kasch ring. Then, $\mathrm{t}(R/I)=R/I$ by the property of Kasch rings.
\end{proof}
 
\begin{remark}\label{2.81} The converse of Proposition \ref{2.8} does not hold in general as we can see in the following example: Let $R$ be a ring and $M\in\mathrm{Max}(R)$ be such that $M\neq M^2$ and $M\nsucc M ^2$. Setting $R':=R/M^2$ and $M':=M/M^2$, we get that $(R',M')$ is a local ring. Let $x\in M',\ x\neq 0$. Then, $M':=(0:x)$, so that $R/M^2$ is a  Kasch ring, but $M^2\not\in\mathrm{QMax}(R)$ because $M\nsucc M^2$.
\end{remark}

Of course, any ring has inert ideals, and any non local ring has decomposed ideals. We are going to show that ramified ideals exist in a ring having a finitely generated maximal ideal. 

\begin{proposition}\label{2.9} If $R$ is a ring which is not a field and such that there exists $M\in\mathrm{Max}(R)$ finitely generated, there exists $I\in\mathrm {QMax}(R)$ such that $I\subset M$.
\end{proposition}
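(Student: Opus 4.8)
The plan is to produce a quasi-maximal ideal $I$ with $I\subset M$ by splitting according to whether $M^2\subset M$ is proper or $M^2=M$, and in each case matching one of the types of Definition \ref{2.2}, using Proposition \ref{2.5} for the ramified case.

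First I would treat the case $M^2\subset M$. Since $M$ is finitely generated and $M$ annihilates $M/M^2$, the quotient $\overline M:=M/M^2$ is a nonzero finite-dimensional vector space over the field $R/M$. I would pick a codimension-one $R/M$-subspace $H$ of $\overline M$ and let $I$ be its preimage in $R$; then $I$ is an ideal of $R$ (submodules of $M$ containing $M^2$ are exactly the $R/M$-subspaces of $\overline M$) with $M^2\subseteq I\subset M$ and $\dim_{R/M}(M/I)=1$. The crux is then to check that $R/I$ is a SPIR whose maximal ideal is nilpotent of index $2$, so that Proposition \ref{2.5} applies. Because $\sqrt{M^2}=M$ and $M$ is maximal, the unique prime of $R$ containing $M^2$, hence $I$, is $M$, so $R/I$ is local with maximal ideal $M/I$; moreover $(M/I)^2=(M^2+I)/I=0$ since $M^2\subseteq I$, and $M/I$, being one-dimensional over $R/M$, is principal in $R/I$. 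Thus $R/I$ has only the ideals $0, M/I, R/I$ and is a SPIR with maximal ideal nilpotent of index exactly $2$ (the index is $2$ because $I\subset M$ is proper), whence $I\in\mathrm{Q_rMax}(R)\subseteq\mathrm{QMax}(R)$ with $I\subset M$.

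Next I would treat the case $M^2=M$. Here $M$ is a finitely generated idempotent ideal, so by the Nakayama Lemma it is generated by an idempotent $e$, giving $R\cong Re\times R(1-e)$ with $Re=M$ and $R(1-e)\cong R/M$ a field. Since $R$ is not a field, $A:=Re=M$ is a nonzero ring and so has a maximal ideal $\mathfrak m$. Writing $R=A\times k$ with $k:=R(1-e)$, I would set $I:=\mathfrak m\times 0$. Then $I=(\mathfrak m\times k)\cap(A\times 0)$ is the intersection of the two distinct maximal ideals $\mathfrak m\times k$ and $A\times 0=M$, so $I\in\mathrm{Q_dMax}(R)\subseteq\mathrm{QMax}(R)$ by Definition \ref{2.2}, and $I\subset M$ because $\mathfrak m\subset A$ is proper.

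The heart of the argument, and the step I expect to require the most care, is the first case: verifying that the hyperplane preimage $I$ really yields a SPIR, i.e. that $R/I$ is local (which rests on $\sqrt{M^2}=M$), that $(M/I)^2=0$, and that $M/I$ is principal. Finite generation of $M$ enters decisively in the second case, where it powers the Nakayama splitting $M=Re$; in the first case it guarantees $M/M^2$ is of finite length, so a codimension-one subspace producing a one-dimensional, hence principal, $M/I$ is available. Once the idempotent splitting is in hand, the decomposed case is routine.
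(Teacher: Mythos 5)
Your proposal is correct, but it follows a genuinely different route from the paper. The paper's proof is a single Zorn's lemma argument: it considers the family $\mathcal F$ of all ideals properly contained in $M$, uses the finite generation of $M$ to show that the union of a chain in $\mathcal F$ cannot equal $M$ (a generator would already lie in one member of the chain), and concludes that a maximal element $I$ of $\mathcal F$ satisfies $M\succ I$, hence $I\in\mathrm{QMax}(R)$ by Theorem \ref{2.6}. You instead split according to whether $M^2\subset M$ or $M^2=M$: in the first case you pull back a codimension-one $R/M$-subspace of $M/M^2$ and verify the SPIR criterion of Proposition \ref{2.5}, producing a ramified ideal; in the second you use Nakayama to write $M=Re$ with $e$ idempotent, split $R\cong A\times k$, and intersect $M$ with a second maximal ideal to produce a decomposed ideal. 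Both arguments are sound. The paper's proof is shorter and uniform but purely existential; yours is more informative, since it identifies the type of the quasi-maximal ideal obtained (ramified exactly when $M^2\neq M$, decomposed when $M^2=M$), thereby essentially recovering the harder implication of Corollary \ref{2.19} as a by-product, and it isolates the two distinct uses of finite generation (finite dimensionality of $M/M^2$, and the Nakayama idempotent splitting). Two small remarks: in your first case you could conclude more quickly via Theorem \ref{2.6}, since $\dim_{R/M}(M/I)=1$ already gives $M\succ I$, making the SPIR verification optional; and note that Zorn's lemma has not really been eliminated, as your second case invokes the existence of a maximal ideal in the arbitrary nonzero ring $A$.
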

\begin{proof} Set $M=\sum_{i=1}^nRx_i$ and $\mathcal F:=\{I$ ideal of $R\mid I\subset M\}$. Then, $\mathcal F\neq\emptyset$ because $0\in\mathcal F$. We claim that $\mathcal F$ is inductive. Let $\mathcal F'$ be a linearly ordered subset of $\mathcal F$ and set $J:=\cup[I_{\alpha}\mid I_{\alpha}\in\mathcal F']$. Then, $J$ is an ideal of $R$ such that $J\subseteq M$. Assume that $J=M$. Hence, for each $i\in\mathbb{N}_n$, there exists some $I_{\alpha_i}\in\mathcal F'$ such that $x_i\in I_{\alpha_i}$. Let $I_{\alpha_k}$ be the greatest of such $I_{\alpha_i}$. Therefore $I_ {\alpha_k}\in\mathcal F$ with $x_i\in I_{\alpha_k}$ for each $i\in\mathbb{N}_n$, so that $M\subseteq I_{\alpha_k}\subset M$, a contradiction. We get $J\subset M$, giving $J\in\mathcal F$ and $\mathcal F$ has a maximal element $I$ such that $I\subset M$. In particular, $M\succ I$, because, if there exists some ideal $K$ such that $I\subset K\subset M$, we should have $K\in\mathcal F$, a contradiction with the maximality of $I$. Then, $I\in\mathrm{QMax}(R)$ by Theorem \ref{2.6}.
\end{proof}

\begin{corollary}\label{2.10} If $J$ is an ideal of a ring $R$ such that $J\subset M$, where $M\in\mathrm{Max}(R)$ is finitely generated, there exists $I\in\mathrm {QMax}(R)$ such that $J\subseteq  I\subset M$. 
\end{corollary}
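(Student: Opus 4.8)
The plan is to reduce everything to Proposition \ref{2.9} by passing to the quotient ring $R/J$. First I would check that the hypotheses of Proposition \ref{2.9} are met by $R/J$ together with its ideal $M/J$. By the ideal correspondence, $J\subseteq M\in\mathrm{Max}(R)$ gives $M/J\in\mathrm{Max}(R/J)$; writing $M=\sum_{i=1}^nRx_i$, the images $\overline{x_i}$ generate $M/J$, so $M/J$ is finitely generated; and $R/J$ is not a field, since the proper chain $J\subset M\subset R$ forces $M/J$ to be a nonzero proper ideal of $R/J$.

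Second, I would apply Proposition \ref{2.9} to the pair $(R/J,\,M/J)$, obtaining an ideal $\overline I\in\mathrm{QMax}(R/J)$ with $\overline I\subset M/J$. Let $I$ be the inverse image of $\overline I$ under the canonical surjection $R\to R/J$, so that $J\subseteq I$ and $I/J=\overline I$. Pulling back the strict inclusion $\overline I\subset M/J$ along this surjection yields $I\subset M$, and combining with $J\subseteq I$ produces the desired chain $J\subseteq I\subset M$.

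It then remains to transfer quasi-maximality from $R/J$ back to $R$. Here I would use the isomorphism $R/I\cong(R/J)/\overline I$. Since $\overline I\in\mathrm{QMax}(R/J)$, the equivalence (1)$\Leftrightarrow$(3) of Theorem \ref{2.3}, applied in the ring $R/J$, gives $0\in\mathrm{QMax}((R/J)/\overline I)$; transporting this across the isomorphism yields $0\in\mathrm{QMax}(R/I)$, and the same equivalence applied in $R$ then gives $I\in\mathrm{QMax}(R)$, as required.

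There is no genuine obstacle: the whole argument is a routine descent-and-ascent through the quotient by $J$. The only points needing care are the verification that $R/J$ satisfies the hypotheses of Proposition \ref{2.9} (in particular that it is not a field) and the bookkeeping of the ideal correspondence; the fact that quasi-maximality is intrinsic to the quotient ring, guaranteed by Theorem \ref{2.3}, is what makes the transfer in both directions automatic. Alternatively, one could bypass Theorem \ref{2.3} by noting that the proof of Proposition \ref{2.9} in fact produces $M/J\succ\overline I$, whence $M\succ I$ because the correspondence both preserves and reflects strict inclusions; then $I$ is submaximal and non-maximal, and therefore lies in $\mathrm{QMax}(R)$ by Theorem \ref{2.6}.
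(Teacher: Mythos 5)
Your proof is correct and follows essentially the same route as the paper's: pass to $R/J$, apply Proposition \ref{2.9} there, pull the resulting quasi-maximal ideal back along the canonical surjection, and transfer quasi-maximality via the isomorphism $R/I\cong(R/J)/(I/J)$ together with Theorem \ref{2.3}. The only differences are cosmetic — you spell out the hypothesis checks and offer an alternative closing step via Theorem \ref{2.6}, both of which the paper leaves implicit.
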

\begin{proof} Set $R':=R/J$ and $M':=M/J$, so that $R'$ is not a field. Then, $M'$ is a finitely generated maximal ideal of $R'$ and Proposition \ref{2.9} says that there exists an ideal $K\in\mathrm{QMax}(R)$ of $R'$
 such that $K\subset M'$. We infer from Theorem \ref{2.3} that $0\in\mathrm{QMax}(R'/K)$. Moreover, there exists an ideal $I$ of $R$ such that $K=I/J$ with $I\subset M$. As $R'/K=(R/J)/(I/J)\cong R/I$, the same Theorem shows that $I\in\mathrm{QMax}(R)$ is such that $J\subseteq I\subset M$.
\end{proof}

\begin{corollary}\label{2.20} Let $R$ be a Noetherian ring which is not a field. For any $M\in\mathrm{Max}(R)$, there exists $I\in\mathrm{QMax}(R)$ such that $M\succ I$.
 \end{corollary}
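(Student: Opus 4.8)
The plan is to read this off from Proposition \ref{2.9}: the only hypothesis there beyond ``$R$ is not a field'' is that some maximal ideal be finitely generated, and in a Noetherian ring every ideal---in particular the prescribed $M$---is finitely generated. So the real content is just to check that the specific $M$ of the statement can play the role of the finitely generated maximal ideal, and that the conclusion can be upgraded from $I\subset M$ to the cover $M\succ I$.

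First I would note that since $R$ is not a field, the maximal ideal $M$ is nonzero, so the family $\mathcal F:=\{J\text{ ideal of }R\mid J\subset M\}$ is nonempty (it contains $0$). The Noetherian hypothesis then lets me bypass the inductive argument of Proposition \ref{2.9} altogether: by the ascending chain condition, $\mathcal F$ possesses a maximal element $I$. (Alternatively, one simply runs the proof of Proposition \ref{2.9} with this $M$, which produces such an $I$.)

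Next I would extract the cover relation. By maximality of $I$ in $\mathcal F$, no ideal $K$ can satisfy $I\subset K\subset M$, since such a $K$ would lie in $\mathcal F$ and strictly contain $I$; hence $M\succ I$. In particular $I\subset M$ strictly, so $I$ is not maximal, and by Definition \ref{2.72} the relation $I\prec M$ says exactly that $I$ is submaximal. Theorem \ref{2.6} then gives $I\in\mathrm{QMax}(R)$, delivering an $I\in\mathrm{QMax}(R)$ with $M\succ I$, as required.

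I expect no genuine obstacle. The one point deserving care is that Proposition \ref{2.9} is stated with the weaker conclusion $I\subset M$, whereas the corollary demands the cover $M\succ I$ for the prescribed $M$; but this stronger relation is precisely what maximality of $I$ within $\mathcal F$ furnishes, and it is in fact already verified inside the proof of Proposition \ref{2.9}.
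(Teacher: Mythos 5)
Your proof is correct and is essentially the paper's own argument: the paper proves this corollary by citing Corollary \ref{2.10}, whose proof (through Proposition \ref{2.9}) runs exactly the maximal-element argument on $\mathcal F=\{J \text{ ideal of } R\mid J\subset M\}$ that you inline, with the cover $M\succ I$ coming from maximality in $\mathcal F$ and quasi-maximality from Theorem \ref{2.6}. Your only deviation --- replacing the Zorn-type argument of Proposition \ref{2.9} by the ascending chain condition --- is a harmless simplification made available by the Noetherian hypothesis.
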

\begin{proof} Use Corollary \ref{2.10}. 
\end{proof}

\begin{proposition}\label{2.12} If $J$ is an $M$-primary ideal of a ring $R$, where $M\in\mathrm{Max}(R)$ is such that $1\leq\mathrm{L}_R(M/J)<\infty$, there exists $I\in\mathrm{Q_rMax}(R)$ such that $J\subseteq I\subset M$. 
\end{proposition}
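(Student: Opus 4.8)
The plan is to reduce to the residue ring $R':=R/J$ and apply Proposition \ref{2.9} there, the point being that $J$ being $M$-primary (not merely contained in $M$) forces $R'$ to be local, which in turn forces the quasi-maximal ideal produced by Proposition \ref{2.9} to be of ramified type.

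First I would set $R':=R/J$ and $M':=M/J$. Since $J$ is $M$-primary, $\sqrt J=M$, so $M$ is the unique prime of $R$ containing $J$; hence $M'$ is the unique prime ideal of $R'$, and $R'$ is local with maximal ideal $M'$. The hypothesis $1\leq\mathrm{L}_R(M/J)<\infty$ reads $1\leq\mathrm{L}_{R'}(M')<\infty$ (the $R$-module and $R'$-module submodule lattices of $M/J$ coincide), which shows on the one hand that $M'\neq 0$, so that $R'$ is not a field, and on the other hand that $M'$, being of finite length over $R'$, is a finitely generated ideal of $R'$.

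Next I would apply Proposition \ref{2.9} to $R'$, which is not a field and has a finitely generated maximal ideal: there exists $K\in\mathrm{QMax}(R')$ with $K\subset M'$. The key step is to identify the type of $K$. Since $R'$ is local, $M'$ is its only maximal ideal; as $K\subset M'$ is a strict inclusion, $K$ is neither inert (it is not maximal) nor decomposed (there are not two distinct maximal ideals of $R'$ to intersect), so $K\in\mathrm{Q_rMax}(R')$ by the trichotomy of Theorem \ref{2.3}. Equivalently, $K$ is strictly contained only in the single maximal ideal $M'$, so $\mathrm o(K)=2$ and Theorem \ref{2.3} forces the ramified type.

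Finally I would lift $K$ back to $R$: write $K=I/J$ for an ideal $I$ of $R$ with $J\subseteq I$, and $K\subset M'$ gives $I\subset M$. Then $R/I\cong (R/J)/(I/J)=R'/K$, and since $K\in\mathrm{Q_rMax}(R')$, Proposition \ref{2.5} tells us that $R'/K$, hence $R/I$, is a SPIR whose maximal ideal is nilpotent of index $2$; applying Proposition \ref{2.5} in the other direction yields $I\in\mathrm{Q_rMax}(R)$ with $J\subseteq I\subset M$, as required. I expect the only genuinely delicate point to be the type determination of $K$: everything hinges on $J$ being $M$-primary, which is exactly what guarantees that $R'$ is local and thereby rules out the decomposed case that Proposition \ref{2.9} alone would permit in a ring with several maximal ideals.
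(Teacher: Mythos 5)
Your proof is correct, but it takes a genuinely different route from the paper's. The paper exploits the finite-length hypothesis directly: since $1\leq\mathrm{L}_R(M/J)=n<\infty$, there is a composition series $M=J_0\succ J_1\succ\cdots\succ J_n=J$, and the paper simply takes $I:=J_1$, the term covered by $M$; because $J\subseteq I$ and $J$ is $M$-primary, $\mathrm{V}(I)=\{M\}$, so $M\succ I$ gives $\mathrm{o}(I)=2$ and Theorem \ref{2.3} yields $I\in\mathrm{Q_rMax}(R)$ in one stroke, with no passage to a quotient ring. You instead pass to $R':=R/J$, observe that primariness makes $R'$ local and that finite length makes $M':=M/J$ nonzero and finitely generated, invoke the Zorn-type argument of Proposition \ref{2.9} to produce $K\in\mathrm{QMax}(R')$ with $K\subset M'$, pin down the ramified type by locality (inert and decomposed are impossible in a local ring, which is the key observation and is argued correctly), and lift back to $R$ via the SPIR characterization of Proposition \ref{2.5} applied to $R/I\cong R'/K$. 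In effect you re-run the proof of Corollary \ref{2.10} inside $R/J$, supplemented by the type determination that locality affords; note that Corollary \ref{2.10} itself could not be cited directly, since it assumes $M$ finitely generated in $R$, and you correctly circumvent this by quotienting first. What your route buys: it uses the length hypothesis only through its consequences ``$M/J\neq 0$ and finitely generated,'' and it recycles established machinery rather than manipulating chains by hand. What the paper's route buys: it is shorter, exhibits $I$ explicitly as the penultimate term of a composition series, and avoids the quotient-and-lift bookkeeping, the type identification falling out of $\mathrm{o}(I)=2$ directly. Both type transfers are sound, yours via Proposition \ref{2.5}, the paper's via condition (6) of Theorem \ref{2.3}.
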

\begin{proof} Let $J$ be an $M$-primary ideal of a ring $R$, where $M\in\mathrm{Max}(R)$ such that $1\leq\mathrm{L}_R(M/J)=n<\infty$. Then there exists a decreasing sequence of ideals $\{J_i\}_{i=0}^n$ of $R$ such that $J_0=M$ and $J_n=J$ with $J_i\succ J_{i+1}$. Setting $I:=J_1$, we get that $I$ is $M$-primary and $\mathrm{L}_R(M/I)=1$, that is $M\succ I$. Moreover, $J\subseteq I$. Because $\mathrm o(I)=2$ and according to Theorem \ref{2.3}, we get that $I\in\mathrm{Q_rMax}(R)$ with $J\subseteq  I\subset M$.
\end{proof}

\begin{proposition}\label{2.11} If $R$ is a ring and $K:=\cap\{I\in\mathrm {QMax}(R)\}$, then $\mathrm J(R)^2\subseteq\cap[M^2\mid M\in\mathrm{Max}(R)]\subseteq K\subseteq\mathrm J(R)$.
\end{proposition}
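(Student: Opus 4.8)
The plan is to establish the chain as three separate inclusions, proceeding from the two outer ones (which are nearly immediate) to the middle one (which carries the content). Throughout I write $N$ for a running maximal ideal to avoid collision with the distinguished $M$ in the ramified case.

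First I would treat the leftmost inclusion. Since $\mathrm{J}(R)=\cap[N\mid N\in\mathrm{Max}(R)]$, we have $\mathrm{J}(R)\subseteq N$ for every $N\in\mathrm{Max}(R)$, hence $\mathrm{J}(R)^2\subseteq N^2$ for every such $N$. Intersecting over all maximal ideals yields $\mathrm{J}(R)^2\subseteq\cap[N^2\mid N\in\mathrm{Max}(R)]$. Next I would dispatch the rightmost inclusion. Because $\mathrm{Q_iMax}(R)=\mathrm{Max}(R)$, every maximal ideal is itself quasi-maximal, so $K=\cap\{I\in\mathrm{QMax}(R)\}\subseteq N$ for each $N\in\mathrm{Max}(R)$, and intersecting gives $K\subseteq\cap[N\mid N\in\mathrm{Max}(R)]=\mathrm{J}(R)$. (This also shows $K$ is a genuine intersection, since $\mathrm{QMax}(R)\neq\emptyset$ whenever $R\neq 0$.)

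The middle inclusion is where the real work lies, and I would prove it by showing $\cap[N^2\mid N\in\mathrm{Max}(R)]\subseteq I$ for each individual $I\in\mathrm{QMax}(R)$; intersecting over all such $I$ then gives $\cap[N^2\mid N\in\mathrm{Max}(R)]\subseteq K$. Here I would invoke the trichotomy of Proposition \ref{2.70} (equivalently Theorem \ref{2.3}). If $I$ is inert, then $I=M\in\mathrm{Max}(R)$ and $\cap[N^2\mid N\in\mathrm{Max}(R)]\subseteq M^2\subseteq M=I$. If $I$ is ramified with associated maximal ideal $M$, the defining inclusion $M^2\subseteq I$ from Definition \ref{2.2} gives $\cap[N^2\mid N\in\mathrm{Max}(R)]\subseteq M^2\subseteq I$. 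If $I$ is decomposed, then $I=M_1\cap M_2$ and one has $\cap[N^2\mid N\in\mathrm{Max}(R)]\subseteq M_1^2\cap M_2^2\subseteq M_1\cap M_2=I$.

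I expect the decomposed case to be the only point requiring care. One must not try to route through $(\sqrt I)^2$: although Remark \ref{2.4}(3) supplies $(\sqrt I)^2\subseteq I$, the containment $\cap[N^2\mid N\in\mathrm{Max}(R)]\subseteq(\sqrt I)^2=(M_1\cap M_2)^2$ is not automatic, so a square of an intersection cannot be used as an intermediate bound. Instead one uses the two separate containments $\cap[N^2\mid N\in\mathrm{Max}(R)]\subseteq M_i^2\subseteq M_i$ for $i\in\mathbb{N}_2$ and intersects them. Apart from this distinction the argument is entirely formal, relying only on the classification already established.
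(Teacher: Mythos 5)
Your proposal is correct and follows essentially the same route as the paper: both prove the outer inclusions directly from the facts that $\mathrm{J}(R)\subseteq N$ for every $N\in\mathrm{Max}(R)$ and that $\mathrm{Max}(R)\subseteq\mathrm{QMax}(R)$, and both prove the middle inclusion via the trichotomy of Proposition \ref{2.70}, with the decomposed case handled exactly as you do, by $\cap[N^2\mid N\in\mathrm{Max}(R)]\subseteq M_1^2\cap M_2^2\subseteq M_1\cap M_2=I$ (the paper additionally notes $M_1^2M_2^2=M_1^2\cap M_2^2$ by comaximality, but this is not needed). Your cautionary remark about not routing through $(\sqrt I)^2$ is sound but does not alter the argument.
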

\begin{proof} For any $I\in\mathrm{QMax}(R)$, there is $M\in\mathrm {Max}(R)$ such that $I\subseteq M$, and any maximal ideal is in $\mathrm {QMax}(R)$. Then, we have $K\subseteq\cap[M\mid M\in\mathrm{Max}(R)] =\mathrm J(R)$. Moreover, any inert or ramified ideal contains the square of some maximal ideal. If $I\in\mathrm{Q_dMax}(R)$, then $I=M\cap M'=M M'$ for some $M,M'\in\mathrm{Max}(R),\ M'\neq M$. Hence $M^2M'^2=M^ 2\cap M'^2\subseteq M\cap M'=I$, whence $\mathrm J(R)^2\subseteq\cap[M^2\mid M\in\mathrm {Max}(R)]\subseteq K$.
\end{proof}

\begin{remark}\label{2.13} Contrary to the Jacobson radical, the nilradical is not always comparable to $K$ as we can see in the following example. Let $(S,N)$ be a local PID and set $N:=St$ for some $t\in N$. Let $R:=S [X]/(X^3)=S[x]$, where $X$ is an indeterminate and $x$ is the class of $X$ in $R$, so that we have an integral extension $S\subset R$ since $x^3=0$.   We claim that $R$ is a local ring with maximal ideal $M:=Rt+Rx$. Let $M\in\mathrm{Max}(R)$. There exists $P\in\mathrm{Max}(S[X])$ such that $ X^3\in P$, and then $X\in P$, with $M=P/(X^3)$. Moreover, since $R$ is integral over $S$, then $M\cap S\in\mathrm{Max}(S)$, so that $M\cap S=N$. In particular, since $t\in N$, we can write $t=f(X)+X^3g(X)$, with $f(X)\in P$ and $\deg(f)<3$. This implies that $f(X)=t\in P\cap S$, so that $P\cap S =N$. According to \cite[Theorem 6.9]{Pic 17}, we get $P=N[X]+XS[X]=tS[X] +XS[X]$. Therefore $M=Rt+Rx$ and $M$ is the only maximal ideal of $R$, which is a local ring. An application of Proposition \ref{2.11} gives $M^2 \subseteq K\subseteq\mathrm J(R)=M$. As $t^2\in M^2\subseteq K$ is a regular element of $R$, we have $K\not\subseteq\mathrm{Nil}(R)$. Now, let $I:=Rt+Rx^2$. As $I+Rx=Rt+Rx=M$, we deduce that $I\in\mathrm {QMax}(R)$, with $x\not\in I$ and $x\not\in K$ holds, although $x\in\mathrm{Nil}(R)$. Then, $\mathrm{Nil}(R)\not\subseteq K$. To conclude, $K$ and $\mathrm{Nil}(R)$ are not comparable.
\end{remark}

\begin{proposition}\label{2.14} Let $\{R_1,\ldots,R_n\}$ be a finite family of rings and $R:=R_1\times\ldots\times R_n$. The ideals of $\mathrm{QMax}(R)$ are the ideals of the form $M_i\times M_j\times\Pi_{k\neq i,j}R_k$, where $M_i$ (resp. $M_j$) $\in\mathrm{Max}(R_i)$ (resp. $\in\mathrm {Max}(R_j)$) and $I_i\times\Pi_{k\neq i}R_k$, where $I_i\in\mathrm{QMax}(R_i)$  (after some reordering).
\end{proposition}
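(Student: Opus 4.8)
The plan is to reduce everything to the structure of the quotient ring, using that quasi-maximality of $I$ depends only on $R/I$ (Theorem \ref{2.3}, $(1)\Leftrightarrow(3)$). Recall first the standard fact that every ideal of $R=R_1\times\cdots\times R_n$ splits as $I=I_1\times\cdots\times I_n$ with each $I_k$ an ideal of $R_k$, whence $R/I\cong\prod_{k=1}^n(R_k/I_k)$. By Remark \ref{2.4} and Proposition \ref{2.5}, $I\in\mathrm{QMax}(R)$ holds if and only if this product is a field, a product of two (distinct) fields, or a SPIR whose maximal ideal is nilpotent of index $2$. So the task is to decide which index-by-index profiles $(R_k/I_k)_k$ yield one of these three rings.

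For the direct implication I would invoke condition $(4)$ of Theorem \ref{2.3} together with the additivity of length and of the (zero-dimensional) spectrum over a finite product. Writing $A_k:=R_k/I_k$ and discarding the factors with $I_k=R_k$ (which are the zero ring, contributing neither length nor primes), one has $\mathrm{L}_R(R/I)=\sum_k\mathrm{L}_{R_k}(A_k)$ and $|\mathrm{V}_{R/I}(0)|=\sum_k|\mathrm{V}_{A_k}(0)|$. Quasi-maximality forces $1\leq\sum_k\mathrm{L}_{R_k}(A_k)\leq 2$, so at most two factors are nonzero, and I would then read off the possibilities from the length profile: a single nonzero factor of length $1$ gives the inert instance of $I_i\times\prod_{k\neq i}R_k$; a single nonzero factor $A_i$ of length $2$ gives $I_i\in\mathrm{QMax}(R_i)$ by applying Theorem \ref{2.3}$(4)$ inside $R_i$ (the bound $|\mathrm{V}_{A_i}(0)|\leq 2$ being inherited), again the second form; and two nonzero factors of length $1$ give exactly $M_i\times M_j\times\prod_{k\neq i,j}R_k$ with $M_i,M_j$ maximal, the first form.

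The converse is then immediate from the same reduction. For an ideal $I_i\times\prod_{k\neq i}R_k$ one has $R/I\cong R_i/I_i$, so $I\in\mathrm{QMax}(R)$ follows from $I_i\in\mathrm{QMax}(R_i)$ through $(1)\Leftrightarrow(3)$ of Theorem \ref{2.3}; and for $M_i\times M_j\times\prod_{k\neq i,j}R_k$ the quotient $R/I\cong R_i/M_i\times R_j/M_j$ is a product of two fields, hence decomposed and quasi-maximal (equivalently $\mathcal O(R/I)=3$ with $|\mathrm{Max}(R/I)|=2$, so Theorem \ref{2.3}$(5)$ applies).

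I expect the difficulty to be bookkeeping rather than substance. The one point that genuinely pins the list down is indecomposability: a field and a SPIR are local, so each can only account for a single nonzero factor, whereas a product of two fields can arise in two distinct ways, namely as two separate field factors (the first form) or as one factor $A_i$ that is itself decomposed (the second form). These two shapes have isomorphic quotients but are different ideals of $R$, which is why both appear in the statement. I must also be careful to keep the vanishing factors $I_k=R_k$ out of the length and spectrum tallies, so that ``after some reordering'' correctly isolates the one or two active indices; the global bound $|\mathrm{V}_{R/I}(0)|\leq 2$ is what prevents a length-$2$ factor from being combined with any further nonzero factor.
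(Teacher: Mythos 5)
Your proof is correct, and it follows a genuinely different (and in one respect tighter) route than the paper's. The paper argues from condition (6) of Theorem \ref{2.3}: since $\mathrm{o}(I)\leq 3$, it splits into the case where $I$ lies under two maximal ideals of $R$ (claiming $I$ is then of the first form $M_i\times M_j\times\Pi_{k\neq i,j}R_k$) and the case where it lies under exactly one (showing $I=I_i\times\Pi_{k\neq i}R_k$ and applying \cite[Proposition 2]{AKKT} to the projection $\psi\colon R\to R_i$ to get $I_i\in\mathrm{QMax}(R_i)$). You instead pass to $R/I\cong\prod_k A_k$ and run condition (4) (length plus prime count) through its additivity over a finite product. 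This buys two things. First, your bookkeeping correctly places the case where the two maximal ideals containing $I$ lie over the \emph{same} factor $R_i$: there $I=(M_i\cap M_i')\times\Pi_{k\neq i}R_k$ is of the second form with $I_i$ decomposed in $R_i$, whereas the paper's two-maximal-ideals case silently assumes the two maximal ideals sit over different factors and asserts the first form. Second, you prove the converse inclusion (every listed ideal is quasi-maximal), which the statement asserts but the paper's proof never writes out. One small slip in your closing remark: it is the length bound $\mathrm{L}_R(R/I)\leq 2$, not the bound $|\mathrm{V}_{R/I}(0)|\leq 2$, that forbids combining a length-$2$ factor with a further nonzero factor (a SPIR times a field has only two primes but length $3$); the spectrum bound is needed only to transfer condition (4) to the single active factor $R_i$. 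This does not affect your argument, whose case analysis already invokes the length bound for exactly that purpose.
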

\begin{proof} Let $I\in\mathrm{QMax}(R)$. Then $\mathrm o(I)\leq 3$ with $\mathrm o(I)=3$ if and only if $|\mathrm{V}_R(I)\cap\mathrm{Max}(R)|= 2$ by Theorem \ref{2.3}. 

If $I$ is contained in two maximal ideals of $R$, then $I$ is an intersection of these two maximal ideals, and is of the form $M_i\times M_j\times\Pi_{k\neq i,j}R_k$, where $M_i$ (resp. $M_j$) $\in\mathrm{Max}(R_i)$ (resp. $\in\mathrm {Max}(R_j)$) after some reordering. 

If $I$ is contained in only one maximal ideal $M$ of $R$, for instance $M= M_i\times S$, where $S:=\Pi_{k\neq i}R_k$, with $R=R_i\times S$ (after reordering), then either $I=M$, or $I\in\mathrm{Q_rMax}(R)$, and then is $ M$-primary and satisfies $M^2=M_i^2\times\Pi_{k\neq i}R_k\subseteq I\subset M=M_i\times\Pi_{k\neq i}R_k$. In both cases, $I$ is of the form $ I_i\times\Pi_{k\neq i}R_k$. Let $\psi:R\to R_i$ be the canonical surjection with kernel $0\times S$. According to \cite[Proposition 2]{AKKT}, we get that $\psi(I)= I_i\in\mathrm{QMax}(R_i)$. 
\end{proof}

A ring $R$ is called a {\it $\mathrm{Max}$-ring} if any nonzero $R$-module has a maximal submodule. An ideal $I$ of $R$ is {\it $\mathrm T$-nilpotent} if for each sequence $\{r_i\}_{i=1}^{\infty}$ in $I$, there is some positive integer $k$ with $r_1\cdots r_k=0$. 

\begin{proposition}\label{2.15} For any $M\in\mathrm{Max}(R)$, where $ R$ is a $\mathrm{Max}$-ring which is not a field, there exists a quasi-maximal ideal $I\subset M$. 
\end{proposition}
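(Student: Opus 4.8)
The plan is to exploit the defining property of a $\mathrm{Max}$-ring by applying it to the maximal ideal $M$ regarded as an $R$-module, and then to translate the resulting maximal submodule into a submaximal ideal via Theorem \ref{2.6}.

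First I would record that $M$ is a \emph{nonzero} $R$-module: since $R$ is not a field, the zero ideal is not maximal, so $M\neq 0$. The next, and conceptually central, step is the identification of the $R$-submodules of $M$ with the ideals of $R$ contained in $M$. Indeed, a subset $N\subseteq M$ is an $R$-submodule exactly when it is an additive subgroup stable under multiplication by $R$, which is precisely the condition for $N$ to be an ideal of $R$; and any such ideal is automatically contained in $M$. Under this dictionary, a \emph{maximal} submodule of $M$ is the same thing as an ideal $I\subset M$ admitting no ideal strictly between $I$ and $M$, that is, an ideal $I$ with $M\succ I$.

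With this identification in hand, I would invoke the $\mathrm{Max}$-ring hypothesis: since $M$ is a nonzero $R$-module, it possesses a maximal submodule $I$. By the correspondence above, $I$ is an ideal of $R$ with $I\subset M$ and $M\succ I$. Because $I$ is properly contained in the proper ideal $M$, it is not maximal, and by Definition \ref{2.72} it is submaximal. Theorem \ref{2.6} then gives $I\in\mathrm{QMax}(R)$ with $I\subset M$, as required.

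There is no serious computational obstacle here; the argument is entirely conceptual. The point deserving care is the translation between module-theoretic and ideal-theoretic language: one must verify that the submodules of the ideal $M$ are exactly its sub-ideals, so that a maximal submodule corresponds to a cover $M\succ I$, and that the $\mathrm{Max}$-ring property legitimately applies because $M$ is nonzero (which is where the hypothesis that $R$ is not a field enters). Once these are settled, the passage from a cover to a quasi-maximal ideal is immediate from Theorem \ref{2.6}.
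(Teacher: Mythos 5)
Your proof is correct and follows exactly the paper's argument: identify the submodules of $M$ with the ideals contained in $M$, apply the $\mathrm{Max}$-ring property to get a maximal submodule, recognize it as a submaximal ideal, and conclude via Theorem \ref{2.6}. You in fact spell out two details the paper leaves implicit (that $M\neq 0$, which is where the non-field hypothesis enters, and the submodule--ideal dictionary), so nothing is missing.
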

\begin{proof} Let $M\in\mathrm{Max}(R)$. Then $M$ is an $R$-module whose $R$-submodules are ideals of $R$. As any maximal submodule of $M$ is a submaximal ideal, and according to Theorem \ref{2.6}, it follows that  there exists a quasi-maximal ideal $I\subset M$. 
\end{proof}

\begin{corollary}\label{2.16} Let $R$ be a $\mathrm J$-regular ring which is not a field and such that $\mathrm J(R)$ is $\mathrm T$-nilpotent. Then, for any $M\in\mathrm{Max}(R)$, there exists a quasi-maximal ideal $I\subset M$. 
\end{corollary}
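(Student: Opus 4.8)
The plan is to deduce this from Proposition \ref{2.15} by verifying that, under the stated hypotheses, $R$ is a $\mathrm{Max}$-ring. The bridge I would use is the classical characterization of (commutative) $\mathrm{Max}$-rings due to Hamsher: a ring $R$ is a $\mathrm{Max}$-ring if and only if $R/\mathrm J(R)$ is von Neumann regular and $\mathrm J(R)$ is $\mathrm T$-nilpotent. This is precisely the combination of conditions packaged in the hypotheses of the corollary.

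Granting this characterization, the argument is immediate. By assumption $R$ is $\mathrm J$-regular, which by the definition recalled in the introduction means that $R/\mathrm J(R)$ is regular, and $\mathrm J(R)$ is assumed to be $\mathrm T$-nilpotent. The two conditions of the characterization therefore hold, so $R$ is a $\mathrm{Max}$-ring. Since moreover $R$ is not a field, Proposition \ref{2.15} applies directly and produces, for each $M\in\mathrm{Max}(R)$, a quasi-maximal ideal $I\subset M$, which is exactly the assertion.

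The only genuine content lies in invoking the Hamsher characterization; once it is cited, no computation remains, so the corollary really is just a packaging of Proposition \ref{2.15}. The main point to be careful about is the matching of terminology: the word \emph{regular} appearing in the definition of a $\mathrm J$-regular ring must be read as von Neumann regular, which is exactly the hypothesis that occurs in Hamsher's theorem. With the two notions aligned, the reduction goes through cleanly, and I do not anticipate any obstacle beyond correctly stating and attributing the characterization theorem.
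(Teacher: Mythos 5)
Your proposal is correct and follows exactly the paper's own route: invoke Hamsher's theorem (cited in the paper as \cite[Theorem, p.1135]{Ham}) to conclude from $\mathrm J$-regularity and $\mathrm T$-nilpotency of $\mathrm J(R)$ that $R$ is a $\mathrm{Max}$-ring, then apply Proposition \ref{2.15}. The only difference is that you spell out the matching of hypotheses more explicitly than the paper's two-line proof, which is fine.
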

\begin{proof} By \cite[Theorem, p.1135]{Ham}, $R$ is a $\mathrm {Max}$-ring. An application of Proposition \ref{2.15} shows that for any $M \in\mathrm{Max}(R)$,  there exists a quasi-maximal ideal $I\subset M$. 
\end{proof}

Obviously, maximal ideals are inert and in every non local ring the  intersection of two maximal ideals is decomposed. The next corollary gives a characterization of maximal ideals covering ramified ideals.

\begin{corollary}\label{2.19} Let $R$ be a ring and $M\in\mathrm{Max}(R)$. There exists $I\in\mathrm{Q_rMax}(R)$ with $I\subset M$ if and only if $M^2\neq M$. 
\end{corollary}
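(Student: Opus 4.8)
The plan is to prove the two implications separately, with the forward direction being essentially immediate from the definitions and the reverse direction requiring a small vector-space construction carried out inside $R/M^2$. For the forward direction, suppose there is $I\in\mathrm{Q_rMax}(R)$ with $I\subset M$. By Definition \ref{2.2} this $I$ is $M'$-primary for some $M'\in\mathrm{Max}(R)$ with $M'=\sqrt I$ and $M'^2\subseteq I\subset M'$. Since $I\subset M$, taking radicals gives $M'=\sqrt I\subseteq\sqrt M=M$, and as both are maximal, $M'=M$. Hence $M^2\subseteq I\subset M$; were $M^2=M$ we would obtain $M\subseteq I\subset M$, a contradiction, so $M^2\neq M$.

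For the converse, assume $M^2\neq M$. The key idea is to pass to $\overline R:=R/M^2$. First I would observe that $\overline R$ is local with maximal ideal $\overline M:=M/M^2$, since any maximal ideal of $R$ containing $M^2$ is prime and hence contains $M$, so equals $M$. As $\overline M$ is nonzero with $\overline M^{2}=0$, the $\overline R$-action on $\overline M$ factors through the residue field $k:=R/M$, so $\overline M$ is a nonzero $k$-vector space whose $k$-subspaces are precisely the ideals of $\overline R$ contained in $\overline M$. The crucial step is then to choose a codimension-one $k$-subspace (a hyperplane) $\overline I\subset\overline M$, which exists because $\overline M\neq 0$; its preimage $I$ in $R$ satisfies $M^2\subseteq I\subset M$ and $M\succ I$.

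To finish I would identify the type of $I$. Because $\overline I$ is a hyperplane, the only $k$-subspaces of $\overline M$ containing it are $\overline I$ and $\overline M$; since every proper ideal of $\overline R$ lies in $\overline M$, the only proper ideals of $R$ containing $I$ are $I$ and $M$, that is $\mathrm o(I)=2$. Theorem \ref{2.3} then yields $I\in\mathrm{Q_rMax}(R)$ with $I\subset M$, completing the converse. The main obstacle is precisely this converse: one cannot simply invoke Proposition \ref{2.12}, which requires $\mathrm L_R(M/M^2)<\infty$, since $M/M^2$ may be infinite-dimensional over $k$. Recognizing that $\overline M=M/M^2$ is a $k$-vector space is exactly what produces a covering ideal regardless of its dimension, and is the technical heart of the argument.
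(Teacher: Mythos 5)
Your proof is correct, and it diverges from the paper's in an interesting way at the key existence step. Both arguments make the same reduction: pass to $\overline R=R/M^2$, which is local with maximal ideal $\overline M=M/M^2$ squaring to zero, produce a covering ideal $\overline I\prec\overline M$ there, and pull back. But the paper produces $\overline I$ by citing its own Max-ring machinery: $\overline R$ is $\mathrm J$-regular with $\mathrm T$-nilpotent Jacobson radical, hence a $\mathrm{Max}$-ring by Hamsher's theorem (via Corollary \ref{2.16} and Proposition \ref{2.15}), so $\overline M$ has a maximal submodule, necessarily ramified since $\overline R$ is local. You instead observe that $\overline M^2=0$ forces the $\overline R$-module structure of $\overline M$ to factor through $k=R/M$, so that ideals of $\overline R$ inside $\overline M$ are exactly $k$-subspaces, and a nonzero vector space always admits a hyperplane; your count $\mathrm o(I)=2$ then identifies the type via Theorem \ref{2.3} exactly as in the proof of its implication (6)$\Rightarrow$(7). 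Your route is more elementary and self-contained — it makes transparent that the maximal submodules of $\overline M$ invoked by the paper are nothing but hyperplanes of a $k$-vector space, at the cost of an explicit appeal to the axiom of choice (which the paper's route uses implicitly anyway). The paper's route is shorter given the results it has already developed, and fits the corollary into its broader theme of $\mathrm{Max}$-rings. A further small point in your favor: your forward direction checks that $\sqrt I=M$ before concluding $M^2\subseteq I$, whereas the paper's one-line forward implication tacitly assumes the ramified ideal $I\subset M$ is $M$-primary rather than $M'$-primary for some other maximal ideal $M'$.
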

\begin{proof} If $M^2=M$, there cannot exists some ideal $I$ of $R$ such that $M^2\subseteq I\subset M$. This gives one implication.

Conversely, assume that $M^2\neq M$. Set $R':=R/M^2,\ M':=M/M^2$ and let $f:R\to R'$ be the surjective natural map, where $R'$ is not a field. Then, $(R',M')$ is a local ring with $M'^2=0$ and $M'=\mathrm J(R')$ is $\mathrm T$-nilpotent. Moreover, $R'/\mathrm J(R')=R'/M'$ is a field, giving that $R'$ is $\mathrm J$-regular. According to Corollary \ref{2.16}, there exists a quasi-maximal ideal $I'\subset M'$, which is necessarily in $\mathrm{Q_rMax}(R')$ since $R'$ is local. Set $I:=f^{-1}(I')$ which is in $\mathrm{QMax}(R)$ by \cite[Proposition 2]{AKKT} with $M^2\subseteq I$.  
 Hence, $I\subset M$ and $I$ is $M$-primary, whence in $\mathrm {Q_rMax} (R)$. 
\end{proof}

The following Proposition gives examples of $\mathrm{Max}$-rings.

\begin{proposition} \label{2.17} A ring $R$ which is not a field and such that $0\in\mathrm {QMax}(R)$ is a $\mathrm{Max}$-ring. 
\end{proposition}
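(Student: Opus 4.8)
The plan is to verify the two hypotheses of Hamsher's characterization of $\mathrm{Max}$-rings, namely \cite[Theorem, p.1135]{Ham} (already invoked in the proof of Corollary \ref{2.16}): that $R$ is $\mathrm{J}$-regular and that $\mathrm{J}(R)$ is $\mathrm{T}$-nilpotent. Once these two conditions are established, that theorem immediately yields that $R$ is a $\mathrm{Max}$-ring.

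First I would pin down the structure of $R$. Since $R$ is not a field, the zero ideal is not maximal, so the hypothesis $0\in\mathrm{QMax}(R)$ forces $0\in\mathrm{Q_dMax}(R)\cup\mathrm{Q_rMax}(R)$ by Theorem \ref{2.3}. By Remark \ref{2.4}(1), in the decomposed case $R\cong R/M_1\times R/M_2$ is a product of two fields, while in the ramified case Proposition \ref{2.5} shows that $R$ is a SPIR whose maximal ideal $M$ satisfies $M^2=0$. In particular $R=R/0$ is Artinian, as recorded in Remark \ref{2.4}(1).

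Next I would check the two conditions in each case. For the Jacobson radical: in the decomposed case $\mathrm{J}(R)=M_1\cap M_2=0$, which is trivially $\mathrm{T}$-nilpotent; in the ramified case $\mathrm{J}(R)=M$ with $M^2=0$, so for any sequence $\{r_i\}_{i=1}^{\infty}$ in $M$ the product $r_1 r_2$ already vanishes, giving $\mathrm{T}$-nilpotency at once. For $\mathrm{J}$-regularity: the quotient $R/\mathrm{J}(R)$ equals $R$ (a product of two fields) in the decomposed case and $R/M$ (a field) in the ramified case; in both situations it is a finite product of fields, hence von Neumann regular, so $R$ is $\mathrm{J}$-regular.

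Having verified that $R$ is $\mathrm{J}$-regular with $\mathrm{T}$-nilpotent Jacobson radical, Hamsher's theorem finishes the argument. I do not expect any genuine obstacle here: the whole content is extracting the structure of $R$ from $0\in\mathrm{QMax}(R)$ and recognizing that both resulting rings satisfy the two Hamsher conditions, the only delicate point being the $\mathrm{T}$-nilpotency in the ramified case, which is immediate from $M^2=0$. Alternatively, one could bypass Hamsher and argue directly from Artinianness: since $\mathrm{J}(R)$ is nilpotent and $R/\mathrm{J}(R)$ is semisimple, any nonzero module $E$ satisfies $\mathrm{J}(R)E\subsetneq E$, and a maximal submodule of the semisimple $R/\mathrm{J}(R)$-module $E/\mathrm{J}(R)E$ pulls back to a maximal submodule of $E$.
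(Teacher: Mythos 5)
Your proof is correct and follows essentially the same route as the paper: both split into the decomposed and ramified (local) cases, verify that $R$ is $\mathrm{J}$-regular with $\mathrm{T}$-nilpotent Jacobson radical, and conclude by Hamsher's theorem. Your closing remark that one can bypass Hamsher entirely via Artinianness (pulling back a maximal submodule of the semisimple module $E/\mathrm{J}(R)E$) is a valid and slightly more self-contained alternative, but the main argument is the paper's.
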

\begin{proof} Since $0\in\mathrm{QMax}(R)$, either $R$ is local with maximal ideal $M$, or $R$ has two maximal ideals, $M_1$ and $M_2$, with $0= M_1 \cap M_2$. 

If $(R,M)$ is local, then $M=\mathrm J(R)$ and $R/M$ is a field, which is  regular. Moreover, $M^2=0$ in both cases, $0$ inert or ramified, so that $\mathrm J(R)$ is $\mathrm T$-nilpotent. We infer that $R$ is a $\mathrm {Max}$-ring by \cite[Theorem, p.1135]{Ham}.

If $R$ has two maximal ideals, $M_1$ and $M_2$, then $\mathrm J(R)= M_1\cap M_2=0$ is $\mathrm T$-nilpotent. Therefore $R=R/\mathrm J(R)\cong R/M_1\times R/M_2$, a product of two fields, giving that $R$ is regular. The same reference implies that $R$ is a $\mathrm {Max}$-ring. 
\end{proof} 

Proposition \ref{2.17} has no converse. It is enough to consider $R:=\mathbb Z/30\mathbb Z$ which is a $\mathrm {Max}$-ring by \cite[Theorem, p.1135]{Ham} because $J(R)=0$ is $\mathrm T$-nilpotent and $R=R/\mathrm J(R)$ is a product of three fields, giving that $R$ is regular. But $0\not\in\mathrm{QMax}(R)$, since an intersection of three maximal ideals of $R$.

\section{Quasi-maximal ideals through extensions}
  
In \cite[Proposition 2]{AKKT}, the behavior of quasi-maximal ideals through surjective ring morphisms is considered. We extend their results to arbitrary ring morphisms and  it is enough to look at  extensions. 

We begin by giving two lemmas linking the radical of an ideal and its extension in a ring extension. 

\begin{lemma}\label{3.25}Let $R\subseteq S$ be an LO extension and $I$ an ideal of $R$. Then   $\sqrt[R]I=\sqrt[S]{IS}\cap R$.
\end{lemma}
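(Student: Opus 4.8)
The plan is to reduce the desired identity to the prime-ideal description of the radical, namely $\sqrt[R]I=\bigcap\{P\in\mathrm{Spec}(R)\mid I\subseteq P\}$ (and likewise for $\sqrt[S]{IS}$), and then to exploit the surjectivity of $\mathrm{Spec}(S)\to\mathrm{Spec}(R)$ that an LO extension provides. The proof splits into two inclusions, one of which is formal and one of which carries all the content.

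First I would dispose of the easy inclusion $\sqrt[R]I\subseteq\sqrt[S]{IS}\cap R$, which holds for \emph{any} extension and uses nothing about LO: if $x\in R$ satisfies $x^n\in I$ for some $n$, then a fortiori $x^n\in IS$, so $x\in\sqrt[S]{IS}$, and of course $x\in R$. Hence $x\in\sqrt[S]{IS}\cap R$.

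The substance lies in the reverse inclusion $\sqrt[S]{IS}\cap R\subseteq\sqrt[R]I$, and this is exactly where the LO hypothesis is used. I would take $x\in R$ with $x^n\in IS$ and aim to show $x\in P$ for every $P\in\mathrm{Spec}(R)$ with $I\subseteq P$, which by the characterization above yields $x\in\sqrt[R]I$. Given such a $P$, LO furnishes a prime $Q$ of $S$ with $Q\cap R=P$. Since $I\subseteq P\subseteq Q$ and $Q$ is an ideal of $S$, the extended ideal satisfies $IS\subseteq Q$; therefore $x^n\in IS\subseteq Q$, and primeness of $Q$ forces $x\in Q$. As $x\in R$, we conclude $x\in Q\cap R=P$, as desired.

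I expect the only real obstacle to be this reverse inclusion, and it rests entirely on being able to lift each prime $P\supseteq I$ of $R$ to a prime $Q$ of $S$ lying over it. This is precisely the defining property of an LO extension, so the hypothesis enters at a single, clearly identifiable step; without it the inclusion can genuinely fail, which is why the statement is restricted to LO extensions.
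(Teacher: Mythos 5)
Your proof is correct and follows essentially the same route as the paper's: the content-bearing inclusion $\sqrt[S]{IS}\cap R\subseteq\sqrt[R]I$ is obtained exactly as in the paper, by lifting each prime $P\in\mathrm{V}_R(I)$ to a prime $Q$ of $S$ via LO and noting $IS\subseteq Q$. The only (immaterial) difference is that you handle the easy inclusion element-wise from $x^n\in I\Rightarrow x^n\in IS$, whereas the paper contracts primes of $\mathrm{V}_S(IS)$.
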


\begin{proof} We know that $\sqrt[R]I=\cap_{P\in\mathrm{V}_R(I)}P$. Let $ P\in\mathrm{V}_R(I)$. Since $R\subseteq S$ has LO, there exists $Q\in\mathrm{Spec}(S)$ such that $P=Q\cap R$. It follows that $IS\subseteq Q$, which implies $\sqrt[S]{IS}\subseteq Q$, giving $\sqrt[S]{IS}\cap R\subseteq Q\cap R=P$. Then, $\sqrt[S]{IS}\cap R\subseteq \sqrt[R]I$. 

Now, let $Q\in\mathrm{V}_S(IS)$ and set $P:=Q\cap R$. Therefore, $I\subseteq IS\cap R\subseteq Q\cap R=P$, so that $P\in\mathrm{V}_R(I)$ and $\sqrt[R]I\subseteq P=Q\cap R$, that is $\sqrt[R]I$ is contained in any $Q\in\mathrm{V}_S(IS)$, and then in $\sqrt[S]{IS}$. To conclude, $\sqrt[R]I =\sqrt[S]{IS}\cap R$.
\end{proof}

\begin{lemma}\label{3.26}Let $R\subseteq S$ be an extension and $I$ an ideal of $R$ such that $I=IS\cap R$. Then $\sqrt[R]I=\sqrt[S]{IS}\cap R=\sqrt[S]{\sqrt[R]IS}\cap R$.
\end{lemma}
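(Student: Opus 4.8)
The plan is to reduce the whole statement to the elementary fact that taking radicals commutes with contraction along any ring extension: for every ideal $K$ of $S$ one has $\sqrt[S]{K}\cap R=\sqrt[R]{K\cap R}$. This is immediate from the definitions, since for $x\in R$ the condition $x^n\in K$ is the same as the condition $x^n\in K\cap R$; I would first record this observation as the backbone of the whole argument, as it does not require any hypothesis on the extension.

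For the first equality $\sqrt[R]I=\sqrt[S]{IS}\cap R$, I apply this observation with $K=IS$, obtaining $\sqrt[S]{IS}\cap R=\sqrt[R]{IS\cap R}$, and then invoke the hypothesis $I=IS\cap R$ to rewrite the right-hand side as $\sqrt[R]I$. This is the only place where the assumption $I=IS\cap R$ enters the proof.

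For the second equality I would show that the two ideals $\sqrt[S]{IS}$ and $\sqrt[S]{\sqrt[R]{I}S}$ of $S$ actually coincide, which a fortiori gives equality after contracting to $R$. One inclusion is clear from $I\subseteq\sqrt[R]I$, whence $IS\subseteq\sqrt[R]{I}S$ and therefore $\sqrt[S]{IS}\subseteq\sqrt[S]{\sqrt[R]{I}S}$. For the reverse inclusion, note that every $a\in\sqrt[R]I$ satisfies $a^n\in I\subseteq IS$ for some $n$, so $\sqrt[R]I\subseteq\sqrt[S]{IS}$; since $\sqrt[S]{IS}$ is an ideal of $S$, it contains the $S$-ideal $\sqrt[R]{I}S$ generated by $\sqrt[R]I$, and hence $\sqrt[S]{\sqrt[R]{I}S}\subseteq\sqrt[S]{\sqrt[S]{IS}}=\sqrt[S]{IS}$ by monotonicity and idempotence of the radical.

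I do not expect a genuine obstacle here, as the argument is purely formal manipulation of radicals. The only points demanding a little care are keeping each radical indexed by the correct ring (the notation $\sqrt[R]{\cdot}$ versus $\sqrt[S]{\cdot}$) and using the hypothesis exactly once, at the identification of $\sqrt[R]{IS\cap R}$ with $\sqrt[R]I$. Unlike Lemma \ref{3.25}, no Lying Over assumption is available or needed, so the spectral argument used there is replaced by the direct contraction formula above.
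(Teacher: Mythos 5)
Your proof is correct and follows essentially the same route as the paper's: both rest on the fact that the radical commutes with contraction (which the paper cites from Zariski--Samuel and you prove directly from the definition), applied to $K=IS$ together with the hypothesis $I=IS\cap R$ to get the first equality, and then on contracting the identity $\sqrt[S]{\sqrt[R]{I}S}=\sqrt[S]{IS}$ to $R$ for the second. If anything, your treatment of the second equality is cleaner: the paper's chain asserts along the way that $\sqrt[R]{I}S=\sqrt[S]{IS}$, which is false in general (e.g. $R=k\subseteq S=k[x]/(x^2)$ with $I=0$ satisfies $I=IS\cap R$ but $\sqrt[R]{I}S=0\neq\sqrt[S]{IS}$), whereas you only claim, and correctly prove, that the two ideals agree after taking radicals in $S$, which is all that is needed.
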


\begin{proof} Thanks to \cite[Theorem 9, (16) and (22), p. 148]{ZS}, we have $\sqrt[R]{IS\cap R}=\sqrt[S]{IS}\cap R=\sqrt[R]IS\cap R$, giving the first equality. Since $\sqrt[R]IS=\sqrt[S]{IS}=\sqrt[S]{\sqrt[R]IS}$, this implies that $\sqrt[R]I=\sqrt[S]{\sqrt[R]IS}\cap R$.
\end{proof}

\begin{corollary}\label{3.27}Let $R\subseteq S$ be an extension and $I$ be an ideal of $R$ such that $\sqrt[R]I=\sqrt[S]{IS}\cap R$. Then $\sqrt[R]I=\sqrt[S]{\sqrt[R]IS}\cap R$.
\end{corollary}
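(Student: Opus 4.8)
The plan is to reduce the statement to a single general identity about radicals of extended ideals. Specifically, I would first show that
\[
\sqrt[S]{\sqrt[R]IS}=\sqrt[S]{IS}
\]
holds for \emph{any} extension $R\subseteq S$ and \emph{any} ideal $I$ of $R$, with no extra hypothesis. Granting this, one intersects both sides with $R$ to obtain $\sqrt[S]{\sqrt[R]IS}\cap R=\sqrt[S]{IS}\cap R$, and the right-hand side equals $\sqrt[R]I$ by the standing assumption of the corollary. This yields the conclusion at once.

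To prove the identity I would argue by two inclusions and then pass to radicals. Since $I\subseteq\sqrt[R]I\subseteq(\sqrt[R]I)S$, one has $IS\subseteq(\sqrt[R]I)S$ and hence $\sqrt[S]{IS}\subseteq\sqrt[S]{\sqrt[R]IS}$. For the reverse inclusion I would first observe that $\sqrt[R]I\subseteq\sqrt[S]{IS}$: if $a\in\sqrt[R]I$, then $a^n\in I\subseteq IS$ for some $n$, so $a\in\sqrt[S]{IS}$. As $\sqrt[S]{IS}$ is an ideal of $S$ containing the subset $\sqrt[R]I$, it contains the $S$-ideal generated by it, so $(\sqrt[R]I)S\subseteq\sqrt[S]{IS}$; taking $S$-radicals and using idempotence of the radical gives $\sqrt[S]{\sqrt[R]IS}\subseteq\sqrt[S]{\sqrt[S]{IS}}=\sqrt[S]{IS}$. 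The two inclusions give the identity.

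There is no genuine obstacle here: once the identity is isolated, the corollary is a formal consequence of it together with the hypothesis $\sqrt[R]I=\sqrt[S]{IS}\cap R$. In fact this identity is precisely the mechanism already invoked at the end of the proof of Lemma \ref{3.26}; the point of the corollary is that the second equality there does not require the full hypothesis $I=IS\cap R$, but only the weaker consequence $\sqrt[R]I=\sqrt[S]{IS}\cap R$ drawn from it, which is exactly what is now assumed. The only step meriting a word of care is the passage from an inclusion of $S$-ideals to the corresponding inclusion of their $S$-radicals, which uses that $\sqrt[S]{IS}$ is its own $S$-radical.
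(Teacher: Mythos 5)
Your proof is correct, but it is organized differently from the paper's. The paper does not isolate the unconditional identity $\sqrt[S]{(\sqrt[R]I)S}=\sqrt[S]{IS}$; instead it bootstraps through Lemma \ref{3.26}: setting $J:=\sqrt[R]I$, it first uses the hypothesis together with the inclusion $(\sqrt[R]I)S\subseteq\sqrt[S]{IS}$ (extracted from the proof of Lemma \ref{3.26}) to check that $J=JS\cap R$, so that $J$ itself satisfies the hypothesis of that lemma; it then applies Lemma \ref{3.26} to $J$ and uses $\sqrt[R]J=J$ to conclude $J=\sqrt[S]{JS}\cap R=\sqrt[S]{(\sqrt[R]I)S}\cap R$. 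Your route avoids re-entering the lemma: you prove the radical identity once, for every extension and every ideal, by the two elementary inclusions (monotonicity of the radical one way, and $(\sqrt[R]I)S\subseteq\sqrt[S]{IS}$ plus idempotence of the $S$-radical the other way), and then the corollary is a one-line formal consequence, the hypothesis $\sqrt[R]I=\sqrt[S]{IS}\cap R$ being used exactly once at the end. What the paper's bootstrap buys is economy within its own development (the corollary appears as literally an instance of the lemma applied to the radical ideal, and it records along the way that $\sqrt[R]I$ is contracted from $S$, i.e. $\sqrt[R]I=(\sqrt[R]I)S\cap R$); what your argument buys is self-containedness and generality: it needs neither the cited Zariski--Samuel identities nor the hypothesis $I=IS\cap R$ of the lemma, and it makes transparent that the only non-formal content of the corollary is an identity that holds with no assumptions at all.
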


\begin{proof} Set $J:=\sqrt[R]I=\sqrt[S]{IS}\cap R$ by assumption. Then, the proof of Lemma \ref{3.26} gives that $J=\sqrt[R]IS\cap R=JS\cap R$. Applying Lemma \ref{3.26} to $J$, we get that $J=\sqrt[R]J=\sqrt[S]{JS}\cap R=\sqrt[S]{\sqrt[R]IS}\cap R$.
\end{proof}

In the following, we will often meet extensions $R\subseteq S$ such that $MS\in\mathrm{Max}(S)$ for some $M\in\mathrm{Max}(R)$. This happens, in particular, for Nagata extensions $R\subseteq R(X)$.

\begin{proposition}\label{3.28} Let $R\subseteq S$ be an extension and $I\in\mathrm{QMax}(R)$ be such that $MS\in\mathrm{Max}(S)$ for any $M\in\mathrm{Max}(R)\cap\mathrm{V}(I)$. Then $IS\in\mathrm{QMax}(S)$ and $\sqrt [R]I=\sqrt[S]{IS}\cap R$. Moreover, $I$ and $IS$ are homotypic 
 and $I=IS\cap R$ in all cases unless $I\in\mathrm{Q_rMax}(R)$ with $IS=\sqrt[S]{IS}$. 
\end{proposition}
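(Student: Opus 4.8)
The plan is to split the argument along the trichotomy of Definition \ref{2.2}, which is available since $I\in\mathrm{QMax}(R)$ by Theorem \ref{2.3}(7); as is customary the inert case is recorded only to fix notation. A fact I would isolate at the outset and reuse throughout is that for every $M\in\mathrm{Max}(R)\cap\mathrm V(I)$ the ideal $MS$ is maximal by hypothesis, hence proper, so $1\notin MS$ and therefore $MS\cap R=M$ (because $M\subseteq MS\cap R\subsetneq R$ and $M$ is maximal). This single observation already pins down $IS\cap R$ and governs the radical identity in the non-ramified cases.

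For the inert case $I=M$ one gets $IS=MS\in\mathrm{Max}(S)=\mathrm{Q_iMax}(S)$, homotypic to $I$, with $IS\cap R=M=I$ and $\sqrt[R]I=I=IS\cap R=\sqrt[S]{IS}\cap R$. For the decomposed case $I=M_1\cap M_2=M_1M_2$ with $M_1,M_2$ comaximal, so $M_1S+M_2S=S$ and $M_1S,M_2S$ are comaximal; being proper and comaximal they are distinct maximal ideals, and $IS=(M_1M_2)S=(M_1S)(M_2S)=M_1S\cap M_2S$. Hence $IS\in\mathrm{Q_dMax}(S)$ is homotypic to $I$, and $IS\cap R=(M_1S\cap R)\cap(M_2S\cap R)=M_1\cap M_2=I$; since both sides are intersections of maximals, they are radical, giving $\sqrt[S]{IS}\cap R=IS\cap R=I=\sqrt[R]I$.

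The ramified case is the heart of the matter. Writing $\sqrt[R]I=M$, I would first extend $M^2\subseteq I\subset M$ to $(MS)^2=M^2S\subseteq IS\subseteq MS$; as $MS$ is maximal this forces $\sqrt[S]{IS}=MS$, so $S/IS$ is local with maximal ideal $MS/IS$, the ideal $IS$ is $MS$-primary, and $\sqrt[S]{IS}\cap R=MS\cap R=M=\sqrt[R]I$. The decisive step is to exploit the defining property $M=I+Ra$ for $a\in M\setminus I$: extending to $S$ yields $MS=IS+Sa$, so $MS/IS$ is a \emph{cyclic} $S/IS$-module, and since $(MS)^2\subseteq IS$ it is a cyclic module over the field $S/MS$, hence of dimension $0$ or $1$. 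If the dimension is $0$ then $IS=MS$, which is exactly the exceptional situation $IS=\sqrt[S]{IS}$: here $IS\in\mathrm{Q_iMax}(S)$ is inert, not homotypic to the ramified $I$, and $IS\cap R=MS\cap R=M\neq I$. If the dimension is $1$, then $0$ and $MS/IS$ are the only proper ideals of $S/IS$, so $\mathrm o(IS)=2$ and $IS\in\mathrm{Q_rMax}(S)$ is homotypic to $I$ by Theorem \ref{2.3}; moreover $IS\cap R$ lies in $\{I,M,R\}$ and is proper, and the value $M$ would force $MS\subseteq IS$, i.e.\ $IS=MS$, which is excluded, so $IS\cap R=I$.

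I expect the main obstacle to be precisely this ramified case: the inclusions $(MS)^2\subseteq IS\subseteq MS$ alone do \emph{not} suffice to conclude that $IS$ is quasi-maximal, since a priori $MS/IS$ could be a higher-dimensional $S/MS$-vector space, endowing $S/IS$ with too many ideals. The cyclicity coming from $M=I+Ra$—a feature particular to ramified ideals and not shared by arbitrary $M$-primary ideals—is exactly what caps the dimension at $1$ and simultaneously isolates the exceptional subcase $IS=MS$. Once cyclicity is secured, the homotypy assertion and the identity $\sqrt[R]I=\sqrt[S]{IS}\cap R$ (which I would also phrase uniformly as $\sqrt[S]{IS}=(\sqrt[R]I)S$ together with $(\sqrt[R]I)S\cap R=\sqrt[R]I$) follow mechanically.
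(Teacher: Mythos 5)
Your proposal is correct and follows essentially the same route as the paper: the same trichotomy by type, the same computations in the inert and decomposed cases (including $M S\cap R=M$ and $IS=(M_1S)\cap(M_2S)$), and the same dichotomy $IS=MS$ versus $IS\neq MS$ in the ramified case, driven by extending $M=I+Ra$ to $MS=IS+Sa$ and yielding the same exceptional inert subcase. The only cosmetic difference is that where the paper cites the Zariski--Samuel covering criterion to conclude $MS\succ IS$, you prove that fact directly (the cyclic $S/MS$-vector space $MS/IS$ has dimension $0$ or $1$) and then invoke Theorem \ref{2.3}(6) via $\mathrm{o}(IS)=2$; this is an inlined proof of the same lemma rather than a different argument.
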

\begin{proof} If $I\in\mathrm{Q_iMax}(R)$, then $I\in\mathrm{Max}(R)$, so that $IS\in\mathrm{Max}(S)=\mathrm{Q_iMax}(S)$. Moreover, $I=\sqrt[R]I=IS\cap R=\sqrt[S]{IS}\cap R$.

If $I\in\mathrm{Q_dMax}(R)$, then $I=M_1\cap M_2=M_1M_2$, with $M_i\in\mathrm{Max}(R)$ for $i\in\mathbb N_2$. Setting $N_i:=M_iS\in\mathrm {Max}(S)$ (by assumption), we get $IS=M_1M_2S=(M_1S)(M_2S)=N_1 N_2=N_1\cap N_2$, so that $IS\in\mathrm{Q_dMax}(S)$. Moreover, $I= M_1\cap M_2=M_1M_2=\sqrt[R]I=(N_1\cap N_2)\cap R=(M_1S\cap M_2S)\cap R=IS\cap R=\sqrt[S]{IS}\cap R$.

If $I\in\mathrm{Q_rMax}(R)$, there exists $M\in\mathrm{Max}(R)$ such that $M\succ I$ with $M^2\subseteq I\subset M\ (*)$. Then $M=\sqrt[R]I=\sqrt[S]{IS}\cap R$. Setting $N:=MS\in\mathrm{Max}(S)$, we get $M^2S= N^2\subseteq IS\subseteq MS=N=\sqrt[S]{IS}$. Moreover, for any $x\in M\setminus I$, we have $M=I+Rx$ giving $N=(I+Rx)S=IS+Sx$. If $IS\neq\sqrt[S]{IS}$, we get that $x\in N\setminus IS$. Otherwise, $Sx\subseteq IS$ gives $IS=\sqrt[S]{IS}=N$, a contradiction. In particular, $x\in(N\cap R) \setminus(IS\cap R)$. Then, $N\succ IS$ by \cite[Corollary 2, p.237]{ZS}, that is $IS\in\mathrm{Q_rMax}(S)$. It follows that $I\subseteq IS\cap R\subset N\cap R=M$, so that $I=IS\cap R$. If $IS=\sqrt[S]{IS}=N$, then $IS\in\mathrm{Max}(S)=\mathrm{Q_iMax}(S)$. In particular, $I\subset M=IS\cap R$. Moreover, $M=\sqrt[R]I=MS\cap R=\sqrt[S]{IS}\cap R$ holds in both cases.

In any case, $IS\in\mathrm{QMax}(S)$ and is homotypic to $I$ unless $I\in\mathrm{Q_rMax}(R)$ and $IS=\sqrt[S]{IS}$, in which case $IS\in\mathrm {Q_iMax}(S)$. 
\end{proof}

\begin{corollary}\label{3.2} Let $R\subseteq S$ be an extension and $I\in \mathrm{Q_rMax}(R)$ be such that $\sqrt[R]IS\in\mathrm{Max}(S)$. Then $I=IS\cap R$ if and only if $IS\in\mathrm {Q_rMax}(S)$. 
\end{corollary}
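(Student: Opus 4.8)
The plan is to recognize that this Corollary is essentially a repackaging of the ramified case already analysed in the proof of Proposition \ref{3.28}, so the first step is to check that its hypotheses apply here. Since $I\in\mathrm{Q_rMax}(R)$, the ideal $I$ is $M$-primary for the maximal ideal $M=\sqrt[R]I$, and $M$ is then the unique maximal ideal of $R$ containing $I$; that is, $\mathrm{Max}(R)\cap\mathrm{V}(I)=\{M\}$. Consequently the standing hypothesis $\sqrt[R]IS=MS\in\mathrm{Max}(S)$ is exactly the condition ``$MS\in\mathrm{Max}(S)$ for every $M\in\mathrm{Max}(R)\cap\mathrm{V}(I)$'' required to invoke Proposition \ref{3.28}.

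Next I would extract from the ramified branch of the proof of Proposition \ref{3.28} the governing dichotomy, which is controlled by whether or not $IS=\sqrt[S]{IS}$. Setting $N:=MS\in\mathrm{Max}(S)$, that proof establishes $N^2\subseteq IS\subseteq N=\sqrt[S]{IS}$ and shows precisely two mutually exclusive outcomes. In the first, $IS\neq\sqrt[S]{IS}$, one has $N\succ IS$, hence $IS\in\mathrm{Q_rMax}(S)$, together with $I=IS\cap R$. In the second, $IS=\sqrt[S]{IS}=N$, so $IS\in\mathrm{Max}(S)=\mathrm{Q_iMax}(S)$ and $I\subset M=IS\cap R$ strictly, whence $I\neq IS\cap R$.

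From this dichotomy the asserted equivalence reads off at once. For the implication $IS\in\mathrm{Q_rMax}(S)\Rightarrow I=IS\cap R$, note that membership in $\mathrm{Q_rMax}(S)$ excludes the second outcome, since the inert and ramified types are mutually exclusive (Proposition \ref{2.70}, Definition \ref{2.2}); so we are in the first outcome and $I=IS\cap R$. Conversely, if $I=IS\cap R$, the second outcome is impossible because there $I\neq IS\cap R$; hence the first outcome holds and $IS\in\mathrm{Q_rMax}(S)$.

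The only point demanding care — and it is already handled inside the proof of Proposition \ref{3.28} — is the exclusivity of the two outcomes, i.e.\ that $IS=\sqrt[S]{IS}$ is equivalent to $IS$ being maximal (hence inert) rather than ramified. Once that is granted, no further computation is needed: both the equality $I=IS\cap R$ and the membership $IS\in\mathrm{Q_rMax}(S)$ are governed by the single condition $IS\neq\sqrt[S]{IS}$, which is what produces the ``if and only if''.
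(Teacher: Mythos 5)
Your proof is correct and follows essentially the same route as the paper: both verify that the hypothesis $\sqrt[R]IS\in\mathrm{Max}(S)$ puts you in the ramified branch of the proof of Proposition \ref{3.28}, and then read off the equivalence from the dichotomy $IS\neq\sqrt[S]{IS}$ (giving $IS\in\mathrm{Q_rMax}(S)$ and $I=IS\cap R$) versus $IS=\sqrt[S]{IS}$ (giving $IS$ maximal and $IS\cap R=M\neq I$). Your write-up merely makes explicit the logical bookkeeping (mutual exclusivity of the inert and ramified types) that the paper leaves implicit.
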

\begin{proof} Since $I\in\mathrm{Q_rMax}(R)$, then $M:=\sqrt[R]I\in\mathrm{Max}(R)$ and the assumption gives that $N:=MS\in\mathrm{Max}(S)$.  According to the proof of Proposition \ref{3.28}, if $IS\neq\sqrt[S]{IS}=N$, then $IS\in\mathrm{Q_rMax}(S)$ with $I=IS\cap R$ and if $IS=\sqrt [S]{IS}=N$, then $IS\cap R=M$. Therefore, $I=IS\cap R$ if and only if $IS\in\mathrm{Q_rMax}(S)$. 
\end{proof}

According to Proposition \ref{2.71}, a non maximal quasi-maximal ideal is covered by a maximal ideal. Thanks to \cite[Corollary 1, p. 237]{ZS}, Proposition \ref{3.35} shows how, under some assumption, the covering property is kept under extensions.

 Before, we give the following improvement of \cite[Lemma 2.4]{DPP2} which will be useful in this paper.

\begin{lemma}\label{3.46}Let $R\subseteq S$ be an extension and $M\in\mathrm{Max}(R)\setminus\mathrm{Supp}(S/R)$. Then the following properties hold:
\begin{enumerate}
\item$M':=MS$ is the unique ideal of $\mathrm{Spec}(S)$ which lies above $M$. 
\item $M'\in\mathrm{Max}(S)$. 
\end{enumerate}
\end{lemma}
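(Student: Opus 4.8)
The plan is to localise at $M$ and read off both statements from the resulting isomorphism $R_M\cong S_M$. First I would translate the hypothesis: since $\mathrm{Supp}(S/R)=\mathrm{Supp}_R(S/R)$, the condition $M\notin\mathrm{Supp}(S/R)$ means exactly $(S/R)_M=0$. Localising the exact sequence $0\to R\to S\to S/R\to 0$ at $M$ (that is, inverting $R\setminus M$) and using flatness of localisation, the natural injection $R_M\hookrightarrow S_M$ becomes an isomorphism. Thus $S_M=R_M$ is local with maximal ideal $MR_M$, and the extension of $MS$ to $S_M$ is $M\cdot S_M=MR_M$, which is proper; consequently $MS\neq S$, for otherwise its localisation would be all of $S_M=R_M$.

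Next I would establish the surjectivity $S=R+MS$. Given $s\in S$, the equality $S_M=R_M$ yields, after clearing denominators in $s/1\in R_M$, an element $w\in R\setminus M$ with $ws\in R$. Hence the ideal $(R :_{R} s):=\{r\in R\mid rs\in R\}$ of $R$ is not contained in $M$, so by maximality of $M$ we have $(R :_{R} s)+M=R$. Writing $1=w+m$ with $w\in(R :_{R} s)$ and $m\in M$ gives $s=ws+ms\in R+MS$, and since $s$ was arbitrary, $S=R+MS$.

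Then (2) and the ``lying above'' assertion follow quickly. The composite $R\to S\to S/MS$ is onto because $S=R+MS$, and its kernel is $R\cap MS$. Since $M\subseteq R\cap MS\subseteq R$ and $MS$ is proper (so $1\notin MS$, whence $R\cap MS\neq R$), maximality of $M$ forces $R\cap MS=M$. Therefore $S/MS\cong R/M$ is a field, so $M':=MS\in\mathrm{Max}(S)$, and $MS\cap R=M$ shows $MS$ lies above $M$. For the uniqueness in (1), let $Q\in\mathrm{Spec}(S)$ satisfy $Q\cap R=M$; then $M\subseteq Q$, and since $Q$ is an ideal of $S$ this gives $MS\subseteq Q$. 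As $MS$ is maximal and $Q$ is proper, $Q=MS$.

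The only genuinely delicate point is the surjectivity $S=R+MS$: one is tempted to deduce it from $(S/R)_M=0$ by Nakayama, but $S/R$ need not be finitely generated, so I would instead argue through the conductor ideal $(R :_{R} s)$ and the maximality of $M$ as above (equivalently, one may note $(S/R)\otimes_R(R/M)=(S/R)_M\otimes_{R_M}(R/M)=0$, an identity that requires no finiteness). Everything else is a routine consequence of the local isomorphism $R_M\cong S_M$.
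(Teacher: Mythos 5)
Your proof is correct, but it is organized in the opposite direction from the paper's, and it is fully self-contained where the paper leans on a citation. The paper disposes of part (1) by invoking \cite[Lemma 2.4]{DPP2} (whose content is exactly that $R_M=S_M$ forces $MS$ to be the unique prime of $S$ over $M$), and then proves (2) by contradiction: if $MS$ were not maximal, a maximal ideal $Q\supset MS$ would also lie over $M$, contradicting the uniqueness already established in (1). You do the reverse: starting from the same local isomorphism $R_M\cong S_M$, you first prove maximality directly, via the identity $S=R+MS$ (obtained through the conductor ideal $(R:_Rs)$, or equivalently by tensoring with $R/M$, neither of which needs $S/R$ finitely generated), which gives $R\cap MS=M$ and hence $S/MS\cong R/M$, a field; uniqueness in (1) then drops out in one line, since any prime $Q$ with $Q\cap R=M$ contains $MS$ and must equal it by maximality. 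What your route buys is a complete argument from first principles, including an explicit proof of the lying-above statement that the paper only cites, and a correctly flagged avoidance of the Nakayama trap; what the paper's route buys is brevity, since once the uniqueness lemma is quoted, (2) is a three-line contradiction.
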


\begin{proof} (1) is in \cite[Lemma 2.4]{DPP2} because $R_M=S_M$.

(2) Assume that $M'\not\in\mathrm{Max}(S)$. Then there exists $Q\in\mathrm{Max}(S)$ such that $M'\subset Q\subset S$, which implies $M=M' \cap R\subseteq Q\cap R\subseteq S\cap R=R$. Since $Q\cap R\neq R$ and $M\in\mathrm{Max}(R)$, we get $M=Q\cap R$ giving $M'=Q$ by (1), a contradiction, so that $M'\in\mathrm{Max}(S)$.
\end{proof}

\begin{proposition}\label{3.35}Let $R\subseteq S$ be an extension.\begin{enumerate} 
\item Let $I$ and $J$ be ideals of $R$ such that $I\prec J$. Assume that $\mathrm{V}(I)\cap\mathrm{Supp}(S/R)=\emptyset$. Then $IS\prec JS$ and $IS\cap R=I$ (resp. $JS\cap R=J$).
\item Let $I'$ and $J'$ be ideals of $S$ such that $I'\prec J'$. Assume that $\mathrm{V}(I'\cap R)\cap\mathrm{Supp}(S/R)=\emptyset$. Then $I'\cap R\prec J'\cap R$ and $(I'\cap R)S=I'$ (resp. $(J'\cap R)S=J'$).
\end{enumerate}
\end{proposition}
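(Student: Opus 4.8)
The plan is to localize everything over $R$, using that the hypothesis $\mathrm{V}(I)\cap\mathrm{Supp}(S/R)=\emptyset$ says exactly that $R_P=S_P$ for every prime $P$ of $R$ with $I\subseteq P$, together with Lemma \ref{3.46} to detect maximality in $S$. I would use throughout that $I\prec J$ is equivalent to $J/I$ being a simple $R$-module (the content of the characterization \cite[Corollaries 1 and 2, p.237]{ZS}); thus $J/I\cong R/M$ with $M:=\mathrm{Ann}_R(J/I)\in\mathrm{Max}(R)$, and $I\subseteq M$ since $IJ\subseteq I$, so $M\in\mathrm{V}(I)\cap\mathrm{Max}(R)$.

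For (1) I would first establish the contraction identities. From $I\subseteq IS\cap R$, I localize at an arbitrary prime $P$ of $R$: if $I\not\subseteq P$ the quotient $((IS\cap R)/I)_P$ vanishes because $(R/I)_P=0$; if $I\subseteq P$ then $R_P=S_P$, hence $(IS)_P=I_P$, and since localization commutes with the intersection taken in $S$ we get $(IS\cap R)_P=I_P\cap R_P=I_P$. Therefore $IS\cap R=I$, and the same computation (via $\mathrm{V}(J)\subseteq\mathrm{V}(I)$) gives $JS\cap R=J$; in particular $IS\subsetneq JS$, for otherwise $J=JS\cap R=IS\cap R=I$. To obtain $IS\prec JS$ I identify the quotient: $J\hookrightarrow JS$ induces a surjection of $S$-modules $(J/I)\otimes_R S\twoheadrightarrow JS/IS$, and $(J/I)\otimes_R S\cong(R/M)\otimes_R S\cong S/MS$. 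As $M\in\mathrm{Max}(R)\setminus\mathrm{Supp}(S/R)$, Lemma \ref{3.46} gives $MS\in\mathrm{Max}(S)$, so $S/MS$ is a simple $S$-module; a nonzero quotient of a simple module is an isomorphism, whence $JS/IS\cong S/MS$ is simple and $IS\prec JS$.

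For (2), put $I:=I'\cap R$ and $J:=J'\cap R$, so $I\subseteq J$ and $\mathrm{V}(J)\subseteq\mathrm{V}(I)$ avoids $\mathrm{Supp}(S/R)$. I first show $(I'\cap R)S=I'$ by proving $I'/IS=0$ locally over $R$ (note $IS\subseteq I'$). At a prime $P$ with $I\subseteq P$ one has $R_P=S_P$, so exactness applied to $I=I'\cap R$ gives $I_P=I'_P$ in $S_P=R_P$, while $(IS)_P=I_P$; at a prime $P$ with $I\not\subseteq P$, some element of $I$ becomes a unit, forcing $(IS)_P=S_P\supseteq I'_P$. Either way the localized quotient vanishes, so $I'=IS$, and likewise $J'=JS$; in particular $I\subsetneq J$. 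Finally, for $I\prec J$: any ideal $K$ with $I\subseteq K\subseteq J$ extends to $I'=IS\subseteq KS\subseteq JS=J'$, so $I'\prec J'$ forces $KS\in\{I',J'\}$; since $\mathrm{V}(K)\subseteq\mathrm{V}(I)$ is disjoint from $\mathrm{Supp}(S/R)$, the contraction identity from (1) gives $KS\cap R=K$, whence $K\in\{I'\cap R,\,J'\cap R\}=\{I,J\}$, and no ideal lies strictly between $I$ and $J$.

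The main obstacle is the bookkeeping in these localizations: making precise that $(S/R)_P=0$ is the ring equality $R_P=S_P$, that this legitimately replaces $IS$ by $I$ after localizing at primes containing $I$, and that the primes not containing $I$ must be handled separately. Conceptually the crux of (1) is identifying $JS/IS$ with $S/MS$ rather than merely a module annihilated by $MS$ (which alone would not give a covering); this is precisely where Lemma \ref{3.46}, converting $M\notin\mathrm{Supp}(S/R)$ into $MS\in\mathrm{Max}(S)$, is indispensable.
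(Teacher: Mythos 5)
Your proof is correct. The localization bookkeeping you use for the contraction/extension identities ($IS\cap R=I$, $JS\cap R=J$ in (1); $I'=(I'\cap R)S$, $J'=(J'\cap R)S$ in (2)) is essentially the same two-case argument as the paper's: at primes containing the ideal one has $R_P=S_P$, at the others everything localizes to the unit ideal. Where you genuinely diverge is in transferring the covering itself. The paper works throughout with the Zariski--Samuel element-wise criterion: in (1) it verifies $N(JS)\subseteq IS$, $JS=IS+Sx$ and $JS\neq IS$ directly, and in (2) it recovers $MJ\subseteq I$ and $J=I+Rx$ by localizing the identity $J'=I'+Sx$. You instead reformulate covering as simplicity of the quotient module, which in (1) lets you dispatch $IS\prec JS$ by the surjection $(J/I)\otimes_R S\cong S/MS\twoheadrightarrow JS/IS$ --- with Lemma \ref{3.46} supplying simplicity of $S/MS$ at exactly the point where the paper uses it to get $MS\in\mathrm{Max}(S)$ --- and in (2) yields a purely lattice-theoretic finish: any intermediate ideal $K$ with $I\subseteq K\subseteq J$ extends to an ideal between $I'$ and $J'$ and contracts back faithfully (since $\mathrm{V}(K)\subseteq\mathrm{V}(I)$), so $K\in\{I,J\}$. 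Your route buys conceptual economy: no element chasing, the isomorphism type of $JS/IS$ comes for free, and part (2) needs no second appeal to the covering criterion at all. The paper's route is more self-contained, using only the quoted criterion and explicit localization of elements. Both arguments hinge on Lemma \ref{3.46} at the same decisive point, and both correctly isolate the unique maximal ideal $M$ (your annihilator $\mathrm{Ann}_R(J/I)$ is the paper's unique $M$ with $MJ\subseteq I$).
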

\begin{proof} Let $I$ and $J$ be ideals of $R$ such that $I\prec J$. According to \cite[Corollary 1, p. 237]{ZS}, there exists $M\in\mathrm{Max}(R)$ such that $MJ\subseteq I\ (*)$ and $x\in J$ such that $J=I+Rx\ (**)$, with $I\subseteq M$. We claim that this $M$ is unique. Otherwise, there exists $M'\in\mathrm{Max}(R),\ M'\neq M$ such that $M'J\subseteq I\ (***)$. Since $M+M'=R$, we get $J\subseteq I$ by $(*)$ and $(***)$, a contradiction with $I\prec J$.   

(1) Moreover, assume that $\mathrm{V}(I)\cap\mathrm{Supp}(S/R)=\emptyset$. Then $N:=MS\in\mathrm{Max}(S)$ by Lemma \ref{3.46} because $M\not\in\mathrm{Supp}(S/R)$. Hence $(*)$ implies $(MS)(JS)=N (JS)\subseteq IS$. By $(**)$, we have $JS=IS+Sx$. We show that $JS\neq IS$. Since $I\neq J$, there exists $M'\in\mathrm{Max}(R)$ such that $J_ {M'}\neq I_{M'}$. In particular, $M'\in\mathrm{V}(I)$, because if not, $M'\not \in\mathrm{V}(J)$, giving $J_{M'}=I_{M'}=R_{M'}$, a contradiction. But $M' \not\in\mathrm{Supp}(S/R)$, so that $R_{M'}=S_{M'}$, giving $J_{M'}= (JS) _{M'}\neq I_{M'}=(IS)_{M'}$. Therefore, $JS\neq IS$, and $IS\prec JS$ by \cite[Corollary 1, p. 237]{ZS}. Of course, $I\subseteq IS\cap R$.
 
Let $M''\in\mathrm{Max}(R)\setminus\mathrm{Supp}(S/R)$. Therefore, $R_ {M''}=S_{M''}$ giving $I_{M''}=(IS)_{M''}=(IS\cap R)_{M''}$.

Now, let $M''\in\mathrm{Max}(R)\cap\mathrm{Supp}(S/R)$. Then, $M''\not\in\mathrm{V}(I)$. Hence, $I_{M''}=R_{M''}$ implies $(IS\cap R)_{M''}=(RS\cap R)_{M''}=R_{M''}=I_{M''}$. 
 
 To conclude, $I=IS\cap R$ and $J=JS\cap R$ with a similar proof.
 
(2) Let $I'$ and $J'$ be ideals of $S$ such that $I'\prec J'$. Assume that $\mathrm{V}(I'\cap R)\cap\mathrm{Supp}(S/R)=\emptyset$. Always by  \cite[Corollary 1, p. 237]{ZS}, there exists $N\in\mathrm{Max}(S)$ such that $NJ'\subseteq I'$ with $I'\subseteq N$. Set $M:=N\cap R,\ I:=I'\cap R$ and $J:=J'\cap R$. Then, $MJ\subseteq NJ'\cap R\subseteq I'\cap R=I\ (*)$. We claim that $I'= IS$ and $J'=JS$. Obviously, $IS\subseteq I'$ and $JS\subseteq J'$. We are going to show that these inclusions are equalities by localizing at each maximal ideal of $R$. 
 
Let $M'\in\mathrm{Max}(R)\setminus\mathrm{Supp}(S/R)$. Therefore, $R_ {M'}=S_{M'}$ giving $I_{M'}=(IS)_{M'}=(I'\cap R)_{M'}=I'_{M'}\cap R_{M'}=I' _{M'}$. In the same way, we get $J'_{M'}=(JS)_{M'}$. 

Now, let $M'\in\mathrm{Max}(R)\cap\mathrm{Supp}(S/R)$. Then, $M'\not\in\mathrm{V}(I)$, and a fortiori, $M'\not\in\mathrm{V}(J)$ since $I\subseteq J$. Hence, $I_{M'}=J_{M'}=R_{M'}$ which implies $(IS)_{M'}=(J S)_{M'}=(RS)_{M'}=S_{M'}=I'_{M'}=J'_{M'}$, since $IS\subseteq I'$ (resp.  $ JS\subseteq J'$). 

To conclude, $I'=IS$ and $J'=JS\ (**)$ which gives $I\neq J$ since $I'\neq J'$. Let $x\in J\setminus I$, so that $x\in J'\setminus I'$ because $I=I'\cap R$. Hence, $J'=I'+Sx\ (***)$ because $I'\prec J'$. Localizing $(***)$ at each maximal ideal of $R$ and using the equalities of the previous paragraph, we get the following:
 
If $M'\in\mathrm{Max}(R)\setminus\mathrm{Supp}(S/R)$, then $J'_{M'}=(I'+ Sx)_{M'}=(JS)_{M'}=J_{M'}=I'_{M'}+S_{M'}(x/1)=I_{M'}+R_{M'}(x/1)=(I+Rx)_ {M'}$.
  
If $M'\in\mathrm{Max}(R)\cap\mathrm{Supp}(S/R)$, then $M'\not\in\mathrm{V}(J)$ as we have already seen. It follows that $I_{M'}=J_{M'}=R_{M'}=I_{M'}+R_{M'}(x/1)=(I+Rx)_{M'}$.
 
Combining the two cases, $J=I+Rx$ and $I\prec J$ hold, always by \cite[Corollary 1, p. 237]{ZS}.
\end{proof}

In Remarks \ref{4.01}(2) and \ref{4.4}(1), we give examples where results of Proposition \ref{3.35} do not hold because the assertion $\mathrm{V}(I'\cap R)\cap\mathrm{Supp}(S/R)$

\noindent$=\emptyset$ (resp. $\mathrm{V}(I)\cap\mathrm{Supp}(S/R)=\emptyset$) is not satisfied. 

\begin{corollary}\label{3.3} Let $R\subseteq S$ be an extension and $I\in\mathrm{QMax}(R)$, such that $\mathrm{V}(I)\cap\mathrm{Supp}(S/R)=\emptyset$. Then $I=IS\cap R,\ IS\in\mathrm{QMax}(S)$ and is homotypic to $I$ with $IS\prec N:=MS\in\mathrm{Max}(S)$ for such $M\in\mathrm{V}(I)\cap \mathrm{Max}(R)$ when $I$ is submaximal in $R$.
\end{corollary}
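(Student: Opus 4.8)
The plan is to reduce the hypothesis to the shape required by Proposition \ref{3.28}, apply that Proposition to obtain all the conclusions except possibly in one borderline ramified case, and then eliminate that borderline case together with the covering assertion using Proposition \ref{3.35}(1).

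First I would observe that the hypothesis $\mathrm{V}(I)\cap\mathrm{Supp}(S/R)=\emptyset$ forces $M\notin\mathrm{Supp}(S/R)$ for every $M\in\mathrm{Max}(R)\cap\mathrm{V}(I)$; Lemma \ref{3.46} then yields $MS\in\mathrm{Max}(S)$ for each such $M$. Thus the hypothesis of Proposition \ref{3.28} is satisfied, and that Proposition gives at once that $IS\in\mathrm{QMax}(S)$, that $\sqrt[R]I=\sqrt[S]{IS}\cap R$, and that $I$ and $IS$ are homotypic with $I=IS\cap R$, except in the single case where $I\in\mathrm{Q_rMax}(R)$ and $IS=\sqrt[S]{IS}$.

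Next I would rule out this exceptional case and prove the covering statement in one stroke. Suppose $I$ is submaximal in $R$, that is, non-inert (decomposed or ramified). By Proposition \ref{2.71}, $M\succ I$, i.e. $I\prec M$, for every $M\in\mathrm{Max}(R)\cap\mathrm{V}(I)$. Since $\mathrm{V}(I)\cap\mathrm{Supp}(S/R)=\emptyset$, Proposition \ref{3.35}(1) applies to the covering $I\prec M$ and gives $IS\prec MS$ with $IS\cap R=I$ and $MS\cap R=M$; writing $N:=MS\in\mathrm{Max}(S)$, this is exactly the asserted $IS\prec N$. In the ramified subcase one has $\sqrt[S]{IS}=MS=N$ by the analysis in the proof of Proposition \ref{3.28}, so $IS\prec N$ forces $IS\subset\sqrt[S]{IS}$ strictly, which precisely excludes the exceptional branch; hence $IS$ remains ramified and homotypic to $I$, and $I=IS\cap R$. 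The inert case $I\in\mathrm{Max}(R)$ is immediate: here $I\notin\mathrm{Supp}(S/R)$, so $IS\in\mathrm{Max}(S)$ is the unique prime lying over $I$ by Lemma \ref{3.46}, whence $IS\cap R=I$ and $IS$ is again homotypic (inert) to $I$.

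I expect the only delicate point to be the ramified case, where the weaker condition $MS\in\mathrm{Max}(S)$ alone (as in Proposition \ref{3.28}) permits $IS$ to degenerate to the maximal ideal $\sqrt[S]{IS}$; the strengthening to $\mathrm{V}(I)\cap\mathrm{Supp}(S/R)=\emptyset$ is exactly what lets Proposition \ref{3.35}(1) preserve the strict covering $IS\prec N$ and thereby keep $IS$ ramified. Everything else is a direct citation of Propositions \ref{3.28}, \ref{3.35} and \ref{2.71}.
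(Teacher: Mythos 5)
Your proposal is correct. It differs from the paper's proof in a small but genuine structural way, so a comparison is in order: both arguments dispose of the inert case by Lemma \ref{3.46} and both obtain the covering $IS\prec N=MS$ from Proposition \ref{2.71} together with Proposition \ref{3.35}(1); but from that point the paper never invokes Proposition \ref{3.28}. Instead, having $IS\prec N$ with $N$ maximal, the paper concludes that $IS$ is submaximal, applies Theorem \ref{2.6} to get $IS\in\mathrm{QMax}(S)$, and then verifies homotypy by hand in each type ($IS=(M_1S)\cap(M_2S)$ in the decomposed case; $(MS)^2\subseteq IS$ with $IS$ primary in the ramified case). You instead let Proposition \ref{3.28} carry the weight: it delivers $IS\in\mathrm{QMax}(S)$, the homotypy, and $I=IS\cap R$ all at once, and the covering from Proposition \ref{3.35}(1) serves the second purpose of excluding the one degenerate branch of \ref{3.28} (namely $I\in\mathrm{Q_rMax}(R)$ with $IS=\sqrt[S]{IS}$), since $IS\prec N$ forces $IS\neq N=\sqrt[S]{IS}$. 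Your route is more economical in that it reuses the type analysis already done in \ref{3.28} rather than repeating it, and you correctly identify the degenerate ramified case as the only delicate point; the paper's route is self-contained relative to \ref{3.28} and makes the submaximality of $IS$ the organizing idea, which is why the covering assertion sits naturally in the statement. Both proofs are rigorous, and yours establishes the full conclusion, including $IS\prec MS$ for every $M\in\mathrm{V}(I)\cap\mathrm{Max}(R)$.
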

\begin{proof} If $I\in\mathrm{Max}(R)$, then $IS\in\mathrm{Max}(S)$ and $I=IS\cap R$ by Lemma \ref{3.46}. 

Now, assume that $I$ is submaximal and $I\prec M\in\mathrm{Max}(R)$. Since $M\not\in\mathrm{Supp}(S/R)$, it follows that $N:=MS\in\mathrm {Max}(S),\ IS\prec N$ and $I=IS\cap R$ thanks to Proposition \ref{3.35}. Therefore, $IS$ is submaximal in $S$. Now, Theorem \ref{2.6} shows that $IS\in\mathrm{QMax}(R)$, and more precisely, $IS\in\mathrm{Q_dMax}(S)\cup\mathrm{Q_rMax}(S)$. If $I\in\mathrm{Q_dMax}(R)$, that is $I=M_1 \cap M_2$, we get $IS=(M_1S)\cap(M_2S)$, so that $IS\in\mathrm {Q_dMax}(S)$. If $I\in\mathrm{Q_rMax}(R)$, then $M^2\subseteq I$ implies $(MS)^2\subseteq IS$ and $IS\in\mathrm{Q_rMax}(S)$ because a primary ideal. To conclude, in any case, $IS$ is quasi-maximal homotypic to $I$.  
\end{proof}

\begin{proposition}\label{3.4} Let $R\subseteq S$ be an extension satisfying LO and INC (for example, an integral extension) and $I\in\mathrm{Q_iMax}(R)\cup\mathrm{Q_dMax}(R)$. Then, there exists $J\in\mathrm{QMax}(S)$ homotypic to $I$  such that $I=J\cap R$. 
\end{proposition}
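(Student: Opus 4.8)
The plan is to split the argument according to the type of $I$, lifting the maximal ideal (or ideals) that describe $I$ through LO and then using INC to keep the lifts maximal.

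First I would record the auxiliary fact that, under INC, any prime $Q$ of $S$ lying over a maximal ideal $M$ of $R$ is itself maximal. Indeed, if $Q\subseteq Q'$ in $\mathrm{Spec}(S)$, then $M=Q\cap R\subseteq Q'\cap R\subset R$, and maximality of $M$ forces $Q'\cap R=M=Q\cap R$; since $Q$ and $Q'$ are comparable primes of $S$ lying over the same prime of $R$, INC yields $Q=Q'$, so $Q\in\mathrm{Max}(S)$. This fact is the technical heart of the proof, and it is precisely where INC is used.

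For the inert case, $I=M$ is maximal; LO provides some $Q\in\mathrm{Spec}(S)$ with $Q\cap R=M$, the auxiliary fact makes $Q$ maximal, and $J:=Q$ works since $J\in\mathrm{Q_iMax}(S)\subseteq\mathrm{QMax}(S)$ is homotypic to $I$ with $J\cap R=M=I$. For the decomposed case, $I=M_1\cap M_2$ with $M_1\neq M_2$ in $\mathrm{Max}(R)$; applying LO to each $M_i$ yields primes $Q_1,Q_2$ of $S$ with $Q_i\cap R=M_i$, both maximal by the auxiliary fact and distinct because their contractions $M_1,M_2$ differ. Setting $J:=Q_1\cap Q_2$ then gives $J\in\mathrm{Q_dMax}(S)\subseteq\mathrm{QMax}(S)$, homotypic to $I$, and $J\cap R=(Q_1\cap R)\cap(Q_2\cap R)=M_1\cap M_2=I$, using that contraction commutes with intersection.

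The argument is short, and I expect no serious obstacle beyond the passage from a prime lying over a maximal ideal to its being maximal, which is exactly the step driven by INC. The remaining points are elementary bookkeeping: LO alone secures existence of the lifts but guarantees neither their maximality nor (in the decomposed case) the correct contraction, so the care lies in checking that the two lifts $Q_1,Q_2$ are genuinely distinct and that $(Q_1\cap Q_2)\cap R=(Q_1\cap R)\cap(Q_2\cap R)$.
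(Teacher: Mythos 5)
Your proof is correct and takes essentially the same route as the paper's: lift each maximal ideal of $R$ through LO, observe that the lift is maximal in $S$, and in the decomposed case intersect the two (distinct) lifts to get $J\in\mathrm{Q_dMax}(S)$ with $J\cap R=I$. The only difference is that you make explicit the auxiliary fact that, under INC, a prime of $S$ lying over a maximal ideal of $R$ is itself maximal --- a step the paper's proof uses silently when it asserts the existence of $J\in\mathrm{Max}(S)$ (resp.\ $N_i\in\mathrm{Max}(S)$) lying over $I$ (resp.\ $M_i$).
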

\begin{proof} If $I\in\mathrm{Q_iMax}(R)$, there exists $J\in\mathrm{Max}(S)$ such that $I=J\cap R$, so that $J\in\mathrm{Q_iMax}(S)$.  

If $I\in\mathrm{Q_dMax}(R)$, there exist $M_1,M_2\in\mathrm{Max}(R)$ such that $I=M_1\cap M_2$ with $M_1\neq M_2$. For each $i\in\mathbb N _2$, there exists $N_i\in\mathrm{Max}(S)$ such that $M_i=N_i\cap R$ with $N_1\neq N_2$. Set $J:=N_1\cap N_2$. Then, $J\in\mathrm {Q_dMax}(S)$. Moreover, $I=M_1\cap M_2=(N_1\cap R)\cap (N_2\cap R)=(N_1\cap N_2)\cap R=J\cap R$.
\end{proof}

\begin{lemma}\label{3.53} Let $f:R\to S$ be a ring morphism, $J$ an ideal of $S$ and $I:=f^{-1}(J)$. Then, $I=f^{-1}(I\cdot S)$.
 \begin{enumerate}
\item There exists an ideal $K$ of $S$ maximal in $\mathcal F:=\{K'$ ideal of $S\mid J\subseteq K',\ I=f^{-1}(K')\}$. Such an ideal $K$ is called a Max-upper of $I$ containing $J$.
\item If $N\in\mathrm{Max}(S)\cap\mathrm{V}_S(J)$, there exists an ideal $K_1$ of $S$ maximal in $\mathcal F':=\{K'$ ideal of $S\mid J\subseteq K' \subseteq N,\ I=f^{-1}(K')\}$. Such an ideal $K_1$ is called a Max-upper of $I$ containing $J$ and contained in $N$.
\end{enumerate}
 \end{lemma}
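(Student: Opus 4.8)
The plan is to settle the preliminary identity by a direct double inclusion and then to obtain both existence statements from a single Zorn's Lemma argument, the only delicate point being that each family is closed under unions of chains.

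First I would establish $I=f^{-1}(I\cdot S)$. From $I=f^{-1}(J)$ we get $f(I)\subseteq J$, so the ideal $I\cdot S$ generated by $f(I)$ satisfies $I\cdot S\subseteq J$; applying $f^{-1}$ gives $f^{-1}(I\cdot S)\subseteq f^{-1}(J)=I$. The reverse inclusion is immediate from $f(I)\subseteq I\cdot S$, which forces $I\subseteq f^{-1}(I\cdot S)$. Hence equality holds; this records that the extension of $I$ contracts back to $I$ and shows the membership condition $f^{-1}(K')=I$ is the natural one to impose.

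For (1), I would note first that $\mathcal F\neq\emptyset$, since $J\subseteq J$ and $f^{-1}(J)=I$ give $J\in\mathcal F$. To apply Zorn's Lemma to $(\mathcal F,\subseteq)$ I must check that $\mathcal F$ is inductive. Given a chain $\{K'_\alpha\}$ in $\mathcal F$, its union $K:=\cup_\alpha K'_\alpha$ is an ideal of $S$ containing $J$, and the crucial computation $f^{-1}(K)=\cup_\alpha f^{-1}(K'_\alpha)=\cup_\alpha I=I$ holds because preimages commute with unions. Thus $K\in\mathcal F$ is an upper bound, $\mathcal F$ is inductive, and a maximal element exists.

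For (2) the argument is the same, performed inside the interval $[J,N]$: the hypothesis $N\in\mathrm{V}_S(J)$ gives $J\subseteq N$, so $J\in\mathcal F'$ and $\mathcal F'\neq\emptyset$; and for a chain in $\mathcal F'$ the union $K_1$ still satisfies $J\subseteq K_1\subseteq N$, the upper bound $\subseteq N$ being inherited because every member of the chain lies in $N$, while $f^{-1}(K_1)=I$ exactly as before. Hence $\mathcal F'$ is inductive and has a maximal element. The only obstacle in the whole proof is verifying that unions of chains stay in the family, and this reduces entirely to the commutation of set-theoretic preimage with union; the maximality of $N$ plays no role in (2) beyond ensuring $J\subseteq N$, and is relevant only for later applications.
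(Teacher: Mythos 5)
Your proof is correct and follows essentially the same route as the paper: the double inclusion $I\subseteq f^{-1}(I\cdot S)\subseteq f^{-1}(J)=I$ for the preliminary identity, then Zorn's Lemma applied to $\mathcal F$ and $\mathcal F'$, whose inductivity the paper dismisses as obvious and you justify via the commutation of preimage with unions of chains. The only difference is that you make this last verification explicit, which is a welcome detail rather than a deviation.
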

\begin{proof} Since $f(I)\subseteq J$, we get $f(I)\subseteq f(I)S=I\cdot S\subseteq J$, so that $I\subseteq f^{-1}(I\cdot S)\subseteq f^{-1}(J)=I$, whence $I=f^{-1}(I\cdot S)$. 

(1) We have $\mathcal F\neq\emptyset$ because $J\in\mathcal F$. We claim that $\mathcal F$ is inductive. Let $\mathcal F'$ be a linearly ordered subset of $\mathcal F$ and set $K'':=\cup[K_{\alpha}\mid K_{\alpha}\in\mathcal F']$. Obviously, $K''\in\mathcal F$ and $\mathcal F$ has a maximal element $K$. 

(2) We use the previous proof with $\mathcal F':=\{K'$ ideal of $S\mid J\subseteq K'\subseteq N,\ I=f^{-1}(K')\}$. Then, $\mathcal F'$ has a maximal element $K_1$. 
\end{proof}

\begin{proposition}\label{3.52} Let $R\subseteq S$ be an extension, $J$ an ideal of $S$ and $I:=J\cap R$. Assume that $I\prec M$ for some $M\in\mathrm{Max}(R)$ and $N:=MS\in\mathrm{Max}(S)$ with $J\subseteq N$. Then, there exists a Max-upper $K$ of $S$ of $I$ containing $J$ and contained in $N$. Moreover, $K\in\mathrm{QMax}(S)$ and the next 
  properties hold:
\begin{enumerate} 
\item If $I\in\mathrm{Q_rMax}(R)$, then $K=IS\in\mathrm{Q_rMax}(S)$. 
\item Let $I\in\mathrm{Q_dMax}(R)$ with $I=M_1\cap M_2,\ M_i\in\mathrm {Max}(R)$ for $i\in\mathbb N_2$. Set $N_i:=M_iS$ for $i\in\mathbb N_2$, and assume that $N_1\in\mathrm{Max}(S)$. Then $N_2\neq S$ and we have two cases: 
\begin{enumerate}
\item If $N_2\in\mathrm{Max}(S)$, then $K=IS\in\mathrm{Q_dMax}(S)$.  
\item If $N_2\not\in\mathrm{Max}(S)$, then $IS\subset K$ and $IS\not\in \mathrm{QMax}(S)$.
\end{enumerate}
\end{enumerate}
\end{proposition}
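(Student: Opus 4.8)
The plan is to separate the construction of $K$ together with the general fact $K\in\mathrm{QMax}(S)$ from the type-by-type identification. Since $J\subseteq N$ with $N\in\mathrm{Max}(S)$, applying Lemma \ref{3.53}(2) to the inclusion $R\hookrightarrow S$ (for which $f^{-1}(K')=K'\cap R$) produces a Max-upper $K$ with $J\subseteq K\subseteq N$ and $K\cap R=I$; this is the asserted ideal. Because $N=MS$ is proper we have $N\cap R=M$, so $K\cap R=I\subsetneq M=N\cap R$ already forces $K\subsetneq N$.

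The core step is to prove $N\succ K$. I would pick any ideal $L$ with $K\subseteq L\subseteq N$ and contract to $R$: then $I=K\cap R\subseteq L\cap R\subseteq N\cap R=M$, and $I\prec M$ leaves only $L\cap R=I$ or $L\cap R=M$. In the first case $L\in\mathcal F'$, so maximality of the Max-upper $K$ gives $L=K$; in the second $M\subseteq L$ forces $N=MS\subseteq L\subseteq N$, i.e. $L=N$. Thus nothing sits strictly between $K$ and $N$, so $K$ is submaximal and Theorem \ref{2.6} yields $K\in\mathrm{QMax}(S)$. I expect this passage --- squeezing the covering $N\succ K$ out of the bare maximality of $K$ in $\mathcal F'$ by descending to $R$ and using $I\prec M$ --- to be the main obstacle; the rest is bookkeeping with the classification of Theorems \ref{2.3} and \ref{2.6}.

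For (1), $I\in\mathrm{Q_rMax}(R)$ gives $M^2\subseteq I\subset M$ with $M=I+Rx$ for every $x\in M\setminus I$, so $N^2=M^2S\subseteq IS$ and $N=IS+Sx$. As $I\subseteq J$ we have $IS\subseteq J\subseteq K\subsetneq N$, hence $x\notin IS$ (else $N=IS$) and $\sqrt[S]{IS}=N$. Reasoning as in the proof of Proposition \ref{3.28} via \cite[Corollary 2, p.237]{ZS}, one gets $N\succ IS$, so $IS\in\mathrm{Q_rMax}(S)$ with $\mathrm o(IS)=2$; since $IS\subseteq K\subsetneq N$ and the only proper ideals above $IS$ are $IS$ and $N$, we conclude $K=IS$.

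For (2), $I=M_1\cap M_2=M_1M_2$ gives $IS=N_1N_2=N_1\cap N_2$, the $N_i$ being comaximal as $M_1+M_2=R$. First I would exclude $N_2=S$: otherwise $IS=N_1$, and $IS\subseteq J\subseteq N_1$ forces $J=N_1$, whence $I=J\cap R=N_1\cap R=M_1\neq M_1\cap M_2$, a contradiction; so $N_2\neq S$, and $N_2\nsubseteq N_1$ (else $M_2\subseteq M_1$) gives $IS\subsetneq N_2$. In case (a), $N_2\in\mathrm{Max}(S)$ makes $IS=N_1\cap N_2\in\mathrm{Q_dMax}(S)$ with $\mathrm o(IS)=3$, and since $IS\subseteq K\subseteq N_1$ with $N_2\nsubseteq N_1$ and $K\cap R=I\neq M_1=N_1\cap R$, the only surviving possibility is $K=IS$. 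In case (b), $IS\subsetneq N_2$ is a proper non-maximal ideal strictly above $IS$, so $IS\notin\mathrm{QMax}(S)$ because every proper ideal strictly containing a quasi-maximal ideal is maximal (Theorem \ref{2.6}); combined with $K\in\mathrm{QMax}(S)$ and $IS\subseteq K$ this gives $IS\subsetneq K$.
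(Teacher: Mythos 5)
Your proof is correct and follows essentially the same route as the paper's: Lemma \ref{3.53}(2) supplies the Max-upper $K$, the covering $K\prec N$ is obtained by contracting intermediate ideals to $R$ and playing $I\prec M$ against the maximality of $K$ in $\mathcal F'$, part (1) just inlines the argument of Proposition \ref{3.28}/Corollary \ref{3.2}, and part (2)(a) and the exclusion of $N_2=S$ are as in the paper. The one notable (and welcome) local difference is case (2)(b): you kill $IS\in\mathrm{QMax}(S)$ in one stroke, since $N_2$ is a proper non-maximal ideal strictly containing $IS$, contradicting Theorem \ref{2.6}, whereas the paper argues by cases on whether $K$ is decomposed or ramified; your version is cleaner and avoids the unjustified details of that case split.
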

\begin{proof}   
Assume that $I\prec M$ for some $M\in\mathrm{Max}(R)$ such that $N:=M S\in\mathrm{Max}(S)$ with $J\subseteq N$. Then, $I\in\mathrm{Q_rMax}(R)\cup\mathrm{Q_dMax}(R)$. According to Lemma \ref{3.53}, we get that $I=IS\cap R$ and there exists an ideal $K$ of $S$ Max-upper of $I$ containing $J$ and contained in $N$. We claim that $K\in\mathrm{QMax}(S)$. First, $K\subset N$ since $I\prec M$. Assume that $K\nprec N$. Hence, there exists some ideal $K''$ of $S$ such that $K\subset K''\subset N$, so that $I\subseteq K''\cap R\subseteq M=N\cap R$. But $I\prec M$ gives that either $I=K''\cap R\ (*)$ or $K''\cap R=M\ (**)$. In case $(*)$, we have $K''\in\mathcal F$, a contradiction with the maximality of $K$. In case $(**)$, we have $M\subseteq K''$, giving $N=MS\subseteq K''$, also a contradiction. To conclude, $K\prec N$, whence $K\in\mathrm{QMax}(S)$.

(1) If $I\in\mathrm{Q_rMax}(R)$, $M$ is the only maximal ideal of $R$ with $I\subset M$ and $MS\in\mathrm{Max}(S)$. As $IS\subseteq K\subset N= MS$, we have $IS\neq\sqrt[S]{IS}$. According to Corollary \ref{3.2}, 
 $IS\in\mathrm{Q_rMax}(S)$ and $K=IS$. 

(2) Assume that $I\in\mathrm{Q_dMax}(R)$ and set $I=M_1\cap M_2,\ M_i\in\mathrm{Max}(R)$ for $i\in\mathbb N_2$, with $N_i:=M_iS$. Set, for instance, $M=M_1$, so that $N_1=N\in\mathrm{Max}(S)$. Then, $IS=M_1 M_2S=NN_2$. As $M_1$ and $M_2$ are comaximal in $R$, so are $N$ and $N_2$ in $S$. 

First, if $N_2=S$, then $I=M_1\cap M_2=M_1M_2$ implies $IS =M_1S=N$ which shows that $I=IS\cap R=N\cap R=M$, a contradiction: this case cannot appear. We  consider the two possibilities for $N_2$.

(a) If $N_2\in\mathrm{Max}(S)$, the same reasoning as for $N$ shows that $IS=N_1\cap N_2$ and $IS=K\in\mathrm {Q_dMax}(S)$. 

(b) Assume that $N_2\not\in\mathrm{Max}(S)$. It follows that $IS= N\cap N_2$. As $K\prec N$, either $K\in\mathrm{Q_dMax}(S)$ ($\alpha$) or $K\in\mathrm {Q_rMax}(S)$ ($\beta$). 

In case ($\alpha$), $K=N\cap N'$, where $N'\neq N$ and $N'\in\mathrm {Max}(S)$ with $IS\subseteq N_2\subset N'$. In this case, $IS\subset K$ and $IS$ is not in $\mathrm{QMax}(S)$.

In case ($\beta$), $N$ is the only maximal ideal of $S$ with $K\subseteq N$. But $IS=NN_2$ shows that $IS$ is contained in at least two maximal ideals of $S$. Then, $IS\subset K$ and $IS$ is not in $\mathrm{QMax}(S)$. 
\end{proof}

\begin{remark}\label{3.54} (1) We may remark that in Proposition \ref{3.52}, $IS=K$ does not always hold, and, even if it holds, $I$ and $IS$ are not always homotypic. Moreover, when $IS=K$ holds, the maximal element $K$ of $\mathcal F$ gotten in Proposition \ref{3.52}(2)(a) is unique since equal to $IS$. 

(2) Let $R\subseteq S$ be an extension, $J$ an ideal of $S$ and $I:=J\cap R$. If $I\in\mathrm{Q_iMax}(R)$,  then $J=IS\in\mathrm{Q_iMax}(S)$ 
 when $IS\in\mathrm{Max}(S)$ since $IS\subseteq J$ always holds.
\end{remark}

We use the results of Olivier about pure extensions \cite{OP}. Recall that a ring morphism $f:R\to S$ is called {\it pure} if $R$ is a pure submodule of $ S$; that is, $R'\to R'\otimes_RS$ is injective for each ring morphism $R\to R'$.  
 
Purity is an universal property and $R\to S$ is pure if and only if $R_P\to S_P$ is pure for each $P\in \mathrm{Spec}(R)$.

A faithfully flat morphism is pure. If $f$ is pure, then $f^{-1}(IS)=I$ for each ideal $I$ of $R$; so that, $f$ is injective. Moreover, a pure ring morphism $R\to S$ verifies the condition (O), {\em i.e.} an $R$-module $M$ is zero if $M\otimes_RS = 0$.

Clearly, if $f:R\subseteq S$ is an extension of rings, $f$ is pure when there is a retract of the extension $R\subseteq S$ in the category of $R$-modules; that is, there is a morphism of $R$-modules $g:S\to R$ such that $g\circ f=\mathrm{Id}_R$, or equivalently $S=R\oplus J$, where $J$ is an $R$-submodule of $S$. For example, for a ring $R$, the extension $R\subseteq R[X]$ has a retract $R[X]\to R$, defined by $X\mapsto 0$.

\begin{theorem}\label{3.5} Let $R\subseteq S$ be a pure extension and $I$ an ideal of $R$. If $MS\in\mathrm{Max}(S)$ for any $M\in\mathrm{Max}(R)\cap\mathrm{V}(I)$, then $I\in\mathrm{QMax}(R)$ if and only if $IS\in\mathrm{QMax}(S)$; in that case $I$ and $IS$ are homotypic.  
\end{theorem}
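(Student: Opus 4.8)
The plan is to prove the two implications separately and to extract homotypy in each case, using throughout the two consequences of purity recorded before the statement (following \cite{OP}): $JS\cap R=J$ for every ideal $J$ of $R$, so that $J\mapsto JS$ is injective and strictly inclusion-preserving; and the fact that a proper ideal of $R$ extends to a proper ideal of $S$ (if $JS=S$ then $J=JS\cap R=R$). Combined with the hypothesis $MS\in\mathrm{Max}(S)$, this makes $M\mapsto MS$ an injection of $\mathrm{Max}(R)\cap\mathrm V(I)$ into $\mathrm{Max}(S)$.

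For the forward implication, a pure extension is in particular an extension, so Proposition \ref{3.28} applies and already yields $IS\in\mathrm{QMax}(S)$. That proposition also gives that $I$ and $IS$ are homotypic \emph{unless} $I\in\mathrm{Q_rMax}(R)$ with $IS=\sqrt[S]{IS}$, in which case its proof shows $IS=\sqrt[S]{IS}=MS$ (where $M=\sqrt[R]I$) and $IS\in\mathrm{Q_iMax}(S)$. Purity rules this case out: from $IS=MS$ we would get $I=IS\cap R=MS\cap R=M$, contradicting $I\subset M$. Hence $IS$ is quasi-maximal and homotypic to $I$.

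For the converse I would first reduce to $I=0$. The extension $R/I\subseteq S/IS$ is again pure, since for any $(R/I)$-algebra $T$ one has $T\otimes_{R/I}(S/IS)=T\otimes_RS$, so the injectivity defining purity for $R\subseteq S$ descends; and $IS\in\mathrm{QMax}(S)$ (resp. $I\in\mathrm{QMax}(R)$) is equivalent to $0\in\mathrm{QMax}(S/IS)$ (resp. $0\in\mathrm{QMax}(R/I)$) by Theorem \ref{2.3}, with the type read off from the quotient and the hypothesis $MS\in\mathrm{Max}(S)$ passing to $\overline M(S/IS)\in\mathrm{Max}(S/IS)$. So assume $R\subseteq S$ is pure, $0\in\mathrm{QMax}(S)$, and $MS\in\mathrm{Max}(S)$ for every $M\in\mathrm{Max}(R)$. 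By purity $\mathcal O(R)\le\mathcal O(S)\le 3$, so $R$ is Artinian, and $|\mathrm{Max}(R)|\le|\mathrm{Max}(S)|\le 2$ via $M\mapsto MS$.

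I would finish by a case analysis on the type of $0$ in $S$ (given by Remark \ref{2.4} and Theorem \ref{2.3}), determining the type of $0$ in $R$ and thereby homotypy. If $S$ is a field, then $\mathcal O(S)=1$ forces $\mathcal O(R)=1$, so $R$ is a field and $0\in\mathrm{Q_iMax}(R)$. If $S$ is a SPIR with maximal ideal $N$ and $N^2=0$, then $\mathrm{Max}(S)=\{N\}$ forces $R$ local with maximal ideal $M$ and $MS=N$; as $N\neq 0$ we get $M\neq 0$, so $R$ is not a field, $\mathcal O(R)=2$ (with $M^2=0$), and $0\in\mathrm{Q_rMax}(R)$. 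The delicate case, which I expect to be the main obstacle, is $S$ decomposed with $\mathrm{Max}(S)=\{N_1,N_2\}$ and $N_1\cap N_2=0$: I must rule out $R$ being local. If $R$ were local with maximal ideal $M$, then $MS\in\{N_1,N_2\}$, say $MS=N_1$, so $M=N_1\cap R$; since $N_2\cap R$ is a prime contained in $M$ and $(N_1\cap R)\cap(N_2\cap R)=(N_1\cap N_2)\cap R=0$, we get $N_2\cap R=0$, making $R$ an Artinian domain, hence a field, whence $M=0$ and $N_1=MS=0$, contradicting that $S$ is not a field. Therefore $R$ has two distinct maximal ideals $M_1,M_2$; then $0,M_1,M_2$ are three distinct proper ideals (each $M_i\neq 0$, else $R$ would be a field), so $\mathcal O(R)=3$ and $M_1\cap M_2=0$, giving $0\in\mathrm{Q_dMax}(R)$. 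In every case $R$ and $S$ carry the same type of $0$, so $I$ and $IS$ are homotypic; the crux is exactly showing, from purity together with $MS\in\mathrm{Max}(S)$, that a second maximal ideal of $R$ genuinely exists rather than the two maximal ideals of $S$ collapsing onto a single prime of $R$.
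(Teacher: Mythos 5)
Your proposal is correct, and it splits naturally in two. The forward implication is exactly the paper's: invoke Proposition \ref{3.28}; your explicit use of purity to exclude the exceptional case of that proposition ($I\in\mathrm{Q_rMax}(R)$ with $IS=\sqrt[S]{IS}$, where $IS\cap R=M\neq I$) is a point the paper leaves implicit when it re-cites Proposition \ref{3.28} to get homotypy, so this is a welcome clarification rather than a deviation. The converse is where you take a genuinely different route. The paper argues directly with the covering relation: for $M\in\mathrm{Max}(R)\cap\mathrm{V}(I)$, either $IS\in\mathrm{Q_iMax}(S)$ forces $IS=MS$ and hence $I=M$ by purity, or $IS$ is non-maximal, so $MS\succ IS$ by Proposition \ref{2.71}; then any intermediate ideal $I\subset K\subset M$ would give $IS\subseteq KS\subseteq MS$ with $K=KS\cap R$, forcing $K\in\{I,M\}$, a contradiction; hence $M\succ I$, $I$ is submaximal, and Theorem \ref{2.6} yields $I\in\mathrm{QMax}(R)$. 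You instead reduce to $I=0$ by passing to $R/I\subseteq S/IS$ (correctly verifying that purity descends to quotients via $T\otimes_{R/I}(S/IS)=T\otimes_RS$), then exploit the injection $J\mapsto JS$ to get $\mathcal O(R)\leq\mathcal O(S)\leq 3$ and $|\mathrm{Max}(R)|\leq|\mathrm{Max}(S)|\leq 2$, and finish by a case analysis on the type of $0$ in $S$, the only delicate case being the decomposed one, where your Artinian-domain argument correctly rules out $R$ being local. Both arguments are sound: the paper's is shorter and leans on its central structural tool (submaximality, Theorem \ref{2.6}), while yours is more computational but identifies the type of $I$ explicitly in every case, so homotypy falls out of the argument itself instead of requiring a second appeal to Proposition \ref{3.28} together with the purity observation above.
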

\begin{proof} Since $R\subseteq S$ is pure, we have $I=IS\cap R$ for any ideal $I$ of $R$. Let $I$ be an ideal of $R$ such that $MS\in\mathrm{Max}(S)$ for any $M\in\mathrm{Max}(R)\cap \mathrm{V}(I)$.
   
If $I\in\mathrm{QMax}(R)$, Proposition \ref{3.28} shows that $IS\in\mathrm {QMax}(S)$. 

Conversely, assume that $IS\in\mathrm{QMax}(S)$ and let $M\in\mathrm{Max}(R)\cap \mathrm{V}(I)$.

If $IS\in\mathrm{Q_iMax}(S)$, this means that $IS=MS$, giving $I=M\in\mathrm{Q_iMax}(R)$. 
  
Assume now that $IS\in\mathrm{Q_dMax}(S)\cup\mathrm{Q_rMax}(S)$, so that $MS\succ IS$ and $M\neq I$. We claim that $M\succ I$. Otherwise, there exists an ideal $K$ of $R$ such that $I\subset K\subset M$ giving $I S\subseteq KS\subseteq MS$. Since $MS\succ IS$, it follows that either $ KS=IS$ giving $K=KS\cap R=IS\cap R=I$, or $KS=MS$ giving $K=KS\cap R=MS\cap R=M$, a contradiction in both cases. Then, $M\succ I$, so that $I$ is a submaximal ideal of $R$ and then in $\mathrm{QMax}(R)$. If these conditions hold, by Proposition \ref{3.28}, $I$ and $IS$ are homotypic.
\end{proof}

\begin{corollary}\label{3.51} Let $R\subseteq S$ be a pure extension such that $\mathrm{Max}(S)=\{MS\mid M\in\mathrm{Max}(R)\}$. An ideal $I$ of $R$ is in $\mathrm{QMax}(R)$ if and only if $IS\in\mathrm{QMax}(S)$. If this condition holds, then $I$ and $IS$ are homotypic.
\end{corollary}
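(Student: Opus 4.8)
The plan is to deduce this corollary directly from Theorem \ref{3.5}, which already handles a single ideal $I$ under the pointwise hypothesis that $MS\in\mathrm{Max}(S)$ for every $M\in\mathrm{Max}(R)\cap\mathrm{V}(I)$. The only work is to observe that the global assumption of the corollary subsumes this pointwise hypothesis uniformly for all ideals $I$, after which the theorem applies verbatim.

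First I would extract from the set equality $\mathrm{Max}(S)=\{MS\mid M\in\mathrm{Max}(R)\}$ the one inclusion we actually need, namely $\{MS\mid M\in\mathrm{Max}(R)\}\subseteq\mathrm{Max}(S)$. This says precisely that $MS\in\mathrm{Max}(S)$ for \emph{every} $M\in\mathrm{Max}(R)$, with no reference to any particular ideal. (The reverse inclusion, that every maximal ideal of $S$ is extended from $R$, is not required for the conclusion and may simply be left unused.)

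Next I would fix an arbitrary ideal $I$ of $R$ and verify the hypothesis of Theorem \ref{3.5} for this $I$. Every $M\in\mathrm{Max}(R)\cap\mathrm{V}(I)$ is in particular a maximal ideal of $R$, so by the previous paragraph $MS\in\mathrm{Max}(S)$. Thus the condition ``$MS\in\mathrm{Max}(S)$ for any $M\in\mathrm{Max}(R)\cap\mathrm{V}(I)$'' holds, and it holds regardless of which ideal $I$ was chosen. Since $R\subseteq S$ is pure by assumption, Theorem \ref{3.5} now gives that $I\in\mathrm{QMax}(R)$ if and only if $IS\in\mathrm{QMax}(S)$, and that $I$ and $IS$ are homotypic whenever this common condition holds.

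Because $I$ was arbitrary, this establishes the corollary in full. There is no genuine obstacle here: the entire content is the remark that the uniform condition on $\mathrm{Max}(S)$ strengthens, for every ideal simultaneously, the per-ideal condition required by Theorem \ref{3.5}, so the corollary is obtained by a single application of the theorem to each ideal.
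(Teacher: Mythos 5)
Your proposal is correct and is exactly the paper's route: the paper's proof of this corollary is simply ``Use Theorem \ref{3.5}'', and your argument just spells out the (straightforward) verification that the global hypothesis $\mathrm{Max}(S)=\{MS\mid M\in\mathrm{Max}(R)\}$ gives $MS\in\mathrm{Max}(S)$ for every $M\in\mathrm{Max}(R)\cap\mathrm{V}(I)$, which is the pointwise hypothesis of that theorem.
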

\begin{proof} Use Theorem \ref{3.5}.
\end{proof}

\begin{corollary}\label{3.6} Let $R$ be a semilocal ring with $\mathrm {Max}(R)=\{M_1,\ldots M_n\}$. Set $S:=\prod_{i=1}^nR_{M_i}$. Then, the following properties hold:
\begin{enumerate}
\item $R\subseteq S$ is pure.
\item $\mathrm{Max}(S)=\{M_1S,\ldots, M_nS\}$.
\item An ideal $I$ of $R$ is in $\mathrm{QMax}(R)$ if and only if $IS\in\mathrm{QMax}(S)$. In that case, $I$ and $IS$ are homotypic. 
\end{enumerate} 
\end{corollary}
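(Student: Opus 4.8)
The plan is to establish the three assertions in order, with the third following formally from the first two together with Corollary \ref{3.51}, which does all the real work. So the task reduces to verifying the two hypotheses of Corollary \ref{3.51}: that $R\subseteq S$ is pure and that $\mathrm{Max}(S)=\{MS\mid M\in\mathrm{Max}(R)\}$.

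For (1) I would argue that the structure map $R\to S=\prod_{i=1}^nR_{M_i}$ is faithfully flat, which by the remarks preceding Theorem \ref{3.5} forces it to be pure (and in particular injective, so that $R\subseteq S$ really is an extension). Flatness is immediate, since as an $R$-module $S$ is the finite direct sum $\bigoplus_{i=1}^nR_{M_i}$ of localizations, each of which is $R$-flat. For faithful flatness it suffices to check that $\mathrm{Spec}(S)\to\mathrm{Spec}(R)$ is surjective; and since $\mathrm{Spec}(S)=\bigsqcup_{i=1}^n\mathrm{Spec}(R_{M_i})$, while every prime $P$ of $R$ lies in some $M_i$ (the $M_i$ being all the maximal ideals), the prime $PR_{M_i}$ contracts to $P$, giving surjectivity.

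For (2) the key computation is the behaviour of $M_k$ in each factor. In the $k$-th factor, $M_kR_{M_k}$ is the maximal ideal of the local ring $R_{M_k}$; for $i\neq k$, the distinctness of the maximal ideals gives $M_k\not\subseteq M_i$, so $M_k$ meets $R\setminus M_i$ and hence $M_kR_{M_i}=R_{M_i}$. Therefore $M_kS=\prod_{i=1}^nM_kR_{M_i}$ is exactly the ideal of $S$ whose $k$-th coordinate is $M_kR_{M_k}$ and whose remaining coordinates are the whole factor. Recalling that the maximal ideals of a finite product are obtained by choosing a maximal ideal in one factor and the whole ring in the others, these $n$ ideals $M_1S,\ldots,M_nS$ are precisely the maximal ideals of $S$, so $\mathrm{Max}(S)=\{M_1S,\ldots,M_nS\}=\{MS\mid M\in\mathrm{Max}(R)\}$.

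Finally (3) is immediate: by (1) the extension $R\subseteq S$ is pure, and by (2) it satisfies $\mathrm{Max}(S)=\{MS\mid M\in\mathrm{Max}(R)\}$, so Corollary \ref{3.51} applies directly and yields both the equivalence $I\in\mathrm{QMax}(R)\Leftrightarrow IS\in\mathrm{QMax}(S)$ and the fact that $I$ and $IS$ are homotypic. I expect the only point needing genuine care to be the computation in (2) that $M_kR_{M_i}=R_{M_i}$ for $i\neq k$; everything else is a standard faithful-flatness argument or a verbatim invocation of the already-proved Corollary \ref{3.51}, so there is no substantial obstacle.
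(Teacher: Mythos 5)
Your proposal is correct and takes essentially the same route as the paper: the paper likewise reduces everything to Corollary \ref{3.51}, proving (2) by the same computation ($M_kR_{M_i}=R_{M_i}$ for $i\neq k$, together with the standard description of the maximal ideals of a finite product) and then quoting Corollary \ref{3.51} for (3). The only divergence is in (1), where the paper simply cites \cite[Exemple 1.3.2]{OP} for purity, while you derive purity from faithful flatness (flatness of each localization plus surjectivity of the spectral map); since this is exactly the mechanism the paper itself uses in the neighbouring applications (Propositions \ref{3.11}, \ref{3.12}, \ref{3.13}), the difference is cosmetic rather than substantive.
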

\begin{proof} (1) comes from \cite[Exemple 1.3.2]{OP}.

(2) Let $N\in \mathrm{Max}(S)$. Then, there exists some $M_i\in \mathrm{Max}(R)$ such that $N=M_iR_{M_i}\times R'$, where $R':=\prod_{j\neq i}R_{M_j}$ (after reordering). But for $j\neq i$, we have $M_iR_{M_j} =R_{M_j}$, so that $N=M_iS$, giving $\mathrm{Max}(S)=\{M_1S,\ldots M_nS\}$.

(3) Use Corollary \ref{3.51}. 
\end{proof}

\begin{corollary}\label{3.7} Let $R\subseteq S$ be an extension such that $f:S\to R$ is a retract of the extension $R\subseteq S$ in the category of $R$-modules.  Then, the next properties hold:
\begin{enumerate}
\item $R\subseteq S$ is pure.
\item If $I\in\mathrm{QMax}(R)$, then $J:=f^{-1}(I)\in\mathrm{QMax}(S)$  with $I=J\cap R$.
\item If $J\in\mathrm{QMax}(S)$ contains $\mathrm{Ker}(f)$, then $I:=f(J)\in\mathrm{QMax}(R)$. 
\end{enumerate} 

In both cases, $J=I+\mathrm{Ker}(f)$ and $I=J\cap R$ hold.
\end{corollary}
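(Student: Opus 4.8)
The plan is to reduce every assertion to the ring isomorphism induced by the retraction and then to feed the result into Theorem \ref{2.3}. Throughout I read $f\colon S\to R$ as a surjective ring morphism with $f|_R=\mathrm{Id}_R$ (so $f(r)=r$ for $r\in R$), and I set $L:=\mathrm{Ker}(f)$, so that $S=R\oplus L$ as $R$-modules. Statement (1) is then immediate: a ring retraction is a fortiori a retraction in the category of $R$-modules, hence $R\subseteq S$ admits an $R$-module retract, which by the remarks preceding the statement forces $R\subseteq S$ to be pure.

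The engine for (2) and (3) is the observation that $f$ induces a ring isomorphism $S/J\xrightarrow{\ \sim\ }R/I$ whenever $J=f^{-1}(I)$. First I would treat (2): for $I\in\mathrm{QMax}(R)$ put $J:=f^{-1}(I)$, a proper ideal of $S$ because $f$ is a surjective ring morphism and $I\neq R$. Composing $f$ with the projection $R\to R/I$ gives a surjection $S\to R/I$ whose kernel is exactly $f^{-1}(I)=J$, so $S/J\cong R/I$. By Theorem \ref{2.3} the condition $I\in\mathrm{QMax}(R)$ is equivalent to $0\in\mathrm{QMax}(R/I)$; transporting this along the isomorphism yields $0\in\mathrm{QMax}(S/J)$, hence $J\in\mathrm{QMax}(S)$, again by Theorem \ref{2.3}. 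For (3), given $J\in\mathrm{QMax}(S)$ with $L\subseteq J$, I set $I:=f(J)$, a proper ideal of $R$ by surjectivity; since $L\subseteq J$ one has $J=f^{-1}(f(J))=f^{-1}(I)$, so the same isomorphism $S/J\cong R/I$ applies and returns $I\in\mathrm{QMax}(R)$.

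The homotypy and the two displayed equalities are then bookkeeping. Because $S/J\cong R/I$ as rings, the description of the quotient recorded in Remark \ref{2.4} (a field, a product of two fields, or a SPIR with square-zero maximal ideal) is preserved, so $I$ and $J$ share the same type and are homotypic. For the equalities, $J\cap R=f^{-1}(I)\cap R=\{r\in R\mid f(r)\in I\}=I$ since $f|_R=\mathrm{Id}_R$; and $J=I+L$ follows by writing any $s\in J$ as $s=f(s)+\big(s-f(s)\big)$, where $f(s)\in I\subseteq R$ and $s-f(s)\in L$ because $f\big(f(s)\big)=f(s)$.

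The only genuinely delicate point, and the place where I would be most careful, is the reading of the hypothesis: the conclusions $J\in\mathrm{QMax}(S)$ and $I=f(J)$ only make sense once $f^{-1}(I)$ and $f(J)$ are ideals, which requires $f$ to respect multiplication, as an $R$-linear retract alone does not guarantee that $\mathrm{Ker}(f)$ is an ideal of $S$. Granting that $f$ is a ring morphism (as in the model example $R[X]\to R$, $X\mapsto 0$), one could alternatively obtain $J\in\mathrm{QMax}(S)$ and $I\in\mathrm{QMax}(R)$ straight from \cite[Proposition 2]{AKKT}, the isomorphism $S/J\cong R/I$ being needed only to pin down the common type.
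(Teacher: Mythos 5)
Your proof is correct, and it differs from the paper's in one substantive way: where the paper disposes of (2) and (3) in a single stroke by citing \cite[Proposition 2]{AKKT} (parts (ii) and (i) respectively, for surjective ring morphisms), you re-derive that transfer internally, from the ring isomorphism $S/J\cong R/I$ induced by $f$ when $J=f^{-1}(I)$, combined with the equivalence (1)$\Leftrightarrow$(3) of Theorem \ref{2.3}. This makes your argument self-contained modulo Theorem \ref{2.3}, and as a bonus it yields (via Remark \ref{2.4}) that $I$ and $J$ are homotypic, which the corollary does not even claim; the alternative you mention at the end — invoking \cite[Proposition 2]{AKKT} directly — is exactly the paper's proof. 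Your handling of the equalities $I=J\cap R$ and $J=I+\mathrm{Ker}(f)$ coincides with the paper's (both rest on $f|_R=\mathrm{Id}_R$). Finally, the ``delicate point'' you flag is genuine and worth flagging: the hypothesis as stated only gives an $R$-module retraction, while both your argument and the paper's need $f$ to be a ring morphism (so that $f^{-1}(I)$ and $f(J)$ are ideals and $R\cong S/\mathrm{Ker}(f)$ as rings); the paper's proof silently makes this upgrade when it writes $R\cong S/\mathrm{Ker}(f)$ and applies the AKKT result, so your reading agrees with the intended one.
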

\begin{proof} (1) $R\subseteq S$ is pure since there is a retract of the extension $R\subseteq S$ in the category of $R$-modules. Moreover, $f$ is surjective, so that $R\cong S/\mathrm{Ker}(f)$ and $f(x)=x$ for any $x\in R$. In particular, $\mathrm{Ker}(f)\cap R=0$.

(2) Let $I\in\mathrm{QMax}(R)$. According to \cite[Proposition 2 (ii)]{AKKT}, $J:=f^{-1}(I)\in\mathrm{QMax}(S)$ and $J=\{x\in S\mid f(x)\in I\}$. Therefore $J=I+\mathrm{Ker}(f)$ because $f(x)=x$ for any $x\in I$, and $I=J\cap R$ holds. 

(3) Let $J\in\mathrm{QMax}(S)$ with $\mathrm{Ker}(f)\subseteq J$ and set $I: =f(J)$. Then, $f(J)\in\mathrm{QMax}(R)$ by \cite[Proposition 2 (i)]{AKKT}. Moreover, $J=I+\mathrm{Ker}(f)$ and $I=J\cap R$ also hold because $f(x)=x$ for any $x\in I$. 
\end{proof}

Since a faithfully flat morphism is pure, we can use the results we got for pure extensions to faithfully flat extensions. 

\begin{proposition}\label{3.11} If $(R,M)$ is a local ring, $I$ is an ideal of $R$ and $f(X)\in R[X]$ is a monic polynomial, the following properties hold:
\begin{enumerate}
\item $R\subseteq R[X]/(f(X))$ is pure.
\item $I\in\mathrm{QMax}(R)$ if and only if $I[R[X]/(f(X))]\in\mathrm{QMax} (R[X]/$

\noindent$(f(X))$, in which case $I$ and $I[R[X]/(f(X))]$ are homotypic.
\end{enumerate} 
\end{proposition}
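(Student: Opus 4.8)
The plan is to read both statements off the free-module structure of $S := R[X]/(f(X))$, using Theorem \ref{3.5} for (2). Write $d := \deg f \geq 1$ and let $x$ be the class of $X$ in $S$. Since $f$ is monic, $S$ is a free $R$-module with basis $1, x, \ldots, x^{d-1}$, so that $S = R \oplus J$ with $J := \sum_{i=1}^{d-1} Rx^i$ an $R$-submodule of $S$; equivalently, the $R$-linear projection onto the degree-zero coordinate is a retract $g : S \to R$ of the inclusion $R \subseteq S$. By the criterion for purity recalled just before Theorem \ref{3.5}, such a retract forces $R \subseteq S$ to be pure, which proves (1). (Equally, a free module of positive finite rank is faithfully flat, hence pure.)

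For (2) I would bring the situation under Theorem \ref{3.5}, applied to the pure extension $R \subseteq S$ and the ideal $I$. As $(R,M)$ is local, $M$ is its only maximal ideal, so for a proper ideal $I$ we have $\mathrm{Max}(R) \cap \mathrm{V}(I) = \{M\}$ (if $I = R$, neither side of the asserted equivalence can hold and there is nothing to prove). Consequently the sole hypothesis of Theorem \ref{3.5} still to be checked is that $MS \in \mathrm{Max}(S)$. Granting this, Theorem \ref{3.5} yields at once that $I \in \mathrm{QMax}(R)$ if and only if $IS \in \mathrm{QMax}(S)$, and that in that case $I$ and $IS = I[R[X]/(f(X))]$ are homotypic, which is precisely the claim.

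The step I expect to be the main obstacle is exactly this verification that $MS$ is maximal in $S$. I would test it by reducing coefficients modulo $M$, which yields a canonical isomorphism
\[
S/MS \;=\; R[X]/\bigl((f(X)) + M\,R[X]\bigr) \;\cong\; k[X]/(\overline f),
\]
where $k := R/M$ is the residue field and $\overline f \in k[X]$ denotes the reduction of $f$. Hence $MS$ is maximal precisely when the finite-dimensional $k$-algebra $k[X]/(\overline f)$ is a field, that is, when $\overline f$ is irreducible over $k$; controlling this reduction is therefore the heart of (2) and the point at which the nature of $f$ genuinely enters. Once $MS \in \mathrm{Max}(S)$ is secured, purity already supplies $I = IS \cap R$, and both the equivalence and the homotypy of $I$ with $IS = I[R[X]/(f(X))]$ follow in one stroke from Theorem \ref{3.5}, with no further type-by-type discussion required.
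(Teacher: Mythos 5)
Your part (1) is correct and is essentially the paper's own argument: the paper notes that $R\subseteq R':=R[X]/(f(X))$ is free, hence faithfully flat, hence pure; your retract $g:S\to R$ (projection onto the degree-zero coordinate of the basis $1,x,\dots,x^{d-1}$) is an equally valid variant of the same observation. For part (2), your reduction to Theorem \ref{3.5} is also precisely the paper's strategy, but your argument is conditional: you explicitly leave unproved the single hypothesis that carries all the weight, namely $MS\in\mathrm{Max}(S)$, observing only that, via $S/MS\cong k[X]/(\overline f)$ with $k:=R/M$, it is equivalent to irreducibility of $\overline f$ over $k$. The paper closes this step by invoking \cite[Theorem 6.9]{Pic 17}: every $N\in\mathrm{Max}(R[X])$ with $f(X)\in N$ satisfies $N\cap R=M$ and $N=M[X]+f(X)R[X]$, so that $R'$ is local with maximal ideal $MR'$, after which Theorem \ref{3.5} applies exactly as you say. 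So what you have is not a proof but a correct reduction plus an unverified key claim.

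Moreover, the gap is genuine and cannot be filled for an arbitrary monic $f$ --- your own mod-$M$ computation shows why, since $k[X]/(\overline f)$ is a field only when $\overline f$ is irreducible. Concretely, take $R=\mathbb Z_{(p)}$ with $p$ odd, $M=pR$, and $f(X)=X^2-1$; since $2$ is a unit, $S\cong R\times R$. For $I=M$ one gets $IS\cong M\times M=(M\times R)\cap(R\times M)\in\mathrm{Q_dMax}(S)$ while $I$ is inert, so the homotypy claim fails; for $I=M^2\in\mathrm{Q_rMax}(R)$ one gets $IS\cong M^2\times M^2$, which has more than three proper over-ideals (for instance $M^2\times M$, $M\times M^2$, $M\times R$, \dots), hence $IS\notin\mathrm{QMax}(S)$ by Theorem \ref{2.3} (compare Proposition \ref{2.14}), so even the stated equivalence fails. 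Thus your instinct that verifying $MS\in\mathrm{Max}(S)$ is ``the heart'' of (2) is exactly right: the statement, and the paper's application of \cite[Theorem 6.9]{Pic 17} (which, as used, forces $k[X]/(\overline f)$ to be a field), tacitly require $\overline f$ to be irreducible modulo $M$. With that hypothesis added your argument becomes complete and coincides with the paper's; without it, neither your proposal nor the proposition as literally stated goes through.
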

\begin{proof} Set $R':=R[X]/(f(X))$.

(1) Since $R\subseteq R'$ is free, it is  faithfully flat, and then pure.

(2) Let $N\in\mathrm{Max}(R[X])$ be such that $f(X)\in N$. According to  \cite[Theorem 6.9]{Pic 17}, we have $M=N\cap R$ and $N=M[X]+f(X)R[X]$. We infer that $N':=N/(f(X))=(M[X]+f(X)R[X])/(f(X))\cong MR'$. We can apply Theorem \ref{3.5}, so that $I\in\mathrm{QMax}(R)$ if and only if $IR' \in\mathrm{QMax}(R')$. If these statements hold, then $I$ and $IR'$ are homotypic.
\end{proof}

\begin{proposition}\label{3.12} Let $R$ be a ring and $f_1,\ldots,f_n\in R$ such that $\{\mathrm{D}(f_i)\mid i\in\mathbb{N}_n\}$ is a partition of $\mathrm{Spec}(R)$ and set $S:=\prod_{i=1}^nR_{f_i}$. An ideal $I$ of $R$ belongs to $\mathrm{QMax}(R)$ if and only if $IS\in\mathrm{QMax}(S)$, in which case $I$ and $IS$ are homotypic. 
\end{proposition}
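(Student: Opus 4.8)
The plan is to recognize this as an instance of Corollary \ref{3.51}: I will show that the canonical localization morphism $R\to S=\prod_{i=1}^nR_{f_i}$ is a pure extension satisfying $\mathrm{Max}(S)=\{MS\mid M\in\mathrm{Max}(R)\}$, after which both assertions follow at once. All the work lies in extracting these two structural facts from the hypothesis that $\{\mathrm{D}(f_i)\}_{i\in\mathbb N_n}$ partitions $\mathrm{Spec}(R)$.

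First I would prove purity. Each $R_{f_i}$ is a flat $R$-algebra, and $S$, being their finite direct product (hence their direct sum as an $R$-module), is flat over $R$. To upgrade this to faithful flatness it is enough that $\mathrm{Spec}(S)\to\mathrm{Spec}(R)$ be surjective; but $\mathrm{Spec}(S)=\coprod_{i=1}^n\mathrm{Spec}(R_{f_i})$ maps onto $\bigcup_{i=1}^n\mathrm{D}(f_i)=\mathrm{Spec}(R)$, the last equality being the covering part of the partition hypothesis. Thus $R\to S$ is faithfully flat, hence injective (so that $R\subseteq S$ is genuinely an extension) and pure, as recalled just before Theorem \ref{3.5}.

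The decisive step, where the disjointness half of the hypothesis enters, is the computation of $\mathrm{Max}(S)$. From $\mathrm{D}(f_i)\cap\mathrm{D}(f_j)=\mathrm{D}(f_if_j)=\emptyset$ for $i\neq j$ together with the covering, each $\mathrm{D}(f_i)$ is clopen in $\mathrm{Spec}(R)$; consequently, for a prime $M\in\mathrm{D}(f_i)$, the singleton $\{M\}$ is closed in $\mathrm{Spec}(R)$ if and only if it is closed in the subspace $\mathrm{D}(f_i)\cong\mathrm{Spec}(R_{f_i})$, i.e. $M\in\mathrm{Max}(R)$ if and only if $MR_{f_i}\in\mathrm{Max}(R_{f_i})$. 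Since any $M\in\mathrm{Max}(R)$ lies in exactly one $\mathrm{D}(f_i)$, while $f_j\in M$ forces $MR_{f_j}=R_{f_j}$ for every $j\neq i$, one obtains $MS=R_{f_1}\times\cdots\times MR_{f_i}\times\cdots\times R_{f_n}\in\mathrm{Max}(S)$; conversely each maximal ideal of $S$ is proper in a single factor and is therefore of this shape. This establishes $\mathrm{Max}(S)=\{MS\mid M\in\mathrm{Max}(R)\}$.

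I expect this last identification to be the only real obstacle, since in an arbitrary localization $R\to R_{f_i}$ a maximal ideal of $R_{f_i}$ need not contract to a maximal ideal of $R$; it is precisely the clopen-ness of $\mathrm{D}(f_i)$ (equivalently, that closed points of $\mathrm{D}(f_i)$ are closed in $\mathrm{Spec}(R)$) that makes maximality pass correctly between $R$ and $R_{f_i}$. Once purity and the description of $\mathrm{Max}(S)$ are in hand, Corollary \ref{3.51} applies directly and yields both the equivalence $I\in\mathrm{QMax}(R)\iff IS\in\mathrm{QMax}(S)$ and the homotypy of $I$ and $IS$.
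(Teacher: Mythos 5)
Your proof is correct and follows essentially the same route as the paper's: establish that $R\subseteq S$ is faithfully flat (hence pure), identify $\mathrm{Max}(S)=\{MS\mid M\in\mathrm{Max}(R)\}$ from the partition hypothesis, and invoke Corollary \ref{3.51}. If anything, your clopen-ness argument makes explicit why a maximal ideal of $R_{f_i}$ contracts to a maximal ideal of $R$, a point the paper's proof asserts without justification.
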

\begin{proof} We know that $R\subseteq S$ is faithfully flat by \cite[Proposition 3, p.137]{ALCO}. It follows that $R\subseteq S$ is pure. In order to use Corollary \ref{3.51}, we show that $\mathrm{Max}(S)=\{M\mid M\in\mathrm{Max}(R)\}$. Any maximal ideal of $S$ is of the form (after reordering) $M_iR_{f_i}\times\prod_{j\neq i}R_{f_j}$, where $M_i\in\mathrm {Max}(R)\cap\mathrm{D}(f_i)$. Since $\{\mathrm{D}(f_i)\}$ is a partition of $\mathrm{Spec}(R)$, we get that $M_i\not\in\mathrm{D}(f_j)$ for any $j\neq i$, which means that $f_j\in M_i$ for any $j\neq i$, so that $M_iR_{f_j}=R_{f_j}$ for any $j\neq i$. Then, $M_iR_{f_i}\times\prod_{j\neq i}R_{f_j}= M_iR_{f_i}\times\prod_{j\neq i}M_iR_{f_j}=M_i(R_{f_i}\times\prod_{j\neq i}R _{f_j})=M_iS$. Conversely, let $M\in\mathrm{Max}(R)$. Since $\{\mathrm {D}(f_i)\}$ is a partition of $\mathrm{Spec}(R)$, there is some $i\in\mathbb {N}_n$ such that $f_i\not\in M$. The previous proof shows also that $f_j\in M$ for any $j\neq i$ and $MS=MR_{f_i}\times\prod_{j\neq i}R_{f_j}\in\mathrm{Max}(S)$. To conclude, we get that $\mathrm{Max}(S)=\{MS\mid M\in\mathrm{Max}(R)\}$. Then, Corollary \ref{3.51} shows that for an ideal $I$ of $S,\ I\in\mathrm{QMax}(R)$ if and only if $IS\in\mathrm{QMax}(S)$. If these statements hold, $I$ and $IS$ are homotypic. 
\end{proof} 

\begin{proposition} \label{3.13} Let $(R,M)$ be a local ring, $R^h$ be its Henselization and $I$ be an ideal of $R$. Then $I\in\mathrm{QMax}(R)$ if and only if $IR^h\in\mathrm{QMax}(R^h)$, in which case $I$ and $IR^h$ 
are homotypic. 
\end{proposition}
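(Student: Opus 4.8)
The plan is to recognize the extension $R \subseteq R^h$ as a pure extension to which Corollary \ref{3.51} applies, thereby reducing the entire statement to two structural facts about the Henselization.

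First I would recall from the basic theory of Henselization that the canonical morphism $R \to R^h$ of a local ring $(R,M)$ is faithfully flat. Since a faithfully flat morphism is pure (as noted just before Theorem \ref{3.5}), the extension $R \subseteq R^h$ is pure, and in particular $I = IR^h \cap R$ holds for every ideal $I$ of $R$.

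Second I would invoke the standard fact that $R^h$ is again a local ring whose maximal ideal is $M^h = MR^h$ (with residue field $R^h/MR^h \cong R/M$). Since $R$ is local we have $\mathrm{Max}(R) = \{M\}$, hence $\mathrm{Max}(R^h) = \{MR^h\} = \{M'R^h \mid M' \in \mathrm{Max}(R)\}$, which is precisely the hypothesis $\mathrm{Max}(S) = \{M'S \mid M' \in \mathrm{Max}(R)\}$ of Corollary \ref{3.51} with $S := R^h$. With purity and this description of $\mathrm{Max}(R^h)$ established, Corollary \ref{3.51} applies verbatim and delivers both the equivalence $I \in \mathrm{QMax}(R) \Leftrightarrow IR^h \in \mathrm{QMax}(R^h)$ and the homotypy of $I$ and $IR^h$ when the condition holds.

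There is no genuinely hard step internal to the argument: the local hypothesis collapses the condition of Theorem \ref{3.5} to the single requirement $MR^h \in \mathrm{Max}(R^h)$, so the whole proof is a direct specialization of the pure-extension machinery already developed in this section. The one point that truly rests on Henselization theory, rather than on the formal ring-theoretic lemmas proved above, is the identification of $MR^h$ as the maximal ideal of $R^h$ together with the faithful flatness of $R \to R^h$; if one wished to be self-contained, those are the facts to cite or justify carefully.
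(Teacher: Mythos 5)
Your proposal is correct and follows exactly the paper's own argument: faithful flatness of $R\subseteq R^h$ gives purity, the Henselization is local with maximal ideal $MR^h$, and Corollary \ref{3.51} then yields both the equivalence and the homotypy. Nothing to add.
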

\begin{proof} We know that $R\subseteq R^h$ is faithfully flat, and then pure. Moreover, $R^h$ is a local ring whose maximal ideal is $MR^h$. Then, Corollary \ref{3.51} shows that for an ideal $I$ of $R,\ I\in\mathrm {QMax}(R)$ if and only if $IR^h\in\mathrm{QMax}(R^h)$, in which case $I$ and $I R^h$ are homotypic. 
\end{proof}

For an arbitrary ring $R$, there is an extension $R\subseteq R(X)$, where $R(X)=R[X]_{\Sigma_R}$ and $\Sigma_R$ is the multiplicatively closed subset of all primitive polynomials of $R[X]$. The ring $R(X)$ is called the {\it Nagata} ring of $R$. We are going to look at the transfer of a quasi-maximal ideal of a ring $R$ in its Nagata ring. Recall that $R\subseteq R(X)$ is faithfully flat and $\mathrm{Max}(R(X))=\{M(X)\mid M\in\mathrm{Max}(R)\}$ \cite[Section 3]{DPP3}.

\begin{proposition}\label{2.111} An ideal $I$ of a ring $R$ is in $\mathrm {QMax}(R)$ if and only if $IR(X)\in\mathrm{QMax}(R(X))$. In that case $I$ and $IR(X)$are homotypic. 
\end{proposition}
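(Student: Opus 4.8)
The plan is to reduce the statement to Corollary~\ref{3.51}, exactly as was done for the Henselization in Proposition~\ref{3.13} and for the distinguished affine cover in Proposition~\ref{3.12}. Corollary~\ref{3.51} requires two inputs: that $R\subseteq R(X)$ be a pure extension, and that $\mathrm{Max}(R(X))=\{M\cdot R(X)\mid M\in\mathrm{Max}(R)\}$. So the entire proof amounts to verifying these two hypotheses for $S:=R(X)$.

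First I would invoke the facts recalled just before the statement: $R\subseteq R(X)$ is faithfully flat, hence pure, so in particular $I=IR(X)\cap R$ for every ideal $I$ of $R$. Then I would pin down the maximal spectrum. The recalled equality $\mathrm{Max}(R(X))=\{M(X)\mid M\in\mathrm{Max}(R)\}$ combined with the identity $M(X)=M\cdot R(X)$ yields $\mathrm{Max}(R(X))=\{M\cdot R(X)\mid M\in\mathrm{Max}(R)\}$, which is precisely the second hypothesis. With both inputs established, Corollary~\ref{3.51} applies directly and gives the equivalence $I\in\mathrm{QMax}(R)\Leftrightarrow IR(X)\in\mathrm{QMax}(R(X))$ together with the assertion that $I$ and $IR(X)$ are homotypic when this holds.

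The only step needing a word of justification is the identification $M(X)=M\cdot R(X)$. I expect this to be the main, though minor, obstacle. I would handle it by unwinding the definition: $M(X)$ is the ideal $M[X]_{\Sigma_R}$ of $R(X)=R[X]_{\Sigma_R}$, and since extension of scalars commutes with localization this is exactly the ideal generated by $M$ in $R(X)$, namely $M\cdot R(X)$. Everything else is a formal appeal to the already-proved Corollary~\ref{3.51}, so no further computation is required.
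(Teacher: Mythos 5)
Your proposal is correct and follows exactly the paper's own argument: faithful flatness of $R\subseteq R(X)$ gives purity, the identification $\mathrm{Max}(R(X))=\{MR(X)\mid M\in\mathrm{Max}(R)\}$ supplies the second hypothesis, and Corollary~\ref{3.51} concludes. Your explicit verification that $M(X)=M\cdot R(X)$ via compatibility of localization with extension of ideals is a detail the paper leaves implicit, but it is the same proof.
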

\begin{proof} We know that $R\subseteq R(X)$ is faithfully flat, and then pure. Moreover, $\mathrm{Max}(R(X))=\{MR(X)\mid M\in\mathrm{Max}(R)\}$. Then, Corollary \ref{3.51} shows that for an ideal $I$ of $R,\ I\in\mathrm {QMax}(R)$ if and only if $IR(X)\in\mathrm{QMax}(R(X))$. If these statements hold, then $I$ and $IR(X)$ are homotypic.
\end{proof}

An extension $R\subseteq S$ is called {\it local} if $\mathrm U(R)= \mathrm U(S)\cap R$. 

\begin{proposition}\label{3.15} Let $(R,M)\subseteq(S,N)$ be a local flat extension such that $N=MS$. An ideal $I$ of $R$ is in $\mathrm{QMax}(R)$ if and only if $IS\in\mathrm{QMax}(S)$, in which case, $I$ and $IS$ are homotypic. 
\end{proposition}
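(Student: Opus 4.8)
The plan is to follow the same route as the proofs of Propositions~\ref{3.13} and~\ref{2.111}, reducing the statement to Corollary~\ref{3.51}. The only step that calls for an argument is promoting flatness to faithful flatness; everything else is a matter of matching hypotheses.

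First I would show that $R\subseteq S$ is faithfully flat. Since $(R,M)$ is local, a flat $R$-module $S$ is faithfully flat exactly when $MS\neq S$, the criterion $S/MS\neq 0$ being tested only at the unique maximal ideal $M$ of $R$. The hypothesis $N=MS$ gives $MS=N\subset S$, since $N$ is a (proper) maximal ideal of $S$; hence $MS\neq S$ and the flat extension $R\subseteq S$ is faithfully flat, and therefore pure by the remarks preceding Theorem~\ref{3.5}. In particular $I=IS\cap R$ for every ideal $I$ of $R$.

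Next I would verify the remaining hypothesis of Corollary~\ref{3.51}, namely that $\mathrm{Max}(S)=\{M'S\mid M'\in\mathrm{Max}(R)\}$. As $R$ is local, $\mathrm{Max}(R)=\{M\}$ and the right-hand side is $\{MS\}=\{N\}$; as $S$ is local with maximal ideal $N=MS$, the left-hand side is $\{N\}$. The two sets agree, so Corollary~\ref{3.51} applies and yields at once that $I\in\mathrm{QMax}(R)$ if and only if $IS\in\mathrm{QMax}(S)$, with $I$ and $IS$ homotypic in that case. I do not expect any genuine obstacle here: the faithful-flatness step is the entire substance, and since a local ring carries only inert and ramified quasi-maximal ideals, the homotypy assertion is already subsumed by Corollary~\ref{3.51} and needs no separate check. (One may also note that the locality condition $\mathrm{U}(R)=\mathrm{U}(S)\cap R$, i.e.\ $N\cap R=M$, is in fact forced by $N=MS$, since $MS\cap R$ is a proper ideal containing the maximal ideal $M$; so it plays no role beyond that of the hypothesis $N=MS$ already used above.)
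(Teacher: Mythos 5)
Your proposal is correct and follows essentially the same route as the paper: establish faithful flatness (hence purity), observe that $\mathrm{Max}(S)=\{MS\}=\{M'S\mid M'\in\mathrm{Max}(R)\}$, and invoke Corollary~\ref{3.51}. The only difference is cosmetic: where the paper cites \cite[Proposition 6.6.1]{EGA} for faithful flatness, you prove it directly from the criterion that a flat module over the local ring $(R,M)$ is faithfully flat once $MS\neq S$, which is exactly what $MS=N$ guarantees.
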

\begin{proof} According to \cite[Proposition 6.6.1]{EGA}, $R\subseteq S$ is faithfully flat, and then pure. Moreover, $\mathrm{Max}(S)=\{N\}=\{MS\}=\{M'S\mid M'\in\mathrm{Max}(R)\}$. Then, Corollary \ref{3.51} shows that for an ideal $I$ of $R,\ I\in\mathrm{QMax}(R)$ if and only if $IS\in\mathrm {QMax}(S)$. If these statements hold, then $I$ and $I S$ are homotypic.
\end{proof}

Most of the results we get for an extension $R\subseteq S$ and the transfer of the property for an ideal $I$ to be quasi-maximal in this section were under the assumption that $I\subseteq M\in\mathrm{Max}(R)$ is such that $MS\in\mathrm{Max}(S)$. This property does not always hold for flat epimorphisms as we see now. 

\begin{proposition}\label{3.16} Let $R\subseteq S$ be a flat epimorphism and $J$ be an ideal of $S$ such that $I:=J\cap R\in\mathrm{QMax}(R)$. Then $J=IS\in\mathrm{QMax}(S)$ and is homotypic to $I$. 
\end{proposition}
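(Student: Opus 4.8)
The plan is to verify the hypotheses of Proposition \ref{3.28} for the extension $R\subseteq S$ and the ideal $I$, and then to upgrade its conclusion. First I would recall the spectral behaviour of a flat epimorphism $R\subseteq S$ (see \cite{OP}): the map $\mathrm{Spec}(S)\to\mathrm{Spec}(R)$ is injective, and for $Q\in\mathrm{Spec}(S)$ lying over $P:=Q\cap R$ one has $R_P\cong S_Q$. Consequently, for $P\in\mathrm{Spec}(R)$ either $P$ is not in the image and $PS=S$, or $P$ lies in the image and $PS$ is the unique prime of $S$ over $P$; moreover this $PS$ is \emph{maximal} whenever $P$ is maximal, since a proper over-prime $Q\subset Q'\in\mathrm{Spec}(S)$ would satisfy $P\subseteq Q'\cap R\subset R$, forcing $Q'\cap R=P$ and hence $Q'=PS$ by injectivity. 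These are the only external inputs I would use.

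Next I would record the cheap identity $IS\cap R=I$: indeed $IS\subseteq J$ gives $IS\cap R\subseteq J\cap R=I\subseteq IS\cap R$, so in particular $IS\subset S$ because $I\neq R$. I would then run through the three types of $I$ furnished by Proposition \ref{2.70} (equivalently Theorem \ref{2.3}) and show that every $M\in\mathrm{Max}(R)\cap\mathrm{V}(I)$ satisfies $MS\in\mathrm{Max}(S)$. Since $IS\subset S$, some $N\in\mathrm{Max}(S)$ contains $IS$, and $N\cap R$ is a prime of $R$ containing $I$, hence lies in $\mathrm{V}(I)\cap\mathrm{Max}(R)$. The real point is to exclude $MS=S$: in the inert case $I=M$ and $MS=IS\subset S$; in the decomposed case $I=M_1\cap M_2=M_1M_2$, and if $M_1S=S$ then $IS=M_2S$, whence $IS\cap R$ is either $R$ or $M_2$, contradicting $IS\cap R=I=M_1\cap M_2\subset M_2$ (and symmetrically for $M_2$); in the ramified case $M=\sqrt I$ is the only prime over $I$, so $N\cap R=M$ places $M$ in the image. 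In every case $MS\subset S$, hence $MS\in\mathrm{Max}(S)$ by the recollection, and the hypothesis of Proposition \ref{3.28} is met.

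With that hypothesis in hand, Proposition \ref{3.28} yields $IS\in\mathrm{QMax}(S)$ together with $I=IS\cap R$, and asserts that $I$ and $IS$ are homotypic except possibly when $I\in\mathrm{Q_rMax}(R)$ and $IS=\sqrt[S]{IS}$, in which case $IS$ would become inert, i.e. $IS=MS\in\mathrm{Max}(S)$. I would rule this degeneration out directly: $IS=MS$ forces $I=IS\cap R=MS\cap R=M$, contradicting $I\subset M$ in the ramified case. Hence $IS$ is quasi-maximal and homotypic to $I$. It then remains to prove $J=IS$, and here I would again invoke the paper's own structure theory. Suppose $IS\subset J$. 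Since $J\neq S$ and $IS$ is quasi-maximal, the remark following Theorem \ref{2.6} forces $J\in\mathrm{Max}(S)$, so $J$ is one of the at most two maximal ideals of $S$ containing $IS$; by the description of the three types these are exactly $MS$ (inert or ramified) or $N_1,N_2$ (decomposed), each contracting to $M$, respectively $M_1$ or $M_2$, none of which equals $I$. This contradicts $J\cap R=I$, so $J=IS$.

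I expect the main obstacle to be the spectral control carried out in the second paragraph: for a general flat epimorphism $MS$ may well be all of $S$, and without the hypothesis of Proposition \ref{3.28} the conclusion genuinely fails. The decisive observation that saves the argument is that the particular maximal ideals sitting above our $I$ cannot disappear, precisely because $IS\cap R=I\neq R$ keeps $IS$ proper and thereby pins these maximal ideals inside the image of $\mathrm{Spec}(S)\to\mathrm{Spec}(R)$.
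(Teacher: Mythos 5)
Your proof is correct, but it takes a genuinely different route from the paper's. The paper starts where you finish: for a flat epimorphism every ideal of $S$ is extended from its contraction (the fact invoked as \cite[Scholium A (6)]{Pic 5} in Proposition \ref{5.11}), so $J=IS$ is the paper's very first line; it then shows directly that $I\in\mathrm{Max}(R)$ forces $J\in\mathrm{Max}(S)$, and that $I\prec M$ forces $J\prec N$ with $N=MS\in\mathrm{Max}(S)$, so $J$ is submaximal hence quasi-maximal, and finally identifies the type (decomposed via $J=N\cap M'S$, ramified via Corollary \ref{3.2}). You instead postpone $J=IS$: you verify the hypothesis of Proposition \ref{3.28} --- that $MS\in\mathrm{Max}(S)$ for every $M\in\mathrm{V}(I)\cap\mathrm{Max}(R)$ --- by a case analysis whose decisive point is that $IS\cap R=I$ keeps $IS$ (and hence the relevant $MS$) proper, apply that proposition, exclude its degenerate ramified case again via $IS\cap R=I$, and only at the end recover $J=IS$ from the classification of the maximal ideals containing $IS$. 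What each route buys: the paper's argument is shorter because it uses the strongest available external fact about flat epimorphisms at the outset, which settles $J$ versus $IS$ for free; yours needs only the spectral facts (injectivity of $\mathrm{Spec}(S)\to\mathrm{Spec}(R)$ and maximality of $MS$ for $M$ maximal in the image) and reuses the paper's internal machinery (Theorem \ref{2.3}, Proposition \ref{3.28}, the remark after Theorem \ref{2.6}), at the price of a longer case analysis.

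Two small repairs. First, in your last step the claim that the maximal ideals containing $IS$ contract to ideals ``none of which equals $I$'' fails in the inert case, where $MS$ contracts to $M=I$; but there $IS=MS$ is itself maximal, so no proper ideal of $S$ strictly contains it and the contradiction with $IS\subset J$ is immediate. Second, your recalled fact that $PS$ is the unique prime of $S$ over $P$ when $P$ is in the image is itself the extended-ideal property for prime ideals, so you have not entirely avoided that input (for maximal $P$ it does follow from injectivity together with $R_P\cong S_Q$, but that deduction deserves a line); also \cite{OP} concerns pure morphisms and is not the right source --- the paper's reference for these flat-epimorphism facts is \cite[Scholium A]{Pic 5}.
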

\begin{proof} Since $R\subseteq S$ is a flat epimorphism, we deduce that $J=IS$. Moreover, $J\neq S$ since $I\neq R$, so that there exists $N\in\mathrm{Max}(S)$ such that $J\subseteq N$. Then, $I=J\cap R\subseteq N\cap R\in\mathrm{Spec}(R)$ with either $I\in\mathrm{Max}(R)\ (*)$ or $I$ is a submaximal ideal of $R\ (**)$. Case $(*)$ implies that $J\in\mathrm{Max}(S)$. In case $(**)$ we get that $I\subset M:=N\cap R\in\mathrm{Max}(R)$. Using again the fact that $R\subseteq S$ is a flat epimorphism, we have $ N=MS\in\mathrm{Max}(S)$. Then, $J=IS$ is a submaximal ideal of $S$ with $J\subset N$ because $I\prec M$ implies that $J\prec N$. 
 
If $I\in\mathrm{Q_dMax}(R)$, then $I=M M'$, with $M'\in\mathrm{Max}(R),\ M'\neq M$. It follows that $J=IS=MM'S=NM'S\subset N$. We have neither $M'S=S$ since $J\neq N$ nor $MS=M'S$ since $M\neq M'$. Set $N':=M'S$. Then $N'\in\mathrm{Max}(S)$ with $N'\neq N$, which implies $J=N\cap N'\in \mathrm{Q_dMax}(S)$. 
 
If $I\in\mathrm{Q_rMax}(R)$, then $\sqrt[R]I=M$, giving $\sqrt[R]IS=MS\in\mathrm{Max}(S)$. Since $I=IS\cap R$, then $J=IS\in\mathrm{Q_rMax}(S)$ by Corollary \ref{3.2}. 
 
In any case, $IS\in\mathrm{QMax}(R)$ and is homotypic to $I$.
\end{proof}

\begin{proposition} \label{2.18} Let $R$ be a ring such that $R=\sum_{i= 1}^nRf_i$ and $I$ be an ideal contained in some $M\in\mathrm{Max}(R)$ where some $f_i\not\in M$. If $I\in\mathrm{QMax}(R)$, then $IR_{f_i}\in\mathrm{QMax}(R_{f_i})$. 

The converse holds if $I$ is saturated for $R_{f_i}$.

\end{proposition}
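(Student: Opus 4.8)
The plan is to pull everything back to the quotient $R/I$ through the canonical isomorphism $R_{f_i}/IR_{f_i}\cong (R/I)_{\overline{f_i}}$, where $\overline{f_i}$ denotes the image of $f_i$, and then to read off quasi-maximality from the numerical criterion of Theorem \ref{2.3}(4): an ideal is quasi-maximal exactly when its quotient has length between $1$ and $2$ and at most two primes, all of them maximal. Both implications then become statements comparing $R/I$ with its localization $(R/I)_{\overline{f_i}}$ at the single element $\overline{f_i}$.

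For the forward implication I would start from $I\in\mathrm{QMax}(R)$, so by Theorem \ref{2.3} and Remark \ref{2.4} the ring $R/I$ is Artinian with $1\le\mathrm{L}_R(R/I)\le 2$ and at most two maximal ideals, which exhaust its spectrum. Passing to $(R/I)_{\overline{f_i}}$, localization never increases length, so the length over $R_{f_i}$ is at most $2$; the primes of the localization are exactly those primes of $R/I$ avoiding $\overline{f_i}$, hence again at most two and still maximal, since a localization of an Artinian ring is Artinian. The only place the hypotheses are needed is properness: because $I\subseteq M$ with $f_i\notin M$, the maximal ideal $M/I$ survives the localization, so $IR_{f_i}\neq R_{f_i}$ and the length is at least $1$. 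Theorem \ref{2.3}(4) then gives $IR_{f_i}\in\mathrm{QMax}(R_{f_i})$. Alternatively, since $R\subseteq R_{f_i}$ is a flat epimorphism, one may invoke Proposition \ref{3.16} with $J:=IR_{f_i}$, after noting that $J\cap R$ lies between $I$ and $M$ and is quasi-maximal by Theorem \ref{2.6}.

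For the converse, the saturation hypothesis provides $I=IR_{f_i}\cap R$, hence an injection of $R$-modules $R/I\hookrightarrow R_{f_i}/IR_{f_i}=(R/I)_{\overline{f_i}}$; by Remark \ref{2.4} the target is Artinian of length at most $2$ with at most two primes, all maximal. The goal is to transport this structure back, that is, to show $R/I$ has dimension $0$, at most two maximal ideals, and length at most $2$, so that Theorem \ref{2.3}(4) applies in $R$.

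The hard part is precisely this descent. The injection by itself does not force the conclusion, because a prime of $R/I$ containing $\overline{f_i}$ becomes invisible after inverting $\overline{f_i}$, and a prime of $(R/I)_{\overline{f_i}}$ need only be maximal among $\overline{f_i}$-avoiding primes rather than maximal in $R/I$ (a one-dimensional local factor collapses to its fraction field upon inverting a uniformizer, so $\mathrm{L}_R\big((R/I)_{\overline{f_i}}\big)$ can be infinite although the length over $R_{f_i}$ is at most $2$). Thus the length comparison is not automatic, and it is exactly the saturation hypothesis that must exclude such behaviour. I would therefore spend the bulk of the argument using saturation to establish that contraction induces a bijection between $\mathrm{Max}(R_{f_i})\cap\mathrm{V}(IR_{f_i})$ and $\mathrm{Max}(R)\cap\mathrm{V}(I)$ with residue fields preserved, and that every prime of $R/I$ is already maximal; the covering condition $R=\sum_i Rf_i$ enters to ensure no prime over $I$ escapes every $\mathrm{D}(f_i)$. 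Once this prime correspondence is in hand, the simple subquotients of $(R/I)_{\overline{f_i}}$ agree as $R$- and as $R_{f_i}$-modules, the injection forces $\mathrm{L}_R(R/I)\le 2$, and Theorem \ref{2.3}(4) yields $I\in\mathrm{QMax}(R)$.
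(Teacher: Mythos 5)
Your forward implication is correct, and it takes a genuinely different route from the paper's. You read quasi-maximality off the numerical criterion of Theorem \ref{2.3}(4) after identifying $R_{f_i}/IR_{f_i}$ with $(R/I)_{\overline{f_i}}$: length and the number of primes can only drop under localization, and properness survives because $f_i\notin M$; the alternative through Proposition \ref{3.16} also works. The paper instead argues type by type: inert and decomposed ideals extend immediately, while in the ramified case it first proves the saturation $I=IR_{f_i}\cap R$ by hand (using that $I$ is $M$-primary and $f_i\notin M$) and then applies Corollary \ref{3.2}. Your version is more uniform; the paper's tracks the type of $IR_{f_i}$ explicitly.

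The converse, however, is a genuine gap. You correctly isolate the obstruction --- a maximal ideal of $R_{f_i}$ need not contract to a maximal ideal of $R$ --- but the program you announce (a contraction bijection between $\mathrm{Max}(R_{f_i})\cap\mathrm{V}(IR_{f_i})$ and $\mathrm{Max}(R)\cap\mathrm{V}(I)$, and the claim that every prime of $R/I$ is maximal) is never carried out, and in fact it cannot be: those claims are false under exactly the stated hypotheses, saturation and the covering condition included. Take $A:=\mathbb{Z}_{(5)}$, $R:=A\times A$, $f_1:=(5,1)$, $f_2:=(1,5)$, so that $R=Rf_1+Rf_2$; take $M:=A\times 5A$, so $f_1\notin M$, and $I:=0\times 5A\subseteq M$. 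Then $R_{f_1}=\mathbb{Q}\times A$ and $IR_{f_1}=(0\times A)\cap(\mathbb{Q}\times 5A)$ is the intersection of the two maximal ideals of $R_{f_1}$, hence $IR_{f_1}\in\mathrm{Q_dMax}(R_{f_1})$; moreover $I$ is saturated, since the preimage of $0\times 5A$ under $R\to\mathbb{Q}\times A$ is $0\times 5A=I$. Yet $I\notin\mathrm{QMax}(R)$: the chain $I\subset 0\times A\subset 5A\times A$ shows that $I$ is strictly contained in a non-maximal ideal (Theorem \ref{2.6}); equivalently $R/I\cong\mathbb{Z}_{(5)}\times\mathbb{F}_5$ has infinite length, and Proposition \ref{2.14} says the same. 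The culprit is the maximal ideal $0\times A$ of $R_{f_1}$, which contracts to a non-maximal prime of $R$; saturation does not exclude this, and the covering condition is irrelevant to it --- the bad prime does not ``escape every $\mathrm{D}(f_j)$'', it simply avoids $f_1$ without being maximal in $R$.

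You should also know that the step you leave open is exactly the step the paper's own proof asserts without justification: its converse disposes of the inert and decomposed cases with the single sentence that if $IR_{f_i}$ is a maximal ideal or an intersection of two maximal ideals of $R_{f_i}$ then ``the same holds for $I$'', which the example above refutes in the decomposed case. The only part of the converse proved there in detail --- and the only non-inert part that is true in general --- is the ramified case: one lifts a generator $t$ of $M':=MR_{f_i}$ over $I':=IR_{f_i}$ to some $x\in M$, and uses that $I+Rx$ is $M$-primary to absorb the denominators $f_i^k$, getting $M=I+Rx$, hence $M\succ I$ and $I\in\mathrm{Q_rMax}(R)$ by Theorem \ref{2.6}. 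If you complete your sketch, that computation is the one you will need, and inert plus ramified is the most that your (or any) argument can recover.
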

\begin{proof} Since $R=\sum_{i=1}^nRf_i$, for any $M\in\mathrm{Max}(R)$, there is some $f_i\not\in M$. Let $I$ be an ideal of $R,\ I\subseteq M$ for some $M\in\mathrm{Max}(R)\cap\mathrm{D}_R(f_i)$. Then, $MR_{f_i}\in\mathrm{Max}(R_{f_i})$ and $M=MR_{f_i}\cap R$. In particular, $f_i\not\in I$. 

Assume first that $I\in\mathrm{QMax}(R)$.  

If $I\in\mathrm{Q_iMax}(R)\cup\mathrm{Q_dMax}(R)$,  so is $IR_{f_i}$ because either maximal or an intersection of two maximal ideals of $R_{f_i}$.  

If $I\in\mathrm{Q_rMax}(R)$, according to Definition \ref{2.2}, $M=\sqrt[R]I$ and $I$ is $M$-primary. We claim that $I=IR_{f_i}\cap R$. We have $I\subseteq IR_{f_i}\cap R$. Let $x\in IR_{f_i}\cap R$. There exist $a\in I,\ b\in R$ and a positive integer $n$ such that $x=a/f_i^n=b/1$. Therefore $af_ i^m=bf_i^k$ for some positive integers $m$ and $k$, so that $bf_i^k\in I$ with $f_i^k\not\in M$, giving $b$ and $x\in I$. Then, $I=IR_{f_i}\cap R$ and an application of Corollary \ref{3.2} shows that $IR_{f_i}\in\mathrm {Q_rMax}(R_{f_i})$.

Conversely, assume that $I$ is saturated for $R_{f_i}$, that is $I=IR_{f_i}\cap R$ and $IR_{f_i}\in\mathrm{QMax}(R_{f_i})$. Since $I\subseteq M$, we have $IR_{f_i}\subseteq MR_{f_i}$, with $MR_{f_i}\in\mathrm{Max}(R_{f_i})$. Set $I':=IR_{f_i},\ M':=MR_{f_i}$ and $R':=R_{f_i}$.

If $I'\in\mathrm{Q_iMax}(R')\cup\mathrm{Q_dMax}(R')$, that is either a maximal ideal or an intersection of two maximal ideals of $R'$, the same holds for $I$ which is then quasi-maximal. 

If $I'\in\mathrm{Q_rMax}(R')$, we have $M'^2\subseteq I'\subset M'$ giving $M^2\subseteq M'^2\cap R\subseteq I=I'\cap R\subset M=M'\cap R$, so that $I$ is $M$-primary by \cite[Corollary 1, p.153]{ZS}. To show that $I\in\mathrm{Q_rMax}(R)$, it is enough to show that $M\succ I$ by Theorem \ref{2.6}. Since $M'\succ I'$, there exists some $t\in R'$ such that $M'=I'+R' t$, with $t\in M'$. Then $t=x/f_i^k$ for some integer $k$ and $x\in M$. We claim that $M=I+Rx$ with $x\in M\setminus I$. First, $x\not\in I$, otherwise we should have $M'=I'$. Of course, $I\subseteq I+Rx\subseteq M\ (*)$. Let $m\in M$, so that $m/1\in M'$ giving $m/1=(a+bx)/f_i^n$, with $a\in I,\ b\in R$ and some integer $n\geq 0$. Then, there exist some positive integers $ t$ and $s$ such that $f_i^tm=af_i^s+bf_i^sx\in I+Rx$, an $M$-primary ideal by $(*)$. Since $f_i^t\not\in M$, we infer that $m\in I+Rx$, so that $M =I+Rx$. Then, $M\succ I$ by \cite[Corollary 2, p.237]{ZS} and $I\in\mathrm{Q_rMax}(R)$. 
\end{proof} 

\begin{proposition} \label{3.161} Let $R$ be a ring. An ideal $I$ of $R$ is in $\mathrm{QMax}(R)$ if and only if $\mathrm{V}(I)\subseteq\mathrm{Max}(R),\ IR_M\in\mathrm{QMax}(R_M)$ for any $M\in\mathrm{V}(I)$ and $|\mathrm{V}(I)|\leq 2$ with $IR_M\in\mathrm{Max}(R_M)$ for any $M\in\mathrm{V}(I)$ when $|\mathrm{V}(I)|=2$, in which case $I$ and $IR_M$ are homotypic when $I\in\mathrm{Q_iMax}(R)\cup\mathrm{Q_rMax}(R)$. If $I\in\mathrm{Q_dMax}(R)$, then $IR_M\in\mathrm{Q_iMax(R_M)}$. 
 \end{proposition}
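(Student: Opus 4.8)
The plan is to prove both implications by splitting according to the value of $|\mathrm{V}(I)|$, which by Theorem \ref{2.3} equals the number of maximal ideals of $R/I$ and records the type of a quasi-maximal ideal. Throughout, the key technical device is the flatness of localization, giving the isomorphism $R_M/IR_M\cong(R/I)_M$ for every $M\in\mathrm{V}(I)$, together with Theorem \ref{2.3} in the form ``$J\in\mathrm{QMax}(A)$ iff $0\in\mathrm{QMax}(A/J)$''. I may assume $I$ is proper, so that $\mathrm{V}(I)\neq\emptyset$.

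For the direct implication, assume $I\in\mathrm{QMax}(R)$. Theorem \ref{2.3} gives $\mathrm{V}(I)\subseteq\mathrm{Max}(R)$ and puts $I$ into exactly one of the three types, with $|\mathrm{V}(I)|=2$ precisely in the decomposed case. If $I$ is inert or ramified, then $\mathrm{V}(I)=\{M\}$ is a single point, so $R/I$ is local with maximal ideal $M/I$ and hence $(R/I)_M\cong R/I$; thus $R_M/IR_M\cong R/I$ and $IR_M$ is quasi-maximal of the same type as $I$ (a field stays a field, a SPIR of index $2$ stays one by Proposition \ref{2.5}), which gives homotypy and $IR_M\in\mathrm{Q_iMax}(R_M)$ resp. $\mathrm{Q_rMax}(R_M)$. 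If $I=M_1\cap M_2$ is decomposed, I would use $M_jR_{M_i}=R_{M_i}$ for $j\neq i$ to get $IR_{M_i}=M_iR_{M_i}\in\mathrm{Max}(R_{M_i})=\mathrm{Q_iMax}(R_{M_i})$, which verifies the stronger localization requirement imposed when $|\mathrm{V}(I)|=2$ and establishes the final assertion about $\mathrm{Q_dMax}$.

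For the converse, I treat the two possible values of $|\mathrm{V}(I)|$ allowed by the hypothesis. When $|\mathrm{V}(I)|=1$, say $\mathrm{V}(I)=\{M\}$, the ring $R/I$ is local (its spectrum is a single maximal point because $\mathrm{V}(I)\subseteq\mathrm{Max}(R)$), so again $R_M/IR_M\cong R/I$; the hypothesis $IR_M\in\mathrm{QMax}(R_M)$ reads $0\in\mathrm{QMax}(R/I)$, and Theorem \ref{2.3} returns $I\in\mathrm{QMax}(R)$, inert or ramified according to $R/I$, with $IR_M$ homotypic. When $|\mathrm{V}(I)|=2$, say $\mathrm{V}(I)=\{M_1,M_2\}$ with $IR_{M_i}=M_iR_{M_i}$, I would prove $I=M_1\cap M_2$ by the local-global principle: I already have $I\subseteq M_1\cap M_2$, and I would check that $IR_N=(M_1\cap M_2)R_N$ at every $N\in\mathrm{Max}(R)$ --- at $N=M_i$ both sides equal $M_iR_{M_i}$, and at $N\notin\{M_1,M_2\}$ both sides equal $R_N$ since neither $I$ nor $M_1\cap M_2$ lies in $N$. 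Hence $I=M_1\cap M_2\in\mathrm{Q_dMax}(R)$ and $IR_{M_i}=M_iR_{M_i}\in\mathrm{Q_iMax}(R_{M_i})$.

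The step I expect to be the main obstacle is the gluing in the case $|\mathrm{V}(I)|=2$ of the converse: one must pass from the purely local maximality statements $IR_{M_i}\in\mathrm{Max}(R_{M_i})$ to the global identity $I=M_1\cap M_2$, and this is exactly where the hypothesis $|\mathrm{V}(I)|\leq 2$ together with the strengthened condition ``$IR_M\in\mathrm{Max}(R_M)$ when $|\mathrm{V}(I)|=2$'' is indispensable --- it rules out, for instance, an ideal that is ramified at $M_1$ and inert at $M_2$, which would make $R/I$ of length $3$ and thus destroy quasi-maximality by Theorem \ref{2.3}. Verifying that the covering and primary behaviour in the ramified case survives localization (so that the type is genuinely preserved) is the other place where care is needed, but it is handled cleanly by the isomorphism $R_M/IR_M\cong R/I$ rather than by a direct manipulation of covers.
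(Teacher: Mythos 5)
Your proof is correct, but it takes a genuinely different route from the paper's for the bulk of the argument. The paper handles the forward direction by invoking its Max-upper machinery (Proposition \ref{3.52} and Remark \ref{3.54}) applied to the extension $R\subseteq R_M$, and it handles the converse in the case $|\mathrm{V}(I)|=1$ via classical primary-ideal theory: $\sqrt I=M$ forces $I$ to be $M$-primary, and then the Zariski--Samuel correspondence between $M$-primary ideals of $R$ and $MR_M$-primary ideals of $R_M$ transfers the covering relation $IR_M\prec MR_M$ back to $I\prec M$. You replace both of these with a single elementary observation: whenever $\mathrm{V}(I)=\{M\}$ the ring $R/I$ is local with unique prime $M/I$, so $R_M/IR_M\cong(R/I)_{M/I}\cong R/I$, and then Theorem \ref{2.3} in the form ``$J\in\mathrm{QMax}(A)$ iff $0\in\mathrm{QMax}(A/J)$'' moves quasi-maximality back and forth, while the type travels along because it is an invariant of the isomorphism class of the quotient (field in the inert case, SPIR of index $2$ in the ramified case by Proposition \ref{2.5}, product of two fields in the decomposed case). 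This buys self-containedness --- no Max-uppers, no external primary-ideal correspondence, and no need for the contraction identity $I=IR_M\cap R$ at all --- whereas the paper's route is economical in reusing results it has already established and makes the primary-ideal bijection explicit. Your treatment of the decomposed direction ($IR_{M_i}=M_iR_{M_i}$ via $M_jR_{M_i}=R_{M_i}$) and of the gluing step when $|\mathrm{V}(I)|=2$ (checking $IR_N=(M_1\cap M_2)R_N$ at every $N\in\mathrm{Max}(R)$ and concluding $I=M_1\cap M_2$ by the local-global principle) coincides with the paper's argument, so the two proofs agree exactly where the statement's extra hypothesis is doing its work, and you correctly identify that this hypothesis is what excludes mixed local behaviour.
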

\begin{proof} Assume that $I\in\mathrm{QMax}(R)$ and let $M\in\mathrm {V}(I)\subseteq\mathrm{Max}(R)$. Then, $M=MR_M\cap R$ with $\{MR_M \}=\mathrm{Max}(R_M)$. According to Proposition \ref{3.52} and Remark \ref{3.54}, we get that $IR_M\in\mathrm{QMax}(R_M)$ and is homotypic to $I$ when $I\in\mathrm{Q_iMax}(R)\cup\mathrm{Q_rMax}(R)$. If $I\in\mathrm{Q_dMax}(R)$ is of the form $I=M_1\cap M_2=M_1M_2,\ M_i\in\mathrm{Max}(R)$ for $i\in\mathbb N_2$, then $M\in\{M_1,M_2\}$. Assume, for example, that $M=M_1$, so that $M_2R_M=R_M,\ IR_M=MR _M\in\mathrm{Q_iMax}(R_M)$ and $|\mathrm{V}(I)|\leq 2$ by Theorem \ref{2.3}.  

Conversely, assume that $\mathrm{V}(I)\subseteq\mathrm{Max}(R),\ IR_M\in\mathrm{QMax}(R_M)$ for any $M\in\mathrm{V}(I)$ and $|\mathrm{V}(I)|\leq 2$ with $IR_M\in\mathrm{Max}(R_M)$ for any $M\in\mathrm{V}(I)$ when $|\mathrm{V}(I)|= 2$.

If $|\mathrm{V}(I)|=1$, then $I$ is contained in only one maximal ideal $M$ of $R$. Hence, $\sqrt I=M$ and $I$ is $M$-primary by \cite[Corollary 1, p.153]{ZS}.

If $IR_M\in\mathrm{Q_iMax}(R_M)$, then $IR_M=MR_M$ gives by \cite[Corollary 2, p.225]{ZS} that $IR_M\cap R=MR_M\cap R=M=I\in\mathrm{Q_iMax}(R)$.
  
If $IR_M\in\mathrm{Q_rMax}(R_M)$, we claim that $I$ is submaximal. Of course, $I\neq M$ because $IR_M\neq MR_M$ since the previous proof would give that $IR_M\in\mathrm {Q_iMax}(R_M)$. Assume that there exists an ideal $J$ of $R$ such that $I\subset J\subset M\ (*)$. Therefore, $J$ is also $M$-primary and $(*)$ would imply by \cite[Corollary 2, p.225]{ZS} that $IR_M\subset JR_M\subset MR_M$, a contradiction since $IR_M$ is submaximal. Then, $I\in\mathrm{Q_rMax}(R)$.

If $|\mathrm{V}(I)|=2$, then $IR_M=MR_M\in\mathrm{Max}(R_M)$ for any $M\in\mathrm{V}(I)$ and $I$ is contained in two maximal ideals $M$ and $ M'$. It follows that $I\subseteq J:=M\cap M'$, with $IR_M=JR_M,\ IR_{M'}= JR_{M'}$ and $IR_{M''}=JR_{M''}=R_{M''}$ for any $M''\in\mathrm{Max}(R)\setminus\{M,M'\}$ which implies $I=J$ and $I\in\mathrm{Q_dMax}(R)$.
\end{proof}

\begin{remark} \label{3.18} The converse part of Proposition \ref{3.161} does not hold in general if $I$ is contained in more than one maximal ideal, and never holds if $I$ is contained in at least three maximal ideals. This last result is obvious. If $I\subseteq M\cap M'$ for $M,M'\in\mathrm{Max}(R)$ such that $IR_M\in\mathrm{QMax}(R_M)$, the only possibility for $I$ to be in $\mathrm{QMax}(R)$ is that $I\in\mathrm{Q_dMax}(R)$ with $I=M\cap M'$. 
\end{remark} 

\section{Quasi-maximal ideals and  finite minimal  extensions} 

\subsection{Quasi-maximal ideals through FCP  extensions}

Before looking at minimal extensions, we consider a finite extension. A {\it chain} $\mathcal C$ of subextensions of an extension $R\subseteq S$ is a family of elements of $[R,S]$ that are pairwise comparable. We say that the extension $R\subseteq S$ has FCP (or is an FCP extension) if each chain in $[R,S]$ is finite, or equivalently, its lattice $[R,S]$ is Artinian and Noetherian. An FCP extension is finitely generated, and (module) finite if integral.  
 
If $R\subseteq S$ has FCP, any maximal (necessarily finite) chain of $[R, S]$, of the form $R=R_0\subset R_1\subset\cdots\subset R_{n-1}\subset R _n=S$, with {\it length} $n<\infty$, results from juxtaposing $n$ minimal extensions $R_i\subset R_{i+1},\ 0\leq i\leq n-1$. 
 
Minimal extensions were introduced by Ferrand-Olivier \cite{FO}. An extension $R\subset S$ is called {\it minimal} if $[R,S]=\{R,S\}$, and in this case, $\mathrm{Supp}_R(S/R)=\{M\}$ for some $M\in\mathrm{Max}(R)$ by \cite[Th\'eor\`eme 2.2]{FO}. A minimal extension is either a flat epimorphism or a strict monomorphism, in which case it is (module) finite. We recall that a {\it finite} extension $R\subset S$ is an extension such that $S$ is a finitely generated $R$-module. Three types of minimal integral extensions exist, characterized in the next theorem, (a consequence of the fundamental lemma of Ferrand-Olivier), whence there are four types of minimal extensions, mutually exclusive. From now on, if $R\subset S$ is a minimal  finite extension, we say that $R\subset S$ is in $\mathcal M$.

\begin{theorem}\label{minimal+} \cite [Theorems 2.1, 2.2 and 2.3]{DPP2}. An extension $R\subset T$ is in $\mathcal M$ if and only if $M:=(R:T)\in\mathrm{Max}(R)$ and one of the following three statements holds:

\noindent (a) {\bf inert case}: $M\in\mathrm{Max}(T)$ and $R/M\to T/M$ is a minimal field extension.

\noindent (b) {\bf decomposed case}: There exist $M_1,M_2\in\mathrm {Max}(T)$ such that $M=M_1\cap M_2$ and the natural maps $R/M\to T/M _1$ and $R/M\to T/M_2$ are both isomorphisms; or, equivalently, there exists $q\in T\setminus R$ such that $T=R[q],\ q^2-q\in M$, and $Mq\subseteq M$.

\noindent (c) {\bf ramified case}: There exists $M'\in\mathrm{Max}(T)$ such that ${M'}^2\subseteq M\subset M',\ [T/M:R/M]=2$, and the natural map $ R/M\to T/M'$ is an isomorphism, or, equivalently, there exists $q\in T\setminus R$ such that $T=R[q],\ q^2\in M$ and $Mq\subseteq M$.
 In particular, $\mathrm{c}(T/M')=\mathrm{c}(T/M)$.

In each of the above cases, $\mathrm{Supp}(T/R)=\{M\}$.
\end{theorem}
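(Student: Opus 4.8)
The plan is to make the conductor $M:=(R:T)$ the pivot and reduce the whole classification to a minimal extension of a field. I would treat the direct implication first, so assume $R\subset T$ lies in $\mathcal M$. By the recalled Ferrand--Olivier fact \cite[Th\'eor\`eme 2.2]{FO}, $\mathrm{Supp}(T/R)=\{M_0\}$ for a single $M_0\in\mathrm{Max}(R)$. The subring $R+M_0T$ belongs to $[R,T]$, hence equals $R$ or $T$ by minimality; were it $T$, then $T/R=M_0(T/R)$, and localizing at $M_0$ (where $(T/R)_{M_0}\neq0$ is a finite $R_{M_0}$-module) would violate Nakayama. Thus $M_0T\subseteq R$, i.e. $M_0\subseteq(R:T)\subsetneq R$, so $M:=(R:T)=M_0\in\mathrm{Max}(R)$. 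Being the conductor, $M$ is an ideal of $T$ as well, whence $B\mapsto B/M$ is an order isomorphism $[R,T]\xrightarrow{\sim}[k,A]$, where $k:=R/M$ is a field and $A:=T/M$; in particular $R\subset T$ is minimal exactly when the finite $k$-algebra extension $k\subset A$ is minimal.

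Second, I would classify the minimal finite $k$-algebra extensions $k\subset A$. For any $\bar q\in A\setminus k$ the subring $k[\bar q]$ is an intermediate ring distinct from $k$, so $A=k[\bar q]$ is monogenic. If $A$ is a field, minimality says precisely that $k\subset A$ is a minimal field extension (case (a)). If $A$ carries a nontrivial idempotent $e$, then $k[e]\cong k\times k$ forces $A=k\times k$ (case (b)). Otherwise $A$ is local but not a field, with nonzero nilpotent maximal ideal $\mathfrak n$; since $\mathfrak n\cap k=0$, the intermediate subrings $k+\mathfrak n$ and $k+\mathfrak n^2$ together with minimality and Nakayama force, in turn, $A/\mathfrak n=k$, then $\mathfrak n^2=0$, then $\dim_k\mathfrak n=1$, so $A\cong k[X]/(X^2)$ (case (c)). Pulling these back through $A=T/M$ recovers the stated data on $T$: the two maximal ideals $M_1,M_2$ over $M$ with $T/M_i\cong k$ in the decomposed case, and the single $M'$ with $(M')^2\subseteq M\subset M'$, $T/M'\cong k$ and $[T/M:k]=2$ in the ramified case, where $T/M'\cong k$ and $T/M=k[\epsilon]$ share the characteristic of $k$, giving $\mathrm c(T/M')=\mathrm c(T/M)$. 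Since $M=(R:T)=\mathrm{Ann}_R(T/R)$ is maximal, $\mathrm{Supp}(T/R)=\mathrm V_R(M)=\{M\}$.

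The two ``or equivalently'' reformulations come from lifting the canonical generator of $A$: a lift $q\in T$ of the idempotent (resp. nilpotent) generator satisfies $q^2-q\in M$ (resp. $q^2\in M$) and $T=R[q]$, the latter because $A=k[\bar q]$ gives $T=R[q]+M=R[q]$ as $M\subseteq R$; the relation $Mq\subseteq M$ is automatic since $M$ is a $T$-ideal. For the converse, assume $M:=(R:T)\in\mathrm{Max}(R)$ and one of (a)--(c). In each case $M$ is a common ideal of $R$ and $T$ (given directly, or deduced from $Mq\subseteq M$ and $T=R[q]$), so $T/R\cong A/k$ with $A\in\{K,\,k\times k,\,k[X]/(X^2)\}$ finite over $k$; hence $T/R$ is a finite $R$-module and $R\subset T$ is module finite, while the interval isomorphism $[R,T]\cong[k,A]$ and the minimality of each such $A$ over $k$ show $R\subset T\in\mathcal M$.

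I expect the main obstacle to be the first step: establishing that the conductor is maximal and that it is a common ideal inducing the interval isomorphism. This is the genuine Ferrand--Olivier input, where module finiteness, the support computation and Nakayama's lemma must be combined correctly. Once $R\subset T$ has been transferred to $k\subset A$, the remaining classification is elementary linear algebra over a field; the only place there calling for care is the local non-field subcase, where one must check that the successive subrings $k+\mathfrak n$ and $k+\mathfrak n^2$ really are proper and intermediate, so that minimality collapses $A$ all the way down to the ring of dual numbers.
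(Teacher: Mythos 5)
Your proposal is correct, but it follows a genuinely different route from the paper, which does not actually prove the classification: the statement is quoted from \cite[Theorems 2.1, 2.2 and 2.3]{DPP2} (themselves resting on the Ferrand--Olivier fundamental lemma), and the only thing the paper proves is the final claim of (c), namely $\mathrm{c}(T/M')=\mathrm{c}(T/M)$, deduced from the isomorphism $R/M\cong T/M'$ together with \cite[Proposition 4.7]{DPPS} applied to $R/M\subset T/M$. You instead give a self-contained argument: minimality, Nakayama and the Ferrand--Olivier support fact force the conductor $M=(R:T)$ to equal the unique maximal ideal supporting $T/R$; the interval isomorphism $[R,T]\cong[k,A]$ with $k:=R/M$, $A:=T/M$ reduces the problem to classifying minimal finite $k$-algebras; and the trichotomy (field / nontrivial idempotent / local non-field) yields exactly a minimal field extension, $k\times k$, and $k[X]/(X^2)$, which pull back to cases (a), (b), (c), with the $q$-reformulations obtained by lifting the canonical generator of $A$. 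Your treatment of the characteristic equality is then immediate (both $T/M'$ and $T/M$ contain a copy of $k$ with the same unit), which is the same underlying observation as the paper's but avoids invoking \cite{DPPS}. What the citation buys the paper is brevity and reliance on established literature; what your argument buys is self-containedness, and it makes transparent that the whole classification is elementary once the conductor trick and the interval isomorphism are in place. Two facts you use implicitly are standard and harmless, but would deserve a line in a final write-up: a minimal field extension is automatically finite (it is monogenic and algebraic), and a finite $k$-algebra without nontrivial idempotents is local.
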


\begin{proof} We prove the last result of (c). Since $R/M\cong T/M'$, it follows that $\mathrm{c}(T/M')=\mathrm{c}(R/M)$. According to \cite[Proposition 4.7]{DPPS}, $R/M\subset T/M$ is in $\mathcal M$ and $\mathrm{c}(T/M)=\mathrm{c}(R/M)$ gives $\mathrm{c}(T/M')=\mathrm{c}(T/M)$.
\end{proof}

\begin{remark}\label{4.01}(1) The element $q\in T$ gotten in (b) (resp. (c)) of Theorem \ref{minimal+} satisfies either $q\in M_1$ or $q\in M_2$ (resp. $q\in M'$) with either $M_1=M+Rq$ or $M_2=M+Rq$ (resp. $ M'=M+Rq$). This reminds for $M$ the definition of a decomposed (resp. ramified) ideal in $T$. The analogy will be gotten in Theorem \ref{4.1}.

(2) To shorten the writing, for an extension $R\subset S$ which is in $\mathcal M$, we say that $R\subset S$ is in $\mathcal M_i$  (resp. $\mathcal M_d,\ \mathcal M_r$) if the extension is minimal inert, (resp. decomposed, ramified).

(3) Assume that $R\subset S$ is in $\mathcal M_d$ and set $M:=(R:S)$. There exist $M_1,M_2\in\mathrm{Max}(S)$ such that $M=M_1\cap M_2 $. Then, $M\prec M_1$ in $S$, with $M=M_1\cap R$. Setting $I':=M,\ J':= M_1,\ I:=I'\cap R=M$ and $J:=J'\cap R=M$, we get that $I'\prec J'$ in $S$
 and $I=J$. Then, Proposition \ref{3.35} cannot apply since $\mathrm{V}(I' \cap R )\cap\mathrm{Supp}(S/R)\neq\emptyset$.
\end{remark}
 
For any extension $R\subseteq S$, the {\it length} of $[R,S]$, denoted by $\ell[R,S]$, is the supremum of the lengths of chains of $R$-subalgebras of $S$. It should be noted that if $R\subseteq S$ has FCP, then there {\it does} exist some maximal chain of $R$-subalgebras of $S$ with length $\ell[R,S]$ \cite[Theorem 4.11]{DPP3}.

\begin{proposition}\label{3.20} A finite extension $R\subseteq S$ such that  $(R:S)\in\mathrm{QMax}(R)$  has FCP.
\end{proposition}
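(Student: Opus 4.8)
The plan is to pass to the quotient by the conductor $C:=(R:S)$ and thereby reduce the statement to a finiteness (finite length) statement about a single module.

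First I would observe that $C$ is an ideal not only of $R$ but also of $S$. Indeed, for $x\in C$ and $s\in S$ one has $sx=xs\in xS\subseteq R$, so $sx\in R$, and moreover $(sx)S=x(sS)\subseteq xS\subseteq R$; hence $sx\in C$ and $C$ is an ideal of $S$. Since $C\subseteq R\subseteq T\subseteq S$ for every $T\in[R,S]$, the ideal $C$ is an ideal of each such intermediate ring, and the correspondence theorem yields a lattice isomorphism $[R,S]\cong[R/C,\,S/C]$. It therefore suffices to prove that $R/C\subseteq S/C$ has FCP.

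Next, since $C=(R:S)\in\mathrm{QMax}(R)$ by hypothesis, condition (5) of Theorem \ref{2.3} (see also Remark \ref{2.4}(1)) guarantees that $R/C$ has only finitely many ideals; in particular $R/C$ is an Artinian ring. On the other hand, the extension $R\subseteq S$ is finite, so $S$ is a finitely generated $R$-module and consequently $S/C$ is a finitely generated $R/C$-module. A finitely generated module over an Artinian ring has finite length, so $\mathrm{L}_{R/C}(S/C)<\infty$; equivalently $\mathrm{L}_R(S/C)<\infty$, the $R$-submodules and $R/C$-submodules of $S/C$ coinciding because $C$ acts as zero.

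Finally I would note that every intermediate ring in $[R/C,\,S/C]$ is in particular an $R/C$-submodule of $S/C$, so any strictly increasing chain in $[R/C,\,S/C]$ gives a strictly increasing chain of submodules of $S/C$. Because $S/C$ has finite length, every such chain of submodules is finite, and a fortiori every chain in the sublattice $[R/C,\,S/C]$ is finite; transporting this back through the isomorphism $[R,S]\cong[R/C,\,S/C]$ shows that every chain in $[R,S]$ is finite, i.e. $R\subseteq S$ has FCP. The main point deserving care — though it is routine — is the implication ``$R/C$ Artinian and $S/C$ finitely generated $\Rightarrow$ $S/C$ of finite length,'' together with the preliminary verification that $C$ is genuinely an ideal of $S$, which is what makes the quotient $S/C$ and the lattice isomorphism available.
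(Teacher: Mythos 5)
Your proof is correct, and it differs from the paper's in an instructive way. The paper's own argument shares your first step — it invokes Theorem \ref{2.3} to conclude that $R/(R:S)$ has at most three proper ideals, hence is Artinian — but then it finishes in one stroke by citing the general characterization of FCP for integral extensions (\cite[Theorem 4.2]{DPP2}: an integral extension $R\subseteq S$ has FCP if and only if $S$ is a finitely generated $R$-module and $R/(R:S)$ is Artinian, a finite extension being automatically integral). You instead prove the relevant implication of that black-box result from scratch: you check that the conductor $C$ is a common ideal of $R$ and $S$ (hence of every $T\in[R,S]$), obtain the lattice isomorphism $[R,S]\cong[R/C,S/C]$, observe that $S/C$ is a finitely generated module over the Artinian ring $R/C$ and therefore has finite length, and conclude that every chain of intermediate rings, being a chain of $R/C$-submodules of $S/C$, has at most $\mathrm{L}_{R/C}(S/C)+1$ terms. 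All of these steps are sound. What the paper's route buys is brevity and a connection to the general FCP theory (the cited theorem is an equivalence, so it gives more than is needed here); what your route buys is a self-contained, elementary proof that makes visible exactly why the Artinian hypothesis on the conductor quotient forces FCP, without any external machinery beyond Hopkins' theorem (finitely generated modules over Artinian rings have finite length).
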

\begin{proof} According to Theorem \ref{2.3}, $R/(R:S)$ has at most three proper ideals, and is then an Artinian ring. Therefore $R\subseteq S$ is integral and has FCP by \cite[Theorem 4.2]{DPP2}.
\end{proof}

\begin{proposition}\label{3.1} Let $R\subseteq S$ be a finite extension, $J\in\mathrm{QMax}(S)$ and  $I:=J\cap R$.
\begin{enumerate}
\item If $J\in\mathrm{Q_iMax}(S)$, then $I\in\mathrm{Q_iMax}(R)$. 
 \item Let $J\in\mathrm{Q_dMax}(S)$, so that $J=N_1\cap N_2$ with $N_1,N_2\in\mathrm{Max}(S),$
 
 \noindent$ N_1\neq N_2$. Then: 
\begin{enumerate}
\item $N_1\cap R=N_2\cap R$ implies that $I\in\mathrm{Q_iMax}(R)\cup\mathrm{Supp}(S/R)$. 
\item $N_1\cap R\neq N_2\cap R$ implies that $I\in\mathrm{Q_dMax}(R)$. 
\end{enumerate}
\item If $J\in\mathrm{Q_rMax}(S)$, let $N:=\sqrt[S]J$. Then, $R/I$ is a local ring whose maximal ideal is  $M': =(N\cap R)/I$. 

If, in addition, $M'$ is principal, then $I\in\mathrm{Q_rMax}(R)$. 
\end{enumerate}
\end{proposition}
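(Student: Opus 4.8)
The plan is to treat the three cases for $J$ separately, the first two being straightforward consequences of integrality (recall that a finite extension is integral, so $R\subseteq S$ satisfies LO and INC, and in particular the contraction of a maximal ideal of $S$ is maximal in $R$). For (1), if $J\in\mathrm{Max}(S)$ then $I=J\cap R$ is the contraction of a maximal ideal, hence maximal in $R$, that is $I\in\mathrm{Q_iMax}(R)$. For (2), I would write $M_i:=N_i\cap R$, each maximal in $R$, and observe $I=(N_1\cap R)\cap(N_2\cap R)=M_1\cap M_2$. In case (b), $M_1\neq M_2$ gives directly $I=M_1\cap M_2\in\mathrm{Q_dMax}(R)$ by Definition \ref{2.2}. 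In case (a), $M_1=M_2=:M$ yields $I=M\in\mathrm{Q_iMax}(R)$; moreover the two distinct primes $N_1\neq N_2$ both lie over $M$, so Lemma \ref{3.46} forces $M\in\mathrm{Supp}(S/R)$ (otherwise the prime over $M$ would be unique). Hence $I$ lies in $\mathrm{Q_iMax}(R)\cup\mathrm{Supp}(S/R)$.

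For the ramified case (3), the first task is to show $R/I$ is local with maximal ideal $M'=(N\cap R)/I$. I would use the elementary identity $\sqrt[R]{J\cap R}=\sqrt[S]J\cap R$ (an element $x\in R$ satisfies $x^n\in J\cap R$ for some $n$ precisely when $x^n\in J$): since $J$ is $N$-primary we have $\sqrt[S]J=N$, whence $\sqrt[R]I=N\cap R=:M$, which is maximal in $R$ by integrality. As the radical of $I$ is the single maximal ideal $M$, every prime containing $I$ must contain $\sqrt[R]I=M$ and hence equal $M$; thus $R/I$ is local with maximal ideal $M'=M/I$, and $I$ is $M$-primary.

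The remaining implication, that $M'$ principal forces $I\in\mathrm{Q_rMax}(R)$, rests on two observations. First, $N^2\subseteq J$ gives $M^2=(N\cap R)^2\subseteq N^2\cap R\subseteq J\cap R=I$, so $M'^2=0$ in $R/I$. Second, writing $M'=(R/I)\bar x$, for any ideal $J'$ with $0\subseteq J'\subseteq M'$ the ideal $\mathfrak a:=\{r\mid r\bar x\in J'\}$ contains $M'$ (because $M'\bar x\subseteq M'^2=0\subseteq J'$), hence equals $M'$ or $R/I$, forcing $J'=0$ or $J'=M'$. Thus, provided $M'\neq0$, the only proper ideals of $R/I$ are $0$ and $M'$, so $\mathrm o(I)=2$ and Theorem \ref{2.3} (equivalently Proposition \ref{2.5}) gives $I\in\mathrm{Q_rMax}(R)$. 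The main obstacle is precisely this last step: the hypothesis $M'^2=0$ alone permits many ideals between $I$ and $M$ (as for a local ring whose maximal ideal is a two-dimensional square-zero vector space), so it is the principality of $M'$ that collapses it to a one-dimensional, minimal ideal. One should also keep in mind the degenerate possibility $M'=0$ (for instance $R$ a field inside $S=R[t]/(t^2)$), in which $I=M$ is merely inert; the ramified conclusion genuinely requires $I\subsetneq M$, that is $M'\neq0$.
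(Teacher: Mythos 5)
Your proof is correct. Parts (1) and (2) coincide with the paper's argument (both reduce to the fact that a finite extension is integral, so contractions of maximal ideals are maximal); in (2)(a) you actually prove slightly more than the paper does, since via Lemma \ref{3.46} you show $I=M$ lies in $\mathrm{Q_iMax}(R)$ \emph{and} in $\mathrm{Supp}(S/R)$, whereas the paper merely asserts membership in the union. In part (3) you take a genuinely different and more elementary route. The paper shows $S/J$ has finitely many ideals (Theorem \ref{2.3}), hence is Artinian, invokes the Eakin--Nagata theorem to make $R/I$ Noetherian, obtains localness of $R/I$ from integrality, and extracts $M'^2=0$ from Proposition \ref{2.5} applied to $S/J$; principality of $M'$ then makes $R/I$ a SPIR and Proposition \ref{2.5} concludes. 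You instead use the elementary contraction identity $\sqrt[R]{J\cap R}=\sqrt[S]{J}\cap R$ to get $\sqrt[R]I=N\cap R=M\in\mathrm{Max}(R)$, so that $R/I$ is local and $I$ is $M$-primary (this step needs only integrality, not finiteness), deduce $M'^2=0$ directly from $N^2\subseteq J$, and replace the SPIR structure theory by a direct colon-ideal argument showing $0$ and $M'$ are the only proper ideals of $R/I$, concluding via $\mathrm o(I)=2$ and Theorem \ref{2.3}. Both arguments are sound; yours avoids Eakin--Nagata and Noetherian considerations altogether, which are in fact superfluous for the conclusion.

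A further point in your favour: your caveat about the degenerate case $M'=0$ is not pedantry but identifies a real imprecision in the statement. Taking $R=k\subset S=k[t]/(t^2)$ with $J=0\in\mathrm{Q_rMax}(S)$, one has $I=J\cap R=0=N\cap R$, so $M'=0$ is principal, yet $I\in\mathrm{Q_iMax}(R)$ and not $\mathrm{Q_rMax}(R)$ (the types are mutually exclusive by Proposition \ref{2.70}). The paper's own proof silently assumes $M'\neq 0$ -- its phrase ``$M'$ is the only ideal of $R'$ different from $0$'' presupposes it -- so your explicit restriction to $I\subset M$ is a genuine, if minor, correction to the statement.
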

\begin{proof} (1) Since $R\subseteq S$ is integral, $N\cap R\in\mathrm {Max}(R)$ for any $N\in\mathrm{Max}(S)$. So (1) is obvious.

(2) Since $J\in\mathrm{Q_dMax}(S)$, then $J=N_1\cap N_2$ with $N_1, N_2\in\mathrm {Max}(S),$

\noindent $N_1\neq N_2$. Set $M_i:=N_i\cap R\in\mathrm{Max}(R)$ for $i \in\mathbb N_2$. Then, $I=N_1\cap N_2\cap R=M_1\cap M_2$.   
 
(a) If $M_1=M_2$, then $I\in\mathrm{Q_iMax}(R)\cup\mathrm{Supp}(S/R)$. This holds, for example, if $R\subset S$ is in $\mathcal{M}_d$ and $J= (R:S)$ (see Theorem \ref{minimal+}). In this case, we have $R/M_1\cong S/N_i$ for $i\in\mathbb N_2$. 
 
 (b) If $M_1\neq M_2$, then $I\in\mathrm{Q_dMax}(R)$. 

(3) Assume that $J\in\mathrm{Q_rMax}(S)$ and let $N:=\sqrt[S]J$. Then, $ S/J$ is a local ring and has finitely many ideals by Theorem \ref{2.3}. Therefore $S/J$ is an Artinian, and also a Noetherian ring. So is $R/I$ by the Eakin-Nagata Theorem because $R/I\subseteq S/J$ is finite and $R/I$ is a local ring whose maximal ideal is $M':=(N\cap R)/I$. Set $R':=R/I,\ S':= S/J$ and $N':=N/J$. According to Proposition \ref{2.5}, $S'$ is a SPIR such that $N'^2=0$. Since $M'=N'\cap R'$, we get that $M'^2=0$. 

If $M'$ is principal, that is $M'=R'x$, then $M'$ is the only ideal of $R'$ different from $0$. We deduce that $R'$ is a SPIR such that $M'^2=0$. Hence, $I\in\mathrm{Q_rMax}(R)$ by Proposition \ref{2.5}.
\end{proof}

We consider an FCP extension $R\subseteq S$ and get conditions in order that $(R:S)\in\mathrm{QMax}(S)$. We recall some needed definitions.
 
\begin{definition}\label{4.60} Let $R\subseteq S$ be an extension. The {\it seminormalization} ${}_S^+R$ of $R$ in $S$ is the greatest subintegral extension of $R$ in $S$ and the {\it $t$-closure} ${}_S^tR$ is the greatest infra-integral extension of $R$ in $S$. An extension $R\subseteq S$ is called {\it t-closed} (resp. seminormal) if ${}_S^tR=R$ (resp.${}_S^+R=R$ )                   
  (see \cite{Pic 1} and \cite{S}).
  \end{definition}
  
The case where the conductor of an extension $R\subset S$ is quasi-maximal inert  in $S$ has  a simple characterization. 

\begin{proposition}\label{4.7} If $R\subset S$ is an integral extension where $(R:S)\in\mathrm{Q_iMax}(R)$, then $(R:S)\in\mathrm{Q_iMax}(S)$ 
if and only if $R\subset S$ is   t-closed.
\end{proposition}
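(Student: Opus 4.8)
The plan is to quotient by the conductor and reduce to an integral extension of a field. Set $M:=(R:S)$, which lies in $\mathrm{Max}(R)$ since $\mathrm{Q_iMax}(R)=\mathrm{Max}(R)$. As $M$ is the conductor, it is an ideal of $S$ too, so $S/M$ is integral over the field $k:=R/M$. Because $\mathrm{Q_iMax}(S)=\mathrm{Max}(S)$, the conclusion $(R:S)\in\mathrm{Q_iMax}(S)$ is equivalent to $M\in\mathrm{Max}(S)$, that is, to $S/M$ being a field; this is also exactly the statement $0\in\mathrm{Q_iMax}(S/M)$ coming from Theorem \ref{2.3}. So the whole proposition reduces to the claim: \emph{$S/M$ is a field if and only if $R\subseteq S$ is t-closed.}

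Next I would transfer t-closedness across the quotient by $M$. Since $M=(R:S)$, the extension $R\subseteq S$ is supported only at $M$: for a prime $P\neq M$ we have $M\not\subseteq P$, hence $(R:S)\not\subseteq P$ and $R_P=S_P=T_P$ for every $T\in[R,S]$. Therefore the only residual extension of $R\subseteq T$ that can fail to be an isomorphism lies above $M$, and it coincides with the corresponding residual extension of $R/M\subseteq T/M$. Because $M\subseteq R\subseteq T$, the correspondence $T\mapsto T/M$ is a bijection $[R,S]\to[R/M,S/M]$, and by the previous remark $R\subseteq T$ is infra-integral if and only if $R/M\subseteq T/M$ is. Hence ${}_S^tR/M={}_{S/M}^t(R/M)$, so $R\subseteq S$ is t-closed if and only if $k=R/M\subseteq S/M$ is t-closed. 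It thus suffices to prove, with $A:=S/M$: for a field $k$ and an integral extension $k\subseteq A$, the ring $A$ is a field if and only if $k\subseteq A$ is t-closed.

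For this last lemma, note first that $A$ is zero-dimensional, being integral over a field. If $A$ is a field, then any $T\in[k,A]$ with $T\neq k$ is again a field, so its residual extension over $k$ is $k\subsetneq T$, not an isomorphism; hence no nontrivial infra-integral subextension exists and $k\subseteq A$ is t-closed. Conversely I argue by contraposition. If $A$ is not a field, either $A$ is local with nonzero maximal ideal $\mathfrak n$, which is then nil, furnishing a nonzero $z$ with $z^2=0$; or $A$ is not local and a standard argument in a zero-dimensional ring produces a nontrivial idempotent $z\notin k$. In the first case $k[z]=k\oplus kz$ is local with nilpotent maximal ideal and residue field $k$, so $k\subset k[z]$ is subintegral; in the second case $k[z]\cong k\times k$ has both residual extensions equal to $k\to k$. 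Either way $k\subsetneq k[z]$ is a nontrivial infra-integral subextension, so $k\subsetneq{}_A^tk$ and $k\subseteq A$ is not t-closed. The main obstacle is the bookkeeping in the reduction step, namely verifying that infra-integrality of $R\subseteq T$ is detected entirely on the fibre over $M$ so that the two t-closures correspond; the elementwise constructions in the lemma are then routine.
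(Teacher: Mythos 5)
Your proposal is correct, and structurally it is the same proof as the paper's: pass to the quotient by the conductor $M:=(R:S)$ and reduce everything to the integral extension $R/M\subseteq S/M$ of the field $R/M$. The only real difference is that where the paper settles the two key steps by citation---the equivalence ``$R\subset S$ t-closed $\Leftrightarrow$ $R/M\subset S/M$ t-closed'' is \cite[Proposition 2.13]{Pic 171}, and ``an integral extension of a field is t-closed if and only if the top ring is a field'' is essentially \cite[Lemme 3.10]{Pic 1}---you prove both ingredients from scratch. Your two in-line arguments are sound: in the transfer step you correctly use the maximality of $M$ to get $R_P=T_P=S_P$ for every prime $P\neq M$, so that infra-integrality of $R\subseteq T$ is detected entirely on the fibre over $M$, and the inclusion-preserving bijection $T\mapsto T/M$ of $[R,S]$ onto $[R/M,S/M]$ (legitimate because $M$ is a common ideal of $R$ and $S$, hence of every $T$) then identifies the two t-closures; in the field-level lemma, when $A:=S/M$ is not a field you correctly produce either a nonzero square-zero element (local case, where the maximal ideal is nil because $A$ is zero-dimensional) or a nontrivial idempotent (non-local zero-dimensional case), and in either case $k\subsetneq k[z]$ is a nontrivial infra-integral subextension, so $k\subseteq A$ is not t-closed. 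What your version buys is self-containedness; what the paper's buys is brevity.
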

\begin{proof} Set $M:=(R:S)\in\mathrm{Max}(R)$. If $M\in\mathrm{Q_iMax}(S)$, then $R/M\subset S/M$ is an integral field extension. It follows that $ R/M\subset S/M$ is t-closed by \cite[Lemme 3.10]{Pic 1}, and so is $R\subset S$ by \cite[Proposition 2.13]{Pic 171}. 

Conversely, assume that $R\subset S$ is t-closed, then so is $R/M\subset S/M$, giving that $S/M$ is a field by the two previous references. So, we get that $M\in\mathrm{Q_iMax}(S)$.
\end{proof}

Most often in the rest of the paper, thanks to the previous Proposition, we will not consider the case where the conductor of an extension $R\subset S$ is in $\mathrm{Q_iMax}(S)$.
 
 The next Lemma will be useful.
 
\begin{lemma}\label{4.61} Let $R\subset S$ be an integral FCP i-extension where $(R,M)$ is a local ring, whence $S$ is also local. Let $N$ be its  maximal ideal. Set $T:={}_S^tR={}_S^+R$. 
 
Then, $\mathrm{L}_R(M/(R: S))=\mathrm{L}_S(N/(R:S))[S/N:R/M]-\ell[R,T]$.
 \end{lemma}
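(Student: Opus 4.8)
The plan is to strip the statement down, by elementary length bookkeeping, to the single equality $\mathrm{L}_R(N/M)=\ell[R,T]$, and then to obtain that equality from the structure of the seminormalization tower. First I would record the basic set-up: since $R\subset S$ is an integral i-extension with $(R,M)$ local, $S$ is local with maximal ideal $N$ satisfying $N\cap R=M$. Put $C:=(R:S)$ and $d:=[S/N:R/M]$. The conductor $C$ is an ideal of both $R$ and $S$, and the FCP hypothesis forces all modules below to have finite length, so we have the chain $C\subseteq M\subseteq N$ of $R$-submodules of $S$, with $C$ and $N$ moreover ideals of $S$.

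Next I would invoke two routine facts. Additivity of length along $C\subseteq M\subseteq N$ gives $\mathrm{L}_R(N/C)=\mathrm{L}_R(M/C)+\mathrm{L}_R(N/M)$. Since $N/C$ is a finite-length module over the local ring $S$, all its composition factors are copies of $S/N$, and each contributes $\mathrm{L}_R(S/N)=[S/N:R/M]=d$; hence $\mathrm{L}_R(N/C)=d\,\mathrm{L}_S(N/C)$. Combining these yields $\mathrm{L}_R(M/C)=d\,\mathrm{L}_S(N/C)-\mathrm{L}_R(N/M)$, so the lemma is equivalent to the assertion $\mathrm{L}_R(N/M)=\ell[R,T]$.

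To prove this last equality I would compute $\mathrm{L}_R(S/R)$ twice. On one side, the conductor computation $\mathrm{L}_R(S/R)=\mathrm{L}_R(S/C)-\mathrm{L}_R(R/C)$ together with $\mathrm{L}_R(S/C)=\mathrm{L}_R(N/C)+d$ and $\mathrm{L}_R(R/C)=\mathrm{L}_R(M/C)+1$ gives $\mathrm{L}_R(S/R)=\mathrm{L}_R(N/M)+d-1$. On the other side I would split $R\subseteq T\subseteq S$ using Definition \ref{4.60}. The extension $R\subseteq T$ is subintegral, hence an integral i-extension with trivial residual extensions, so it is a tower of ramified minimal steps; by Theorem \ref{minimal+}(c) each such step has $R$-length $1$ (its residual extension is an isomorphism), so every maximal chain in $[R,T]$ has the same number of steps and $\ell[R,T]=\mathrm{L}_R(T/R)$. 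The extension $T\subseteq S$ is t-closed, hence a tower of inert minimal steps $T=T_0\subset\cdots\subset T_k=S$; writing $\mathfrak n_i$ for the maximal ideal of $T_i$ and $e_i:=[T_i/\mathfrak n_i:R/M]$ (so $e_0=1$ because $T/\mathfrak n_0\cong R/M$, and $e_k=d$), Theorem \ref{minimal+}(a) gives $\mathrm{L}_{T_i}(T_{i+1}/T_i)=[T_{i+1}/\mathfrak n_{i+1}:T_i/\mathfrak n_i]-1=e_{i+1}/e_i-1$, and multiplying by the residual degree $[T_i/\mathfrak n_i:R/M]=e_i$ yields $\mathrm{L}_R(T_{i+1}/T_i)=e_{i+1}-e_i$. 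This telescopes to $\mathrm{L}_R(S/T)=e_k-e_0=d-1$, so $\mathrm{L}_R(S/R)=\mathrm{L}_R(T/R)+\mathrm{L}_R(S/T)=\ell[R,T]+d-1$. Comparing the two computations of $\mathrm{L}_R(S/R)$ forces $\mathrm{L}_R(N/M)=\ell[R,T]$, which finishes the proof.

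The length additivities and the degree count for $S$-modules are routine; the genuine content, and the step I expect to be the main obstacle, is the bookkeeping for the t-closed tower $T\subseteq S$, where one must track how each inert minimal step multiplies the residual degree while contributing residual-degree-minus-one to the $R$-length, and verify that these contributions telescope to exactly $d-1$. A secondary point needing care is the justification that every minimal step occurring in a subintegral (resp. t-closed) i-extension is forced to be ramified (resp. inert), which is precisely what excludes the decomposed case and makes both towers behave uniformly; this is where the i-extension hypothesis is essential.
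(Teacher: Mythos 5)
Your argument is correct in outline and takes a genuinely different route from the paper's. The paper never leaves the subintegral tower: after observing that $N$ and $I:=(R:S)$ are ideals of $T$, it applies Northcott's length formula \cite[Theorem 13, p.~168]{N} once, in the form $\mathrm{L}_T(N/I)=\mathrm{L}_S(N/I)[S/N:R/M]$, and then descends a maximal chain $R=R_0\subset\cdots\subset R_n=T$ (each step in $\mathcal M_r$ by \cite[Proposition 4.5]{Pic 6}, with $n=\ell[R,T]$ by \cite[Lemma 5.4]{DPP2}), showing by induction that each ramified step lowers $\mathrm{L}_{R_i}(N_i/I)$ by exactly one. You instead reduce the lemma to the identity $\mathrm{L}_R(N/M)=\ell[R,T]$ and prove it by double-counting $\mathrm{L}_R(S/R)$; this is more elementary bookkeeping, and the identity you isolate is (given $T=R+N$, so that $T/R\cong N/M$) precisely the content of \cite[Lemma 5.4]{DPP2} that the paper cites. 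What you pay for this is having to control the chain structure of the t-closed part $[T,S]$, which the paper's proof never touches: its single application of Northcott's theorem between $T$ and $S$ does the work of your entire telescoping computation for the inert tower.

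The one genuine debt is your unproved assertion that every minimal step of a maximal chain of $T\subseteq S$ is inert; you flag it but do not discharge it, and your $\mathrm{L}_R(S/T)=d-1$ collapses without it. It is true, and can be proved with tools in the paper: decomposed steps are excluded because every ring in $[R,S]$ is local ($S$ is local and all extensions are integral), and if $T_i\subset T_{i+1}$ were the \emph{first} ramified step, then the inert steps below it all share their maximal ideal, so the conductor of that step equals the maximal ideal $\mathfrak n$ of $T$ itself; the generator $q$ of Theorem \ref{minimal+}(c) then satisfies $q\notin T$, $q^2\in\mathfrak n$ and $\mathfrak nq\subseteq\mathfrak n$, so $T\subset T[q]=T+Tq$ is minimal ramified, hence infra-integral, contradicting ${}_S^tT=T$ (t-closure is idempotent). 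Alternatively you can bypass the tower of $[T,S]$ entirely: once one knows $N\subseteq T$ --- the fact behind the paper's remark that $N$ is an ideal of $T$, which holds because $T+N$ is subintegral over $R$, forcing $T+N\subseteq{}_S^+R=T$ --- one gets $S/T\cong(S/N)/(T/N)$ as $R$-modules, hence $\mathrm{L}_R(S/T)=[S/N:R/M]-1$ with no chain analysis at all. The companion claim that all steps of $[R,T]$ are ramified is exactly \cite[Proposition 4.5]{Pic 6}, cited in the paper's own proof, so a citation suffices there.
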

 
\begin{proof} Since $R\subset S$ is an integral FCP i-extension where $(R,M)$ is a local ring, $S$ is also a local ring. It follows that $T:={}_S^tR={}_S^+R$ by \cite[Proposition 5.2]{Pic 15}, and $M=N\cap R$. Set $I:=(R:S)$. Therefore, $N$ and $I$ are also ideals of $T$ and $(T,N)$ is a local ring. According to \cite[Theorem 13, p. 168]{N}, we have $\mathrm{L}_T(N/I)=\mathrm{L}_S(N/I)[S/N:T/N]$.

Since $R\subset S$ has FCP, so has $R\subset T$. Let $\{R_i\}_{i=0}^n$ be a maximal chain of $[R,T]$, with $R_0:=R,\ R_n:=T$ and $R_{i-1} \subset R_i$ in $\mathcal M$ for each $i\in\mathbb N_n$. By \cite[Lemma 5.4]{DPP2}, $n=\ell[R,T]$. Each $R_{i-1}\subset R_i$ is in $\mathcal M_r$ by \cite[Proposition 4.5]{Pic 6}. Setting $N_i:=N\cap R_i$, we get that each $(R_i,N_i)$ is a local ring having still $I$ as an ideal. Moreover, $I\subseteq N_i$. We are going to prove by induction on $i\in\mathbb N_n$ that $\mathrm{L}_{R_i}(N_i/I)=\mathrm{L}_T(N/I)-(n-i)\ (*)$. For $i=n$, we have $\mathrm{L}_{R_n}(N_n/I)=\mathrm{L}_T(N/I)$ and $(*)$ holds.

Assume that $(*)$ holds for some $i\in\mathbb N_n$, so that $\mathrm{L}_{R_i}(N_i/I)=\mathrm{L}_T(N/I)$
 
 \noindent$-(n-i)$. Since $R_{i-1}\subset R_i$ is in $\mathcal M_r$, with $ N_{i-1}=(R_{i-1}:R_i)$, we have $\mathrm{L}_{R_i}(N_i/N_{i-1})=1=\mathrm{L}_{R_i}(N_i/I)-\mathrm{L}_{R_i}(N_{i-1}/I)$. Using \cite[Lemma 5.4]{DPP2} and \cite[Theorem 13, page 168]{N}, we infer that $\mathrm{L}_{R_i}(N_i/I)-1=\mathrm{L}_{R_i}(N_{i-1}/I)=\mathrm{L}_{R_{i-1}}(N_{i-1}/I) =\mathrm{L}_T(N/I)-(n-i)-1=\mathrm{L}_T(N/I)-(n-i+1)=\mathrm{L}_T(N/I)-(n-(i-1))$ and $(*)$ holds for $i-1$. Therefore, $(*)$ holds for each $i\in\mathbb N_n$, so that $\mathrm{L}_{R}(M/I)=\mathrm{L}_T(N/I)-n=\mathrm{L}_T(N/I)-\ell[R, T ] =\mathrm{L}_S(N/I)[S/N:R/M]-\ell[R,T] $.
 \end{proof}
 
We are now able to characterize an FCP integral extension $R\subset S$ such that $(R:S)\in\mathrm{Q_dMax}(S)\cup\mathrm{Q_rMax}(S)$  
 because of Proposition \ref{4.7}.  
 
\begin{theorem}\label{4.62} Let $R\subset S$ be an FCP integral extension and set $T:={}_S^tR$. Then, 
\begin{enumerate}
\item $(R:S)\in\mathrm{Q_dMax}(S)$ if and only if either $R=T$ with $|\mathrm{V}_R((R:S))|=2$ (in this case, $(R:S)\in\mathrm{Q_dMax}(R)$) or $R\subset T$ is in $\mathcal M_d$ with $|\mathrm{V}_R((R:S))|=1$ (in this case, $(R:S)\in\mathrm{Q_iMax}(R)$). 
 
\item $(R:S)\in\mathrm{Q_rMax}(S)$ if and only if $R\subset S$ is an i-extension such that $|\mathrm{Supp}_R(S/R)|=1$, and, setting $\{M\}:=\mathrm{Supp}_R(S/R)$ and letting $N:=\sqrt[S]{MS}$, we have: $[S/N:R/M ] =\mathrm{L}_{R}(N/(R:S))$.
\end{enumerate}
\end{theorem}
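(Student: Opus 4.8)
The plan is to analyze the Artinian ring $\bar S := S/C$, where $C := (R:S)$, together with its subring $\bar R := R/C$; note that $(\bar R : \bar S) = 0$. By Theorem \ref{2.3} and Proposition \ref{2.5}, $C \in \mathrm{Q_dMax}(S)$ exactly when $\bar S$ is a product of two fields, while $C \in \mathrm{Q_rMax}(S)$ exactly when $\bar S$ is a SPIR with maximal ideal nilpotent of index $2$. Since the extension is integral, $\mathrm{Max}(\bar R)$ consists of the contractions of the maximal ideals of $\bar S$, and localizing the conductor shows $\mathrm V_R(C) = \mathrm{Supp}_R(S/R)$; I would use this throughout to read off the numerical conditions $|\mathrm V_R(C)| \in \{1,2\}$.

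For the direct implication of (1) I would note that $\bar S$, being reduced, forces $\bar R$ to be a reduced Artinian ring, hence a field or a product of two fields, whose nontrivial idempotents (if any) must coincide with those of $\bar S = L_1 \times L_2$. If $\bar R = F_1 \times F_2$ then $C \in \mathrm{Q_dMax}(R)$ and $|\mathrm V_R(C)| = 2$; here $\bar R \subseteq \bar S$ splits as a product of two field extensions, hence is $t$-closed, so ${}_{\bar S}^t\bar R = \bar R$ lifts to $R = T$. If $\bar R = F$ is a field then $C \in \mathrm{Q_iMax}(R)$ and $|\mathrm V_R(C)| = 1$; the infra-integral closure of $F$ inside $L_1 \times L_2$ is $F \times F$ (larger residue fields would be genuine field extensions, hence not infra-integral), so $\bar R \subset {}_{\bar S}^t\bar R = F\times F$ is decomposed minimal and $R \subset T$ is in $\mathcal M_d$. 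For the converse I would first prove $\bar S$ reduced: in both cases the seminormalization satisfies ${}_S^+ R = R$ (when $R=T$ because ${}_S^+R \subseteq {}_S^tR = R$; in the decomposed case because a decomposed minimal step is not subintegral, so ${}_S^+R$ cannot be $T$), and as $R \subseteq R + \mathrm{Nil}(S)$ is always subintegral we get $\mathrm{Nil}(S) \subseteq {}_S^+R = R$, whence $\mathrm{Nil}(S) \subseteq (R:S) = C$ and $\bar S$ is reduced. Then $\bar T := T/C$ equals $F_1\times F_2$ or $F\times F$, and splitting $\bar S$ along the idempotents of $\bar T$ gives $\bar S = \bar Se_1 \times \bar Se_2$ with each $\bar Se_i$ a reduced Artinian ring in which a field sits $t$-closedly; were $\bar Se_i$ to have two maximal ideals, that field would admit a nontrivial decomposed infra-integral subextension, contradicting $t$-closedness. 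Hence each $\bar Se_i$ is a field and $\bar S$ is a product of two fields.

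For (2) the key is that $C \in \mathrm{Q_rMax}(S)$ is equivalent to $\bar S$ being \emph{local} with $\mathrm L_S(N/C) = 1$, where $N := \sqrt[S]{MS}$. If $C \in \mathrm{Q_rMax}(S)$, then $\bar S$ is local, so $\bar R$ is local (giving $|\mathrm V_R(C)| = |\mathrm{Supp}_R(S/R)| = 1 =: \{M\}$), $S$ has a unique prime over $M$, and the Spec-bijection off $M$ yields the i-extension property; the equality $[S/N:R/M] = \mathrm L_R(N/C)$ is then automatic from the length identity $\mathrm L_R(N/C) = \mathrm L_S(N/C)\,[S/N:R/M]$ (Nagata's theorem, exactly as invoked in Lemma \ref{4.61}) together with $\mathrm L_S(N/C)=1$. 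Conversely, assuming the i-extension property and $|\mathrm{Supp}_R(S/R)|=1$, the ring $\bar S$ is local with maximal ideal $N/C$; the same length identity and the hypothesis $[S/N:R/M] = \mathrm L_R(N/C)$ force $\mathrm L_S(N/C) = 1$ (since $[S/N:R/M] \geq 1$), so $\mathrm L_S(\bar S) = 2$ and $(N/C)^2 = 0$ by Nakayama, making $\bar S$ a SPIR of index $2$ and $C \in \mathrm{Q_rMax}(S)$ by Proposition \ref{2.5}.

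The main obstacle is the converse of (1), where I must simultaneously exclude nilpotents in $\bar S$ and an excess of maximal ideals. The first is dealt with by identifying $\mathrm{Nil}(S)$ with part of the seminormalization and using that the stated hypotheses make ${}_S^+R = R$; the second by the idempotent-splitting argument, in which the $t$-closedness of $\bar T \subseteq \bar S$ is precisely what prevents a field from gaining a second maximal ideal. I would also invoke the two standard compatibilities — that the $t$-closure (and seminormalization) commute with passage to $S/C$, and that a product of $t$-closed field extensions is $t$-closed — which I would cite from the references on $t$-closure and seminormalization (see \cite{Pic 1}, \cite{S}).
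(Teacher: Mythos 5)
Your part (2) and the direct implication of part (1) are correct; part (2) in fact shortcuts the paper's route (which goes through Lemma \ref{4.61} and \cite[Lemma 5.4]{DPP2}) by applying the length formula of \cite[Theorem 13, p.168]{N} directly to the $S$-module $N/C$, and your part (1) replaces the paper's appeal to \cite[Proposition 2.5]{SPLIT} by a self-contained analysis of $\bar S:=S/C$. However, there is a genuine gap in the converse of part (1), at the step where you deduce that $\bar S$ is reduced. You write: $\mathrm{Nil}(S)\subseteq{}_S^+R=R$, hence $\mathrm{Nil}(S)\subseteq(R:S)=C$, ``and $\bar S$ is reduced.'' That last inference is a non sequitur: reducedness of $S/C$ means that $C$ is a radical ideal of $S$, i.e. $\sqrt[S]C=C$, whereas $\mathrm{Nil}(S)\subseteq C$ is far weaker. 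For instance, for $R=k[t^2,t^3]\subset S=k[t]$ one has $C=(R:S)=t^2k[t]$ and $\mathrm{Nil}(S)=0\subseteq C$, yet $S/C\cong k[t]/(t^2)$ is not reduced. (In that example $R$ is not seminormal in $S$, so it does not contradict the statement you need; it only shows that the implication as written does not prove it.)

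The gap is repairable with tools you already invoke, but the repair must actually be made, and at the right level: since seminormalization commutes with passage modulo the common ideal $C$ (one of the compatibilities you cite), ${}_S^+R=R$ gives ${}_{\bar S}^+\bar R=\bar R$; then $\mathrm{Nil}(\bar S)\subseteq{}_{\bar S}^+\bar R=\bar R$ (the same fact about nilpotents, now applied to $\bar R\subseteq\bar S$), and $\mathrm{Nil}(\bar S)$ is an ideal of $\bar S$ contained in $\bar R$, hence $\mathrm{Nil}(\bar S)\subseteq(\bar R:\bar S)=0$, so $\bar S$ is reduced. In other words, what your step really needs is the classical Traverso--Swan fact that a seminormal extension has radical conductor, and it must be derived where the conductor is $0$, not read off from $\mathrm{Nil}(S)\subseteq C$. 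A further small elision in the same part: in the case $R\subset T$ in $\mathcal M_d$ you assert $\bar T=F\times F$; this requires observing that reducedness of $\bar T$ forces $C=(R:T)$, since $\mathrm{V}_T(C)$ consists of the two maximal ideals of $T$ lying over $(R:T)$ and their intersection equals $(R:T)$. With these points fixed, your idempotent-splitting and $t$-closedness argument does complete the converse, along a route genuinely different from (and more self-contained than) the paper's.
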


\begin{proof} We set $I:=(R:S)$, an ideal shared by any element of $[R,S]$.

(1) Assume that $I\in\mathrm{Q_dMax}(S)$, that is $I=N_1\cap N_2,\ N_1 \neq N_2$, with $N_i\in\mathrm{Max}(S)$ for $i\in\mathbb N_2$. Set $M_i: =N_i\cap R$, so that $M_i\in\mathrm{Max}(R)$ for $i\in\mathbb N_2$. Then $I=M_1\cap M_2$. Two cases occur, either (a): $M_1\neq M_2$ or (b): $M_1=M_2$.  

In case (a), $I\in\mathrm{Q_dMax}(R)$ by Proposition \ref{3.1}. Moreover, $R=T$ by \cite[Proposition 2.5]{SPLIT} because $|\mathrm{V}_R(I)|=2$. 

Conversely, if $R=T$ with $|\mathrm{V}_R(I)|=2$, then, $|\mathrm{V}_R(I)|=\mathrm{V}_S(I)|=2$. According again to \cite[Proposition 2.5]{SPLIT}, $R \subset S$ is t-closed with $I$ an intersection of two maximal ideals of $S$. Then, $I\in\mathrm{Q_dMax}(R)$. 

In case (b), $I=M_1=M_2\in\mathrm{Q_iMax}(R)$ by Proposition \ref{3.1}. Moreover, $R\subset T$ is in $\mathcal M_d$ with $|\mathrm{V}_R(I)|=1$ and $|\mathrm{V}_T(I)|=|\mathrm{V}_S(I)|=2$ by Theorem \ref{minimal+}. 

Conversely, assume that $R\subset T$ is in $\mathcal M_d$ with $\mathrm {V}_R(I)|=1$. Then, $R\subset T$ and $R\subset S$ are  seminormal, which implies that $I\in\mathrm{Max}(R)$. Let $J:=(R:T)=M'_1\cap M'_2$, where $M'_i\in\mathrm{Max}(T)$ for $i\in\mathbb N_2$ and $J\in\mathrm {Max}(R)$. As $I\subseteq J$, we deduce that $I=J=M'_1\cap M'_2=N_1 \cap N_2$, where $N_i\in\mathrm{Max}(S)$ is lying over $M'_i$ for each $i\in\mathbb N_2$ (there is no harm to choose the same letters as in the first part because $N_i$ plays the same role in the two  parts). Then, 
 $I\in\mathrm{Q_dMax}(S)$. 

(2) Assume that $I\in\mathrm{Q_rMax}(S)$, and let $N:=\sqrt[S]I\in\mathrm {Max}(S)$, giving $\mathrm{L}_{S}(N/I)=1$. It follows that $\mathrm{Supp}_R(S/R)=\{M\}$ where $I\subseteq M:=N\cap R\in\mathrm{Max}(R)$ by \cite[Proposition 5.7 and Example 5.15]{Pic 18}. This implies that $R\subset S$ is an i-extension and $N=\sqrt[S]{MS}\in\mathrm{Max}(S)$ is the only maximal ideal of $S$ lying above $M$. According to Lemma \ref{4.61} because we may assume that $(R,M)$ is a local ring by \cite[Theorem 12, p. 166]{N}, $\mathrm{L}_R(M/I)=\mathrm{L}_S(N/I)[S/N: R/M]-\ell[R,T]= [S/N:R/M]-\ell[R,T]=[S/N:R/M]-\mathrm{L}_R(N/M)\ (*)$ by \cite[Lemma 5.4]{DPP2} because $R\subseteq T$ is subintegral and $N\in\mathrm{Max}(T)$ since $I\subseteq(T:S)$. Then $(*)$ implies that $\mathrm{L}_R(M/I)+\mathrm{L}_R(N/M)=\mathrm{L}_R(N/I)=[S/N:R/M]$.

Conversely, assume that $R\subset S$ is an i-extension with $|\mathrm {Supp}_R(S/R)|\\=1$, and, setting $\{M\}:=\mathrm{Supp}_R(S/R)$, with $I\subseteq M$ by \cite[Proposition 5.7 and Example 5.15]{Pic 18}, and letting $N:=\sqrt[S]{MS}\in\mathrm{Max}(S)\cap\mathrm{Max}(T)$, the following equality holds: $[S/N:R/M]=\mathrm{L}_{R}(N/I)$. Thanks to Lemma \ref{4.61}, \cite[Theorem 13, p. 168]{N} and because $\mathrm {Supp}_R(S/R)=\{M\}$, we deduce that $\mathrm{L}_R(M/I)=\mathrm{L}_{R_M}(MR_M/IR_M)=\mathrm{L}_{S_M}(NR_M/IR_M)$

\noindent$[S/N:R/M]-\ell[R_M,T_M]=\mathrm{L}_S (N/I)[S/N:R/M]-\ell[R,T]=\mathrm{L}_S(N/I)\\ \mathrm{L}_R(N/I)-\ell[R,T]=\mathrm{L}_S(N/I)\mathrm {L}_R(N/I)-\mathrm{L}_R(N/M)$ using again \cite[Lemma 5.4]{DPP2}, which gives $\mathrm{L}_S(N/I)\mathrm{L}_R(N/I)=\mathrm{L}_R(M/I)+\mathrm{L}_R(N/M)=\mathrm{L}_R(N/I)$, whence $\mathrm{L}_S(N/I)=1$, because $N\neq I$. Then $N\succ I$ and $I\in\mathrm{Q_rMax}(S)$.
\end{proof} 

\subsection{Quasi-maximal ideals in minimal  extensions}

As we recall in Theorem \ref{minimal+} and Remark \ref{4.01}, the conductor of a minimal integral extension $R\subset S$ has the same characterization as a quasi-maximal ideal of $S$. In this section, we consider the behavior of quasi-maximal ideals of $S$ linked to the conductor of a minimal integral extension $R\subset S$. In fact, the following Theorem shows that the conductor of a minimal integral extension $R\subset S$ is in $\mathrm{QMax}(S)$.

\begin{theorem}\label{4.1} Let $R\subset S$ be in $\mathcal M$. Then $(R:S)\in\mathrm{QMax}(S)$ and is of the same type as   $R\subset S$. 
\end{theorem}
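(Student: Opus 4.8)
The plan is to set $M := (R:S)$ and invoke Theorem \ref{minimal+}, which tells us that $M \in \mathrm{Max}(R)$ and that $R \subset S$ falls into exactly one of the three cases (a) inert, (b) decomposed, (c) ramified. I would then treat each case separately, checking that $M$, now viewed as an ideal of $S$, satisfies the corresponding clause of Definition \ref{2.2}, so that $M \in \mathrm{QMax}(S)$ with the matching type. The inert case is immediate: Theorem \ref{minimal+}(a) says $M \in \mathrm{Max}(S)$, hence $M \in \mathrm{Q_iMax}(S)$.

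For the decomposed case (b), Theorem \ref{minimal+} furnishes $M_1,M_2 \in \mathrm{Max}(S)$ with $M = M_1 \cap M_2$ and isomorphisms $R/M \to S/M_i$. I would first record that $M_1 \neq M_2$: if $M_1 = M_2$ then $M = M_1 \in \mathrm{Max}(S)$ and the isomorphism $R/M \to S/M$ forces $S = R + M = R$ (since $M \subseteq R$), contradicting $R \subsetneq S$. Thus $M = M_1 \cap M_2$ is an intersection of two distinct maximal ideals of $S$, i.e. $M \in \mathrm{Q_dMax}(S)$ by Definition \ref{2.2}(2).

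The ramified case (c) carries the real work, and I expect the covering/generation condition to be the main obstacle. Here Theorem \ref{minimal+} gives $M' \in \mathrm{Max}(S)$ with $M'^2 \subseteq M \subset M'$, with $[S/M:R/M]=2$ and $R/M \cong S/M'$. The inclusion $M'^2 \subseteq M \subset M'$ already forces $M' = \sqrt[S]{M}$ and makes $M$ an $M'$-primary ideal of $S$, exactly as remarked in the proof of Proposition \ref{2.5}. What remains is the clause $M' = M + Sa$ for every $a \in M' \setminus M$. To secure it I would compute dimensions over the field $R/M$: the short exact sequence $0 \to M'/M \to S/M \to S/M' \to 0$ gives $\dim_{R/M}(M'/M) = [S/M:R/M] - [S/M':R/M] = 2 - 1 = 1$, using $R/M \cong S/M'$. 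Since $M'^2 \subseteq M$ kills $M'/M$, this quotient is a one-dimensional $S/M'$-vector space, hence a simple $S$-module; therefore every $a \in M' \setminus M$ has nonzero class generating $M'/M$, yielding $M + Sa = M'$. This verifies all of Definition \ref{2.2}(3), so $M \in \mathrm{Q_rMax}(S)$.

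As an alternative to the direct verification in case (c), I could instead route through Theorem \ref{2.3}(4): the same length bookkeeping gives $\mathrm{L}_S(S/M) = 2$ while $\mathrm{V}_S(M) = \{M'\}$ has a single element, which places $M \in \mathrm{QMax}(S)$ and, by the type classification, pins down the ramified type. Either way, combining the three cases shows that $(R:S)$ is quasi-maximal in $S$ of the same type as the extension $R \subset S$.
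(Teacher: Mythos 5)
Your proposal is correct, and its overall architecture --- reduce to the trichotomy of Theorem \ref{minimal+} and match each case against Definition \ref{2.2} --- is exactly the paper's. The inert and decomposed cases are handled identically (your explicit check that $M_1\neq M_2$ fills a small point the paper leaves implicit). The only genuine divergence is the ramified case. The paper invokes the element $q$ of Theorem \ref{minimal+}(c), for which $M'=M+Rq$ (cf. Remark \ref{4.01}(1)), notes that $M$ is $M'$-primary, and then applies the covering criterion \cite[Corollary 2, p.237]{ZS} to get $M'\succ M$ in $S$, concluding via Theorem \ref{2.6} and Definition \ref{2.2}. You instead exploit the residual data of Theorem \ref{minimal+}(c), namely $[S/M:R/M]=2$ and $R/M\cong S/M'$, to compute $\dim_{S/M'}(M'/M)=1$ from the exact sequence $0\to M'/M\to S/M\to S/M'\to 0$; hence $M'/M$ is a simple $S$-module, so $M'=M+Sa$ for every $a\in M'\setminus M$, which verifies Definition \ref{2.2}(3) directly (your fallback through Theorem \ref{2.3}(4), via $\mathrm{L}_S(S/M)=2$ and $|\mathrm{V}_S(M)|=1$, is equally valid). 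Your route is self-contained linear algebra over the residue field --- it needs neither the Zariski--Samuel covering lemma nor the generator $q$ --- and it yields the ``for any $a$'' generation clause of the ramified definition in one stroke; the paper's route is shorter given that the ZS criterion and the explicit $q$ are already on the table.
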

\begin{proof} Set $M:=(R:S)\in\mathrm{Max}(R)$. 

By Theorem \ref{minimal+}, in the inert case, we have $M\in\mathrm{Max}(S)$ and $M\in\mathrm{Q_iMax}(S)$. 

In the decomposed case, there exist $M_1,M_2\in\mathrm{Max}(S)$ such that $M=M_1\cap M_2$ and $M\in\mathrm{Q_dMax}(S)$. 

In the ramified case, there exists $M'\in\mathrm{Max}(S)$ such that $M'^2 \subseteq M\subset M'$ and there exists $q\in S\setminus R$ such that $ M'=M+Rq$ according to Theorem \ref{minimal+}. In particular, $M$ is $M' $-primary in $S$. Then, \cite[Corollary 2, p.237]{ZS} shows that $M'\succ M$ in $S$, whence $M\in\mathrm{Q_rMax}(S)$ by Theorem \ref{2.6} and Definition \ref{2.2}.
\end{proof}

The next Proposition gives a converse of Theorem \ref{4.1} under a special assumption.

\begin{proposition}\label{4.6} Let $R\subset S$ be an FCP infra-integral extension such that $(R:S)\in\mathrm{QMax}(S)$. Then, $R\subset S$ is a minimal extension of the same type as $(R:S)$ (in $S$), so that $(R:S)\in\mathrm{Max}(R)$.
\end{proposition}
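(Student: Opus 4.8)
The plan is to set $I:=(R:S)$ and to argue according to the type of $I$ in $S$, using throughout that the infra-integral hypothesis forces the $t$-closure $T:={}_S^tR$ to equal $S$. This is the key structural simplification: with $T=S$ both parts of Theorem \ref{4.62} become directly usable, since every occurrence of $T$ may be replaced by $S$. Note also that infra-integral presupposes integral, so $R\subset S$ is an FCP integral (hence module-finite) extension.

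First I would eliminate the inert case. Suppose $I\in\mathrm{Q_iMax}(S)$, i.e. $I\in\mathrm{Max}(S)$. Because $I=(R:S)\subseteq R$, we have $I\cap R=I$, which is maximal in $R$ since $R\subseteq S$ is integral; the residual extension of $R\subset S$ at $I$ is then the natural map $R/I\to S/I$, and infra-integrality forces it to be an isomorphism, hence surjective, giving $S=R+I=R$ (as $I\subseteq R$). This contradicts $R\subset S$, so $I\in\mathrm{Q_dMax}(S)\cup\mathrm{Q_rMax}(S)$.

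For the decomposed case, since $R\subset S$ is infra-integral we have $T=S\neq R$, so the alternative ``$R=T$'' in Theorem \ref{4.62}(1) is impossible and only ``$R\subset T$ is in $\mathcal M_d$'' survives. As $T=S$, this says exactly that $R\subset S$ is in $\mathcal M_d$, whence $R\subset S$ is minimal decomposed and $(R:S)\in\mathrm{Max}(R)$ by Theorem \ref{minimal+}; its type matches that of $I$ in $S$.

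The ramified case is the crux. By Theorem \ref{4.62}(2), $I\in\mathrm{Q_rMax}(S)$ forces $R\subset S$ to be an i-extension with $\mathrm{Supp}_R(S/R)=\{M\}$ and $N:=\sqrt[S]{MS}\in\mathrm{Max}(S)$ satisfying $\mathrm{L}_S(N/I)=1$ (since $N\succ I$). Combined with the infra-integral hypothesis, the i-extension property makes $R\subset S$ subintegral, so the residual extension at $N$ is an isomorphism and $[S/N:R/M]=1$. Reducing to the local case at $M$ (elsewhere $R_{M''}=S_{M''}$, and the lattice length is preserved since $\mathrm{Supp}_R(S/R)=\{M\}$, as in the proof of Theorem \ref{4.62}(2)), I would apply Lemma \ref{4.61} with $T=S$, which yields $\mathrm{L}_R(M/I)=\mathrm{L}_S(N/I)\,[S/N:R/M]-\ell[R,S]=1-\ell[R,S]$. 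Since $\mathrm{L}_R(M/I)\geq 0$ and $\ell[R,S]\geq 1$, both quantities are pinned down: $\ell[R,S]=1$ and $\mathrm{L}_R(M/I)=0$, that is, $R\subset S$ is minimal and $I=M\in\mathrm{Max}(R)$, with $N^2\subseteq I\subset N$, so $R\subset S$ is minimal ramified. The main obstacle is precisely this last step: the hypotheses readily give $I\in\mathrm{Max}(R)$, but to upgrade this to genuine minimality ($\ell[R,S]=1$) one must feed the two exact values $\mathrm{L}_S(N/I)=1$ and $[S/N:R/M]=1$ (the latter being where subintegrality is indispensable) into the length identity of Lemma \ref{4.61}.
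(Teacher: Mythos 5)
Your proof is correct, and its skeleton --- trichotomy on the type of $I:=(R:S)$ in $S$, exploited through Theorem \ref{4.62} after the identification ${}_S^tR=S$ --- is the same as the paper's; but you close the two nontrivial cases differently, and the difference is worth recording. For the inert case you argue directly: $I\in\mathrm{Max}(S)$ contracts to a maximal ideal of $R$, infra-integrality makes the residual map $R/I\to S/I$ an isomorphism, and since $I\subseteq R$ this forces $S=R+I=R$, a contradiction; the paper instead quotes Proposition \ref{4.7} via t-closedness, with the same content. The substantive divergence is the ramified case. The paper uses only the stated equality $[S/N:R/M]=\mathrm{L}_R(N/I)$ of Theorem \ref{4.62}(2), which together with $S/N\cong R/M$ gives $\mathrm{L}_R(N/M)+\mathrm{L}_R(M/I)=1$ and hence $I=M$; but that alone does not yield minimality, so the paper must then produce a generator ($N=M+Sx$, whence $S=R+Rx$ with $x^2\in M$ and $Mx\subseteq M$) and invoke the equivalent form of Theorem \ref{minimal+}(c). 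You instead descend one level, to Lemma \ref{4.61} itself, whose formula carries the term $\ell[R,T]=\ell[R,S]$: with $\mathrm{L}_S(N/I)=1$ and $[S/N:R/M]=1$ it reads $\mathrm{L}_R(M/I)=1-\ell[R,S]$, and positivity pins down $\ell[R,S]=1$ and $I=M$ simultaneously. This is a genuine economy: minimality falls out of the length identity with no generator construction, and the type is then forced either because a minimal subintegral extension is ramified (the fact from \cite[Proposition 4.5]{Pic 6} already used in the proof of Lemma \ref{4.61}), or because $\mathrm{V}_S(I)=\{N\}$ with $I\subset N$ excludes cases (a) and (b) of Theorem \ref{minimal+}. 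What the paper's longer route buys is the explicit byproduct $[S/M:R/M]=2$ and the generator $x$. Your localization step (lengths and $\ell[R,S]$ computed at the unique supporting maximal ideal $M$) is exactly the reduction the paper performs inside the proof of Theorem \ref{4.62}(2), so it is sound.
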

\begin{proof} Set $I:=(R:S)$. We consider the two cases of Theorem \ref{4.62} because $I\not\in\mathrm{Q_iMax}(S)$ since $S={}_S^tR$ by Proposition \ref{4.7}. 

If $I\in\mathrm{Q_dMax}(S)$, then $R\subset S$ is in $\mathcal M_d$ and $I\in\mathrm {Max}(R)$ because $R\neq{}_S^tR$. 

If $I\in\mathrm{Q_rMax}(S)$, then $R\subset S$ is an i-extension, giving that $R\subset S$ is subintegral, because infra-integral and $|\mathrm {Supp}_R(S/R)|=1$. Set $\{M\}:=\mathrm{Supp}_R(S/R)$ with $I\subseteq M$ and let $N:=\sqrt[S]{MS}$. The following equality holds: $[S/N:R/M]=\mathrm{L}_{R}(N/I)$. But $S/N\cong R/M$ because $R\subset S$ is infra-integral, so that $\mathrm{L}_{R}(N/I)=\mathrm{L}_{R}(N/M)+\mathrm{L}_ {R}(M/I)=1$. As $N\neq M$ since $S\neq R$, it follows that $\mathrm{L}_ {R}(N/M)\neq 0$ and $\mathrm{L}_{R}(M/I)=0$. Therefore, $I=M$ and Theorem \ref{minimal+} gives that $R\subset S$ is in $\mathcal M_r$. In fact, $[S/M: R/M]=2$ because $N=M+Sx$ for some $x\in N\setminus M$ and the isomorphism $R/M\cong S/N$ shows that $S=R+N=R+Rx+Nx=R+Rx$, giving $[S/M:R/M]=2$.
\end{proof}

One can ask whether Theorem \ref{4.1} has a converse. In fact, the next 
 Theorem shows that, given a ring $S$ and $I\in\mathrm{QMax}(S)$, we have to add additional assumptions in order that there exists a subring $R$ of $S$ such that $R\subset S$ is an extension in $\mathcal M$ with $(R:S)=I$. 

\begin{theorem}\label{4.15} Let $S$ be a ring and $I\in\mathrm{QMax}(S)$. Then there exists a subring $R$ of $S$ such that $I=(R:S)$ with $R\subset S$ in $\mathcal M$ if and only if one of the following conditions holds:
\begin{enumerate}
\item $I\in\mathrm{Q_iMax}(S)$ and $S/I$ is a field which has a maximal subring which is a field.
\item $I\in\mathrm{Q_dMax}(S)$ is such that $I=M_1\cap M_2$ where $ M_1,M_2\in\mathrm{Max}(S),\ M_1\neq M_2$ and $S/M_1\cong S/M_2$.
 \item $I\in\mathrm{Q_rMax}(S)$ is such that $M^2\subseteq I\subset M$, where $M\in\mathrm{Max}(S)$  and  $\mathrm{c}(S/M)=\mathrm{c}(S/I)$.
\end{enumerate}
In each of these situations, $R\subset S$ is a minimal extension of the same type as  the quasi-maximal ideal $I$. 
\end{theorem}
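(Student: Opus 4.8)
The plan is to reduce the whole statement to the case $I=0$ by passing to the quotient $\bar S:=S/I$, and then to treat the three types separately using the explicit structure of $\bar S$ recorded in Remark \ref{2.4} and Proposition \ref{2.5}. Write $\pi:S\to\bar S$ for the projection. The key reduction is this: if $\bar R$ is a subring of $\bar S$ with $(\bar R:\bar S)=0$ and $\bar R\subset\bar S$ minimal, then $R:=\pi^{-1}(\bar R)$ contains $I$, and since $I\subseteq R$ one has $(R:S)=\pi^{-1}((\bar R:\bar S))=\pi^{-1}(0)=I$, while the inclusion-preserving bijection $[R,S]\cong[\bar R,\bar S]$ forces $R\subset S$ to be minimal. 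Conversely, any $R\subset S$ in $\mathcal M$ with $I=(R:S)$ has $I\in\mathrm{Max}(R)$ by Theorem \ref{minimal+}, so it descends to $\bar R:=R/I$ with $(\bar R:\bar S)=0$ and $\bar R\subset\bar S$ minimal. Since the type of a minimal extension is read off from how $(R:S)$ sits inside $S$, which is intrinsic to $\bar S$, "same type'' is automatically preserved; so it remains only to decide, for each admissible form of $\bar S$, whether a suitable $\bar R$ exists.

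For the inert case, $I\in\mathrm{Q_iMax}(S)$ forces $\bar S$ to be a field by Remark \ref{2.4}. A subring $\bar R$ yielding an inert minimal extension with zero conductor must be a field (so that $0\in\mathrm{Max}(\bar R)$) and a maximal subring of $\bar S$; conversely a maximal subring of $\bar S$ that is a field gives a minimal field extension, hence inert by Theorem \ref{minimal+}(a). This is precisely condition (1). For the decomposed case, Remark \ref{2.4} gives $\bar S\cong S/M_1\times S/M_2$. I would produce $\bar R$ as the diagonal $\{(a,\varphi(a)):a\in S/M_1\}$ attached to an isomorphism $\varphi:S/M_1\xrightarrow{\sim}S/M_2$, check $(\bar R:\bar S)=0$, and verify via the idempotent $q$ (the image of $(1,0)$, which lies outside $\bar R$) that $\bar S=\bar R[q]$ with $q^2-q=0$, so Theorem \ref{minimal+}(b) applies. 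The existence of $\varphi$ is condition (2), and conversely Theorem \ref{minimal+}(b) forces $S/M_1\cong S/M_2$.

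The ramified case is the substantial one. Here Proposition \ref{2.5} identifies $\bar S$ with a SPIR whose maximal ideal $\bar M$ satisfies $\bar M^2=0$ and $\dim_k\bar M=1$, where $k:=\bar S/\bar M$. By Theorem \ref{minimal+}(c), an $\bar R$ yielding a ramified minimal extension with zero conductor must be a coefficient field, i.e. a subfield of $\bar S$ mapping isomorphically onto $k$; then $\bar S=\bar R\oplus\bar M=\bar R[q]$ for any $q$ spanning $\bar M$, with $q^2\in\bar M^2=0$, producing the ramified minimal extension. The existence of such a coefficient field is governed by Cohen's structure theorem: the Artinian (hence complete) local ring $\bar S$ admits one if and only if it is equicharacteristic, i.e. $\mathrm{c}(\bar S)=\mathrm{c}(k)$. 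Translating back through $\bar S=S/I$ and $k\cong S/M$ with $M=\sqrt[S]I$, this reads $\mathrm{c}(S/M)=\mathrm{c}(S/I)$, which is condition (3); the converse necessity is the last assertion of Theorem \ref{minimal+}(c).

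The main obstacle will be the ramified case: extracting the equicharacteristic hypothesis in the correct form, invoking the coefficient-field part of Cohen's structure theorem, and then checking that the resulting inclusion $\bar R\subset\bar R[q]$ is genuinely minimal of ramified type against the criterion of Theorem \ref{minimal+}(c). The decomposed diagonal and the inert maximal-subfield constructions are routine once the reduction to $I=0$ is in place. Throughout, one must keep track that passing to $S/I$ leaves the relevant residue rings and the conductor-quotient unchanged, which is what guarantees that $R\subset S$ has the same type as the quasi-maximal ideal $I$.
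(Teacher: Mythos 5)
Your proposal is correct and follows essentially the same route as the paper: reduce to $I=0$ via the pullback $R=p^{-1}(K)$ (the paper cites \cite[Proposition 4.7]{DPPS} where you argue the lattice bijection $[R,S]\cong[R/I,S/I]$ directly), then take a maximal subfield in the inert case, the diagonal of $S/M_1\times S/M_2$ in the decomposed case, and a coefficient field of the SPIR $S/I$ in the ramified case. The only cosmetic difference is that you invoke Cohen's structure theorem for the coefficient field, while the paper reaches the same conclusion through Hungerford's theorem \cite[Theorem 8]{Hun} (itself a consequence of Cohen's theorem), writing $S/I$ as an image of $K[[X]]$.
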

\begin{proof} Let $S$ be a ring and $I\in\mathrm{QMax}(S)$. Assume that there exists a subring $R$ of $S$ such that $I=(R:S)$ with $R\subset S$ in $\mathcal M$. Then Theorems \ref{4.1} and \ref{minimal+} give the first part of the equivalence. 

Now, assume that one of conditions (1), (2) or (3) holds. According to \cite[Proposition 4.7]{DPPS}, we get that there exists a subring $R$ of $S$ such that $R\subset S$ is in $\mathcal M$ with $I=(R:S)$ if and only if $R/I\subset S/I$ is in $\mathcal M$, and if this statement holds, the two extensions have the same type. In this case, since $I=(R:S)$ with $R\subset S$ an extension in $\mathcal M$, it follows that $I\in\mathrm{Max}(R)$, so that $R/I$ is a field. Therefore, if there exists a subring $K$ of $S$ which is a field such that $K\subset S/I$ is an extension in $\mathcal M$, the wanted ring $R$ is obtained by the  pullback  
$\begin{matrix}
        R         & \subset &         S            \\
\downarrow &      {}     & p\downarrow   \\
       K          & \subset &       S/I         
\end{matrix}$  
where $p:S\to S/I$ is the natural  surjective map. Then, $R=p^{-1}(K)$. For such a ring $R$, we get that $I$ is an ideal shared by $R$ and $S$, with $ K=R/I$, so that $I=(R:S)\in\mathrm{Max}(R)$ and $R\subset S$ is an  
extension in $\mathcal M$ of the same type as $K\subset S/I$ by the previous reference. So, our problem reduces to find conditions in order that there exists a maximal subring $K$ of $S/I$ which is a field and such that $K\subset S/I$ is an extension in $\mathcal M$ of the same type as the quasi-maximal ideal $I$ in $S$. We make a discussion considering the different types of the quasi-maximal ideal $I$ of $S$.

Assume that $I\in\mathrm{Q_iMax}(S)$ so that $S/I$ is a field and assume that $S/I$ has a maximal subring $K$ which is a field. Then $K\subset S/I$ is in $\mathcal M_i$, and so is $R:=p^{-1}(K)\subset S$. In Remark \ref{4.14}, we give a more precise characterization of this situation.

Assume that $I\in\mathrm{Q_dMax}(S)$, that is $I=M_1\cap M_2$ where $ M_1,M_2\in\mathrm{Max}(S),\ M_1\neq M_2$. Then, $S/I\cong S/M_1 \times S/M_2$. Assume moreover that $S/M_1\cong S/M_2$. Setting $K:= S/M_1\cong S/M_2$, we get that $K\subset S/I\cong K^2$ is in $\mathcal M_d$, and so is $R:=p^{-1}(K)\subset S$. 

Assume that $I\in\mathrm{Q_rMax}(S)$, that is $M^2\subseteq I\subset M$ for some $M\in\mathrm{Max}(S)$ and assume that $\mathrm{c}(S/M)=\mathrm{c}(S/I)$. Since $I$ is ramified in $S$, it follows that $S':=S/I$ is a SPIR by Proposition \ref{2.5} whose maximal ideal is $M':=M/I$ nilpotent of index 2. In particular, $S'$ is a complete local ring with $M'$ is principal by \cite[Definition 9]{Hun}. Set $M'=S't$. According to \cite[Theorem 8]{Hun}, $S'=f(K[[X]])$ where $f:K[[X]]\to S'$ is a morphism such that $f(X)=t$, with $K=S'/M'\cong S/M$ because $\mathrm{c}(K)=\mathrm{c}(S/M)=\mathrm {c}(S/I)$. Then, $S'=K[t]=K+Kt$, because $t^2=0$. Therefore $K\subset S' =S/I$ is in $\mathcal M_r$ by Theorem \ref{minimal+}, and so is $R:=p^{-1}(K)\subset S$. 
\end{proof}

\begin{remark}\label{4.14}The condition occurring in case $I\in\mathrm{Q_i Max}(S)$ is drastic. In fact, for a field $K$, Azarang and Karamzadeh get that $K$ has no maximal subring if and only if there is a prime number $p$ such that either $K=\mathbb F_p:=\mathbb Z/p\mathbb Z$ or $K$ is an infinite subfield of the algebraic closure $\overline{\mathbb F_p}$ of $\mathbb F_p$ such that $K=\cup_{n\in T}\mathbb F_{p^n}$, where $T\subseteq\mathbb N$ is such that $\cup_{n\in T}\mathbb F_{p^n}$ is a subfield of $\overline{\mathbb F_p}$, where $\mathbb F_{p^n}$ is the unique subfield of $\overline{\mathbb F_p}$ with $p^n$ elements ($T$ is called a FG-set) and $T$ has no maximal FG-subset \cite[Theorem 1.8] {AK}. Then, taking the contraposition, we get a characterization of a field having a maximal subring. But we have now to exclude the solutions which are not fields. Now, use \cite[Theorem 3.3]{AZ} which says that for a field $ K$ which has a maximal subring $R$, then either $R$ is a field such that $[K:R]<\infty$ or $R$ is a G-domain which is not a field.  
\end{remark}

Let $R\subseteq S$ be an extension in $\mathcal M$ and set $M:=(R:S)$. By Theorem \ref{minimal+}, $\mathrm{Supp}(S/R)=\{M\}$. According to Corollary \ref{3.3}, if $I\in\mathrm{QMax}(R)$ is such that $\mathrm{V}(I)\cap\mathrm{Supp}(S/R)=\emptyset$, then $IS\in\mathrm{QMax}(S)$ and is homotypic to $I$, with $I=IS\cap R$. Moreover, if $I$ is a submaximal ideal of $R$, then $IS\prec N':=NS\in\mathrm{Max}(S)$ for any $N\in\mathrm {V}(I)$. The following Proposition gives a converse for this situation. 

\begin{proposition}\label{4.11} Let $R\subset S$ be in $\mathcal M$, $M:= (R:S)$ and $J\in\mathrm{QMax}(S)$ be such that $I:=J\cap R\nsubseteq M$. Then $J=IS,\ I\in\mathrm{QMax}(R)$ and is homotypic to $J$. 
\end{proposition}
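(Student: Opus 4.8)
The plan is to observe that the hypothesis $I:=J\cap R\nsubseteq M$ is exactly the condition $\mathrm{V}(I)\cap\mathrm{Supp}(S/R)=\emptyset$, since $\mathrm{Supp}(S/R)=\{M\}$ by Theorem \ref{minimal+}; this places us precisely in the framework of Proposition \ref{3.35} and Corollary \ref{3.3}. First I would record the elementary facts that, $R\subset S$ being finite and hence integral, every maximal ideal $N$ of $S$ contracts to a maximal ideal $N\cap R$ of $R$, and that $J$ is proper (so $I=J\cap R\neq R$). Moreover, for any $N\in\mathrm{Max}(S)$ with $J\subseteq N$ we must have $N\cap R\neq M$, for otherwise $I\subseteq N\cap R=M$, contradicting the hypothesis.

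The proof then splits according to the type of $J$. In the \emph{inert} case $J\in\mathrm{Q_iMax}(S)$, so $J\in\mathrm{Max}(S)$ and $I=J\cap R\in\mathrm{Max}(R)$ with $I\neq M$; thus $I\in\mathrm{Max}(R)\setminus\mathrm{Supp}(S/R)$, and Lemma \ref{3.46} shows that $IS$ is the unique prime of $S$ lying over $I$ and is maximal. Since $J$ lies over $I$ as well, uniqueness forces $J=IS$. In the \emph{submaximal} cases $J\in\mathrm{Q_dMax}(S)\cup\mathrm{Q_rMax}(S)$, Proposition \ref{2.71} gives $N\in\mathrm{Max}(S)$ with $J\prec N$; I would then apply Proposition \ref{3.35}(2) with $I':=J$ and $J':=N$. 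Its hypothesis $\mathrm{V}(J\cap R)\cap\mathrm{Supp}(S/R)=\mathrm{V}(I)\cap\{M\}=\emptyset$ holds, so we obtain both $(J\cap R)S=J$, that is $IS=J$, and $I=J\cap R\prec N\cap R$. As $N\cap R\in\mathrm{Max}(R)$, the ideal $I$ is submaximal in $R$, whence $I\in\mathrm{QMax}(R)$ by Theorem \ref{2.6}. In every case we conclude $I\in\mathrm{QMax}(R)$ and $J=IS$.

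Homotypy then comes for free: having shown $I\in\mathrm{QMax}(R)$ with $\mathrm{V}(I)\cap\mathrm{Supp}(S/R)=\emptyset$, Corollary \ref{3.3} yields that $IS\in\mathrm{QMax}(S)$ is homotypic to $I$, and since $J=IS$ this says $J$ and $I$ are homotypic. The main obstacle is establishing the equality $J=IS$ rather than merely the inclusion $IS\subseteq J$, i.e. recovering $J$ as the extension of its contraction; this is precisely what the covering-preservation statement Proposition \ref{3.35}(2) delivers in the submaximal cases and what the uniqueness clause of Lemma \ref{3.46} delivers in the inert case, both ultimately resting on the localization identity $R_P=S_P$ valid for every $P\neq M$. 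The only point requiring genuine care is verifying the hypotheses of these results, which reduces to the single translation $I\nsubseteq M\Longleftrightarrow\mathrm{V}(I)\cap\mathrm{Supp}(S/R)=\emptyset$.
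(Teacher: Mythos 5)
Your proof is correct and follows essentially the same route as the paper: the same split by the type of $J$, with Lemma \ref{3.46} handling the inert case, Proposition \ref{3.35}(2) together with Theorem \ref{2.6} handling the submaximal cases (after translating $I\nsubseteq M$ into $\mathrm{V}(I)\cap\mathrm{Supp}(S/R)=\emptyset$), and Corollary \ref{3.3} supplying homotypy. The only cosmetic differences are that in the inert case the paper deduces $J=IS$ from $IS\subseteq J$ with $IS$ maximal rather than from the uniqueness clause of Lemma \ref{3.46}, and that the covering $J\prec N$ is more directly justified by Theorem \ref{2.6} and Definition \ref{2.72} than by Proposition \ref{2.71}.
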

\begin{proof} We begin to notice that $M\in\mathrm{Max}(R)$ and $\mathrm {Supp}(S/R)=\{M\}$.
 
If $J\in\mathrm{Q_iMax}(S)$, then $I\in\mathrm{Q_iMax}(R)$ by Proposition \ref{3.1}. But, $I\subseteq J$ implies $IS\subseteq J$ with $IS\in\mathrm{Max}(S)$ since $I\not\in\mathrm{Supp}(S/R)$ by Lemma \ref{3.46}. Hence, $J=IS$. 
 
Assume now that $J\in\mathrm{Q_dMax}(S)\cup\mathrm{Q_rMax}(S)$. Then, $J\prec N'$ for some $N'\in\mathrm{QMax}(S)$. According to Proposition \ref{3.35}, $I\prec M':=N'\cap R\in\mathrm{Max}(R)$ and $IS=J$. In particular, $I\in\mathrm{QMax}(R)$. By Corollary \ref{3.3}, $IS$ is homotypic to $I$. 
\end{proof}

Now, according to Proposition \ref{4.11} and considering an extension $R\subset S$ in $\mathcal M$ with $M:=(R:S)$, in order to study the behavior of ideals $I\in\mathrm{QMax}(R)$ or $J\in\mathrm{QMax}(S)$ such that $I =J\cap R$, it remains to assume the situation where $I\subseteq M$. The following key Lemma characterizes submaximal ideals $I$ of $R$ which are also ideals of $S$, that is $I=IS$.

\begin{lemma}\label{4.3} Let $R\subset S$ be an  extension such that $(R: S)\in\mathrm{Max}(R)$ and let $I$ be a submaximal ideal of $R$. Then $I= IS$ if and only if $I\subset(R:S)$ and there exists an ideal $J$ of $S$ such that $I=J\cap R$. 
\end{lemma}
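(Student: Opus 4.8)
The plan is to set $M:=(R:S)\in\mathrm{Max}(R)$ and unwind both implications directly from the definition $M=\{r\in R\mid rS\subseteq R\}$, observing that the condition $I\subseteq M$ is exactly the requirement $IS\subseteq R$. Throughout I would keep in mind the trivial inclusion $I=I\cdot 1\subseteq IS$, so that proving $I=IS$ reduces to establishing the reverse inclusion $IS\subseteq I$.

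For the forward implication, I would assume $I=IS$. Then $I$ coincides with the ideal $IS$ of $S$, hence $I$ is itself an ideal of $S$, and taking $J:=I$ immediately yields $I=J\cap R$ since $I\subseteq R$. For the conductor inclusion, note that $I=IS\subseteq R$ forces $aS\subseteq IS=I\subseteq R$ for every $a\in I$, so each such $a$ lies in $(R:S)=M$; thus $I\subseteq M$. Finally, because $I$ is submaximal it is not maximal by Definition \ref{2.72}, whereas $M$ is maximal, so $I\neq M$ and the inclusion is strict: $I\subset M$.

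For the converse, I would assume $I\subset M=(R:S)$ together with $I=J\cap R$ for some ideal $J$ of $S$. The inclusion $I\subseteq M$ gives $IS\subseteq R$, since every $a\in I$ satisfies $aS\subseteq R$ by the definition of the conductor. On the other hand, $I\subseteq J$ together with the fact that $J$ is an ideal of $S$ yields $IS\subseteq JS=J$. Combining the two, $IS\subseteq J\cap R=I$, and with the trivial inclusion $I\subseteq IS$ this gives $I=IS$.

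I do not expect a serious obstacle here: the argument is a direct manipulation of the conductor condition and the elementary inclusions above. The only point requiring care is the strictness of $I\subset M$ in the forward direction, which is precisely where the submaximality hypothesis (guaranteeing $I\notin\mathrm{Max}(R)$) is used; notably, the reverse implication uses only $I\subseteq(R:S)$ and makes no use of submaximality at all.
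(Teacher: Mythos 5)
Your proof is correct, and in one direction it takes a genuinely different route from the paper. The forward implication (from $I=IS$ to the two conditions) is essentially the paper's argument: $I$ becomes an ideal of $S$ contained in $R$, hence lies in the conductor, and submaximality (which by Definition \ref{2.72} excludes maximality) supplies the strictness $I\subset(R:S)$. Your converse, however, is simpler than the paper's. The paper exploits submaximality: by Proposition \ref{2.71}, $I\prec M$ where $M:=(R:S)$, and since $I\subseteq IS\subseteq MS=M$ exhibits $IS$ as an ideal of $R$ squeezed between $I$ and $M$, the covering relation forces $IS\in\{I,M\}$; the case $IS=M$ is then excluded because $IS\subseteq J$ would give $M=M\cap R\subseteq J\cap R=I$, a contradiction. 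You bypass this dichotomy entirely by observing that $IS\subseteq JS=J$ and $IS\subseteq R$ (each $a\in I$ lies in the conductor, so $aS\subseteq R$) already yield $IS\subseteq J\cap R=I$, whence equality follows from the trivial inclusion $I\subseteq IS$. Your route buys a slightly stronger statement---as you note, the converse implication needs only $I\subseteq(R:S)$ and holds for arbitrary ideals $I$, not just submaximal ones---whereas the paper's argument is an instance of the covering technique ($I\prec M$ via Proposition \ref{2.71}) that recurs throughout Section 4.
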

\begin{proof} Set $M:=(R:S)$. 

Assume that $I\subset M$ and $J$ is an ideal of $S$ such that $I=J\cap R$. This implies $I\subseteq J$, so that $IS\subseteq JS=J\ (*)$. We infer that $I\subseteq IS\subseteq MS=M$. Then $IS\in\{I,M\}$ because $IS$ is also an ideal of $R$. We claim that $IS=I$. Otherwise, $IS=M$ and $(*)$ would imply $M\subseteq J$, so that $M=M\cap R\subseteq J\cap R=I$, a contradiction and  $IS=I$ holds. 
 
Conversely, if $I$ is a submaximal ideal of $R$ such that $I=IS$, then $I\subset M$ because $I$ is also an ideal of $S$ and $I\not\in\mathrm{Max}(R)$. Moreover, $IS$ is also an ideal of $S$ such that $I=IS\cap R$.
\end{proof}

\begin{remark}\label{4.4} (1) Let $R\subset S$ in $\mathcal M_d$ and set $ M:=(R:S)=M_1\cap M_2\in\mathrm{Max}(R)\setminus\mathrm{Max}(S)$ with $M_i\in\mathrm{Max}(S)$ for $i\in\mathbb N_2$ and $ M_1\neq M_2 $. Let $M'\in\mathrm{Max}(R),\ M'\neq M$ and set $I:=M\cap M'=MM'$. Then $I\prec M'$ in $R$. Moreover, $MS=M$ and $IS=(MM')S=MSM'=MM' =I$ is also an ideal of $S$. Since $M'\neq M$, we have $N':=M'S\in\mathrm{Max}(S)$ thanks to Lemma \ref{3.46} and $I\subset M_1\cap N' \subset N'$ shows that $IS\not\prec M'S$. Then, Proposition \ref{3.35}(1) cannot apply since $\mathrm{V}(I)\cap\mathrm{Supp}(S/R)\neq\emptyset$.

(2) In order to characterize extensions $R\subset S$ in $\mathcal M$ such that, given an ideal $I\in\mathrm{QMax}(R)$ with $I\subset(R:S)$, there exists $J\in\mathrm{QMax}(S)$ such that $I=J\cap R$, we can add the assumption that $IS=I$ by Lemma \ref{4.3}. This means that $R$ and $S$ shares $I$.
\end{remark}

In the following subsections, we are going to give more precise results concerning quasi-maximal ideals of $R$ and $S$ according to the type of the minimal extension $R\subset S$.

\subsection{Quasi-maximal ideals in minimal inert  extensions}\begin{proposition}\label{4.5} Let $R\subset S$ be in $\mathcal M$, $M:= (R:S)$ and $I\in\mathrm{Q_dMax}(R)$ with $I\subset M$. Then  $I=M\cap M'$, for some $M'\in\mathrm{Max}(R)\setminus\{M\}$ and there exists $J:= N\cap N'\in\mathrm{Q_dMax}(S)$ such that $I=J\cap R$, where $N$ (resp. $N'$) $\in\mathrm{Max}(S)$ is lying in $S$ above $M$ (resp. $M'$) and $I =IS$. Moreover, $I\in\mathrm{Q_dMax}(S)$ if and only if $R\subset S$ is in $\mathcal M_i$. In this case, $I$ is the only ideal of $\mathrm{QMax}(S)$ contained in $M$ and lying above $I$. If $M\not\in\mathrm{Max}(S)$, then $I\neq J$. 
\end{proposition}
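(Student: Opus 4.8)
The plan is to first fix the decomposition of $I$, then exhibit the ideal $J$, deduce $I=IS$, and finally read off when $I\in\mathrm{Q_dMax}(S)$ from the set of maximal ideals of $S$ containing $I$.

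Recall from Theorem~\ref{minimal+} that $M=(R:S)\in\mathrm{Max}(R)$ and $\mathrm{Supp}(S/R)=\{M\}$. Since $I\in\mathrm{Q_dMax}(R)$, Theorem~\ref{2.3} shows that $I$ equals the intersection of the two distinct maximal ideals containing it and that these, together with $I$, are the only proper ideals of $R$ above $I$; as $I\subset M\in\mathrm{Max}(R)$, one of these two must be $M$, so $I=M\cap M'$ with $M'\in\mathrm{Max}(R)\setminus\{M\}$. Because $M'\neq M$ and $\mathrm{Supp}(S/R)=\{M\}$, Lemma~\ref{3.46} applies to $M'$ and gives $N':=M'S\in\mathrm{Max}(S)$, the unique prime of $S$ over $M'$, with $N'\cap R=M'$. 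Choosing any $N\in\mathrm{Max}(S)$ above $M$ (one exists since $R\subset S$ is integral), I set $J:=N\cap N'$; here $N\neq N'$ since they contract to the distinct ideals $M\neq M'$, so $J\in\mathrm{Q_dMax}(S)$ and $J\cap R=(N\cap R)\cap(N'\cap R)=M\cap M'=I$. As $I$ is submaximal in $R$ (Theorem~\ref{2.6}), $I\subset(R:S)$, and $I=J\cap R$, Lemma~\ref{4.3} then yields $I=IS$.

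For the equivalence, the key computation is $IS=(MM')S=(MS)(M'S)=MN'$, using $MS=M$ because $M=(R:S)$ is an ideal of $S$. A maximal ideal $P$ of $S$ contains $MN'$ exactly when $P\supseteq M$ or $P=N'$, and the maximal ideals over $M$ are, by Theorem~\ref{minimal+}, a single one $N$ in the inert and ramified cases and two, $N_1,N_2$, in the decomposed case (none equal to $N'$, which lies over $M'\neq M$). Hence $\mathrm{V}_S(I)$ has two elements in the inert and ramified cases and three in the decomposed case. In the decomposed case $|\mathrm{V}_S(I)|=3$ forces $I\notin\mathrm{QMax}(S)$ by Theorem~\ref{2.3}; in the inert case $M=N\in\mathrm{Max}(S)$ is comaximal with $N'$, so $I=MN'=N\cap N'=J\in\mathrm{Q_dMax}(S)$, which also proves $I=J$ there.

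The main obstacle is the ramified case, where $|\mathrm{V}_S(I)|=2$ and a containment must be examined: writing $N$ for the maximal ideal of $S$ over $M$, if $I$ were in $\mathrm{Q_dMax}(S)$ it would have to equal $N\cap N'$, but $I=MN'\subsetneq NN'=N\cap N'$, the strictness being seen by localizing at $N$, where $MS_N\subsetneq NS_N$ because $M$ is $N$-primary with $M\neq N$; this rules out $\mathrm{Q_dMax}(S)$ and, combined with the previous paragraph, gives $I\in\mathrm{Q_dMax}(S)\Leftrightarrow R\subset S\in\mathcal M_i$. For the uniqueness claim (inert case), I would use that $I\in\mathrm{Q_dMax}(S)$ makes $I,\,N(=M),\,N'$ the only proper ideals of $S$ above $I$ (Theorem~\ref{2.3}); any $J^{*}\in\mathrm{QMax}(S)$ with $J^{*}\cap R=I$ contains $I$, hence is one of these, and $N\cap R=M\neq I$, $N'\cap R=M'\neq I$ leave only $J^{*}=I$. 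Finally $M\notin\mathrm{Max}(S)$ means the extension is decomposed or ramified, and then $I\neq J$: in the decomposed case $I$ lies in both $N_1$ and $N_2$ while $J=N\cap N'$ lies in only one of them, and in the ramified case $I\subsetneq N\cap N'=J$ by the strict localization above.
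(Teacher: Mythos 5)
Your proof is correct, covers every assertion of the statement, and follows the same skeleton as the paper's proof: write $I=M\cap M'$, construct $J=N\cap N'$ via Lemma \ref{3.46} and lying-over, deduce $I=IS$ from Lemma \ref{4.3}, then split on whether $M\in\mathrm{Max}(S)$. Where you genuinely diverge is in the non-inert direction. The paper (which delegates the construction of $J$ to the proof of Proposition \ref{3.4}) settles both non-inert cases with a single localization at $M\in\mathrm{Spec}(R)$, giving $I_M=M_M\neq N_M=J_M$, hence $I\subset J$ strictly, hence $I\neq J$ and $I\notin\mathrm{Q_dMax}(S)$ --- this last inference being left terse, since one must still observe that any decomposed presentation of $I$ in $S$ would have to use two ideals of $\mathrm{V}_S(I)$, each possibility being excluded. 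You instead prove the explicit identity $IS=MN'$ and read $\mathrm{V}_S(I)$ off Theorem \ref{minimal+}: in the decomposed case $|\mathrm{V}_S(I)|=3$ excludes quasi-maximality outright by Theorem \ref{2.3}, and in the ramified case localization at $N\in\mathrm{Spec}(S)$ shows $I\subsetneq N\cap N'$, the only candidate decomposition. Your route makes explicit exactly the step the paper elides and gives a cleaner decomposed case; its modest cost is a separate containment argument for $I\neq J$ there, which the paper gets for free from its one localization. Likewise, in the inert case the paper obtains $I\in\mathrm{QMax}(S)$ from the covering $I\prec M$ in $S$ (ideals of $S$ between $I$ and $M$ are ideals of $R$), while your comaximality computation $I=MN'=M\cap N'=J$ reaches the same conclusion at least as directly. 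One minor simplification: for the uniqueness claim, your count of the proper ideals of $S$ above $I$ works, but it is immediate that any $J^{*}\in\mathrm{QMax}(S)$ with $J^{*}\subseteq M$ and $J^{*}\cap R=I$ satisfies $J^{*}\subseteq M\subseteq R$, whence $J^{*}=J^{*}\cap R=I$.
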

\begin{proof} 

According to Theorem \ref{4.1}, $M\in\mathrm{QMax}(S)$. In the proof of Proposition \ref{3.4}, we proved that there exists $J:= N\cap N'\in\mathrm {Q_dMax}(S)$ such that $I=J\cap R$, where $N$ (resp. $N'$) $\in\mathrm {Max}(S)$ is lying above $M$ (resp. $M'$). By Lemma \ref{4.3}, $I=IS$ holds.  

If $M\in\mathrm{Max}(S)$, then $R\subset S$ is in $\mathcal M_i$
and $I=IS$ is also in $\mathrm{QMax}(S)$ because any ideal of $S$ contained between $I$ and $M$ is an ideal of $R$, and then is either $M$ or $I$. In this case, $M=N$, so that $J=N\cap N'=M\cap N'=M\cap R\cap N' =M\cap M'=I$. But $N'=M'S\in\mathrm{Max}(S)$ is the unique element of $\mathrm{Max}(S)$ lying over $M'$ because $M'\not\in\mathrm{Supp}(S/R)$, according to Lemma \ref{3.46}. Then, $I\in\mathrm{Q_dMax}(S)$ 
 because $M\neq N'$.

If $M\not\in\mathrm{Max}(S)$, then $R\subset S$ is not in $\mathcal M_i$
and $I\neq J$. Indeed, $I_M=M_M\neq J_M=N_M$, so that $I\subset J$. In particular, $I\not\in\mathrm{Q_dMax}(S)$. 
\end{proof} 

\begin{proposition}\label{4.81} Let $R\subset S$ be an extension in $\mathcal M$ and $J$ be a submaximal ideal of $S$ with $J\subset (R:S)$. 
Then either $J\in\mathrm{Q_dMax}(R)\cap\mathrm{Q_dMax}(S)$ or $J\in\mathrm{Q_rMax}(S)\setminus\mathrm{QMax}(R)$, with $R\subset S$ in $\mathcal M_i$ in both  cases. 
\end{proposition}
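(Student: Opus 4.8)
The plan is to set $M := (R:S)$, which by Theorem \ref{minimal+} is a maximal ideal of $R$ with $\mathrm{Supp}(S/R)=\{M\}$, and first to extract the structural constraints forced by the hypothesis $J\subset M$. Since $(R:S)$ is an ideal of $S$ contained in $R$, and $J\subset (R:S)$ is an ideal of $S$, I would observe at the outset that $J$ is simultaneously an ideal of $R$ and of $S$, with $J=JS=J\cap R$; this is precisely what lets me compare the type of $J$ in the two rings. Because $J$ is submaximal in $S$, Theorem \ref{2.6} gives $J\in\mathrm{Q_dMax}(S)\cup\mathrm{Q_rMax}(S)$ and, crucially, that $J$ is strictly contained only in maximal ideals of $S$.

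The first main step is to show that $R\subset S$ is necessarily inert. I would argue by contradiction: if $R\subset S$ were decomposed or ramified, then by Theorem \ref{minimal+} the conductor $M$ would fail to be maximal in $S$ (it is $M_1\cap M_2\subset M_i$ in the decomposed case, and satisfies $M\subset M'$ in the ramified case). But $J\subset M$ would then exhibit a non-maximal ideal of $S$ strictly containing $J$, contradicting the property recorded above. Hence $R\subset S$ is in $\mathcal M_i$ and $M\in\mathrm{Max}(R)\cap\mathrm{Max}(S)$. I would also record two consequences of inertness used below: $S/M=S/MS$ is a field, so $M$ is the unique prime of $S$ lying over $M$; and the minimal field extension $R/M\subset S/M$ has degree $[S/M:R/M]\geq 2$.

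It then remains to identify the type of $J$ by splitting on the two possibilities in $S$. If $J\in\mathrm{Q_dMax}(S)$, write $J=N_1\cap N_2$ with distinct $N_1,N_2\in\mathrm{Max}(S)$; since $J\subset M\in\mathrm{Max}(S)$, one of them, say $N_1$, equals $M$. Contracting to $R$ yields $J=(N_1\cap R)\cap(N_2\cap R)=M\cap M_2$ with $M_2:=N_2\cap R\in\mathrm{Max}(R)$, and $M_2\neq M$, for otherwise $N_2$ would lie over $M$ and the uniqueness of $M$ would force $N_2=M=N_1$, a contradiction; hence $J\in\mathrm{Q_dMax}(R)$ by Definition \ref{2.2}, giving the first alternative. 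If instead $J\in\mathrm{Q_rMax}(S)$, the unique maximal ideal of $S$ over $J$ is its radical, which must be $M$; so $M^2\subseteq J\subset M$ in $S$ and $\mathrm{L}_S(M/J)=1$, i.e. $M/J$ is a one-dimensional $S/M$-vector space. Viewing $M/J$ instead over $R/M$ multiplies the dimension by $[S/M:R/M]$, so $\mathrm{L}_R(M/J)=[S/M:R/M]\geq 2$; thus some ideal of $R$ lies strictly between $J$ and $M$, so $M\nsucc J$ in $R$, and since $J$ is non-maximal in $R$, Proposition \ref{2.71} together with Theorem \ref{2.6} forces $J\notin\mathrm{QMax}(R)$. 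This is the second alternative.

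The step I expect to be the main obstacle is the length computation in the ramified subcase: the delicate point is that the covering $M\succ J$ valid in $S$ does not survive in $R$, and quantifying this requires passing from the $S/M$-dimension to the $R/M$-dimension of $M/J$ through the residue-degree factor $[S/M:R/M]$. It is exactly the inertness (degree at least two) that breaks the covering over $R$ and expels $J$ from $\mathrm{QMax}(R)$, so the reduction to $\mathcal M_i$ and the disposal of the ramified subcase are tightly linked.
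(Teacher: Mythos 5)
Your proof is correct, and it reorganizes the argument in a way that differs from the paper's proof at one key point: how inertness of $R\subset S$ is established. You prove it once, upfront, by the clean observation that $M:=(R:S)$ is an ideal of $S$ strictly containing $J$, so Theorem \ref{2.6} (a submaximal ideal of $S$ is strictly contained only in maximal ideals of $S$) forces $M\in\mathrm{Max}(S)$, which by Theorem \ref{minimal+} rules out the decomposed and ramified cases for the extension. The paper instead derives inertness separately inside each case for $J$: in the decomposed case by a localization computation showing $M_1=N_1=(R:S)$, and in the ramified case by first identifying $(R:S)$ with $N\cap R$ via a primary-ideal argument and then exploiting the covering $J\prec N$ in $S$ to get the dichotomy $M\in\{J,N\}$. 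Your single argument replaces both and makes the logical dependence transparent (the hypothesis $J\subset(R:S)$ with $J$ submaximal in $S$ is exactly what forces $\mathcal M_i$); the paper's per-case treatment is more computational but yields the same extra facts along the way (e.g. $N_1=(R:S)$ explicitly). After that, the two proofs coincide in substance: the decomposed case is handled by contracting $N_1\cap N_2$ to $R$ (your use of the uniqueness of the prime of $S$ over $M$ to get $M_2\neq M$ is a slight variant of the paper's contradiction $J=M_1\in\mathrm{Max}(R)$), and the ramified case rests on the same length computation $\mathrm{L}_R(M/J)=\mathrm{L}_S(M/J)\,[S/M:R/M]=[S/M:R/M]\geq 2$ (the paper cites Northcott; in your setting it is just linear algebra over $S/M\supset R/M$ since $M^2\subseteq J$), which destroys the covering $M\succ J$ over $R$ and expels $J$ from $\mathrm{QMax}(R)$ via Proposition \ref{2.71} and Theorem \ref{2.6}.
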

\begin{proof} $J\not\in\mathrm{Q_iMax}(S)$ as a submaximal ideal of $S$. 

If $J\in\mathrm{Q_dMax}(S)$, then $J=N_1\cap N_ 2,\ N_1\neq N_2,\ (*)$, with $N_i\in\mathrm{Max}(S)$ for $i\in\mathbb N_2$. Setting $M_i:=N_i\cap R$ for $i\in\mathbb N_2$, we get that $J=M_1\cap M_2$ because $J\subset(R:S)\subset R$. Therefore $(R:S)=M_i$ for some $i\in\mathbb N_2 $. Assume, for example, that $(R:S)=M_1$. If $M_1=M_2$, it follows that $ J=M_1\in\mathrm{Max}(R)$, a contradiction. Then, $J= M_1\cap M_2,\ M_1\neq M_2\ (**)$ and $J\in\mathrm{Q_dMax}(R)$. Moreover, $(R:S)\not\subseteq N_2 $. Otherwise, $J=(R:S)$, a contradiction. Then, $R\subset S$ is not in $\mathcal M_d$. Localizing $(*)$ and $(**)$ at $M_1$, we get $J_{M_1}={M_1}_{M_1}={N_1}_{M_1}$, and, because $M_2\not\in\mathrm {Supp}(S/R)$, we have ${M_1}_{M_2}=R_{M_2}=S_{M_2}={N_1}_{M_2}$ since $R_{M_2}=S_{M_2}$, so that $M_1=N_1=(R:S)$ and $R\subset S$ is in $\mathcal M_i$. 

If $J\in\mathrm{Q_rMax}(S),$ then $N^2\subseteq J\subset N\ (***)$ for some $N\in\mathrm{Max}(S)$ and $J$ is an $N$-primary ideal of $S$. Set $M:=N\cap R\in\mathrm{Max}(R)$. Then $(***)$ implies $M^2\subseteq N^2\subseteq J\subseteq M\subseteq N$, with $J$ and $N$ ideals of $S$ and $J$ and $M$ ideals of $R$ such that $J$ is an $M$-primary ideal of $ R$. Moreover, $M=(R:S)$ because $M$ and $(R:S)$ are both maximal ideals of $R$ which contain $J$. As $J$ is a submaximal ideal of $S$ contained in only one maximal ideal of $S$, that is $N$ by $(***)$, we have $M=(R:S)=N\cap R$. But $M= (R:S)$ shows that $M$ is also an ideal of $ S$ and $J\subseteq M\subseteq N$ shows that either $M=J$ or $M=N$. Now, $N$ (resp. $M$) is the only maximal ideal of $S$ (resp. $R$) containing $J$. Then, the equality $M=J$ would imply $J=(R:S)$, a contradiction. To conclude, we have $M=N=(R:S)$ and $R\subset S$ is in $\mathcal M_i$. Since $M\succ J$ in $S$, this gives that $\mathrm{L}_S (M/J)=1$. But, by \cite[Theorem 13, p.168]{N}, $\mathrm{L}_R(M/J)=\mathrm{L}_S(M/J)[S/M:R/M]=[S/M:R/M]> 1$ because $R/M\subset S/M$ is a field extension which is not an isomorphism. Then, $J\not\in\mathrm {QMax}(R)$.
\end{proof}

\begin{corollary}\label{4.8} Let $R\subset S$ be in $\mathcal M_i$ and $I\in\mathrm{Q_rMax}(R)$ such that $I\subset (R:S)$. Then there does not exist $J\in\mathrm{QMax}(S)$ such that $J\subset(R:S)$ and $I=J\cap R$. 
\end{corollary}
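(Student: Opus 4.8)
The plan is to argue by contradiction, exploiting the dichotomy furnished by Proposition \ref{4.81}. Set $M:=(R:S)$. Since $R\subset S$ is in $\mathcal M_i$, the inert case of Theorem \ref{minimal+} gives $M\in\mathrm{Max}(R)\cap\mathrm{Max}(S)$. Suppose, contrary to the claim, that there exists $J\in\mathrm{QMax}(S)$ with $J\subset(R:S)=M$ and $I=J\cap R$.

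The key observation is that $J\subset M\subseteq R$, so $J$ is contained in $R$; being an ideal of $S$ contained in $R$, it is also an ideal of $R$, and $J\cap R=J$. Hence $I=J$, and in particular $J=I\in\mathrm{Q_rMax}(R)\subseteq\mathrm{QMax}(R)$. This reduction is the heart of the argument.

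Next I would note that $J$ is submaximal in $S$: it lies in $\mathrm{QMax}(S)$ and is strictly contained in the maximal ideal $M$ of $S$, so it cannot itself be maximal in $S$, whence it is submaximal by Theorem \ref{2.6}. Since $R\subset S$ is in $\mathcal M$ (as $\mathcal M_i\subseteq\mathcal M$) and $J$ is a submaximal ideal of $S$ with $J\subset(R:S)$, Proposition \ref{4.81} applies and yields the dichotomy: either $J\in\mathrm{Q_dMax}(R)\cap\mathrm{Q_dMax}(S)$, or $J\in\mathrm{Q_rMax}(S)\setminus\mathrm{QMax}(R)$.

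Both alternatives contradict the conclusion $J=I\in\mathrm{Q_rMax}(R)$ obtained above. In the first, $J\in\mathrm{Q_dMax}(R)$ clashes with $J\in\mathrm{Q_rMax}(R)$, since the three types of quasi-maximal ideals are mutually exclusive (Proposition \ref{2.70} and Definition \ref{2.2}). In the second, $J\notin\mathrm{QMax}(R)$ contradicts $J=I\in\mathrm{QMax}(R)$. Thus no such $J$ can exist. I do not expect a genuine obstacle: once the reduction $I=J$ is in hand, the statement is an immediate consequence of Proposition \ref{4.81}, the only subtlety being the verification that an $S$-ideal sitting inside $R$ is recovered intact by contraction to $R$.
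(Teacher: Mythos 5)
Your proof is correct and takes essentially the same approach as the paper: the crucial reduction that $J\subset(R:S)\subseteq R$ forces $J\cap R=J$, hence $I=J$, after which Proposition \ref{4.81} rules out every possible type of $J$ by contradiction with $I\in\mathrm{Q_rMax}(R)$. The only cosmetic difference is that you dispose of the inert case for $J$ up front (via submaximality and Theorem \ref{2.6}), whereas the paper treats $J\in\mathrm{Q_iMax}(S)$ as a third explicit case; the substance is identical.
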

\begin{proof} Since $R\subset S$ is in $\mathcal M_i$, $(R:S)\in\mathrm {Max}(S)$. Assume that there exists $J\in\mathrm{QMax}(S),$ $J\subset (R:S)$ such that $I=J\cap R$, so that $I=J$. According to Proposition \ref{4.81}, if $J\in\mathrm{Q_rMax}(S)$, we get that $J=I\not\in\mathrm {QMax}(R)$, a contradiction with the hypothesis on $I$. The same Proposition says that if $J\in\mathrm{Q_dMax}(S)$, then $I=J\in\mathrm {Q_dMax}(R),$ a contradiction. At last, if $J\in\mathrm{Q_iMax}(S)$, then $I=J\in\mathrm{Q_iMax}(R)$, also  a contradiction since $I\subset (R:S)$.
 
It follows that there does not exist $J\in\mathrm{QMax}(S)$ such that $J\subset (R:S)$ and  $I= J\cap R$.
\end{proof}

\begin{remark}\label{4.9} Let $R\subset S$ be in $\mathcal M_i$ and $J\in\mathrm{Q_rMax}(S)$ such that $J\subset (R:S)$. In Proposition \ref{4.81}, we prove that $J\not\in\mathrm{QMax}(R)$ as an ideal of $R$. In particular, setting $M:=(R:S)\in\mathrm{Max}(S)$, there exists an ideal $K$ of $R$ such that $J\subset K\subset M$. Therefore $J= JS\subset K\subseteq KS\subseteq MS=M$ in $S$. So, we have $J\subset KS\subseteq M$, with $KS$ an ideal of $S$. But $M\succ J$ in $S$, so that $ KS=M$. Then, any ideal of $R$ strictly contained between $J$ and $M$ is lifted up in $M$. 

This situation may happen for example, in the context of algebraic numbers. Take an extension of algebraic orders $R\subset S$ in $\mathcal M_i$ and set $M:=(R:S)\in\mathrm{Max}(S)$, so that $M^2\neq M$ by the Nakayama Lemma. As $0<\mathrm{L}_S(M/M^2)<\infty$, there exists an ideal $J$ of $S$ such that $M^2\subseteq J\subset M$ with $\mathrm{L}_ S(M/J)=1$. Then, $J\in\mathrm{Q_rMax}(S)$ with $J\subset M$ and $J\not\in\mathrm {QMax}(R)$. 
\end{remark}

Let $R\subset S$ be an extension with $I\in\mathrm{QMax}(R)$ such that $\mathrm{V}_R(I)\cap\mathrm{Supp}(S/R)=\emptyset$. By Corollary \ref{3.3}, $I=IS\cap R$ and $IS\in\mathrm{QMax}(S)$ with $IS\subset N:= MS\in\mathrm{Max}(S)$ for any $M\in\mathrm{V}_R(I)$ and $IS$ is homotypic to $I$. Now, we are going to consider a minimal extension $R\subset S$ and an ideal $J\in\mathrm{QMax}(S)$ such that $J\not\subseteq(R:S)$. In particular, Proposition \ref{4.82} generalizes Proposition \ref{4.11}. According to Proposition \ref{3.1}, it is enough to  consider only $J\in\mathrm{Q_rMax}(S)$.

\begin{proposition}\label{4.82} Let $R\subset S$ be in $\mathcal M_i$ and $J\in\mathrm{Q_rMax}(S),\ J\not\subseteq (R:S)$. Set $I:=J\cap R$. Then $I\in\mathrm{Q_rMax}(R)$ and $J=IS$.
\end{proposition}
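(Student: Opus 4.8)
The plan is to reduce everything to the covering/descent machinery of Proposition \ref{3.35}(2), whose support hypothesis is exactly what the assumption $J\not\subseteq(R:S)$ delivers. First I would fix notation: since $R\subset S$ is in $\mathcal M_i$, Theorem \ref{minimal+}(a) gives $M:=(R:S)\in\mathrm{Max}(R)\cap\mathrm{Max}(S)$, and the last line of Theorem \ref{minimal+} gives $\mathrm{Supp}(S/R)=\{M\}$. Because $J\in\mathrm{Q_rMax}(S)$, there is $N:=\sqrt[S]J\in\mathrm{Max}(S)$ with $J$ being $N$-primary and $J\prec N$ (Definition \ref{2.2} together with Theorem \ref{2.6}). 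The hypothesis $J\not\subseteq M$ forces $N\neq M$ (otherwise $J\subseteq N=M$), and, more importantly, it says that $M\notin\mathrm{V}_R(I)$ with $I:=J\cap R$; since $\mathrm{Supp}(S/R)=\{M\}$, this is precisely the condition $\mathrm{V}(I)\cap\mathrm{Supp}(S/R)=\emptyset$.

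Next I would apply Proposition \ref{3.35}(2) to the covering $I':=J\prec J':=N$ of ideals of $S$. Its hypothesis $\mathrm{V}(I'\cap R)\cap\mathrm{Supp}(S/R)=\mathrm{V}(I)\cap\{M\}=\emptyset$ holds by the previous paragraph, so the conclusion yields both $IS=(J\cap R)S=J$ and the descent of the covering, $I=J\cap R\prec N\cap R=:P$, where $P\in\mathrm{Max}(R)$ because $R\subset S$ is integral. This already produces the equality $J=IS$ claimed in the statement and shows that $I$ is submaximal, whence $I\in\mathrm{QMax}(R)$ by Theorem \ref{2.6}.

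Finally I would pin down the type. The contraction $I=J\cap R$ of the $N$-primary ideal $J$ is $(N\cap R)$-primary, i.e.\ $P$-primary, so $\sqrt[R]I=P$ and $\mathrm{V}(I)=\{P\}$ is a single maximal ideal. Combined with $I\prec P$ (so $I$ is not maximal), the classification in Theorem \ref{2.3} forces $\mathrm o(I)=2$, that is $I\in\mathrm{Q_rMax}(R)$, as desired. Equivalently, the whole statement is the $\mathcal M_i$, $J\in\mathrm{Q_rMax}(S)$ instance of Proposition \ref{4.11}, which already gives $J=IS$ and $I\in\mathrm{QMax}(R)$ homotypic to $J$; homotypy then reads $I\in\mathrm{Q_rMax}(R)$.

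The routine verifications are absorbed by the cited results; the only genuine point to get right is the bookkeeping in the first paragraph, namely recognizing that ``$J\not\subseteq(R:S)$'' is exactly the support-disjointness condition needed to run Proposition \ref{3.35}(2), using that for a minimal extension the support of $S/R$ is the singleton $\{(R:S)\}$. I expect no serious obstacle beyond making sure the covering $J\prec N$ is correctly read off from $J\in\mathrm{Q_rMax}(S)$ and that the contraction of a primary ideal stays primary, so that the conclusion lands in $\mathrm{Q_rMax}(R)$ rather than merely in $\mathrm{QMax}(R)$.
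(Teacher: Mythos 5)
Your proof is correct and takes essentially the same route as the paper: the paper's own proof verifies that $\mathrm{V}_R(I)$ is a single maximal ideal avoiding $\mathrm{Supp}(S/R)=\{(R:S)\}$ and then cites Proposition \ref{4.11}, whose proof is precisely the Proposition \ref{3.35}(2) covering-descent argument that you unpack (and you yourself note the Proposition \ref{4.11} shortcut at the end). The only point to tighten is your claim that $J\not\subseteq M$ immediately ``says'' $M\notin\mathrm{V}_R(I)$: this is not formal but follows in one line from ingredients you already have, namely that $I$ is $(N\cap R)$-primary and that $N\cap R=M$ would force $M\subseteq N$, hence $N=M$ because $M\in\mathrm{Max}(S)$ in the inert case, contradicting $N\neq M$.
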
 
\begin{proof} Set $M:=(R:S)$. Since $R\subset S$ is in $\mathcal M_i$, then $\mathrm{Supp}(S/R)=\{M\}$ and $M\in\mathrm{Max}(S)$. It follows that $J\not\subseteq M$. Let $N':=\sqrt[S]J\in\mathrm{Max}(S)$ and set $ N:=N'\cap R\in\mathrm{Max}(R)$. Then $N\not\in\mathrm{Supp}(S/R)$ and $\{N\}=\mathrm{V}_R(I)$. According to Proposition \ref{4.11}, $I\in \mathrm{Q_rMax}(R)$ and $J=IS$ because $J\not\subseteq M$.
\end{proof}

 \subsection{Quasi-maximal ideals in minimal decomposed   extensions}
 
Let $R\subset S$ be in $\mathcal M$. According to Proposition \ref{4.81}, if $J\in\mathrm{QMax}(S)$ is such that $J\subset (R:S)$, then $R\subset S$ is in $\mathcal M_i$. It follows that we cannot consider this situation for $R\subset S$ in $\mathcal M_d\cup\mathcal M_r$. 

If we delete the assumption that $J\subset(R:S)$, but keep $I:=J\cap R\subset (R:S)$, we have more results as we can see in the next Proposition. In the case $I\not\subseteq(R:S)$, then $I\in\mathrm{QMax}(R)$ is homotypic to $J$ and $J=IS$ by Proposition \ref{4.11}. 

\begin{proposition}\label{4.111} Let $R\subset S$ be in $\mathcal M_d$ with $M:=(R:S)=M_1\cap M_2,\ M_i\in\mathrm{Max}(S)$ for $i\in\mathbb N _2$. Let $I\in\mathrm{QMax}(R)$ be such that $I\subset M$. There exists $J\in\mathrm{QMax}(S),\ J\not\subseteq M$ such that $I=J\cap R$ if and only if $I=IS$ and either (1) $I\in\mathrm{Q_dMax}(R)$ with $I=M\cap N,\ N\in\mathrm{Max}(R),\ N\neq M$ or (2) $I$ is an $M$-primary ideal of $\mathrm{Q_rMax}(R)$.

If these conditions are satisfied, then $I$ and $J$ are homotypic and $J=M _i\cap N'$ for some $i\in\mathbb N_2$, where $N':=NS\in\mathrm{Max}(S)$ in case (1) or $\mathrm{V}_S(J)=\{M_i\}$ for only one $i\in\mathbb N_2$ in case (2). 
\end{proposition}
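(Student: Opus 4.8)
The plan is to treat the two implications separately. For the forward direction, suppose $J\in\mathrm{QMax}(S)$ with $J\nsubseteq M$ and $I:=J\cap R\subset M$. Since $I\subset M=(R:S)\in\mathrm{Max}(R)$ strictly, $I$ is a non-maximal member of $\mathrm{QMax}(R)$, hence submaximal by Theorem \ref{2.6}, so $I\in\mathrm{Q_dMax}(R)\cup\mathrm{Q_rMax}(R)$. As $I$ is submaximal, $I\subset(R:S)$, and $I=J\cap R$ with $J$ an ideal of $S$, Lemma \ref{4.3} gives $I=IS$ at once. Reading off the shape of $I$: in the decomposed case $M$ is one of the two maximals over $I$, so $I=M\cap N$ with $N\in\mathrm{Max}(R)\setminus\{M\}$ (condition (1)); in the ramified case $M$ is the unique maximal over $I$, so $I$ is $M$-primary (condition (2)). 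To identify $J$, I would compute $\mathrm{V}_S(I)$ using $\mathrm{Supp}(S/R)=\{M\}$ and Lemma \ref{3.46} (so $N':=NS$ is the unique maximal of $S$ over $N$, with $N'\cap R=N$): one gets $IS=M_1\cap M_2\cap N'$ in case (1) and $\mathrm{V}_S(I)=\{M_1,M_2\}$ in case (2). Since $I=IS\subseteq J$ we have $\mathrm{V}_S(J)\subseteq\mathrm{V}_S(I)$; the facts that $J$ is submaximal (so $|\mathrm{V}_S(J)|\le2$), that $J\nsubseteq M$ (so $M_1,M_2$ are not both in $\mathrm{V}_S(J)$), and that $\sqrt[S]J\cap R=\sqrt[R]I$ (Lemma \ref{3.25}, the extension being integral) then force $\mathrm{V}_S(J)=\{M_i,N'\}$ in case (1) and $\mathrm{V}_S(J)=\{M_i\}$ in case (2). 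Hence $J=M_i\cap N'$ is decomposed, resp. $J$ is $M_i$-primary and ramified, in both instances homotypic to $I$.

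For the converse in case (1) I would exhibit $J$ directly. Put $N':=NS$; by Lemma \ref{3.46}, $N'\in\mathrm{Max}(S)$ and $N'\cap R=N$. Here $I=IS$ is in fact automatic: as $M,N$ are comaximal, $IS=(M\cap N)S=(MS)(NS)=MN'=M\cap N'$, and since $M\subseteq R$ and $N'\cap R=N$ one has $M\cap N'=M\cap N=I$. Now set $J:=M_i\cap N'$. Then $J\cap R=(M_i\cap R)\cap(N'\cap R)=M\cap N=I$, and since $M_1,M_2,N'$ are three distinct maximal ideals of $S$, $J=M_i\cap N'\nsubseteq M_j$ for $j\neq i$, whence $J\nsubseteq M_1\cap M_2=M$. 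Thus $J\in\mathrm{Q_dMax}(S)$ is homotypic to $I$, as wanted.

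The converse in case (2) is the main obstacle; I would combine a length count with primary decomposition in $S$. Given $I=IS$ ramified and $M$-primary, I would first note $\mathrm{V}_S(I)=\{M_1,M_2\}$, since any $S$-prime over $I$ contracts to a prime of $R$ over $\sqrt[R]I=M$, hence to $M$. Next, $M^2\subseteq I$ with $M=M_1M_2$ localizes to $M_i^2\subseteq IS_{M_i}$, so each $A_i:=S_{M_i}/IS_{M_i}$ is local with square-zero maximal ideal $\mathfrak m_i$ and residue field $k:=R/M$; write $S/I\cong A_1\times A_2$, i.e. $I=Q_1\cap Q_2$ with $Q_i:=\ker(S\to A_i)$ the $M_i$-primary component. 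The decisive input is the minimality of $R\subset S$: by Theorem \ref{minimal+}, $S/M\cong(R/M)^2$ and $S/R\cong(S/M)/(R/M)$, so $\mathrm{L}_R(S/R)=1$ and therefore $\mathrm{L}_S(S/I)=\mathrm{L}_R(S/I)=\mathrm{L}_R(R/I)+\mathrm{L}_R(S/R)=2+1=3$. As $\mathrm{L}(A_1)+\mathrm{L}(A_2)=3$ with both lengths $\ge1$, exactly one factor, say $A_j$, has length $2$ and the other is a field. Taking $J:=Q_j$: the field factor forces the complementary component $Q_i=M_i$, so its contraction is $M$ and thus $J\cap R=I$ follows from $I=(Q_1\cap R)\cap(Q_2\cap R)$; meanwhile $\mathrm{L}(A_j)=2$ with $\mathfrak m_j^2=0$ gives $\mathfrak m_j\cong k$, i.e. $M_j\succ J$ and $M_j^2\subseteq J$, so $J\in\mathrm{Q_rMax}(S)$ with $\mathrm{V}_S(J)=\{M_j\}$ and $J\nsubseteq M$. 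The delicate point is precisely that the bound $\mathrm{L}_S(S/I)=3$ rules out both components being fields or both being genuinely ramified, forcing exactly one ramified factor covered by its maximal ideal; minimality of $R\subset S$ is indispensable for this balance.
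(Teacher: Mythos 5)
Your argument is correct, and in the decisive step (the converse of case (2)) it runs on the same engine as the paper's proof: a length count at the two maximal ideals $M_1,M_2$ of $S$ lying over $M$, with the residual isomorphisms of the decomposed minimal extension used to pass between $R$-lengths and $S$-lengths. The packaging, however, is genuinely different. The paper starts from $\mathrm{L}_R(M/I)=1$, distributes it over $M_1,M_2$ by the length formula of \cite[Theorem 13, p.168]{N}, and produces $J$ as the $M_i$-primary ideal with $J_{M_i}=I_{M_i}$ via \cite[Corollary 2, p.225]{ZS}; you instead split $S/I\cong A_1\times A_2$ into its two localizations, obtain $\mathrm{L}_S(S/I)=3$ from the exact sequence $0\to R/I\to S/I\to S/R\to 0$ together with $\mathrm{L}_R(S/R)=1$, and take $J$ to be the primary component attached to the length-two factor --- the same ideal, reached by primary decomposition rather than by saturation. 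Your forward direction is also more self-contained: where the paper pins down the type of $J$ through Proposition \ref{3.1}, you recover it from $\mathrm{V}_S(J)\subseteq\mathrm{V}_S(I)$, $J\not\subseteq M$ and contraction of radicals; and your observation that $I=IS$ is automatic in case (1) is a pleasant refinement the paper does not make.

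Three steps need one more line each, though none is a genuine gap. First, $\mathrm{L}_S(S/I)=\mathrm{L}_R(S/I)$ does not follow from $\mathrm{L}_R(S/R)=1$; it holds because every simple $S$-subquotient of $S/I$ is $S/M_i\cong R/M$, hence $R$-simple (equivalently, it is the formula of \cite{N} with residual degrees $1$, exactly the tool the paper uses). Second, the splitting $S/I\cong A_1\times A_2$ deserves justification, e.g. by CRT, since $M_1^2$ and $M_2^2$ are comaximal and $M_1^2M_2^2=M^2\subseteq I$. Third, to conclude $J\cap R=I$ from $I=(Q_j\cap R)\cap(Q_i\cap R)=(Q_j\cap R)\cap M$ you must add that $Q_j\cap R\subseteq M_j\cap R=M$. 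Finally, the identity you attribute to Lemma \ref{3.25} is really the elementary fact that radicals commute with contraction, $\sqrt[S]{J}\cap R=\sqrt[R]{J\cap R}$; Lemma \ref{3.25} concerns $\sqrt[S]{IS}$ instead.
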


\begin{proof} First, $I\not\in\mathrm{Q_iMax}(R)$ because $I\subset M\in\mathrm{Max}(R)$. Next, $M\prec M_i$ in $S$ for any $i\in\mathbb N_2$.

Assume that there exists $J\in\mathrm{QMax}(S)$ with $J\not\subseteq M$ such that $I=J\cap R$. Then $J\not\in\mathrm{Q_iMax}(S)$ by Proposition \ref{3.1} and $I=IS$ by Lemma \ref{4.3}.  

(1) If $I\in\mathrm{Q_dMax}(R)$, then Proposition \ref{3.1} gives that $J\in\mathrm{Q_dMax}(S)$. Since $I\subset M$, we get that $I=M\cap N,\ N\in\mathrm{Max}(R),\ N\neq M$. It follows that $N\not\in\mathrm{Supp}(S/R)$ and thanks to Lemma \ref{3.46}, $N':=NS\in\mathrm{Max}(S)$ satisfies $N =N'\cap R$, with $N'\neq M_i$ for $i\in\mathbb N_2$. Therefore $I=M_1 \cap M_2\cap N=M_1\cap M_2\cap N'\subset J$ in $S$. As $J\in\mathrm {Q_dMax}(S)$, we obtain $J=M_i\cap N'$ for some $i\in\mathbb N_2$ because $J=M_1 \cap M _2$ would imply $J=M$, a contradiction.

(2) If $I\in\mathrm{Q_rMax}(R)$, then Proposition \ref{3.1} gives that $J\in\mathrm{Q_rMax}(S)$. Moreover, $I\subset M$ and $I$ is a primary ideal 
 of $R$ because ramified. This implies that $I$ is $M$-primary and satisfies $M_1^2M_2^2=M^2\subseteq I\subset J\ (*)$. But, $J\in\mathrm {Q_rMax}(S)$ is a primary ideal of $S$ and then an $M_i$-primary ideal. Then, $J\subseteq M_i$ for only one $i\in\mathbb N_2$.

Conversely, assume that $I=IS$, so that $I$ is also an ideal of $S$, and
either (1) $I\in\mathrm{Q_dMax}(R)$ with $I=M\cap N,\ N\in\mathrm{Max}(R),\ N\neq M$ or (2) $I$ is an $M$-primary ideal of $\mathrm{Q_rMax}(R)$. We claim that there exists $J\in\mathrm{QMax}(S),\ J\not\subseteq M$
 such that $I=J\cap R$.
 
(1) Assume that $I\in\mathrm{Q_dMax}(R)$. We keep the first part of the proof with $I=M\cap N,\ N\in\mathrm{Max}(R),\ N\neq M,\ N':=NS\in\mathrm{Max}(S)$ which satisfies $N=N'\cap R$, with $N'\neq M_i$ for $i\in\mathbb N_2$. Setting $J:=M_1\cap N'$, we get that $J\in\mathrm{Q_d Max}(S)$ and $J\cap R=M_1\cap N'\cap R=M\cap N=I$. Moreover, $J\not\subseteq M$. Otherwise, we should have $M_1\cap N'\subseteq M\subset M_2$, a contradiction. A similar result holds taking $J':=M_2\cap N'$.

(2) Assume that $I$ is an $M$-primary ideal of $\mathrm{Q_rMax}(R)$. According to Lemma \ref{4.3}, there exists an ideal $J'$ of $ S$ such that $ I=J'\cap R$. We claim that we can choose $J'\in\mathrm{Q_rMax}(S)$. Since $I\prec M$, we get that $\mathrm{L}_R(M/I)=1$. By \cite[Theorem 13, p. 168]{N}, we also have $\mathrm{L}_R(M/I)=\sum_{i=1}^2\mathrm{L}_{S_{M_i}}[(M/I)_{M_i}][S/M_i:R/M]=\sum_{i=1}^2\mathrm{L}_{S_{M_i}}[(M/I)_{M_i}]$ because $R\subset S$ is infra-integral. It follows that $\mathrm {L}_{S_{M_i}}[(M/I)_{M_i}]=1$ for some $i$ and $\mathrm{L}_{S_{M_j}}[(M/I)_{M_j}]=0$ for $j\neq i$. In particular, $I_{M_j}=M_{M_j}$ and $I_{M_i}\prec M_{M_i}$. Using \cite[Corollary 2, p. 225]{ZS},  there exists an ideal $ M_i$-primary $J$ of $S$ such that $J_{M_i}=I_{M_i}$, giving $\mathrm{L}_ S[(M_i/J)]=1\ (*)$, and $J\in\mathrm{Q_rMax}(S)$ holds with $J\prec M_i$ in $S$. Moreover, $I_{M_j}\subset J_{M_j}=S_{M_ j}\ (**)$ shows that $I\subseteq J\cap R\subseteq M_i\cap R=M=M_1\cap M_2$. It follows that $J\not\subseteq M$, otherwise, we should have $J\subseteq M_j$, a contradiction with $(**)$. At last, since $I\prec M$ in $R$, we get that $J\cap R\in\{I,M\}$. But $J\cap R=M$ gives $M\subseteq J$, in contradiction with $(*)$ because $M\prec M_i$. To conclude, $J\cap R=I$. 
 \end{proof}
 
\begin{proposition}\label{4.12} Let $R\subset S$ be in $\mathcal M_d$ with $M:=(R:S)=M_1\cap M_2,\ M_i\in\mathrm{Max}(S)$ for $i\in\mathbb N_2$. Let $J\in\mathrm{QMax}(S),\ J\not\subseteq M$. Set $I:=J\cap R$. Then $I\in\mathrm{QMax}(R)$ and is homotypic to $J$.
\end{proposition}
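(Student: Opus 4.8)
The plan is to reduce to Proposition \ref{4.11} whenever possible and then settle the remaining situation by a length computation. First I would dispose of the easy reduction: if $I:=J\cap R\not\subseteq M$, then Proposition \ref{4.11} immediately gives $J=IS$ and $I\in\mathrm{QMax}(R)$ homotypic to $J$. Hence from now on I may assume $I\subseteq M$, and I argue according to the type of $J$ in $S$. Throughout I use that $R\subset S$ is finite (so $N\cap R\in\mathrm{Max}(R)$ for every $N\in\mathrm{Max}(S)$), that $\mathrm{Supp}(S/R)=\{M\}$, and that $M_1,M_2$ are the only primes of $S$ lying over $M$ (Theorem \ref{minimal+}(b)).

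If $J\in\mathrm{Q_iMax}(S)$, then $I\in\mathrm{Q_iMax}(R)$ by Proposition \ref{3.1}(1), and $I$ and $J$ are both inert. If $J\in\mathrm{Q_dMax}(S)$, write $J=N_1\cap N_2$ with $N_1\neq N_2$ in $\mathrm{Max}(S)$. I would show $N_1\cap R\neq N_2\cap R$: otherwise two distinct primes of $S$ lie over their common (maximal) contraction $P$, which by Lemma \ref{3.46}(1) forces $P\in\mathrm{Supp}(S/R)$, i.e.\ $P=M$; but then $\{N_1,N_2\}=\{M_1,M_2\}$ and $J=M_1\cap M_2=M$, contradicting $J\not\subseteq M$. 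So the contractions differ and Proposition \ref{3.1}(2)(b) yields $I\in\mathrm{Q_dMax}(R)$, homotypic to $J$.

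The decisive case is $J\in\mathrm{Q_rMax}(S)$, and this is where the main obstacle lies. Set $N:=\sqrt[S]{J}\in\mathrm{Max}(S)$. Since the radical commutes with contraction, $\sqrt[R]{I}=N\cap R$ is maximal, and the standing assumption $I\subseteq M$ forces $N\cap R=M$, hence $N\in\{M_1,M_2\}$; say $N=M_1$, so that $I$ is $M$-primary and $R/I$ is local with maximal ideal $M/I$. The goal is $\mathrm{L}_R(M/I)=1$, i.e.\ $M\succ I$, which together with $\mathrm{V}_R(I)=\{M\}$ gives $I\in\mathrm{Q_rMax}(R)$ by Theorem \ref{2.6} and Definition \ref{2.2}. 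The delicate points are establishing the strict inclusion $I\subset M$ and controlling the $R$-length of $S/J$. For the first, writing $M=M_1\cap M_2=M_1M_2$ and choosing $y\in M_2\setminus M_1$: if $M\subseteq J$, then for each $x\in M_1$ one has $xy\in M_1M_2\subseteq J$ with $y\notin\sqrt[S]{J}=M_1$, so $x\in J$ by primality, forcing $M_1\subseteq J$, a contradiction; hence $M\not\subseteq J$ and $I=J\cap R\subset M$.

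Finally I would bound lengths. By Proposition \ref{2.5}, $S/J$ is a SPIR with $\mathrm{L}_S(S/J)=2$, and since $[S/M_1:R/M]=1$ in the decomposed case (Theorem \ref{minimal+}(b)), \cite[Theorem 13, p.168]{N} gives $\mathrm{L}_R(S/J)=2$. The $R$-module inclusion $R/I\hookrightarrow S/J$ then forces $\mathrm{L}_R(R/I)\leq 2$, while $I\subset M$ gives $\mathrm{L}_R(R/I)=1+\mathrm{L}_R(M/I)\geq 2$; hence $\mathrm{L}_R(M/I)=1$, so $M\succ I$ and $I\in\mathrm{Q_rMax}(R)$ is homotypic to $J$. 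The only genuinely subtle case is this last one, and within it the two steps worth care are the strict inclusion $I\subset M$ and the evaluation of $\mathrm{L}_R(S/J)$ through the residual degree $[S/M_1:R/M]=1$.
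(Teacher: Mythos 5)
Your proof is correct, and while it uses the same toolkit as the paper (Proposition \ref{3.1}, Proposition \ref{4.11}, Lemma \ref{3.46}, Northcott's length formula, Theorem \ref{2.6}), it is organized and finished differently in the one genuinely delicate case. You reduce at the outset to $I\subseteq M$ via Proposition \ref{4.11}, whereas the paper invokes that proposition only at the very end, inside the ramified case, after identifying the sub-case $\sqrt[S]J=N'\neq M_1,M_2$ and checking there that $I\not\subseteq M$; the content is the same but your ordering is cleaner. The decomposed case is essentially identical in both proofs (each rules out equal contractions by showing they would force $J=M$). The real divergence is the ramified case with $\sqrt[S]J=M_1$: the paper first observes that $I=J\cap M_2$ is an ideal of $S$, then applies Northcott's Theorem 13 directly to the $S$-module $M/I$, computing the local lengths ($1$ at $M_1$, $0$ at $M_2$) to get $\mathrm{L}_R(M/I)=1$ — note that strictness $I\subset M$ then falls out of the computation for free. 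You instead prove $M\not\subseteq J$ by a primary-ideal argument (which is needed, since nothing cheaper gives it), then apply Northcott to the simplest possible module $S/J$ (a SPIR of $S$-length $2$, residue degree $1$ by Theorem \ref{minimal+}(b)) and squeeze $\mathrm{L}_R(R/I)$ between $2$ and $2$ via the embedding $R/I\hookrightarrow S/J$. Your route buys a more elementary length computation and avoids having to recognize $I$ as an ideal of $S$; the paper's buys the strict inclusion without a separate argument. Both hinge on the residual isomorphisms $R/M\cong S/M_i$ of the decomposed case, and both are complete.
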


\begin{proof} If $J\in\mathrm{Q_iMax}(S)$, then $I =J\cap R\in\mathrm {Max}(R)$. 

If $J\in\mathrm{Q_dMax}(S)$, then by Proposition \ref{3.1}, $I\in\mathrm {Q_iMax}(R)\cup\mathrm{Q_dMax} (R)$. We claim that $I\in\mathrm {Q_dMax}(R)$. Otherwise $I\in\mathrm{Q_iMax}(R)$, implies $I\in\mathrm {Max}(R)$ and $I=(R:S)=M=J$, a contradiction. Therefore $I\in\mathrm {Q_dMax}(R)$. 
 
Let $J\in\mathrm{Q_rMax}(S),\ J\not\subseteq M$, so that $J$ is a primary ideal of $S$ contained in only one maximal ideal of $S$.  

Assume first that $J\subset M_i$, for some $i\in\mathbb N_2$, and for example, $i=1$, so that $M_1^2\subseteq J\subset M_1$ and $J$ is $M_1 $-primary. Then, $I=J\cap R=J\cap M_1\cap R=J\cap M=J\cap M_1\cap M _2=J\cap M_2\ (*)$, so that $I$ is also an ideal of $S$. According to \cite[Theorem 13, p. 168]{N}, we get that $\mathrm{L}_R(M/I)=\sum_{i=1}^ 2\mathrm{L}_{S_{M_i}}[(M/I)_{M_i}][S/M_i:R/M]=\sum_{i=1}^2\mathrm{L}_ {S_{M_i}}[(M/I)_{M_i}]\ (**)$ because $R\subset S$ is infra-integral. Moreover, $(M/I)_{M_1}=(M_1\cap M_2/J\cap M_2)_{M_1}=(M_1)_{M_1}/J _{M_1}$, which implies $\mathrm{L}_{S_{M_1}}[(M/I)_{M_1}]=1$ because $J\in\mathrm{Q_rMax}(S)$ with $J\prec M_1$. In the same way, we have $(M/I)_{M_2}=(M_2)_{M_2}/((M_ 2)_{M_2}\cap J_{M_2})=(M_2)_{M_2}/(M_ 2)_{M_2}=0$ because $J_{M_2}=S_{M_2}$ since $J$ is $M_1$-primary. This implies that $\mathrm{L}_{S_{M_2}}[(M/I)_{M_2}]=0$, so that $\mathrm{L}_R(M/I)=1$, giving that $I\prec M$ in $R$. Then necessarily $I\in\mathrm{Q_rMax}(R)$ by Proposition \ref{2.71} since $I$ cannot be contained in another maximal ideal of $R$ by $(*)$.

At last, assume that $\sqrt[S]J=N'$ for some $N'\in\mathrm{Max}(S),\ N' \neq M_i$ for $i\in\mathbb N_2$. Hence $I\subseteq N:=N'\cap R\neq M$. In particular, $I\not\subseteq M$ because $J_M=S_M$ gives $I_M=R_M$. Then $N\not\in\mathrm{Supp}(S/R)$, so that $I\in\mathrm{Q_rMax}(R)$ and $J=IS$ by Proposition \ref{4.11}. 
\end{proof}

\subsection{Quasi-maximal ideals in minimal ramified   extensions}

Let $R\subset S$ be in $\mathcal M_r$ and $I\in\mathrm{QMax}(R)$ be such that $I\subset (R:S)$. If there exists an ideal $J$ of $S$ such that $I= J\cap R$, according to Lemma \ref{4.3}, we have $IS=I$ and $I$ is an ideal of $S$. To consider the existence of such an ideal $J$, we will examine the case where $I$ is also an ideal of $S$.

\begin{proposition}\label{4.18} Let $R\subset S$ be in $\mathcal M_r$ with $M:=(R:S)$ and $N:=\sqrt[S]M$. Let $I$ be a submaximal ideal of $R$ 
such that $I=IS$, then $I\not\in\mathrm{QMax}(S)$. 
\end{proposition}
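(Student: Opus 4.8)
The plan is to exploit the fact that in a minimal ramified extension the conductor $M:=(R:S)$ is a \emph{non-maximal} ideal of $S$ lying strictly below $N$, and to show that this $M$ is forced to sit strictly between $I$ and $N$, which is incompatible with $I$ being quasi-maximal in $S$. First I would record the structure of $R\subset S$ coming from Theorem \ref{minimal+}(c). Since $R\subset S$ is in $\mathcal M_r$, we have $M=(R:S)\in\mathrm{Max}(R)$ and there is a maximal ideal $M'\in\mathrm{Max}(S)$ with ${M'}^2\subseteq M\subset M'$. From ${M'}^2\subseteq M\subseteq M'$ one gets $\sqrt[S]M=M'$, so $N=\sqrt[S]M=M'\in\mathrm{Max}(S)$ and $M\subset N$ is a \emph{strict} inclusion. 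In particular $M$ is a proper ideal of $S$ that is strictly contained in the maximal ideal $N$, whence $M\notin\mathrm{Max}(S)$.

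Next I would locate $I$ inside this picture. Because $R\subset S$ is in $\mathcal M_r$ we have $(R:S)=M\in\mathrm{Max}(R)$, so Lemma \ref{4.3} applies to the submaximal ideal $I$ satisfying $I=IS$: it yields $I\subset M$ and confirms that $I$ is also an ideal of $S$. Thus in $S$ we obtain the chain $I\subset M\subset N\subset S$, in which $M$ is a proper ideal of $S$ that strictly contains $I$.

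Finally I would invoke Theorem \ref{2.6}, whose accompanying remark asserts that any proper ideal of $S$ strictly containing a quasi-maximal ideal of $S$ is maximal. If $I\in\mathrm{QMax}(S)$, then $M$, being a proper ideal of $S$ strictly containing $I$, would have to be maximal in $S$, contradicting the fact established above that $M\notin\mathrm{Max}(S)$. Hence $I\notin\mathrm{QMax}(S)$. The only point requiring care is the correct application of Lemma \ref{4.3} to place $I$ strictly inside the conductor $M$; the remainder is a direct appeal to the containment characterization of quasi-maximal ideals, so I expect no serious technical obstacle.
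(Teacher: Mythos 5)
Your proof is correct, and it reaches the conclusion by a genuinely lighter route than the paper's. Both arguments pivot on the same chain $I\subset M\subset N$ inside $S$: you get $I\subset M$ exactly as the paper does (via Lemma \ref{4.3}, since $I=IS$ forces $I$ to be an ideal of $S$ contained in the conductor, and $I\neq M$ because $I$ is not maximal in $R$), and you get $M\subset N$ with $N\in\mathrm{Max}(S)$ directly from Theorem \ref{minimal+}(c). Where you diverge is the finish: the paper quantifies the chain, proving $\mathrm{L}_S(M/I)=1$ (using the fact that any ideal of $S$ between $I$ and $M$ is an ideal of $R$, together with Northcott's length-comparison theorem) and $\mathrm{L}_S(N/M)=1$ (via Theorem \ref{4.1}, which puts $M$ in $\mathrm{Q_rMax}(S)$), so that $\mathrm{L}_S(N/I)=2$ contradicts the length bound of Theorem \ref{2.3}. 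You instead observe that no lengths are needed at all: $M$ is a proper ideal of $S$, non-maximal in $S$ since $M\subset N\subsetneq S$, and it strictly contains $I$; by the remark following Theorem \ref{2.6} (every proper ideal strictly containing a quasi-maximal ideal is maximal), this alone rules out $I\in\mathrm{QMax}(S)$. Your version thus dispenses with Theorem \ref{4.1}, with the Northcott citation, and with all length bookkeeping, at the cost of proving slightly less — the paper's computation $\mathrm{L}_S(N/I)=2$ is finer information that it reuses implicitly elsewhere (e.g., in the setup of Proposition \ref{4.13}), whereas your argument only certifies non-membership in $\mathrm{QMax}(S)$.
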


\begin{proof} First, $M\in\mathrm{Max}(R)$ and $N\in\mathrm{Max}(S)$. Moreover, $I\subseteq M$ because $I=IS$. Theorem \ref{4.1} says that $M \in\mathrm{Q_rMax}(S)$. But $N\cap R=M$ is also lying over $M$ and $N\in\mathrm{Q_iMax}(S)$. Assume that $IS=I$, so that $I$ is an ideal of $S$. Any ideal of $S$ contained between $I$ and $M$ is an ideal of $R$, that is either $I$ or $M$. In particular, according to \cite[Theorem 12, p.166]{N}, $\mathrm{L}_R(M/I)=1=\mathrm{L}_S(M/I)$. Since $R\subset S$ is in $\mathcal M_r$, $\mathrm{L}_S(N/M)=1$ by Theorem \ref{4.1} implies that $\mathrm{L}_S (N/I)=2$, so that $I\not\in\mathrm{QMax}(S)$.
\end{proof}

The following Proposition generalizes Proposition \ref{4.18} in case $I\in\mathrm{Q_rMax}(R)$.

\begin{proposition}\label{4.13} Let $R\subset S$ be in $\mathcal M_r$ with $M:=(R:S)$ and $N:=\sqrt[S]M$. Let $I\in\mathrm{Q_rMax}(R)$ be such that $I=IS$, so that $I\subset M$. Set $R':=R/I,\ S':=S/I,\ M':=M/I$ and $N': =N/I $. There exists $ J\in\mathrm{QMax}(S)$ such that $I=J\cap R$ if and only if $S'\cong R'(+)(R'/M')$, and in this case $J\in\mathrm{Q_rMax}(S)$. 
\end{proposition}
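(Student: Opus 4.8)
The plan is to descend to the quotient by $I$. Since $I=IS$ is a common ideal of $R$ and $S$, we have $R':=R/I\subseteq S':=S/I$, and because every $T\in[R,S]$ contains $I\subseteq R$, the lattice $[R',S']$ is the image of $[R,S]=\{R,S\}$; hence $R'\subset S'$ is again minimal. Its conductor is $M'=M/I$ and, with $N':=N/I=\sqrt[S']{M'}$, the data $N'^2\subseteq M'\subset M'$ (read $N'^2\subseteq M'\subset N'$) and $[S'/N':R'/M']=2$ descend from those of $R\subset S$, so by Theorem \ref{minimal+} the extension $R'\subset S'$ is ramified. As $I\in\mathrm{Q_rMax}(R)$, Proposition \ref{2.5} makes $R'$ a SPIR with $M'^2=0$; thus $M'\cong R'/M'=:K$ is a simple $R'$-module and $R'$ has only the ideals $0\subset M'\subset R'$. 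Under the correspondence $J\mapsto J/I$ (valid since any $J$ with $J\cap R=I$ contains $I$), Theorem \ref{2.3} shows that the existence of $J\in\mathrm{QMax}(S)$ with $J\cap R=I$ is equivalent to the existence of $J'\in\mathrm{QMax}(S')$ with $J'\cap R'=0$, and I would prove the statement in this reduced form.

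First I would pin down the type of such a $J'$. Since $S'$ is local with maximal ideal $N'$, $J'$ cannot be decomposed, and it cannot be inert, for $J'=N'$ would force $J'\cap R'=N'\cap R'=M'\neq 0$. Hence $J'\in\mathrm{Q_rMax}(S')$, and by Proposition \ref{2.5} the quotient $A:=S'/J'$ is a SPIR whose maximal ideal $N'/J'$ has square zero, with residue field $S'/N'\cong K$.

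For the ``only if'' direction, consider the canonical surjection $\phi\colon S'\to A$. The restriction $\phi|_{R'}\colon R'\to A$ is injective since $\ker(\phi|_{R'})=J'\cap R'=0$, and it is local because $\phi(M')\subseteq N'/J'$. As $\phi|_{R'}$ is injective and $N'/J'\cong K$ is simple, $\phi(M')=N'/J'$, and $\phi|_{R'}$ is an isomorphism on residue fields because the residual extension $R'/M'\to S'/N'$ of the ramified $R'\subset S'$ is an isomorphism; therefore $\phi(R')+(N'/J')=A$ with $N'/J'\subseteq\phi(R')$, so $\phi|_{R'}$ is onto and hence an isomorphism. Composing $\phi$ with its inverse yields a retraction $\psi\colon S'\to R'$ with $\psi|_{R'}=\mathrm{id}$ and $\ker\psi=J'$, so $S'=R'\oplus J'$ as $R'$-modules and $J'\cong S'/R'$. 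Because $R'\subset S'$ is minimal with conductor $M'$, the module $S'/R'$ is cyclic and killed by $M'$, whence $S'/R'\cong R'/M'=K$; thus $J'$ is a simple $R'$-module. Finally $J'^2$ is an $S'$-submodule of the simple module $J'$, so $J'^2\in\{0,J'\}$, and $J'^2=J'$ is impossible by Nakayama ($J'$ cyclic, $J'\subseteq N'=\mathrm J(S')$); hence $J'^2=0$. The decomposition $S'=R'\oplus J'$ with $J'\cong R'/M'$ and $J'^2=0$ is exactly the idealization $S'\cong R'(+)(R'/M')$.

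For the converse, write $S'=R'(+)K$ with $K=R'/M'$ and take $J':=0\,(+)\,K$. It is an ideal with $J'^2=0$, it meets $R'=R'(+)0$ only in $0$, and $S'/J'\cong R'$ is a SPIR with square-zero maximal ideal, so $J'\in\mathrm{Q_rMax}(S')$ by Proposition \ref{2.5}, with $N'=M'(+)K\succ J'$; pulling $J'$ back to $S$ produces the required $J\in\mathrm{Q_rMax}(S)$ with $J\cap R=I$, ramified throughout. I expect the main difficulty to lie in the forward direction — upgrading the injection $\phi|_{R'}\colon R'\hookrightarrow S'/J'$ to an isomorphism and then extracting both the retraction and the vanishing $J'^2=0$ that together identify $S'$ with the idealization.
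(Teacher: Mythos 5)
Your proof is correct, but it takes a genuinely different route from the paper's in the forward direction. The paper reduces, as you do, to the minimal ramified extension $R'\subset S'$ of the SPIR $R'$, but then invokes Dobbs's classification \cite[Theorem 2.1(g)]{Dram} of ramified minimal extensions of a SPIR: $S'$ is an $R'$-algebra isomorphic to either $R'(+)(R'/M')$ or $S'_u:=R'[X]/(tX,X^2-tu)$, and the forward implication is obtained \emph{by elimination} — the paper computes that $S'_u$ is a SPIR whose ideals form the chain $0\subset M'\subset N'$, so its only submaximal ideal is $M'$, which meets $R'$ in $M'\neq 0$, whence no admissible $J'$ exists in case $(**)$. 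You avoid the classification entirely: from the mere existence of $J'\in\mathrm{Q_rMax}(S')$ with $J'\cap R'=0$ you build the structure directly, showing $\phi|_{R'}\colon R'\to S'/J'$ is injective (kernel $J'\cap R'=0$) and surjective (the conductor $M'$, being an ideal of $S'$, maps onto the simple maximal ideal of the SPIR $S'/J'$, and the residual extension of a ramified minimal extension is trivial), then extracting the retraction $S'=R'\oplus J'$, the identification $J'\cong S'/R'\cong R'/M'$, and $J'^2=0$ via Nakayama — exactly the idealization. Your converse matches the paper's case $(*)$, except that you certify $J'=0(+)(R'/M')\in\mathrm{Q_rMax}(S')$ via Proposition \ref{2.5} applied to $S'/J'\cong R'$, where the paper cites \cite[Theorem 6]{AKKT}. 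The trade-off: the paper's route is shorter given the external reference and yields the explicit description of the obstruction ring $S'_u$ when no $J$ exists; yours is self-contained, constructive, and makes transparent that the isomorphism $S'\cong R'(+)(R'/M')$ is one of $R'$-algebras (which is how both the statement and the paper's proof implicitly read it). Two small points you glossed over, both routine: $S'$ is local because any maximal ideal of $S'$ lies over $M'$ and contains $N'^2$, hence equals $N'$; and $\phi(M')$ is an $A$-submodule of $N'/J'$ precisely because $M'$ is the conductor, hence an ideal of $S'$ — injectivity of $\phi|_{R'}$ alone would not suffice there.
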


\begin{proof} We proved in Proposition \ref{4.18} that $I\not\in\mathrm {QMax}(S)$. We are going to characterize $J\in\mathrm{QMax}(S)$ such that $I=J\cap R$. Since $I\in\mathrm{Q_rMax}(R)$ with $I\subseteq M$, Proposition \ref{2.5} says that $R'$ is a SPIR whose maximal ideal $M'$ is nilpotent of index 2. In particular, there exists $t\in M'\setminus\{0\}$ such that $M'=R't$ with $t^2=0$. Moreover, $R'\subset S'$ is also in $\mathcal M_r$ by \cite[Proposition 4.7]{DPPS} with $(R':S')=M'$. In his paper \cite[Theorem 2.1(g)]{Dram}, Dobbs gives a characterization of extensions $R'\subset S'$ in $\mathcal M_r$, where $R'$ is a SPIR. Applied to our situation where the index of nilpotence of $M'$ is 2, we get that $S'$ is an $R'$-algebra isomorphic to either $R'(+)(R'/M')\ (*)$ or $S'_u:=R'[X]/(tX,X^ 2-tu)\ (**)$, for some $u\in\mathrm{U}(R')$. Moreover, in each case, $S'$ is local since so is $R'$. We claim that there exists $J\in\mathrm{QMax}(S)$ such that $I=J\cap R$ if and only if  $(*)$ holds. 

In case $(*)$, $J':=0(+)(R'/M')\in\mathrm{QMax}(S')$ by \cite[Theorem 6]{AKKT} because $0\in\mathrm{QMax}(R')$ according to Theorem \ref{2.3} and more precisely is in $\mathrm{Q_rMax}(S')$. Let $J$ be an ideal of $S$ with $I\subseteq J$ and such that $J'=J/I$. Applying \cite[Corollary 2]{AKKT}, we get that $J\neq I$ and $J\in\mathrm{QMax}(S)$ with $I\subset J$. Moreover, $J'\cap R'=\{x\in R'\mid(x,0)\in 0(+)(R'/M')\}=0$ implies that $J\cap R=I$, and in this case obviously $ J\in\mathrm{Q_rMax}(S)$. 

Assume that $(**)$ holds for some $u\in\mathrm{U}(R')$. Set $y:=\overline X\in S'_u$, where $\overline X$ is the class of $X$ in $S'_u$. Then $S'_u= R'[y]$, with $ty=y^2-tu=0$, so that $y^2=ut\in M'\setminus\{0\}$, which implies $y\in N'\setminus M'$. Therefore $S'_u=R'+R'y$ and $N'=M'+R'y$ since $M'\in\mathrm{Q_rMax}(S'_u)$ with $N'=\sqrt[S'_u] {M'}$. It follows that $N'=R't+R'y=R'y^2+R'y=y(R'+R'y)=S'_uy$ and $N'$ is a principal ideal of $S'_u$ with $y^3=uty=0$. We know that $R'$ is a Noetherian ring, and so is $S'_u$, which is a local ring whose maximal ideal is principal. Then, $ S'_u$ is a SPIR and its ideals are $N'^3=0,N'^2= M'$ and $N'$ which satisfy $0\subset M'\subset N'$. Then, $M'$ is the only submaximal ideal of $S'_u$. In particular, there does not exist $J'\in\mathrm{QMax}(S'_u)$ such that $J'\cap R'=0$, that is there does not exist $J\in\mathrm{QMax}(S_u)$ of $S_u$ such that $J\cap R= I$.
\end{proof}

Here is a converse of Proposition \ref{4.13}.

\begin{proposition}\label{4.16} Let $R\subset S$ be in $\mathcal M_r$ with $M:=(R:S)$ and let $N:=\sqrt[S]M$. Let $J\in\mathrm{QMax}(S)$ with $J\subseteq N$ and $J\not\subseteq M$. Set $I:=J\cap R$. Then $I=IS\in\mathrm{QMax}(R)$ and is homotypic to $J$.
\end{proposition}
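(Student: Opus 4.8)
The plan is to classify $J$ by its quasi-maximal type and treat each case separately, using throughout the structure of a minimal ramified extension: for $R\subset S$ in $\mathcal M_r$ the conductor $M:=(R:S)$ is maximal in $R$ and is simultaneously an ideal of $S$, the ideal $N:=\sqrt[S]M\in\mathrm{Max}(S)$ is the unique maximal ideal of $S$ lying over $M$ with $N^2\subseteq M\subset N$ and $M\prec N$ in $S$ (Theorem \ref{4.1}), and the residual map $R/M\to S/N$ is an isomorphism (Theorem \ref{minimal+}(c)). Since $J\subseteq N$ with $N$ maximal, the three types of $J$ specialize to: $J=N$ (inert); $J=N\cap N'$ with $N'\in\mathrm{Max}(S)$, $N'\neq N$ (decomposed); or $J$ is $N$-primary with $N^2\subseteq J\subset N$ (ramified). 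I expect $I\subseteq M$ in every case, so that Proposition \ref{4.11} does not apply and the conclusion $I=IS$ must instead come from Lemma \ref{4.3}.

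In the inert case $J=N$ I would simply note $I=N\cap R=M$, whence $I=IS=M\in\mathrm{Q_iMax}(R)$ (since $M$ is already an ideal of $S$), and $I$ is homotypic to $J=N$. In the decomposed case I would set $M'':=N'\cap R\in\mathrm{Max}(R)$; because $N$ is the only maximal ideal of $S$ over $M$ and $N'\neq N$, one gets $M''\neq M$, so $I=J\cap R=M\cap M''\in\mathrm{Q_dMax}(R)$ by Definition \ref{2.2}. Being submaximal and contained in $M=(R:S)$ with $I=J\cap R$, Lemma \ref{4.3} gives $I=IS$, and $I$ is homotypic to $J$.

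The heart of the argument is the ramified case. Here $\sqrt[R]I=\sqrt[S]J\cap R=N\cap R=M$, so $I$ is $M$-primary and $I\subseteq M$; moreover $I\neq M$ (otherwise $M\subseteq J\subseteq N$ with $M\prec N$ forces $J\in\{M,N\}$, both excluded by $J\not\subseteq M$ and $J\subsetneq N$), and $M^2\subseteq N^2\subseteq J$ gives $M^2\subseteq I$. It remains to prove $M\succ I$ in $R$. I would first observe $I=J\cap M$ (as $M\subseteq R$ and $I\subseteq M$), and that $J\not\subseteq M$ together with $M\prec N$ forces $M+J=N$. The second isomorphism theorem then yields an $S$-isomorphism $M/I=M/(J\cap M)\cong(M+J)/J=N/J$, so $\mathrm{L}_S(M/I)=\mathrm{L}_S(N/J)=1$. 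Since $M/I$ is annihilated by $N$ (because $NM\subseteq N^2\subseteq J$ and $NM\subseteq M$), it is an $S/N$-module, and as $R/M\cong S/N$ the $R$- and $S$-lengths coincide, giving $\mathrm{L}_R(M/I)=1$, i.e. $M\succ I$. Hence $I\in\mathrm{Q_rMax}(R)$ by Definition \ref{2.2}; Lemma \ref{4.3} then gives $I=IS$, and $I$ is homotypic to $J$.

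The main obstacle is this length transfer in the ramified case: one must pass from the $S$-computation $\mathrm{L}_S(M/I)=\mathrm{L}_S(N/J)=1$ to the $R$-statement $\mathrm{L}_R(M/I)=1$ that actually certifies $M\succ I$ in $R$. The point that makes it succeed is that the extension is ramified rather than inert, so the residue degree $[S/N:R/M]$ equals $1$ (indeed $R/M\cong S/N$), forcing $M/I$ to have the same length over $R$ as over $S$; for an inert extension this degree would be $2$ and the conclusion would fail, which is exactly the phenomenon recorded in Proposition \ref{4.81}.
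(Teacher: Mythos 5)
Your proof is correct and follows essentially the same route as the paper's: a case split by the type of $J$, with the ramified case handled via $I=J\cap M$, the second isomorphism $M/I\cong(M+J)/J=N/J$, and a length transfer from $S$ to $R$ that exploits the residual isomorphism $R/M\cong S/N$. The only cosmetic differences are that you replace the paper's citations (Northcott's length formula for the transfer, and the i-extension property of $\mathcal M_r$ in the decomposed case) by short direct arguments, which does not change the structure of the proof.
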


\begin{proof} If $J\in\mathrm{Q_iMax}(S)$, then $J=N$ and $I=M=IS\in\mathrm{Q_iMax}(R)$. 

If $J\in\mathrm{Q_dMax}(S)$, then $I\in\mathrm{Q_dMax}(R)$ by Proposition \ref{3.1} because $R\subset S$ is in $\mathcal M_r$, and then an i-extension by \cite[Theorem 2.2]{SPLIT}. Moreover, $I=IS$ by Proposition \ref{4.3}  because $I\subset M$.

If $J\in\mathrm{Q_rMax}(S)$, it satisfies $N^2\subseteq J\subset N$ since $J\subset N$. Therefore, $M^2\subseteq I\subseteq M$ and $I$ is $M$-primary. Moreover, $M$ and $J$ are both $N$-primary. In particular, since $J\subset N$, we have $I=J\cap R\cap N=J\cap M$, so that $I$ is also an ideal of $S$, giving $I=IS$. As $S$-modules as well as $R$-modules, we have the following isomorphisms $M/I=M/(J\cap M)\cong(M+J)/J=N/J$ because $J\not\subseteq M$ with $J$ and $M$ both submaximal in $N$. By \cite[Theorem 13, p.168]{N}, we deduce that $\mathrm{L}_R(M/I)=\mathrm{L}_R(N/J)=\mathrm{L}_{S_N}[(N/J)_N]=\mathrm{L}_S(N/J)=1$. Then, $I$ is submaximal in $M$ as an ideal of $R$, and is then in $\mathrm{Q_rMax}(R)$, because $M$-primary.  
\end{proof}
 
\begin{corollary}\label{4.19} Let $R\subset S$ be in $\mathcal M_r$ with $ M:=(R:S)$ and let $N:=\sqrt[S]M$. Let $J\in\mathrm{QMax}(S)$ with $J\not\subseteq M$. Then $I:=J\cap R\in\mathrm{QMax}(R)$ and is homotypic to $J$.
\end{corollary}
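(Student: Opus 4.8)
The plan is to deduce this from the two cases already handled, Proposition~\ref{4.16} and Proposition~\ref{4.11}, and to check that under the single hypothesis $J\not\subseteq M$ these two results together cover everything. Note that Proposition~\ref{4.16} applies precisely when $J\subseteq N$ and $J\not\subseteq M$, while Proposition~\ref{4.11} applies precisely when $J\cap R\not\subseteq M$. So the first thing I would do is split the argument according to whether $J\subseteq N$ or $J\not\subseteq N$, recalling at the outset the relevant facts: $M=(R:S)\in\mathrm{Max}(R)$, $N=\sqrt[S]M\in\mathrm{Max}(S)$, and—since $R\subset S$ is minimal ramified, hence an i-extension (as used in the proof of Proposition~\ref{4.16})—that $N\cap R=M$ and $N$ is the \emph{unique} prime of $S$ lying over $M$.

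If $J\subseteq N$, then $J\subseteq N$ together with $J\not\subseteq M$ are exactly the hypotheses of Proposition~\ref{4.16}, which at once gives $I=J\cap R=IS\in\mathrm{QMax}(R)$ homotypic to $J$. The whole content therefore lies in the complementary case $J\not\subseteq N$, where I would verify that the hypothesis $I:=J\cap R\not\subseteq M$ of Proposition~\ref{4.11} holds automatically; granting this, Proposition~\ref{4.11} yields $I\in\mathrm{QMax}(R)$ homotypic to $J$ (in fact $J=IS$), completing the proof.

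The key step, which I expect to be the main (though mild) obstacle, is the implication: $J\cap R\subseteq M$ forces $J\subseteq N$. To prove it I would argue by Lying Over. Assuming $I:=J\cap R\subseteq M$, the inclusion $R/I\hookrightarrow S/J$ is an injective integral extension (its injectivity being exactly $J\cap R=I$), and $M/I$ is a maximal ideal of $R/I$. Lying Over for this integral extension produces a prime of $S/J$ contracting to $M/I$, that is, a prime $Q$ of $S$ with $J\subseteq Q$ and $Q\cap R=M$. Since $N$ is the unique prime of $S$ over $M$, necessarily $Q=N$, whence $J\subseteq N$. Taking the contrapositive gives: $J\not\subseteq N$ implies $J\cap R\not\subseteq M$, which is precisely what is needed to invoke Proposition~\ref{4.11} in the second case. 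Combining the two cases establishes that $I\in\mathrm{QMax}(R)$ and is homotypic to $J$ in all situations.
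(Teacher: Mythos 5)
Your proposal is correct, and it shares the paper's overall skeleton (split on $J\subseteq N$ versus $J\not\subseteq N$, with Proposition \ref{4.16} disposing of the first case), but it handles the second case by a genuinely different and more uniform argument. The paper, in the case $J\not\subseteq N$, splits again according to the type of $J$: for $J\in\mathrm{Q_iMax}(S)\cup\mathrm{Q_dMax}(S)$ it invokes Proposition \ref{3.1} (with the i-extension property guaranteeing the homotypic conclusion in the decomposed case), and only for $J\in\mathrm{Q_rMax}(S)$ does it verify $I\not\subseteq M$ --- via contraction of the $Q$-primary ideal $J$, where $Q=\sqrt[S]J\neq N$ forces $P:=Q\cap R\neq M$ --- so as to apply Proposition \ref{4.11}. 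You instead prove once and for all, by Lying Over applied to the integral injection $R/I\hookrightarrow S/J$ together with the uniqueness of the prime $N$ of $S$ over $M$ (which follows from the i-extension property, or directly from $M$ being $N$-primary in $S$), that $J\cap R\subseteq M$ forces $J\subseteq N$; the contrapositive then feeds every type of $J$ into Proposition \ref{4.11} simultaneously. Your route is more economical --- one lemma replaces the type-by-type discussion and no appeal to Proposition \ref{3.1} is needed --- and it yields the extra conclusion $J=IS$ throughout the case $J\not\subseteq N$; what the paper's route exhibits, by contrast, is that for inert and decomposed $J$ the conclusion holds by Proposition \ref{3.1} alone, without any positional hypothesis relative to $N$. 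Both proofs are complete and correct.
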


\begin{proof} Since $R\subset S$ is in $\mathcal M_r$, then an i-extension by \cite[Theorem 2.2]{SPLIT}, and according to Proposition \ref{4.16}, if $J\subseteq N$ but $J\not\subseteq M$, then $I\in\mathrm{QMax}(R)$ is homotypic to $J$.

Assume that $J\not \subseteq N$. 

By Proposition \ref{3.1}, if $J\in\mathrm{Q_iMax}(S)\cup\mathrm{Q_dMax}(S)$, then $I\in\mathrm{QMax}(R)$ and is homotypic to $J$, because $R\subset S$ is in $\mathcal M_r$, then an i-extension by \cite[Theorem 2.2]{SPLIT}. 

If $J\in\mathrm{Q_rMax}(S)$ and $J\not\subseteq N$, let $Q:=\sqrt[S]J\neq N$. Setting $P:=Q\cap R$, we have $P\neq M$ because $Q\neq N$ and $I=J\cap R\not\subseteq M$. According to Proposition \ref{4.11}, $I\in\mathrm{Q_rMax}(R)$. 
\end{proof}

Let $(R,M)\subset (S,N)$ be an extension where $(R,M)$ and $(S,N)$ are local rings such that $N$ is lying over $M$. Then, $MS\subseteq N$ is an ideal of $S$. It may or not be a maximal ideal of $S$, as we can see when $M=(R:S)$ for $R\subset S$ in $\mathcal M$ in Theorem \ref{minimal+}. In the next example, we get that some ideal may be quasi-maximal or not according to the  considered ring.

\begin{example}\label{4.22} Let $k$ be a field and set $T:=k[X,Y]/(X^2,Y^ 2,XY):=k[x,y],\ R:=k[x]$ and $S:=k[y]$, where $x$ and $y$ are the classes of $X$ and $Y$ in $T$. This gives the following commutative diagram:
$\begin{matrix}
{} &       {}       & R  &       {}      & {}  \\
{} & \nearrow & {}  & \searrow & {}   \\
k &      {}        & {}  &       {}       & T   \\
{} & \searrow & {}  & \nearrow & {}   \\ 
{} &     {}        & S  &       {}       & {}
\end{matrix}$
where $(R,M),\ (S,N)$ and $(T,P)$ are local rings with $M=kx,\ N=ky$ and $P=kx+ky$. Obviously, $k\subset R$ and $k\subset S$ are in $\mathcal M _r$ with $(k:R)=(k:S)=0$. Moreover, $T=R[y]=S[x]$ satisfies $My=Nx=0$ and $x^2=y^2=0$, so that $R\subset T$ (resp. $S\subset T$) is in $\mathcal M_r$ with $M=(R:T)$ (resp. $N= (S:T)$). Then, Theorem \ref{4.1} gives that $0\in\mathrm{Q_rMax}(S)\cap\mathrm{Q_rMax}(R)$ (resp. $M,\ N\in\mathrm{Q_rMax}(T)$). As $0\subset M\subset P$, we get that $0\not\in\mathrm{QMax}(T)$.
\end{example}

Now, we show how we can build minimal extensions and quasi-maximal ideals from an idealization.  

\begin{proposition}\label{4.21} Let $I\prec J$ be two ideals of a ring $R$. 
Then, $R(+)I\subset R(+)J$ is in $\mathcal M_r$ with $(R(+)I:R(+)J)=N(+)I$, where $N:=(I:J)\in\mathrm{Max}(R)$. In particular, $N(+)I\in\mathrm {Q_rMax}( R(+)J)$. 
\end{proposition}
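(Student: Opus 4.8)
The plan is to verify the $q$-form of the minimal ramified criterion in Theorem \ref{minimal+}(c) for the inclusion $R(+)I\subseteq R(+)J$, and then invoke Theorem \ref{4.1} for the final sentence. Write $A:=R(+)I$ and $B:=R(+)J$. Because $I\subseteq J$, every element $(r,i)$ with $i\in I$ is visibly an element of $R(+)J$, so $A\subseteq B$ as subrings of the idealization. The starting point is the covering hypothesis: since $I\prec J$, by \cite[Corollary 1, p.237]{ZS} there exist a maximal ideal $N_0$ of $R$ with $I\subseteq N_0$ and $N_0J\subseteq I$, together with an element $x\in J$ such that $J=I+Rx$; here $x\notin I$ because $I\neq J$. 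I would first record that $N_0\subseteq(I:J)$ while $(I:J)\neq R$ (as $J\not\subseteq I$), so that $(I:J)=N_0=:N\in\mathrm{Max}(R)$. This already settles the maximality claim for $N$.

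Next I would compute the conductor $(A:B)=\{(r,j)\in B\mid (r,j)B\subseteq A\}$ directly from the idealization product $(r,j)(r',j')=(rr',rj'+r'j)$. Testing $(r,j)$ against the elements $(r',0)$ and $(0,j')$ forces $Rj\subseteq I$ (hence $j\in I$) and $rJ\subseteq I$ (hence $r\in(I:J)=N$); conversely, these two conditions make the mixed term $rj'+r'j$ lie in $I$ for every $(r',j')\in B$. Thus $(A:B)=N(+)I$. Since the maximal ideals of an idealization $R(+)E$ are exactly those of the form $\mathfrak{m}(+)E$ with $\mathfrak{m}\in\mathrm{Max}(R)$ (the ideal $0(+)E$ being nil, with $R(+)E/(\mathfrak{m}(+)E)\cong R/\mathfrak{m}$ a field), the ideal $N(+)I$ is maximal in $A$, so the requirement $(A:B)\in\mathrm{Max}(A)$ of Theorem \ref{minimal+} is met.

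Finally I would exhibit the ramifying element $q:=(0,x)\in B$. As $x\notin I$ we have $q\notin A$, so the extension is proper. The idealization product gives $q^2=(0,x)(0,x)=(0,0)$, whence $q^2=0\in(A:B)$; and for $(n,i)\in N(+)I$ one computes $(n,i)q=(0,nx)$ with $nx\in I$ because $n\in N=(I:J)$ and $x\in J$, so $(A:B)q\subseteq(A:B)$. Moreover $A+Aq=B$, since $(r_0,i_0)+(r_1,i_1)q=(r_0,\,i_0+r_1x)$ ranges over all of $R(+)(I+Rx)=R(+)J$; as $q^2=0$ this yields $B=A[q]$. These are precisely the hypotheses of the $q$-form of Theorem \ref{minimal+}(c), so $A\subset B$ is in $\mathcal M_r$ with conductor $N(+)I$, and the concluding statement $N(+)I\in\mathrm{Q_rMax}(R(+)J)$ follows immediately from Theorem \ref{4.1}. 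I do not expect a genuine obstacle here; the only points demanding care are the conductor computation and the choice $q=(0,x)$, which is dictated by the need to reproduce the relation $J=I+Rx$ in the second coordinate.
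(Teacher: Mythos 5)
Your proof is correct, but it follows a genuinely different route from the paper's. You verify the Ferrand--Olivier-type criterion of Theorem \ref{minimal+}(c) by hand inside the idealization: you compute the conductor directly as $(R(+)I:R(+)J)=(I:J)(+)I$, identify $(I:J)$ with the maximal ideal $N$ furnished by \cite[Corollary 1, p.237]{ZS}, and exhibit the ramifying element $q=(0,x)$ coming from $J=I+Rx$, checking $q^2=0$, $(A:B)q\subseteq(A:B)$ and $B=A[q]$. The paper instead argues at the level of the lattice $[R(+)I,R(+)J]$: since $I\prec J$ gives $\mathrm{L}_R(J/I)=1$, the length-transfer result \cite[Proposition 2.8]{Pic 8} yields $\ell[R(+)I,R(+)J]=1$, so the extension is minimal; minimality plus subintegrality of $R\subset R(+)J$ (\cite[Lemma 2.1]{Pic 8}, \cite[Lemma 3.1]{Pic 4}) forces the ramified type, and only then is the conductor pinned down to $N(+)I$ by a containment argument between maximal ideals. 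Your version is more elementary and self-contained -- it needs no external lemmas beyond Theorem \ref{minimal+} and the covering characterization from \cite{ZS}, and it makes the equality $N=(I:J)$ explicit, which the paper's proof leaves implicit. What the paper's approach buys is generality and conceptual context: the identity $\ell[R(+)I,R(+)J]=\mathrm{L}_R(J/I)$ handles arbitrary pairs $I\subseteq J$ of ideals (not just covers), and the subintegrality argument explains structurally why the type must be ramified rather than verifying it by an ad hoc choice of generator. Both proofs close with Theorem \ref{4.1}, as they must.
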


\begin{proof} Since $J\succ I$, there is no ideal of $R$, and then no $R$-module strictly contained between $I$ and $J$. Therefore $\mathrm{L}_R (J/I)=1$. According to \cite[Proposition 2.8]{Pic 8}, this implies that $\ell[R (+)I,R(+)J]=\mathrm{L_R}(J/I)=1$, so that $[R(+)I,R(+)J]=\{R(+)I,R(+)J\}$ and $R(+)I\subset R(+)J$ is in $\mathcal M$. Moreover, $R\subset R(+)J$ is subintegral by \cite[Lemma 2.1]{Pic 8}, and so is $R(+)I\subset R(+)J$. Then, \cite[Lemma 3.1]{Pic 4} shows that $R(+)I\subset R(+)J$ is in $\mathcal M_r$. In particular, $(R(+)I:R(+)J)\in\mathrm{Max}(R(+)\\I)$, and is of the form $M(+)I$, where $M\in\mathrm{Max}(R)$. Because there exists some $N\in\mathrm{Max}(R)$ such that $NJ\subseteq I$ by \cite[Corollary 1, p.237]{ZS}, this gives that $(R(+)J)(N(+)I)\subseteq N(+)(NJ+I)=N(+)I\subseteq R(+)I$, so that $(R(+)I:R(+)J)=N(+)I$. In particular, $N(+)I\in\mathrm{Q_rMax}(R(+)J)$ by Theorem \ref{4.1}. 
\end{proof}

\subsection{Quasi-maximal ideals in minimal flat epimorphisms}

Although the situation is quite different for a minimal flat epimorphism 
 $R\subset S$, we can still consider the conductor $(R:S)$. Before, we first give a characterization of such extensions. 

\begin{proposition}\label{4.31} Let $R\subset S$ be an extension. The following conditions are equivalent:
\begin{enumerate}
\item $R\subset S$ is a minimal flat epimorphism.
\item $\mathrm{Supp}_R(S/R)=\{M\}$ for some $M\in\mathrm{Max}(R)$ and $R_M\subset S_M$ is a  minimal flat epimorphism.
\item $\mathrm{Supp}_R(S/R)=\{M\}$ for some $M\in\mathrm{Max}(R)$ and $(R:S)=:P\in\mathrm{Spec}(R)$ is such that $P\subset M$ with $S_M\cong R_P$ and $R_M/PR_M$ is a one-dimensional valuation domain.
\end{enumerate} 
\end{proposition}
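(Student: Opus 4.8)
The plan is to prove the equivalences cyclically, $(1)\Rightarrow(2)\Rightarrow(3)\Rightarrow(1)$, passing between the global extension and its localization at the crucial maximal ideal $M$ and reducing the decisive local step to the overrings of a one-dimensional valuation domain. The main tool is that for a minimal extension $R\subset S$ one has $\mathrm{Supp}_R(S/R)=\{M\}$ for a unique $M\in\mathrm{Max}(R)$ by \cite[Th\'eor\`eme 2.2]{FO}; hence $T_Q=R_Q=S_Q$ for every $T\in[R,S]$ and every prime $Q\neq M$, so that $T\mapsto T_M$ is a lattice isomorphism $[R,S]\to[R_M,S_M]$, while flatness and the epimorphism property (being local) are inherited by and detected on $R_M\subset S_M$. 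In particular $(1)\Rightarrow(2)$ is immediate: localizing at $M$ gives a flat epimorphism $R_M\subset S_M$, nontrivial because $(S/R)_M\neq0$ and minimal because $[R_M,S_M]\cong[R,S]=\{R,S\}$.

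For $(2)\Rightarrow(3)$, the core of the proof, set $(A,\mathfrak{m}):=(R_M,MR_M)$ and $B:=S_M$, a minimal flat epimorphism of the local ring $A$, with conductor $\mathfrak{p}:=(A:B)$, the largest ideal common to $A$ and $B$. I would invoke the Ferrand--Olivier classification of minimal flat epimorphisms \cite{FO} to obtain that $\mathfrak{p}$ is a prime ideal with $\mathfrak{p}\subsetneq\mathfrak{m}$ and $B\cong A_{\mathfrak{p}}$; since $\mathfrak{p}$ is then an ideal of $A_{\mathfrak{p}}$ it is the maximal ideal $\mathfrak{p}A_{\mathfrak{p}}$, whence $B/\mathfrak{p}=\kappa(\mathfrak{p})=\mathrm{Frac}(\overline{A})$ with $\overline{A}:=A/\mathfrak{p}$ a local domain. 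Quotienting by the common ideal $\mathfrak{p}$ yields a lattice isomorphism $[A,A_{\mathfrak{p}}]\cong[\overline{A},\mathrm{Frac}(\overline{A})]$, so minimality of $A\subset A_{\mathfrak{p}}$ means $\overline{A}\subset\mathrm{Frac}(\overline{A})$ admits no proper intermediate ring. I then argue that such an $\overline{A}$ is a one-dimensional valuation domain: each localization $\overline{A}_{\mathfrak{q}}$ is an overring lying in $[\overline{A},\mathrm{Frac}(\overline{A})]$, which rules out any prime strictly between $0$ and the maximal ideal, so $\dim\overline{A}=1$; moreover $\overline{A}$ is integrally closed, for otherwise its integral closure would be a proper intermediate ring, hence all of $\mathrm{Frac}(\overline{A})$, making $\mathrm{Frac}(\overline{A})$ integral over the positive-dimensional domain $\overline{A}$, which is absurd; finally, given $x\in\mathrm{Frac}(\overline{A})\setminus\overline{A}$, if also $x^{-1}\notin\overline{A}$ then minimality gives $\overline{A}[x^{-1}]=\mathrm{Frac}(\overline{A})\ni x$, forcing $x$ to be integral over $\overline{A}$ and so $x\in\overline{A}$, a contradiction; thus $\overline{A}$ satisfies the valuation dichotomy. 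To transfer the data back to $R$, put $P:=\mathfrak{p}\cap R$, a prime of $R$ with $P\subset M$ and $PR_M=\mathfrak{p}$; then $A_{\mathfrak{p}}=R_P$, so $S_M\cong R_P$ and $R_M/PR_M=\overline{A}$ is a one-dimensional valuation domain, while, because $Q\neq M$ already forces $R_Q=S_Q$, the conductor contracts as $(R:S)=(R_M:S_M)\cap R=\mathfrak{p}\cap R=P$, which is prime.

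For $(3)\Rightarrow(1)$ I reverse the computation. First, $R_M\subset R_P=(R_M)_{PR_M}$ is a localization, hence a flat epimorphism, and it is minimal: the conductor $(R_M:R_P)$ equals $PR_M=PR_P$ (its reduction modulo $PR_P$ is the conductor of $\overline{A}$ in its fraction field, which is zero), and quotienting by it produces the lattice isomorphism $[R_M,R_P]\cong[\overline{A},\mathrm{Frac}(\overline{A})]$ whose right-hand side has only the two members $\overline{A}$ and $\mathrm{Frac}(\overline{A})$, since $\overline{A}=R_M/PR_M$ is a one-dimensional valuation domain. Because $\mathrm{Supp}_R(S/R)=\{M\}$, flatness and the epimorphism property of $R\subset S$ are checked locally, where they hold ($R_Q\cong S_Q$ for $Q\neq M$ and $R_M\subset S_M$ a flat epimorphism), and the lattice isomorphism $[R,S]\cong[R_M,S_M]=\{R_M,S_M\}$ supplies minimality; hence $R\subset S$ is a minimal flat epimorphism.

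I expect the main obstacle to be the local structure statement that a minimal flat epimorphism $A\subset B$ of a local ring forces $B\cong A_{\mathfrak{p}}$ for its conductor prime $\mathfrak{p}$, together with the valuation-domain characterization of $A/\mathfrak{p}$; this is where I lean on Ferrand--Olivier, and where the one-dimensional valuation domain enters through the fact that inside its fraction field its only overrings are itself and that field. A secondary, purely bookkeeping difficulty is to keep the conductor computation honest across localization: the correct identity is the contraction $(R:S)=(R_M:S_M)\cap R$, valid because $R$ and $S$ agree outside $M$, and one must resist the naive localization $(R:S)R_M=(R_M:S_M)$, which fails here because $S$ is not a finite $R$-module.
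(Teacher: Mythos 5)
Your proof is correct, and at the top level it has the same skeleton as the paper's: localize at the unique supporting maximal ideal $M$ and reduce everything to the structure of a minimal flat epimorphism over a local ring. The difference is in how much is actually proved. The paper's proof is two lines of citations: $(1)\Leftrightarrow(2)$ by \cite[Proposition 4.6]{DPPS} and \cite[Th\'eor\`eme 2.2]{FO}, and $(2)\Leftrightarrow(3)$ entirely by \cite[Proposition 9.2 and Corollary 9.3]{ARP}. You, by contrast, reconstruct most of the content of those references: the lattice isomorphism $[R,S]\cong[R_M,S_M]$ coming from $\mathrm{Supp}_R(S/R)=\{M\}$, the fact that flatness and the epimorphism property are local on $\mathrm{Spec}(R)$, the quotient-by-the-common-ideal argument identifying $[A,A_{\mathfrak p}]$ with $[\overline A,\mathrm{Frac}(\overline A)]$, the derivation that $\overline A$ is a one-dimensional valuation domain (dimension one via localizations being trivial overrings, integral closedness, the $x$ versus $x^{-1}$ dichotomy), the contraction formula $(R:S)=(R_M:S_M)\cap R$, and the fact that a one-dimensional valuation domain has exactly two overrings. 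All of these steps are sound; the paper buys brevity, your route buys transparency about where the valuation domain comes from.

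Two caveats. First, the one step you do outsource --- that a minimal flat epimorphism $A\subset B$ of a local ring $(A,\mathfrak m)$ has $B\cong A_{\mathfrak p}$ with $\mathfrak p:=(A:B)$ prime and $\mathfrak p\subsetneq\mathfrak m$ --- is misattributed: \cite{FO} gives only that the conductor is a prime ideal common to $A$ and $B$ (\cite[Lemme 3.2]{FO}, the fact the paper itself quotes in Proposition \ref{4.32}); the identification $B\cong A_{\mathfrak p}$ is precisely what the paper imports from \cite[Proposition 9.2]{ARP}. It can be proved directly (quotient by $\mathfrak p$ to get a minimal flat epimorphic overring $\overline A\subset\overline B$ with zero conductor; a nonzero prime $Q$ of $\overline B$ would force $\overline B=\overline A+Q$ by minimality, making $\overline{\mathfrak m}+Q$ a proper ideal of $\overline B$ whose maximal cover contracts to $\overline{\mathfrak m}$, which is impossible for a flat epimorphism; hence $\overline B=\mathrm{Frac}(\overline A)$, $B$ is local with maximal ideal $\mathfrak p$, and the induced embedding $A_{\mathfrak p}\hookrightarrow B$ is onto by minimality), but as written this crucial step is borrowed, not proved. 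Second, in $(3)\Rightarrow(1)$ you assert $PR_M=PR_P$ without justification; it holds because $P=(R:S)$ is an ideal of $S$, so $PR_P=PS_M=(PS)_M=P_M=PR_M\subseteq R_M$, and only after this observation is $PR_P$ a common ideal of $R_M$ and $R_P$, which is what your quotient-lattice argument needs.
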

\begin{proof} (1)$\Leftrightarrow$(2) by \cite[Proposition 4.6]{DPPS} and \cite[Th\'eor\`eme 2.2]{FO}.

(2)$\Leftrightarrow$(3) by \cite[Proposition 9.2 and Corollary 9.3]{ARP}.
\end{proof}

The first result of Theorem \ref{4.1} also holds for minimal flat epimorphism
 $R\subset S$ under additional condition.

\begin{proposition}\label{4.32} Let $R\subset S$ be a minimal flat epimorphism. Then, $(R:S)\in\mathrm{QMax}(S)$ if and only if $|\mathrm {V}_R ((R:S))\cap\mathrm{Max}(R)|=1$. In this case, $(R:S)\in\mathrm {Max} (S)$.
\end{proposition}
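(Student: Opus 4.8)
The plan is to exploit the explicit description of $R\subset S$ furnished by Proposition \ref{4.31}(3). Writing $P:=(R:S)$, we have $P\in\mathrm{Spec}(R)$ with $P\subset M$, where $\{M\}=\mathrm{Supp}_R(S/R)$, together with $S_M\cong R_P$ and $R_M/PR_M$ a one-dimensional valuation domain. The first step is to identify $(R:S)$ as an ideal of $S$. Since the conductor is an ideal of $S$, we have $(R:S)=(R:S)S=PS$; and because $R\subset S$ is a flat epimorphism, every ideal $J$ of $S$ satisfies $J=(J\cap R)S$ while $\mathrm{Spec}(S)\to\mathrm{Spec}(R)$ is injective. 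As $P\neq M$ we have $P\notin\mathrm{Supp}_R(S/R)$, so $S_P=R_P$, and the maximal ideal $PR_P$ produces a prime of $S$ lying over $P$, namely $PS$. Hence $(R:S)=PS\in\mathrm{Spec}(S)$.

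Next I record two spectral facts. No prime of $S$ lies over $M$: a prime $Q$ with $Q\cap R=M$ would, under $S\to S_M\cong R_P$, give a prime of $R_P$ whose contraction to $R$ equals $M$, which is impossible since every prime of $R_P$ contracts into $P\subset M$. On the other hand, every prime $q\neq M$ of $R$ lies below the prime $qS$ of $S$, because $q\notin\mathrm{Supp}_R(S/R)$ forces $S_q=R_q$. Consequently a prime of $S$ strictly containing $PS$ contracts to a prime $q$ of $R$ with $P\subsetneq q$ and $q\neq M$ (strictness of the contraction uses $Q=(Q\cap R)S$ and $PS\cap R=P$), and conversely every such $q$ yields $qS\supsetneq PS$.

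The crucial reduction is that $(R:S)=PS$ is a \emph{prime} ideal of $S$: among the three types of quasi-maximal ideals of Definition \ref{2.2}, a decomposed ideal $M_1\cap M_2$ and a ramified ideal (whose radical strictly contains it) are never prime, so a prime quasi-maximal ideal must be inert, i.e. maximal. Therefore $(R:S)\in\mathrm{QMax}(S)$ if and only if $PS\in\mathrm{Max}(S)$, which by the previous paragraph holds if and only if there is no prime $q\neq M$ of $R$ with $P\subsetneq q$.

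Finally I would translate this into the stated cardinality condition. If some maximal ideal $M'\neq M$ of $R$ contains $P$, then $P\subsetneq M'$ (as $P\subset M$ prevents $P$ from being maximal), giving such a $q=M'$ and so $PS\notin\mathrm{Max}(S)$; thus $|\mathrm{V}_R(P)\cap\mathrm{Max}(R)|\geq 2$ obstructs quasi-maximality. Conversely, assume $M$ is the only maximal ideal of $R$ containing $P$. Any prime $q$ with $P\subsetneq q$ is contained in some maximal ideal, which must then be $M$; hence $P\subsetneq q\subseteq M$, and since $R_M/PR_M$ is a one-dimensional valuation domain there is no prime strictly between $P$ and $M$, forcing $q=M$. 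So no prime $q\neq M$ strictly contains $P$, whence $PS=(R:S)\in\mathrm{Max}(S)\subseteq\mathrm{QMax}(S)$, which also yields the final assertion. The main obstacle is the first step, namely pinning down $(R:S)$ as the prime ideal $PS$ of $S$ and controlling $\mathrm{Spec}(S)$ through the flat-epimorphism hypothesis; after that, the three-type classification makes the equivalence essentially formal.
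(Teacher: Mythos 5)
Your proof is correct and follows essentially the same route as the paper's: both rest on the spectral structure of a minimal flat epimorphism (the conductor $P:=(R:S)$ is a prime ideal of both rings, $\mathrm{Spec}(S)$ identifies with $\mathrm{Spec}(R)\setminus\{M\}$ where $\{M\}=\mathrm{Supp}_R(S/R)$, and no prime lies strictly between $P$ and $M$), and both reduce the statement to whether $M$ is the only maximal ideal of $R$ containing $P$. The only cosmetic differences are that you re-derive the cited spectral facts from Proposition \ref{4.31}(3) and the flat-epimorphism identity $J=(J\cap R)S$ instead of quoting \cite{FO}, \cite{DPP2} and \cite{ARP}, and that where the paper excludes non-maximal prime quasi-maximal ideals via Proposition \ref{2.7} and Theorem \ref{2.6}, you do so via the type classification of Definition \ref{2.2} (decomposed and ramified ideals are never prime).
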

\begin{proof} Since $R\subset S$ is a minimal flat epimorphism, $P:=(R:S)\in\mathrm{Spec}(R)\cap\mathrm{Spec}(S)$ by \cite[Lemme 3.2]{FO}, is such that $P\subset M$, where $\{M\}=\mathrm{Supp}_R(S/R)$, and there is no prime ideal of $R$ contained strictly between $P$ and $M$ according to \cite[Corollary 9.3]{ARP}. Moreover, the spectral map $\mathrm{Spec}(S)\to\mathrm{Spec}(R)\setminus\{M\}$ is a bijection by \cite[Theorem 2.1]{DPP2}. 

Assume that $|\mathrm{V}_R(P)\cap\mathrm{Max}(R)|=1$, which means that $M$ is the only maximal ideal of $R$ containing $P$. We claim that $ P\in\mathrm{Max}(S)$. Otherwise, there is $Q\in\mathrm{Max}(S)$ such that $P\subset Q$, so that $P\subseteq Q\cap R\in\mathrm{Max}(R)$, a contradiction because $P,M\neq Q\cap R$. Then $P\in\mathrm{QMax}(S)$.

Assume now that $|\mathrm{V}_R(P)\cap\mathrm{Max}(R)|>1$ and let $N\in\mathrm{Max}(R),\ N\neq M$ be such that $P\subset N$. Since $N\not\in\mathrm{Supp}_R(S/R)$, it follows that $NS\in\mathrm{Max}(S)$ (see Lemma \ref{3.46}). Then, Proposition \ref{2.7}, Theorem \ref{2.6} and $P\subset NS$ in $S$ show that $P\not\in \mathrm{QMax}(S)$. 
\end{proof}

\begin{example}\label{4.33} This example shows that the conductor of a minimal flat epimorphism $R\subset S$ is not always quasi-maximal in $S$.

Set $R:=\mathbb Z$ and let $p$ be a prime number. Consider $S:=\mathbb Z_p$ and $M:=p\mathbb Z$. Then $M$ is the only maximal ideal of $R$ with no prime ideal of $S$ lying over it. Obviously, $R\subset S$ is a minimal flat epimorphism such that $MS=S,\ \{M\}=\mathrm{Supp}_R(S/R)$ and $(R:S)=0$, which is not in $\mathrm{QMax}(S)$. In fact, any maximal ideal of $R$ contains $(R:S)$. We may remark that $R_M=\mathbb Z_{p\mathbb Z}$ is a discrete valuation domain with quotient field $S_M=\mathbb Q$ (see \cite[Proposition 4.6]{DPPS}). In particular, this example shows that Proposition \ref{3.16} has no converse when $R_M$ is a local ring since $0$ is a maximal ideal in $S_M$ and a prime not maximal ideal in $R_M$, and then not quasi-maximal in $R_M$.
\end{example}

\section{Quasi-maximal ideals and 2-absorbing ideals}

Badawi introduced in \cite{Ba} the notion of 2-absorbing ideals: a nonzero proper ideal $I$ of a ring $R$ is said to be 2-absorbing if for any $a,b,c\in R$ such that $abc\in I$, then $ab\in I$ or $bc\in I$ or $ac\in I$. As a 
quasi-maximal ideal is a 2-absorbing ideal by \cite[Theorem 1]{AKKT}, and they are quite similar by the trichotomy of their characterization, we look in this section at situations where these two notions coincide. In \cite[Theorem 2.4]{Ba}, Badawi shows that a 2-absorbing ideal $I$ of a ring $R$ satisfies one of the next properties:

(a) $\sqrt I$ is a prime ideal $P$ of $R$ such that $P^2\subseteq I$.

(b) $\sqrt I=P_1\cap P_2,\ P_1P_2\subseteq I$ and $(\sqrt I)^2\subseteq I$, where $P_1,P_2$ are the only distinct prime ideals of $R$ that are minimal over $I$.

(A prime ideal minimal over $I$ is a minimal ideal in 
 $\mathrm{V}_R(I)$).
 
\begin{remark}\label{5.1} If we limit for the characterization of 2-absorbing ideals which are maximal ideals, Proposition \ref{2.70} gives for (a): Either $I\in\mathrm{Max}(R)$, giving Proposition \ref{2.70}(1) or $I$ is an $M$-primary ideal, for some $M\in\mathrm{Max}(R)$ such that $M^2\subseteq I\subset M$ giving Proposition \ref{2.70}(3). And (b) gives $I=M_1\cap M_2 $ for two distinct maximal ideals $M_1,M_2$ of $R$ since $M_1M_2 \subseteq I\subset M_1\cap M_2$ gives $I=M_1\cap M_2$ because $M_1$ and $M_2$ are comaximal, which is Proposition \ref{2.70}(2).
\end{remark}
 
We get the following converse of \cite[Theorem 1]{AKKT}.

\begin{theorem}\label{5.2} An ideal $I$ of a ring $R$ is in $\mathrm{QMax}(R)$ if and only if $I$ is a 2-absorbing ideal such that $\mathrm{V}(I)\subseteq\mathrm{Max}(R)$ and $M/I$ is a principal ideal of $R/I$ for each $M\in \mathrm{V}(I)$. 
\end{theorem}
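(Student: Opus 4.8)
The plan is to prove the two implications separately, using the trichotomy for quasi-maximal ideals (Theorem~\ref{2.3}, Proposition~\ref{2.5}) for the ``only if'' part and Badawi's structure theorem for 2-absorbing ideals (the cases (a)/(b) quoted above, together with Remark~\ref{5.1}) for the ``if'' part. The principality hypothesis is the feature that cuts the 2-absorbing ideals down to exactly the quasi-maximal ones.

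For the forward implication, suppose $I\in\mathrm{QMax}(R)$. That $I$ is 2-absorbing is \cite[Theorem 1]{AKKT}, and $\mathrm{V}(I)\subseteq\mathrm{Max}(R)$ is the asserted consequence of Theorem~\ref{2.3}. It remains to verify that $M/I$ is principal in $R/I$ for each $M\in\mathrm{V}(I)$, which I would check type by type. In the inert case $I=M$, so $M/I=0$ is principal; in the decomposed case $R/I\cong R/M_1\times R/M_2$ by Remark~\ref{2.4}, and each $M_i/I$ corresponds to a factor $0\times R/M_j$ (equivalently $R/M_i\times 0$), hence is generated by an idempotent and is principal; in the ramified case Proposition~\ref{2.5} says $R/I$ is a SPIR whose maximal ideal $M/I$ is by definition principal.

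For the converse, suppose $I$ is 2-absorbing with $\mathrm{V}(I)\subseteq\mathrm{Max}(R)$ and $M/I$ principal for each $M\in\mathrm{V}(I)$. I would invoke Badawi's dichotomy. In case (b), $\sqrt I=P_1\cap P_2$ with $P_1P_2\subseteq I$ and $P_1,P_2$ the distinct minimal primes over $I$; since $\mathrm{V}(I)\subseteq\mathrm{Max}(R)$ these are distinct maximal ideals $M_1,M_2$, and comaximality gives $M_1M_2=M_1\cap M_2$, whence $I=M_1\cap M_2$ is decomposed, exactly as recorded in Remark~\ref{5.1}. In case (a), $\sqrt I=P$ is prime with $P^2\subseteq I$; as $P\in\mathrm{V}(I)\subseteq\mathrm{Max}(R)$, it is a maximal ideal $M$ and the unique prime over $I$, so $R/I$ is local with maximal ideal $M/I$ and $(M/I)^2=(M^2+I)/I=0$. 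If $I=M$ this is the inert case.

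The crux is the subcase $I\subsetneq M$ in case (a), and here the principality hypothesis is indispensable: without it a 2-absorbing ideal of this form need not be quasi-maximal, as shown by $I=M^2$ with $M\nsucc M^2$ in Remark~\ref{2.81}. Writing $M/I=(R/I)\bar t$, for any $\bar a\in(M/I)\setminus\{0\}$ one has $\bar a=\bar r\,\bar t$, and $\bar r\in M/I$ would force $\bar a\in(M/I)^2=0$; hence $\bar r$ is a unit of the local ring $R/I$ and $M/I=(R/I)\bar a$, i.e. $M=I+Ra$ for every $a\in M\setminus I$. Equivalently, since $M/I$ is principal and nilpotent of index $2$, \cite[Proposition 4]{Hun} makes $R/I$ a principal ideal ring, hence a SPIR with maximal ideal nilpotent of index $2$, and Proposition~\ref{2.5} yields $I\in\mathrm{Q_rMax}(R)$. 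Combining the three outcomes gives $I\in\mathrm{QMax}(R)$; the main obstacle, as expected, is precisely this upgrade from ``$M$-primary with $M^2\subseteq I$'' to ``$I\prec M$'', which is where principality does the essential work.
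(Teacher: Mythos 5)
Your proof is correct and follows essentially the same route as the paper's: the forward direction via \cite[Theorem 1]{AKKT} and Theorem \ref{2.3}, and the converse via Badawi's (a)/(b) dichotomy, with the principality hypothesis doing exactly the same job in both proofs, namely upgrading ``$M$-primary with $M^2\subseteq I\subset M$'' to ``$I\prec M$''. The only cosmetic differences are that the paper obtains this covering from the generator $t$ of $M/I$ via \cite[Corollary 2, p.237]{ZS} and then concludes with Theorem \ref{2.6}, whereas you argue it by a direct unit computation in the local ring $R/I$ (equivalently via Proposition \ref{2.5}); and in the forward direction the principality of $M/I$ already follows at once from the definition of a quasi-maximal ideal (for $a\in M\setminus I$, $I+Ra$ is proper, hence maximal, hence equal to $M$), so your type-by-type verification, while valid, is more work than needed.
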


\begin{proof} One implication is \cite[Theorem 1 and Definition 1]{AKKT} and Theorem \ref{2.3}. 

Conversely, assume that $I$ is a 2-absorbing ideal such that $\mathrm{V}(I)\subseteq\mathrm{Max}(R)$ and $M/I$ is a principal ideal of $R/I$ for each $M\in\mathrm{V}(I)$. First, $I$ is a proper ideal by definition of a 2-absorbing ideal. According to \cite[Theorem 2.3]{Ba} and Remark \ref{5.1}, $I\in\mathrm{QMax}(R)$ if either $I$ is a maximal ideal (first part of case (a) of Remark \ref{5.1}) or an intersection of two maximal ideals (case (b)  of Remark \ref{5.1}) by Theorem \ref{2.3}. In case (a) of Remark \ref{5.1} when $\sqrt I$ is a maximal ideal $M$ of $R$ such that $M^2\subseteq I$ with $I\subset M$, we get that $I$ is an $M$-primary ideal such that $M^2 \subseteq I\subset M$. Moreover, $M/I$ is a principal ideal by the assumption, so that there exists $t\in M$ such that $M=I+Rt$. As $M^2 \subseteq I$, this implies that $M\succ I$ by \cite[Corollary 2, p.237]{ZS}. An application of Theorem \ref{2.6} shows that $I\in\mathrm{QMax}(R)$.
\end{proof} 

\begin{corollary}\label{5.3} Let $I\in\mathrm{Q_rMax}( R)$ and $M:=\sqrt I$. Then $(I:x)=M$ for any $x\in M\setminus I$.
\end{corollary}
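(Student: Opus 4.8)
The plan is to read off the structural data of a ramified ideal from Definition \ref{2.2}: since $I\in\mathrm{Q_rMax}(R)$ with $M=\sqrt I$, we have $M\in\mathrm{Max}(R)$, the ideal $I$ is $M$-primary, and the crucial chain of inclusions $M^2\subseteq I\subset M$ holds. Fix $x\in M\setminus I$. I would prove the asserted equality $(I:x)=M$ by establishing the two inclusions separately, and I expect the second one to require nothing more than the maximality of $M$ together with the hypothesis $x\notin I$.

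First I would show $M\subseteq(I:x)$. Take any $m\in M$. Because $x\in M$ as well, the product $mx$ lies in $M^2$, and the defining inclusion $M^2\subseteq I$ gives $mx\in I$; by definition of the residual this means $m\in(I:x)$. Since $m\in M$ was arbitrary, $M\subseteq(I:x)$.

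For the reverse inclusion I would argue that $(I:x)$ is a proper ideal of $R$. Indeed, if we had $(I:x)=R$, then $1\in(I:x)$, which means $x=1\cdot x\in I$, contradicting the choice $x\in M\setminus I$. Hence $(I:x)\neq R$. Combining this with the inclusion already obtained, we get $M\subseteq(I:x)\subsetneq R$, and the maximality of $M$ forces $(I:x)=M$, as claimed.

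There is really no substantive obstacle here: the only point that must not be overlooked is that the properness of $(I:x)$ rests entirely on $x\notin I$, which is exactly why the statement restricts to $x\in M\setminus I$ rather than to all of $M$. Everything else is a direct consequence of $M^2\subseteq I$ and of $M$ being maximal.
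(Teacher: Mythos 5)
Your proof is correct, but it takes a genuinely different route from the paper. The paper proves this corollary as an application of its 2-absorbing machinery: by Theorem \ref{5.2} the ideal $I$ is 2-absorbing, and then the equality $(I:x)=M$ for $x\in M\setminus I$ is quoted directly from Badawi's results on 2-absorbing ideals whose radical is prime (\cite[Theorem 2.5]{Ba}). You instead argue from first principles using only the structural data in Definition \ref{2.2}: the inclusion $M^2\subseteq I$ gives $M\subseteq(I:x)$ since $mx\in M^2$ for $m\in M$, and properness of $(I:x)$ (which would otherwise force $x\in I$) combined with maximality of $M$ gives the reverse inclusion. Both arguments are sound; your computation never uses that $I$ is $M$-primary nor the covering property $M=I+Ra$, only $M^2\subseteq I\subset M$ with $M$ maximal, so it is strictly more elementary and self-contained --- indeed it shows the corollary could have been stated and proved immediately after Definition \ref{2.2}, with no reference to Section 5 at all. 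What the paper's route buys is economy of exposition within its own narrative: having just established the equivalence between quasi-maximal and (suitably constrained) 2-absorbing ideals, it gets the corollary for free from Badawi's theory, and the placement emphasizes that connection rather than the bare computation.
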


\begin{proof} Since $I$ is  2-absorbing by Theorem \ref{5.2}, then $(I:x)=M$ for any $x\in M\setminus I$ by \cite[Theorem 2.5]{Ba}. 
\end{proof} 

\begin{corollary}\label{5.4} Let $M$ be a maximal ideal of a ring $R$ such that $M^2\neq M$. Then $M^2\in\mathrm{QMax}(R)$ if and only if $M/M^2$ is a principal ideal if and only if $\mathrm{L}_R(M/M^2)=1$.
\end{corollary}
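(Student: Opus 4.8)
The plan is to read $M^2$ through the Artinian quotient $R/M^2$ and then invoke Theorem \ref{5.2}. First I would record that $M^2\subset M$ (this is exactly $M^2\neq M$ together with maximality of $M$) and that $\mathrm{V}(M^2)=\{M\}$: any prime $P$ with $M^2\subseteq P$ satisfies $M\subseteq P$, hence $P=M$. Thus $\mathrm{V}(M^2)\subseteq\mathrm{Max}(R)$ and $M^2$ is $M$-primary, so $R/M^2$ is local with maximal ideal $M/M^2$ and $(M/M^2)^2=0$. Next I would verify that $M^2$ is a $2$-absorbing ideal: given $abc\in M^2$, if at least two of $a,b,c$ lie in $M$ then the product of two of them already lies in $M^2$; otherwise, since $abc\in M^2\subseteq M$ and $M$ is prime while at most one of $a,b,c$ lies in $M$, exactly one of them, say $a$, lies in $M$ (and $b,c\notin M$), and writing $1=rbc+m$ with $m\in M$ (possible as $bc\notin M$ and $M$ is maximal) gives $a=r(abc)+am\in M^2$, whence $ab\in M^2$. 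So $M^2$ is $2$-absorbing (this is also the sufficiency direction of \cite{Ba} for a primary ideal whose radical squared is contained in it).

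With these facts in hand, Theorem \ref{5.2} applies directly: since $\mathrm{V}(M^2)=\{M\}$, the requirement there that $N/M^2$ be a principal ideal of $R/M^2$ for every $N\in\mathrm{V}(M^2)$ reduces to the single condition that $M/M^2$ be principal in $R/M^2$. Hence $M^2\in\mathrm{QMax}(R)$ if and only if $M/M^2$ is a principal ideal, which is the first of the two claimed equivalences.

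It remains to show that $M/M^2$ is principal if and only if $\mathrm{L}_R(M/M^2)=1$, and this is where the real content sits. The key observation is that $M\cdot(M/M^2)=M^2/M^2=0$, so $M/M^2$ is a module over the field $R/M$; its $R$-submodules are precisely its $R/M$-subspaces, whence $\mathrm{L}_R(M/M^2)=\dim_{R/M}(M/M^2)$. If $M/M^2=(R/M^2)\overline t$ is principal, then since $M\overline t\subseteq M^2/M^2=0$ we get $(R/M^2)\overline t=(R/M)\overline t$, a space of dimension at most $1$; combined with $M/M^2\neq 0$ (that is, $M^2\neq M$) this forces $\dim_{R/M}(M/M^2)=1$, i.e. $\mathrm{L}_R(M/M^2)=1$. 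Conversely a one-dimensional space is generated by any nonzero element, so $M/M^2$ is then principal. The main subtlety to watch is exactly this collapse of ``principal'' to ``one-dimensional'', which works only because $M$ annihilates $M/M^2$; one should also keep in mind the degenerate case $M^2=0$, where $R$ itself is local with $M$ nilpotent of index $2$ and $M/M^2=M$, and the same reasoning goes through inside $R$.
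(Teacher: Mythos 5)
Your proposal is correct and follows essentially the same route as the paper: both establish $\mathrm{V}(M^2)=\{M\}$, check that $M^2$ is $2$-absorbing, and then obtain the first equivalence directly from Theorem \ref{5.2}. The only (minor) differences are that you verify $2$-absorbingness by a direct element computation where the paper simply cites \cite[Theorem 3.1]{Ba}, and you prove ``$M/M^2$ principal $\Leftrightarrow\mathrm{L}_R(M/M^2)=1$'' by the vector-space argument over $R/M$, whereas the paper closes the chain by deducing ``$M^2\in\mathrm{QMax}(R)\Leftrightarrow\mathrm{L}_R(M/M^2)=1$'' from Theorem \ref{2.3}, using that $M^2\in\mathrm{QMax}(R)$ forces $M^2\in\mathrm{Q_rMax}(R)$.
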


\begin{proof} By Theorem \ref{5.2}, $M^2\in\mathrm{QMax}(R)$ if and only if $M^2$ is a 2-absorbing ideal such that $\mathrm{V}(M^2)\subseteq\mathrm{Max}(R)$ and $M/M^2$ is a principal ideal. By \cite[Theorem 3.1]{Ba}, $M^2$ is a 2-absorbing ideal. Obviously, $\mathrm{V}(M^2)=\{M\}\subseteq\mathrm{Max}(R)$. Then, the first equivalence is gotten. At last, $M^2\in\mathrm{QMax}(R)$ if and only if $\mathrm{L}_R(M/M^2)=1$ by  Theorem \ref{2.3} since $M^2\in\mathrm{QMax}(R)$ if and only if $M^2\in\mathrm{Q_rMax}( R)$.
\end{proof} 

\begin{remark}\label{5.14} 
Theorem \ref{5.2} shows that in a ring $R$, the class of 2-absorbing ideals is larger than the class of quasi-maximal ideals, even if we limit to ideals $I$ such that $\mathrm{V}(I)\subseteq\mathrm{Max}(R)$. In particular, for an extension $R\subset S$, if $J$ is a 2-absorbing ideal of $S$, then $J\cap R$ is obviously a 2-absorbing ideal of $R$.  But, in Remark \ref{4.9} we give an example of an extension $R\subset S$ in $\mathcal M_i$ and an ideal $J$ of $S$ which is also an ideal of $R$ and such that $J\in\mathrm{Q_rMax}(S)$ with $J\not\in\mathrm {QMax}(R)$.
\end{remark}

We are now looking how some properties gotten by Badawi for 2-absorbing ideals in \cite{Ba} are transferred  for quasi-maximal ideals.

\begin{proposition}\label{5.8} An $M$-primary ideal $I$ of a valuation domain $(R,M)$ is 2-absorbing if and only if $I\in\mathrm{QMax}(R)$.
\end{proposition}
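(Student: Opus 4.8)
The plan is to treat the two implications separately and to reduce the substantive one to Theorem \ref{5.2}. The implication ``$I\in\mathrm{QMax}(R)\Rightarrow I$ is $2$-absorbing'' is immediate from \cite[Theorem 1]{AKKT} and uses neither the valuation hypothesis nor the fact that $I$ is $M$-primary, so I would dispose of it in one line. For the converse, I would first note that since $I$ is $M$-primary we have $\sqrt I=M$, and any prime of $R$ containing $I$ contains $M$ and hence equals $M$; thus $\mathrm{V}(I)=\{M\}\subseteq\mathrm{Max}(R)$ holds automatically. By Theorem \ref{5.2} it then suffices to verify the single remaining hypothesis, namely that $M/I$ is a principal ideal of $R/I$.

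To extract the numerical input I need, I would invoke case (a) of the Badawi trichotomy recalled at the start of this section (\cite[Theorem 2.4]{Ba}): since $\sqrt I=M$ is prime, we are in case (a), which yields $M^2\subseteq I$. Passing to the quotient $R':=R/I$, this says that $R'$ is a local ring with maximal ideal $M':=M/I$ satisfying $M'^2=0$, and because $R$ is a valuation domain the ideals of $R'$ (which correspond to the ideals of $R$ lying between $I$ and $M$) are totally ordered by inclusion.

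The heart of the argument, and the step I expect to be the main obstacle, is to deduce from this that $M'$ is principal. Since $M'^2=0$, the $R'$-action on $M'$ factors through the residue field $k:=R'/M'$, so $M'$ is a $k$-vector space whose $R'$-submodules are exactly its $k$-subspaces; moreover these submodules are precisely the ideals of $R'$ contained in $M'$, hence totally ordered by the previous paragraph. A $k$-vector space whose subspaces form a chain under inclusion must have dimension at most $1$, since two independent vectors would span incomparable lines. In the nontrivial case $I\neq M$ we have $M'\neq 0$, so $\dim_k M'=1$ and $M'=k\overline t$ is principal; the case $I=M$ is trivial since then $I\in\mathrm{Max}(R)\subseteq\mathrm{QMax}(R)$. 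With $M/I$ principal, Theorem \ref{5.2} gives $I\in\mathrm{QMax}(R)$, and in fact $I\in\mathrm{Q_rMax}(R)$ with $M^2\subseteq I\subset M$.
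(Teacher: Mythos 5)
Your proof is correct, but it follows a genuinely different route from the paper's. The paper disposes of the converse by citing Badawi's result specific to valuation domains \cite[Proposition 3.10]{Ba}, which says a 2-absorbing $M$-primary ideal of a valuation domain is either $M$ or $M^2$; it then concludes that $M^2$ (when $\neq M$) is submaximal because the $M$-primary ideals of a valuation domain are the powers of $M$ \cite[Theorem 5.11]{LM}, and invokes Theorem \ref{2.6}. You instead use only the general trichotomy \cite[Theorem 2.4]{Ba} already recalled at the head of the section to get $M^2\subseteq I$, and then verify the principality hypothesis of Theorem \ref{5.2} by a self-contained linear-algebra argument: in $R'=R/I$ one has $M'^2=0$, so $M'$ is a vector space over $k=R'/M'$ whose subspaces are ideals of $R'$, hence totally ordered because $R$ is a valuation ring, forcing $\dim_k M'\leq 1$. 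Each approach has its merits: the paper's is shorter modulo the two external citations, while yours avoids both \cite[Proposition 3.10]{Ba} and \cite[Theorem 5.11]{LM}, actually exercises Theorem \ref{5.2} (which the paper states as the key converse criterion but then does not use in this proof), identifies the type ($I\in\mathrm{Q_rMax}(R)$ when $I\subset M$), and uses the valuation hypothesis only through the total ordering of ideals, so your argument applies verbatim to any chained local ring, not just valuation domains.
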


\begin{proof} If $I\in\mathrm{QMax}(R)$, then $I$ is 2-absorbing by \cite[Theorem 1]{AKKT}. 

Conversely, assume that $I$ is 2-absorbing. Since $I$ is an $M$-primary ideal and according to \cite[Proposition 3.10]{Ba}, either $I=M$ or $I=M^2 $. In the first case, $I\in\mathrm{QMax}(R)$ by Definition \ref{2.2}. Assume that $M\neq I=M^2$. Then $I\subset M$ is submaximal because the only $ M$-primary ideals of $R$ are powers of $M$ by \cite[Theorem 5.11]{LM}. Therefore Theorem \ref{2.6} gives that $I\in\mathrm{QMax}(R)$.
\end{proof} 

\begin{proposition}\label{5.9} A nonzero ideal $I$ of a Pr\"ufer domain $R$ such that $\mathrm{V}(I)\subseteq\mathrm{Max}(R)$
 is 2-absorbing if and only if $I\in\mathrm{QMax}(R)$.
\end{proposition}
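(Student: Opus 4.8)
The plan is to prove the two implications separately, with the forward direction immediate and the converse carrying the substance. For the forward implication, if $I\in\mathrm{QMax}(R)$ then $I$ is $2$-absorbing by \cite[Theorem 1]{AKKT} and $\mathrm{V}(I)\subseteq\mathrm{Max}(R)$ by Theorem \ref{2.3}; this uses neither the Pr\"ufer hypothesis nor the principal condition, exactly as the easy half of Proposition \ref{5.8}. So I would dispatch it in one line and concentrate on the converse.

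For the converse, suppose $I$ is a nonzero $2$-absorbing ideal with $\mathrm{V}(I)\subseteq\mathrm{Max}(R)$. My intention is to verify the hypotheses of the converse part of Proposition \ref{3.161} and read off $I\in\mathrm{QMax}(R)$ from it. First I would note that every $M\in\mathrm{V}(I)$ is minimal over $I$ (a prime of $\mathrm{V}(I)$ contained in the maximal ideal $M$ is itself maximal, hence equal to $M$), so Badawi's trichotomy \cite[Theorem 2.4]{Ba}, which bounds the number of minimal primes of a $2$-absorbing ideal by two, yields $|\mathrm{V}(I)|\leq 2$. Next, for each $M\in\mathrm{V}(I)$ the ring $R_M$ is a valuation domain because $R$ is Pr\"ufer, and $IR_M$ is a proper, nonzero, $MR_M$-primary ideal: properness and nonvanishing come from $0\neq I\subseteq M$, while $\sqrt{IR_M}=MR_M$ because $M$ is the only prime of $\mathrm{V}(I)$ lying below $M$. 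Since $IR_M$ is again $2$-absorbing, Proposition \ref{5.8} gives $IR_M\in\mathrm{QMax}(R_M)$.

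It then remains to treat the extra requirement of Proposition \ref{3.161} in the case $|\mathrm{V}(I)|=2$, namely that $IR_M\in\mathrm{Max}(R_M)$ for each such $M$. Here $\mathrm{V}(I)=\{M_1,M_2\}$ falls into case (b) of \cite[Theorem 2.4]{Ba}, so $M_1M_2\subseteq I\subseteq\sqrt I=M_1\cap M_2$; localizing at $M_i$ and using $M_jR_{M_i}=R_{M_i}$ for $j\neq i$ collapses these inclusions to $IR_{M_i}=M_iR_{M_i}$, which is maximal. With all hypotheses of Proposition \ref{3.161} in hand I conclude $I\in\mathrm{QMax}(R)$. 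As a sanity check on case (b) one can bypass Proposition \ref{3.161} entirely and observe that comaximality of $M_1,M_2$ forces $I=M_1\cap M_2\in\mathrm{Q_dMax}(R)$, while case (a) of \cite[Theorem 2.4]{Ba} can instead be closed by showing through $R_M$ that $M/I$ is principal and invoking Theorem \ref{5.2}.

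I expect the main obstacle to be the claim that $IR_M$ is $2$-absorbing in $R_M$, i.e.\ that the $2$-absorbing property passes to localizations. The quick argument I would record is: if $(a/s)(b/t)(c/u)\in IR_M$ then $wabc\in I$ for some $w\notin M$, and applying the definition of $2$-absorbing to the triple $a,\,b,\,cw$ gives $ab\in I$, $bcw\in I$, or $acw\in I$; each of these pushes down to one of $(a/s)(b/t)$, $(b/t)(c/u)$, $(a/s)(c/u)$ lying in $IR_M$. The only other delicate point is checking that $IR_M$ is genuinely $MR_M$-primary rather than merely contained in $MR_M$, which is precisely where the hypothesis $\mathrm{V}(I)\subseteq\mathrm{Max}(R)$ does the work.
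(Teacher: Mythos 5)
Your proof is correct and takes essentially the same route as the paper's: bound $|\mathrm{V}(I)|\le 2$ via Badawi's structure theory, show that $IR_M$ is a $2$-absorbing ideal of the valuation domain $R_M$ for each $M\in\mathrm{V}(I)$ and apply Proposition \ref{5.8}, check $IR_M\in\mathrm{Max}(R_M)$ when $|\mathrm{V}(I)|=2$, and conclude by Proposition \ref{3.161}. The only divergences are citation-level: the paper invokes the Pr\"ufer-specific trichotomy \cite[Theorem 3.14]{Ba} and the localization result \cite[Lemma 3.13]{Ba}, where you use the general trichotomy \cite[Theorem 2.4]{Ba} and re-prove the localization step by hand, and you are more explicit than the paper about why $IR_M$ is $MR_M$-primary, a hypothesis that Proposition \ref{5.8} does require.
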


\begin{proof} If $I\in\mathrm{QMax}(R)$, then $I$ is 2-absorbing by \cite[Theorem 1]{AKKT}. 

Conversely, assume that $I$ is 2-absorbing. According to \cite[Theorem 3.14]{Ba}, either $I=M\ (*)$ or $I=M^2\ (**)$ or $I=M_1\cap M_2\ (***)$,  where $M,M_1,M_2\in\mathrm{Max}(R)$ since $\mathrm{V}(I)\subseteq\mathrm{Max}(R)$. It follows that $|\mathrm{V}(I)|\leq 2$. By \cite[Lemma 3.13]{Ba}, $IR_M$ is 2-absorbing in $R_M$, and then is in $\mathrm {QMax}(R_M)$ for any $M\in\mathrm{V}(I)$ by Proposition \ref{5.8} because $R_M$ is a valuation ring. Since $IR_M\in\mathrm{Max}(R)$ when $|\mathrm{V}(I)|=2$ by $(***)$,  Proposition \ref{3.161} gives that $I\in\mathrm{QMax}(R)$.
\end{proof} 

\begin{proposition}\label{5.5} A Noetherian domain $R$ which is not a field is a Dedekind domain if and only if $M^2\in\mathrm{QMax}(R)$ for each $M\in\mathrm{Max}(R)$.
\end{proposition}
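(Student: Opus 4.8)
The plan is to reduce everything to Corollary \ref{5.4} applied at each maximal ideal and then to recognize the resulting local condition as the statement that every localization $R_M$ is a discrete valuation ring (DVR). First I would observe that since $R$ is a Noetherian domain which is not a field, every $M\in\mathrm{Max}(R)$ is a nonzero finitely generated ideal, so Nakayama's Lemma applied in $R_M$ forces $M\neq M^2$ (were $MR_M=M^2R_M$ we would get $MR_M=0$, hence $M=0$ since $R$ is a domain). Thus Corollary \ref{5.4} applies at every maximal ideal and yields that $M^2\in\mathrm{QMax}(R)$ if and only if $\mathrm{L}_R(M/M^2)=1$, equivalently $M/M^2$ is principal.

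Next I would localize this condition. Because $M/M^2$ is annihilated by $M$ and supported only at $M$, we have $\mathrm{L}_R(M/M^2)=\dim_{R_M/MR_M}(MR_M/M^2R_M)$, so $\mathrm{L}_R(M/M^2)=1$ says exactly that $MR_M$ can be generated by one element modulo $M^2R_M$; by Nakayama this means $MR_M$ is principal. A Noetherian local domain $(R_M,MR_M)$ with $MR_M\neq 0$ and principal maximal ideal is a DVR (Krull's intersection theorem gives $\bigcap_n M^nR_M=0$, after which the unique representation of each nonzero element as a unit times a power of a generator is routine). Conversely a DVR has principal maximal ideal. Hence, for the nonzero maximal ideal $M$, the condition $M^2\in\mathrm{QMax}(R)$ is equivalent to $R_M$ being a DVR.

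It then remains to invoke the local characterization of Dedekind domains: a Noetherian domain which is not a field is Dedekind if and only if $R_M$ is a DVR for every $M\in\mathrm{Max}(R)$. For the forward implication, a Dedekind domain localizes to a DVR at each nonzero prime, so each $M^2\in\mathrm{QMax}(R)$ by the equivalence of the previous paragraph. For the converse, if every $R_M$ is a DVR then each has Krull dimension one, whence $\dim R=1$ and every nonzero prime is maximal; since integral closedness is a local property, $R$ is integrally closed, and being a Noetherian integrally closed domain of dimension one it is Dedekind.

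The only genuine content beyond bookkeeping is the equivalence ``$\mathrm{L}_R(M/M^2)=1\Leftrightarrow R_M$ is a DVR,'' so the main step to carry out carefully is the passage through Nakayama and the DVR criterion for a Noetherian local domain with principal maximal ideal; the two directions of the standard Dedekind characterization are then immediate.
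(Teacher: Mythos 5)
Your proof is correct, but it takes a genuinely different route from the paper's. Both arguments funnel through Corollary \ref{5.4}, reducing $M^2\in\mathrm{QMax}(R)$ to $\mathrm{L}_R(M/M^2)=1$ (equivalently, $M/M^2$ principal); the divergence is in how that length condition gets tied to the Dedekind property. The paper simply cites Gilmer's Theorem 38.1 of \cite{MIT}, which states outright that a Noetherian domain that is not a field is Dedekind if and only if there are no ideals properly between $M^2$ and $M$ for each $M\in\mathrm{Max}(R)$, so for the paper the whole equivalence is a one-line quotation. You instead prove that bridge yourself: Nakayama together with the observation that $M/M^2$ is supported only at $M$ converts $\mathrm{L}_R(M/M^2)=1$ into principality of $MR_M$, the standard criterion for a Noetherian local domain with nonzero principal maximal ideal makes $R_M$ a DVR, and the local characterization of Dedekind domains (Noetherian, every $R_M$ a DVR; equivalently integrally closed of dimension one) finishes. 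Your route is longer but self-contained modulo textbook facts, whereas the paper's is shorter but rests on a less commonly quoted theorem of Gilmer. A small bonus of your argument: your opening Nakayama observation shows that $M\neq M^2$ holds automatically for every maximal ideal of a Noetherian domain that is not a field, which makes the paper's closing sentence handling the case $M=M^2$ vacuous, and also guarantees that Corollary \ref{5.4} is applicable at every maximal ideal without any case distinction.
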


\begin{proof} In \cite[Theorem 38.1]{MIT},  Gilmer shows that a Noetherian  domain $R$ which is not a field is a Dedekind domain if and only if, for each $M\in\mathrm{Max}(R)$, there are no ideals properly between $M$ and $M^2$, which is equivalent, if $M\neq M^2$, to $\mathrm{L}_R(M/M^2)=1$ and to $M^2\in\mathrm{QMax}(R)$ by Corollary \ref{5.4}. If $M=M^2$, then $M^2$ is obviously quasi-maximal. 
\end{proof} 

\begin{corollary}\label{5.6} (1) An ideal $I$ of a Dedekind domain $R$ 
 is 2-absorbing if and only if $I\in\mathrm{QMax}(R)$.
 
(2) A Noetherian domain $R$ which is not a field is a Dedekind domain if and only if any 2-absorbing ideal is in $\mathrm {QMax}(R)$.
\end{corollary}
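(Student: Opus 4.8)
The plan is to reduce both parts to results already in hand, namely Propositions \ref{5.9} and \ref{5.5}, together with the fact from \cite[Theorem 3.1]{Ba} that the square of a maximal ideal is always 2-absorbing (the same input used in Corollary \ref{5.4}).

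For part (1), the key observation is that a Dedekind domain is a Pr\"ufer domain of Krull dimension one, so every nonzero prime ideal is maximal. Hence for any nonzero ideal $I$ of $R$ one automatically has $\mathrm{V}(I)\subseteq\mathrm{Max}(R)$, and Proposition \ref{5.9} applies verbatim to yield that $I$ is 2-absorbing if and only if $I\in\mathrm{QMax}(R)$. It then remains only to dispose of the zero ideal: by definition a 2-absorbing ideal is nonzero, while $0\notin\mathrm{QMax}(R)$ by Theorem \ref{2.3}, since in a one-dimensional domain $0$ is a non-maximal prime and therefore $\mathrm{V}(0)=\mathrm{Spec}(R)\nsubseteq\mathrm{Max}(R)$. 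Thus the two classes of ideals coincide.

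For part (2), the forward implication is immediate from part (1): if $R$ is a Dedekind domain, then every 2-absorbing ideal of $R$ lies in $\mathrm{QMax}(R)$. For the converse I would invoke Proposition \ref{5.5}, which reduces proving that $R$ is Dedekind to showing $M^2\in\mathrm{QMax}(R)$ for every $M\in\mathrm{Max}(R)$. Fix such an $M$. Since $R$ is a Noetherian domain which is not a field, $M$ is a nonzero finitely generated ideal, so $M\neq M^2$ by the Nakayama Lemma; in particular $M^2$ is a nonzero proper ideal. By \cite[Theorem 3.1]{Ba} the ideal $M^2$ is 2-absorbing, whence by hypothesis $M^2\in\mathrm{QMax}(R)$. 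As $M$ was arbitrary, Proposition \ref{5.5} gives that $R$ is a Dedekind domain.

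The argument is essentially a bookkeeping assembly of earlier results, so I do not anticipate a serious obstacle. The only points requiring genuine care are verifying that the hypothesis $\mathrm{V}(I)\subseteq\mathrm{Max}(R)$ of Proposition \ref{5.9} comes for free in the Dedekind setting (which is exactly one-dimensionality) and confirming, via Nakayama, that $M\neq M^2$ so that $M^2$ is a legitimate nonzero 2-absorbing ideal to which the hypothesis of the converse can be applied.
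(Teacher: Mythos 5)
Your proposal is correct, and part (2) coincides with the paper's own argument ($M^2$ is 2-absorbing by \cite[Theorem 3.1]{Ba}, hence quasi-maximal by hypothesis, and Proposition \ref{5.5} concludes); but part (1) takes a genuinely different route. The paper never invokes Proposition \ref{5.9} here: instead it cites Badawi's classification of 2-absorbing ideals in a Dedekind domain \cite[Theorem 3.15]{Ba}, by which such an ideal is $M$, $M_1\cap M_2$ or $M^2$ with $M,M_1,M_2$ maximal, and then notes that the first two are quasi-maximal by Definition \ref{2.2} while $M^2\in\mathrm{QMax}(R)$ by Proposition \ref{5.5}; the reverse direction is \cite[Theorem 1]{AKKT}. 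You instead specialize the paper's Pr\"ufer-domain equivalence, observing that one-dimensionality makes the hypothesis $\mathrm{V}(I)\subseteq\mathrm{Max}(R)$ of Proposition \ref{5.9} automatic for nonzero ideals, and you handle $I=0$ separately via Theorem \ref{2.3}. What each approach buys: the paper's is shorter but needs the extra external citation \cite[Theorem 3.15]{Ba}, whereas yours stays within results already established in the paper (Proposition \ref{5.9} ultimately rests on Badawi's Pr\"ufer classification anyway) and your explicit treatment of the zero ideal is more careful than the paper's cursory ``if $R$ is a field there is nothing to prove.'' Two trivial quibbles: in (2), the nonzeroness of $M^2$ follows from $R$ being a domain with $M\neq 0$, not from $M\neq M^2$ as your ``in particular'' suggests (and Proposition \ref{5.5} as stated already absorbs the case $M=M^2$, so Nakayama is not really needed); and your reading of ``Dedekind'' as one-dimensional silently excludes fields, which is in fact the scope in which the stated equivalence of (1) is accurate.
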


\begin{proof} (1) If $R$ is a field there is nothing to prove. So, assume that $R$ is not a field. If $I\in\mathrm{QMax}(R)$, then $I$ is a 2-absorbing ideal by \cite[Theorem 1]{AKKT}. If $I$ is 2-absorbing, then \cite[Theorem 3.15]{Ba} gives that there exists some $M\in\mathrm{Max}(R)$ such that either $I=M$ or $I=M_1\cap M_2$ with $M=M_i$ for $i\in\{1,2\}$ or $I=M^2 $ which is in $\mathrm{QMax}(R)$ by Proposition \ref{5.5}. In the two first cases, $I$ is obviously also in $\mathrm{QMax}(R)$ by Definition \ref{2.2}.

(2) If $R$ is a Dedekind domain, then any 2-absorbing ideal is in $\mathrm{QMax}(R)$ by (1).

Conversely, assume that any 2-absorbing ideal is in $\in\mathrm{QMax}(R)$. By \cite[Theorem 3.1]{Ba}, $M^2$ is a 2-absorbing ideal for any $M\in\mathrm{Max}(R)$. Then Proposition \ref{5.5} gives that $R$ is a Dedekind domain. 
\end{proof} 

We end this section by adding some results perhaps already known about the behavior of 2-absorbing ideals through ring morphisms. 

\begin{proposition}\label{5.11} Let $R\subseteq S$ be a flat epimorphism, $J$ a proper ideal of $S$ and $I:=J\cap R$. Then, 
\begin{enumerate}
\item If $J$ is a 2-absorbing ideal of $S$, then $I$ is a 2-absorbing ideal of $R$. 
\item Assume that $I$ is an $M$-primary ideal with $M\in\mathrm{Max}(R)$. Then, $J$ is a 2-absorbing  if and only if $I$ is a 2-absorbing ideal of $R$.
\end{enumerate}
\end{proposition}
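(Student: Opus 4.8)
The plan is to exploit two structural features of a flat epimorphism $R\subseteq S$. First, every ideal of $S$ is extended from $R$, so that $J=(J\cap R)S=IS$ (this is the fact already used in Proposition \ref{3.16}); in particular $J\neq 0$ forces $I\neq 0$, which will supply the non-vanishing demanded by the definition of a 2-absorbing ideal. Second, the spectral map $\mathrm{Spec}(S)\to\mathrm{Spec}(R)$ is injective and $S\otimes_R\kappa(P)$ is either $0$ or $\kappa(P)$ for each $P\in\mathrm{Spec}(R)$, so that either $PS=S$ or $PS\in\mathrm{Spec}(S)$ is the unique prime of $S$ lying over $P$.

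For part (1), I would note that the contraction of a 2-absorbing ideal is 2-absorbing along \emph{any} extension, flatness being needed only for non-triviality. Indeed, if $J$ is 2-absorbing it is nonzero and proper; since $J=IS$, $J\neq 0$ gives $I\neq 0$, and $J\neq S$ gives $I=J\cap R\neq R$. Taking $a,b,c\in R$ with $abc\in I\subseteq J$, the 2-absorbing property of $J$ yields $ab\in J$, $bc\in J$ or $ac\in J$; each of these elements lies in $R$, hence in $J\cap R=I$. Thus $I$ is 2-absorbing.

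For part (2), the forward implication is exactly part (1), so the content is the converse. Assume $I$ is $M$-primary with $M\in\mathrm{Max}(R)$ and 2-absorbing. I would first locate $J$: since $J$ is proper it lies in some $Q\in\mathrm{Max}(S)$, and $Q\cap R\supseteq I$ forces $Q\cap R=M$ because $\sqrt{I}=M$ is maximal, so $\mathrm{V}_R(I)=\{M\}$. Hence $M$ survives in $S$, the prime $N:=MS\in\mathrm{Max}(S)$ is the unique prime over $M$, and $Q=N$; consequently $\mathrm{V}_S(J)=\{N\}$ and $\sqrt[S]{J}=N$. Next, Remark \ref{5.1} applied to the $M$-primary 2-absorbing ideal $I$ gives $M^2\subseteq I\subseteq M$, so that $N^2=M^2S\subseteq IS=J\subseteq MS=N$. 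Finally I pass to $S/J$, which is local with maximal ideal $N/J$ satisfying $(N/J)^2=0$, so every element outside $N/J$ is a unit. Given $abc\in J$: if two of the classes $\bar a,\bar b,\bar c$ lie in $N/J$, their product vanishes in $(N/J)^2=0$; otherwise, $N/J$ being prime, exactly one class lies in $N/J$ while the other two are units, and $\bar a\bar b\bar c=0$ forces that class to be $0$, i.e.\ the corresponding element to lie in $J$. In every case one of $ab,bc,ac$ lies in $J$, so $J$ (nonzero since $I\neq 0$, and proper by hypothesis) is 2-absorbing.

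I expect the main obstacle to be the converse of part (2), and specifically the transfer of the $M$-primary 2-absorbing structure across the flat epimorphism: one must confirm that $N=MS$ is genuinely maximal and is the only prime containing $J$, and that $M^2\subseteq I$ extends to $N^2\subseteq J$ with $\sqrt[S]{J}=N$. The $M$-primary hypothesis is precisely what collapses $\mathrm{V}_R(I)$ to the single maximal ideal $M$ and hence, by injectivity of the spectral map, collapses $\mathrm{V}_S(J)$ to the single maximal ideal $N$; this reduces the 2-absorbing test to the square-zero computation in the local ring $S/J$. Without it, $J$ could meet several primes of $S$ and this clean trichotomy would break down.
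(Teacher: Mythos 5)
Your proposal is correct and follows essentially the same route as the paper's proof: both rely on $J=IS$ for the flat epimorphism, locate the unique maximal ideal $N=MS$ lying over $M$ and containing $J$, and transfer $M^2\subseteq I$ (from Badawi's characterization of primary 2-absorbing ideals) to $N^2\subseteq J\subseteq N$. The only real difference is at the final step, where the paper simply cites Badawi's Theorem 3.1 (an $N$-primary ideal containing $N^2$ is 2-absorbing) while you verify the 2-absorbing condition by hand through the square-zero computation in the local ring $S/J$; likewise your part (1), including the non-triviality check $I\neq 0$ via $J=IS$, spells out what the paper dismisses as obvious.
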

\begin{proof} (1) Obvious.

(2) One implication is (1). Now, assume that $I$ is an $M$-primary 2-absorbing ideal with $M\in\mathrm{Max}(R)$. By \cite[Theorem 3.1]{Ba}, $ M^2\subseteq I\ (*)$. According to \cite[Scholium A (6)]{Pic 5}, $J=IS$. Since $J$ is a proper ideal of $S$, then $J\subseteq N$ for some   $N\in \mathrm{Max}(S)$. Moreover, $M^2\subseteq I=J\cap R\subseteq N\cap R$ implies that $M=N\cap R$, so that $N=MS$. Then, we get by $(*)$ that $ N^2\subseteq J=IS$. Hence, $J$ is an $N$-primary ideal of $S$. The previous reference shows that  $J$ is a 2-absorbing ideal of $S$.
\end{proof} 

\begin{corollary}\label{5.12} (1) An $M$-primary ideal $I$ of a ring $R$ with $M\in\mathrm{Max}(R)$ is 2-absorbing if and only if $IR_M$ is a 2-absorbing ideal of $R_M$.

(2) An $M$-primary ideal $I$ of a ring $R$ with $M\in\mathrm{Max}(R)$ is  2-absorbing if and only if $IR(X)$ is a 2-absorbing ideal of $R(X)$.
\end{corollary}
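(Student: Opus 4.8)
The plan is to treat the two parts separately, since the extensions involved are of different natures: $R\subseteq R_M$ is a flat epimorphism, while $R\subseteq R(X)$ is faithfully flat but not an epimorphism, so Proposition \ref{5.11} is only available for the first.

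For (1) I would apply Proposition \ref{5.11} to the localization map $R\subseteq R_M$, which is a flat epimorphism. Set $J:=IR_M$. Since $I$ is $M$-primary with $M$ maximal, the standard saturation property gives $I=IR_M\cap R$; moreover $J=IR_M\subseteq MR_M\subset R_M$ is proper because $I\subseteq M$. Thus $J$ is a proper ideal of $R_M$ with $J\cap R=I$ and $I$ is $M$-primary, so Proposition \ref{5.11}(2) yields immediately that $IR_M$ is 2-absorbing if and only if $I$ is 2-absorbing.

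For (2) I would instead reduce everything to a numerical containment criterion. Combining \cite[Theorem 3.1]{Ba} (which gives that $N^2$ is 2-absorbing for any maximal ideal $N$, and that an $N$-primary 2-absorbing ideal contains $N^2$) with the elementary converse (if $J$ is $N$-primary with $N^2\subseteq J$, then for $abc\in J$ either some factor, say $c$, lies outside $N=\sqrt J$, whence $ab\in J$ by primarity, or $a,b,c\in N$, whence $ab\in N^2\subseteq J$), one obtains: an $N$-primary ideal is 2-absorbing if and only if it contains $N^2$. Recall from \cite[Section 3]{DPP3} that $R\subseteq R(X)$ is faithfully flat and $MR(X)=M(X)\in\mathrm{Max}(R(X))$. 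Since $\sqrt{IR(X)}=(\sqrt I)R(X)=MR(X)=M(X)$ is maximal, $IR(X)$ is $M(X)$-primary by \cite[Corollary 1, p.153]{ZS}.

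Finally I would chain the equivalences: by the criterion applied in $R(X)$, the ideal $IR(X)$ is 2-absorbing if and only if $M(X)^2=M^2R(X)\subseteq IR(X)$; by faithful flatness of $R\subseteq R(X)$ (so $J_1R(X)\subseteq J_2R(X)$ iff $J_1\subseteq J_2$ for ideals of $R$), this holds if and only if $M^2\subseteq I$; and by the criterion applied in $R$, $M^2\subseteq I$ if and only if $I$ is 2-absorbing. This gives (2). The main obstacle is exactly part (2): one cannot invoke Proposition \ref{5.11}, since a faithfully flat epimorphism is an isomorphism and $R\subseteq R(X)$ is not, so the argument must pass through Badawi's containment criterion $M^2\subseteq I$ together with the faithful flatness of the Nagata extension, after verifying that $IR(X)$ is $M(X)$-primary.
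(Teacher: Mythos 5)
Your proposal is correct and follows essentially the same route as the paper: part (1) via Proposition \ref{5.11}(2) applied to the flat epimorphism $R\subseteq R_M$ with $J:=IR_M$, and part (2) via Badawi's criterion that an $N$-primary ideal is 2-absorbing if and only if it contains $N^2$, transferred through the Nagata extension using that $IR(X)$ is $M(X)$-primary (same reference, [ZS, Corollary 1, p.153]) and that $M^2\subseteq I$ holds exactly when $M^2R(X)\subseteq IR(X)$. The only cosmetic differences are that you check the contraction $IR_M\cap R=I$ explicitly and re-derive one direction of Badawi's Theorem 3.1 by hand, both of which the paper leaves implicit.
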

\begin{proof} (1) One implication is \cite[Lemma 3.13]{Ba}. For the converse, we use Proposition \ref{5.11}(2) with $S:=R_M$ and $J:=IR_ M$.

(2) If $I$ is a 2-absorbing ideal of $R$, then $M^2\subseteq I\subseteq M$ by \cite[Theorem 3.1]{Ba}. Therefore $(MR(X))^2\subseteq IR(X)\subseteq MR(X) $ which gives first that $IR(X)$ is an $MR(X)$-primary ideal of $R(X)$ and then a 2-absorbing ideal of $R(X)$ by the previous reference.

If $IR(X)$ is a 2-absorbing ideal of $R(X)$ with $I$ an $M$-primary ideal of $R$, then $IR(X)$ is an $MR(X)$-primary ideal of $R(X)$ by \cite[Corollary 1, p.153]{ZS}, because $MR(X)\in\mathrm{Max}(R(X))$ and $M^2R(X)=(M R(X))^2\subseteq IR(X)$ implies $M^2\subseteq I$, so that $I$ is a 2-absorbing ideal of $R$, always by \cite[Theorem 3.1]{Ba}.
\end{proof} 

\section{Appendix}

For a proper ideal $I$ of a ring $R$, the first author defines in \cite[D\'efinitions 1, 2 and 3]{PICANAL} the set $\Lambda (I):=\{x\in R\mid I:x=I\}$, which is a saturated multiplicative set. It satisfies $I\cap\Lambda(I)=\emptyset$ and $\Lambda(I)$ is the set of elements of $R$ whose image in $R/I$ is regular. Then, $\mathrm{t}(R/I)=(R/I)_{\Lambda (I)}$ follows.

\begin{proposition}\label{2.82} Let $R$ be a ring and $I\in\mathrm{QMax}(R)$.  We have the following properties: 
\begin{enumerate}
\item If $I\in\mathrm{Q_iMax}(R)\cup\mathrm{Q_rMax}(R)$ and $M\in\mathrm{Max}(R)\cap\mathrm{V}(I)$, then $\Lambda(I)=R\setminus M$.
\item If $I\in\mathrm{Q_dMax}(R)$ with $I=M_1\cap M_2,\ M_i\in\mathrm {Max}(R)$ for $i\in\mathbb N_2$, then $\Lambda(I)=R\setminus(M_1\cup M_2)=(R\setminus M_1)\cap(R\setminus M_2)$.
\end{enumerate} 
\end{proposition}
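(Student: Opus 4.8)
The plan is to reduce everything to identifying the regular elements of $R/I$: by the description of $\Lambda(I)$ recalled just before the statement, $\Lambda(I)$ is the preimage in $R$ of the set of regular elements of $R/I$. So for each type I would first pin down the structure of $R/I$ using Remark \ref{2.4} (and Proposition \ref{2.5} in the ramified case), then read off which classes $\overline x$ are non-zero-divisors and pull back.

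For part (1), in the inert case $I=M$ and $R/I$ is a field, so $\overline x$ is regular if and only if $\overline x\neq 0$, i.e. $x\notin M$; thus $\Lambda(I)=R\setminus M$. In the ramified case Proposition \ref{2.5} says $R/I$ is a SPIR with maximal ideal $M/I$ satisfying $(M/I)^2=0$; since it is local Artinian, its units are exactly the elements outside $M/I$, and these are precisely the regular elements, the non-units all being (nilpotent) zero-divisors. Pulling back yields $\Lambda(I)=R\setminus M$ again.

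For part (2), Remark \ref{2.4} gives $R/I\cong R/M_1\times R/M_2$, a product of two fields; here a class is regular exactly when both of its coordinates are nonzero (an element with a vanishing coordinate is annihilated by a suitable idempotent). Hence $\overline x$ is regular if and only if $x\notin M_1$ and $x\notin M_2$, and translating back gives $\Lambda(I)=(R\setminus M_1)\cap(R\setminus M_2)=R\setminus(M_1\cup M_2)$.

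The only step needing a word of justification---the main, and rather minor, obstacle---is the claim that in the ramified case the regular elements of $R/I$ coincide with its units; this is immediate because $(M/I)^2=0$ forces every element of the maximal ideal to square to zero, and hence to be a zero-divisor. Everything else is routine bookkeeping of zero-divisors in a field and in a product of two fields.
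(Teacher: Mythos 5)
Your proof is correct and follows essentially the same route as the paper's: both arguments identify $\Lambda(I)$ with the preimage of the regular elements of $R/I$, then use that $R/I$ is a field (inert case), a SPIR with square-zero maximal ideal via Proposition \ref{2.5} (ramified case), or a product of two fields (decomposed case). Your extra remark justifying why regular elements coincide with units in the ramified case is a detail the paper leaves implicit, but it is the same argument.
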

\begin{proof} $\Lambda(I)$ is the set of elements of $R$ whose image in $R/I$ is regular.

(1) If $I\in\mathrm{Q_iMax}(R)$, then $I=M$ and $R/I$ is a field, so that $\Lambda (I)=R\setminus M$. 

If $I\in\mathrm{Q_rMax}(R)$, let $M:=\sqrt I\in\mathrm{Max}(R)$. Proposition \ref{2.5} says that $R/I$ is a SPIR whose maximal ideal $M/I$ is nilpotent, so that $\Lambda (I)=R\setminus M$. 

(2) Let $I\in\mathrm{Q_dMax}(R)$, with $I=M_1\cap M_2,\ M_i\in\mathrm {Max}(R)$ for $i\in\mathbb N_2$. Then $R/I\cong R/M_1\times R/M_2$, a product of two fields. Hence, its set of regular elements is $(R\setminus M_ 1)/M_1\times(R\setminus M_2)/M_ 2$, so that $\Lambda(I)=(R\setminus M_1)\cap(R\setminus M_2)=R\setminus(M_1\cup M_2)$.
\end{proof}

\begin{corollary}\label{2.83} Let $R$ be a ring and $I$ an ideal of $R$ such that $I\prec M$ for some $M\in\mathrm{Max}(R)$. Then $I\in\mathrm {Q_rMax}( R)\cup\mathrm{Q_dMax}( R)$. Let $x\not\in M$. Then:
\begin{enumerate}
\item If $I\in\mathrm{Q_rMax}(R)$, then $I:x=I$ and $x\in\Lambda (I)$.
\item If $I\in\mathrm{Q_dMax}(R)$ with $I=M\cap M',\ M'\in\mathrm{Max}(R)$ and $M\neq M'$, then:
\begin{enumerate}
\item $I:x=M$ and $x\not\in \Lambda(I)$ if  $x\in M'\setminus M$,
\item $I:x=I$ and $x\in\Lambda(I)$  if  $x\not\in M\cap  M'$.\end{enumerate} 
\end{enumerate}
\end{corollary}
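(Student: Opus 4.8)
The plan is to first pin down the type of $I$, then read off $\Lambda(I)$ from Proposition \ref{2.82}, and finally compute the quotients $I:x$ by hand, using throughout that $\Lambda(I)=\{y\in R\mid I:y=I\}$ by its very definition, so that $x\in\Lambda(I)$ is literally the same assertion as $I:x=I$.

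First I would observe that $I\prec M$ means exactly that $I$ is submaximal in the sense of Definition \ref{2.72}; in particular $I\notin\mathrm{Max}(R)$. Theorem \ref{2.6} then gives $I\in\mathrm{QMax}(R)$, and since $I$ is not maximal, the trichotomy of Theorem \ref{2.3} forces $I\in\mathrm{Q_rMax}(R)\cup\mathrm{Q_dMax}(R)$. This settles the first assertion. For part (1), suppose $I\in\mathrm{Q_rMax}(R)$ and let $M=\sqrt I$. Proposition \ref{2.82}(1) yields $\Lambda(I)=R\setminus M$; since $x\notin M$ by hypothesis, $x\in\Lambda(I)$, which is precisely the statement $I:x=I$.

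For part (2), write $I=M\cap M'$ with $M'\in\mathrm{Max}(R)$, $M'\neq M$, so that Proposition \ref{2.82}(2) gives $\Lambda(I)=R\setminus(M\cup M')$. In subcase (a), where $x\in M'\setminus M$, we have $x\in M\cup M'$ and hence $x\notin\Lambda(I)$; to identify $I:x$ I would argue directly. For $r\in R$ one has $rx\in M'$ automatically because $x\in M'$, while $rx\in M$ holds iff $r\in M$ because $M$ is prime and $x\notin M$. Thus $rx\in I=M\cap M'$ iff $r\in M$, i.e. $I:x=M$. In the complementary subcase, where $x\notin M'$ (so that, together with $x\notin M$, one has $x\notin M\cup M'$), we get $x\in\Lambda(I)$ and therefore $I:x=I$; equivalently, $rx\in M$ forces $r\in M$ and $rx\in M'$ forces $r\in M'$, so $rx\in I$ iff $r\in M\cap M'=I$.

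There is essentially no hard step here: everything reduces to the primality of $M$ and $M'$ together with Proposition \ref{2.82}. The only point requiring a little care is the bookkeeping of the subcases in (2). Since $x\notin M$ is assumed throughout, the condition ``$x\notin M\cap M'$'' of subcase (b) is automatically satisfied, and the genuine dichotomy is whether or not $x\in M'$: subcase (a) is $x\in M'$ (yielding $x\in M'\setminus M$) and the remaining case is $x\notin M'$. I would make this explicit so that the two subcases cover every $x\notin M$ without overlap, and note that in subcase (a) the equality $I:x=M\neq I$ is consistent with $x\notin\Lambda(I)$.
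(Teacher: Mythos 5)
Your proof is correct, and it takes a somewhat different route from the paper's. The paper's proof derives, from the covering relation, a dichotomy that it then uses in every case: for $a\in I:x$, the inclusion $ax\in I\subseteq M$ together with $x\notin M$ and the primality of $M$ gives $a\in M$, hence $I\subseteq I:x\subseteq M$, and $I\prec M$ forces $I:x\in\{I,M\}$; case (1) is then settled by the $M$-primary property of $I$ (pushing $I:x$ down to $I$), and subcase (2)(a) by exhibiting a single element of $(I:x)\setminus I$ (pushing $I:x$ up to $M$). You dispense with this dichotomy entirely: after the same classification step via Theorem \ref{2.6}, you quote Proposition \ref{2.82} to identify $\Lambda(I)$ in both the ramified and decomposed cases, and in the decomposed case you compute $I:x$ outright from primality ($rx\in M'$ holds automatically when $x\in M'$, and $rx\in M$ iff $r\in M$), so the covering relation plays no role past the classification. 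Your route buys self-contained, element-level identifications of the quotients that do not depend on $I\prec M$; the paper's route buys a uniform mechanism that, in (2)(a), avoids computing $I:x$ in full. Two small points. First, in case (1) you silently identify the $M$ of the hypothesis with $\sqrt I$; this is justified (any prime containing $I$ contains $\sqrt I$, which is maximal, so $M=\sqrt I$), but a word to that effect belongs in the proof. Second, your bookkeeping remark is apt and worth stating explicitly: as printed, the hypothesis $x\notin M\cap M'$ of subcase (b) is vacuous given $x\notin M$ and inconsistent with (a); the intended condition is $x\notin M\cup M'$, which is exactly the condition used in the paper's own proof of (b).
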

\begin{proof} By Theorem \ref{2.6}, $I\in\mathrm{Q_rMax}(R)\cup\mathrm{Q_dMax}( R)$. 

Let $x\not\in M$ and $a\in I:x$. Since $ax\in I\subseteq M\ (*)$, we get $a\in M$, so that $I\subseteq I:x\subseteq M$. Therefore $I:x\in\{I,M\}$ because $I\prec M$.

(1) If $I\in\mathrm{Q_rMax}(R)$, then $I$ is $M$-primary, and $(*)$ implies that $a\in I$, so that $I:x\subseteq I\subseteq I:x$, giving $I= I:x$ and $x\in\Lambda (I)$. 

(2) Assume that $I\in\mathrm{Q_dMax}(R)$, with $I=M\cap M',\ M'\in\mathrm {Max}(R),$

\noindent $ M\neq M'$. 

(a) If $x\in M'\setminus M$, then any $a\in M\setminus I$ satisfies $ax\in I$, which implies $a\in(I:x)\setminus I$. The beginning of the proof shows that $I:x=M$ and $x\not\in \Lambda(I)$.

(b) If $x\not\in M\cup M'$, then $x\in R\setminus(M\cup M')=\Lambda(I)$ by Proposition \ref{2.82} and $I= I:x$.
\end{proof}

\begin{proposition}\label{2.84} Let $f:R\to S$ be a ring morphism and let $ J$ be an ideal of $S$. Then $f^{-1}[\Lambda(J)]\subseteq\Lambda[f^{-1} (J)]$, with equality if $f$ is surjective.
\end{proposition}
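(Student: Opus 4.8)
The plan is to reduce everything to a single identity relating colon ideals across $f$. Write $I:=f^{-1}(J)$, which is an ideal of $R$; I shall tacitly assume it is proper so that $\Lambda(I)$ is defined, the statement being vacuous otherwise. Recall from \cite[D\'efinitions 1, 2 and 3]{PICANAL} that $x\in\Lambda(I)$ means exactly $I:x=I$, and likewise $y\in\Lambda(J)$ means $J:y=J$; these colon characterizations, rather than the ``regular image'' description, are what I would work with throughout.

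First I would establish, for every $x\in R$, the key equality
\[
f^{-1}(J:f(x))=I:x .
\]
This is a routine element chase: for $r\in R$ one has $r\in f^{-1}(J:f(x))$ iff $f(r)f(x)\in J$ iff $f(rx)\in J$ iff $rx\in f^{-1}(J)=I$ iff $r\in I:x$, using only that $f$ is a ring morphism. Granting this, the inclusion is immediate: if $x\in f^{-1}[\Lambda(J)]$, then $f(x)\in\Lambda(J)$, i.e.\ $J:f(x)=J$, and substituting into the identity gives $I:x=f^{-1}(J)=I$, so $x\in\Lambda(I)$. Hence $f^{-1}[\Lambda(J)]\subseteq\Lambda[f^{-1}(J)]$.

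For the reverse inclusion under the surjectivity hypothesis, the fact to exploit is that a surjective $f$ satisfies $f(f^{-1}(K))=K$ for every ideal $K$ of $S$. Taking $x\in\Lambda(I)$, so $I:x=I$, the identity reads $f^{-1}(J:f(x))=f^{-1}(J)$; applying $f$ to both sides and invoking $f(f^{-1}(K))=K$ with $K=J:f(x)$ and $K=J$ yields $J:f(x)=J$, that is $f(x)\in\Lambda(J)$, whence $x\in f^{-1}[\Lambda(J)]$.

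The main (and essentially only) obstacle is recognizing that surjectivity is precisely what makes the second half work: without it $f(f^{-1}(K))$ may be strictly smaller than $K$, so the step ``apply $f$ and cancel $f^{-1}$'' is unavailable and the reverse inclusion can genuinely fail. This is why equality is asserted only in the surjective case, while the plain inclusion needs no hypothesis on $f$ beyond its being a ring morphism.
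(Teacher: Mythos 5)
Your proof is correct. It differs from the paper's in organization rather than in substance, but the difference is worth noting. The paper disposes of the unconditional inclusion $f^{-1}[\Lambda(J)]\subseteq\Lambda[f^{-1}(J)]$ by citing \cite[Corollaire, p.89]{PICANAL}, and then proves the reverse inclusion for surjective $f$ by a direct element chase: given $x\in\Lambda(I)$ and $b\in J:f(x)$, it lifts $b=f(a)$, deduces $ax\in I$, hence $a\in I:x=I$, hence $b\in f(I)\subseteq J$. You instead isolate the single identity $f^{-1}(J:f(x))=I:x$, valid for every $x\in R$ and every morphism $f$, and derive both halves formally from it: the forward inclusion by substituting $J:f(x)=J$, and the reverse (surjective) inclusion by applying $f$ and using $f(f^{-1}(K))=K$. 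The underlying computations coincide --- your verification of the identity is the same chase the paper performs, and your appeal to $f(f^{-1}(K))=K$ is exactly where surjectivity enters in the paper's lifting step --- but your packaging buys two things: the proof is self-contained, needing no external citation for the first inclusion; and your identity is precisely the general form of the paper's subsequent Proposition \ref{2.85} (which states $f^{-1}(J:f(x))=I$ for $x\in\Lambda(I)$), so your lemma would prove that proposition as an immediate corollary as well. The paper's route, by contrast, is shorter on the page because it leans on prior work. One small remark: your caution about $I$ being proper is not really needed, since $J$ must be proper for $\Lambda(J)$ to be defined, and then $1\notin f^{-1}(J)=I$ automatically.
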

\begin{proof} According to \cite[Corollaire, p.89]{PICANAL}, $f^{-1}[\Lambda(J)]\subseteq \Lambda[f^{-1}(J)]$ holds.

We claim that $f^{-1}[\Lambda(J)]=\Lambda[f^{-1}(J)]$ if $f$ is surjective. It is enough to show that $\Lambda[f^{-1}(J)]\subseteq f^{-1}[\Lambda(J)]$. So, set $I:=f^{-1}(J)$ and let $x\in\Lambda(I)$, that is $I:x=I$. Setting $y:=f (x)\in S$, we already have $J\subseteq J:y$. Let $b\in J:y\subseteq S$, so that $by\in J\ (*)$. Since $f$ is surjective, there exists some $a\in R$ such that $b =f(a)$. By $(*)$, we have $f(ax)\in J$. As $ax\in I$, we infer that $a\in I:x=I$. Hence, $b=f(a)\in f(I)\subseteq J$. We proved that $J:y\subseteq J$, which means that $J:y=J$ and $y=f(x)\in\Lambda(J)$. To conclude, $x\in f^{-1}[\Lambda(J)]$, and our claim is gotten. 
\end{proof}

\begin{proposition}\label{2.85} Let $f:R\to S$ be a ring morphism and let $  J$ be an ideal of $S$. If $I:=f^{-1}(J)$, then $f^{-1}(J:f(x))=I$ for any $x\in\Lambda(I)$.
\end{proposition}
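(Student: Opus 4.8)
The plan is to reduce the assertion to a single colon-ideal identity and then invoke the defining property of $\Lambda(I)$. First I would set $y:=f(x)\in S$ and compute the fibre $f^{-1}(J:y)$ directly by unwinding the definitions. For $a\in R$ one has the chain of equivalences
\[
a\in f^{-1}(J:f(x))\iff f(a)\in J:f(x)\iff f(a)f(x)\in J\iff f(ax)\in J\iff ax\in f^{-1}(J)=I\iff a\in I:x,
\]
where the crucial step is the equality $f(a)f(x)=f(ax)$, valid because $f$ is a ring morphism, together with the definition of the preimage. This establishes the identity $f^{-1}(J:f(x))=I:x$, which holds for \emph{every} $x\in R$ and does not yet use the hypothesis $x\in\Lambda(I)$.

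Once this identity is in hand, the conclusion is immediate: by the definition of $\Lambda(I)$ recalled at the beginning of the Appendix, namely $\Lambda(I)=\{x\in R\mid I:x=I\}$, the assumption $x\in\Lambda(I)$ means precisely that $I:x=I$. Substituting this into the identity above yields $f^{-1}(J:f(x))=I:x=I$, which is exactly the desired statement.

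I expect there to be no genuine obstacle here, since the whole argument is a direct computation; the only point requiring care is the passage $f(a)f(x)=f(ax)$, which relies on $f$ preserving products, and the bookkeeping that $f^{-1}(J)=I$ by hypothesis. It is worth remarking that this Proposition sharpens Proposition \ref{2.84}: there the inclusion $f^{-1}[\Lambda(J)]\subseteq\Lambda[f^{-1}(J)]$ was established in general, whereas here we obtain the precise value of the contracted colon ideal $f^{-1}(J:f(x))$ for elements $x$ already known to lie in $\Lambda(I)$, with no surjectivity assumption on $f$.
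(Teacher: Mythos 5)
Your proof is correct and takes essentially the same route as the paper: both arguments are the direct computation that unwinds the definitions of preimage and colon ideal via $f(ax)=f(a)f(x)$, and then invoke $I:x=I$ (the paper splits this into the inclusion $f^{-1}(J:f(x))\subseteq I$ plus the trivial reverse inclusion from $J\subseteq J:f(x)$). Your packaging of the computation as the identity $f^{-1}(J:f(x))=I:x$, valid for every $x\in R$ before specializing to $x\in\Lambda(I)$, is a marginally cleaner presentation of the same argument.
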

\begin{proof} Let $x\in\Lambda(I)$, so that $I:x=I$. Set $y:=f(x)$ and $K:= J:y$. Now, let $z\in f^{-1}(K)$. Therefore  $f(z)\in K$ and $f(zx)\in J$. Then, $zx\in I$, whence $z\in I:x=I$. It follows that $f^{-1}(K)=f^{-1}(J:f(x))\subseteq I=f^{-1}(J)\subseteq f^{-1}(J:f(x))$ because $J\subseteq J:f(x)$ always holds. Hence, $f^{-1}(J:f(x))=I$.
\end{proof}

We recall that a proper ideal $I$ of a ring $R$ is called {\it primal} if $\mathrm Z(R/I)=P/I$, where $P\in\mathrm{Spec}(R)$ \cite[Ch. IV, Exercice 33, p.177]{ALCO1}. Then, by \cite[Corollaire, p.83]{PICANAL}, a proper ideal $I$ of a ring $R$ is primal if and only if $R\setminus \Lambda(I)\in\mathrm{Spec}(R)$. This implies the following.

\begin{proposition}\label{2.87} An ideal either inert or ramified is primal.\end{proposition}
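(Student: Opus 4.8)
The plan is to invoke the characterization of primal ideals in terms of the multiplicative set $\Lambda(I)$ recalled just before the statement, namely that a proper ideal $I$ is primal if and only if $R\setminus\Lambda(I)\in\mathrm{Spec}(R)$, and then to read off $\Lambda(I)$ from Proposition \ref{2.82}(1). Since inert and ramified ideals are precisely the two types covered by that part of Proposition \ref{2.82}, almost all of the work has already been done.

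First I would dispose of the inert case: if $I\in\mathrm{Q_iMax}(R)$, then $I=M$ for the unique $M\in\mathrm{Max}(R)\cap\mathrm{V}(I)$, and Proposition \ref{2.82}(1) yields $\Lambda(I)=R\setminus M$. Next, for the ramified case $I\in\mathrm{Q_rMax}(R)$, I would set $M:=\sqrt[R]I\in\mathrm{Max}(R)$, which is the unique maximal ideal of $\mathrm{V}(I)$ by Definition \ref{2.2} together with Theorem \ref{2.3}, and again apply Proposition \ref{2.82}(1) to get $\Lambda(I)=R\setminus M$. In both cases $I$ is a proper ideal (it is contained in $M$), so the criterion applies.

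Combining the two cases, $R\setminus\Lambda(I)=M\in\mathrm{Spec}(R)$, and the primality criterion \cite[Corollaire, p.83]{PICANAL} immediately gives that $I$ is primal.

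I do not anticipate any genuine obstacle here: the entire content is that for an inert or ramified ideal the set of zero divisors modulo $I$ is exactly the image of the single maximal ideal containing $I$, which is prime. The only point requiring a word of care is to confirm that $R\setminus\Lambda(I)$ is well defined as that prime, i.e. that the relevant maximal ideal is unique, which is guaranteed because an inert ideal equals its unique maximal overideal and a ramified ideal is $M$-primary with $M=\sqrt[R]I$ the only element of $\mathrm{V}_R(I)$ by Theorem \ref{2.3}. (Note that the decomposed case is genuinely excluded: there $\Lambda(I)=R\setminus(M_1\cup M_2)$ by Proposition \ref{2.82}(2), whose complement $M_1\cup M_2$ is not prime, so a decomposed quasi-maximal ideal need not be primal.)
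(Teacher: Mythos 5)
Your proposal is correct and follows essentially the same route as the paper: both apply Proposition \ref{2.82}(1) to identify $\Lambda(I)=R\setminus M$ for the unique maximal ideal $M$ over $I$, and then conclude via the criterion from \cite[Corollaire, p.83]{PICANAL} that $I$ is primal since $R\setminus\Lambda(I)=M$ is prime. Your closing remark about the decomposed case is also consistent with the paper, which notes in Remark \ref{2.88} that decomposed ideals are only semi-primal.
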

\begin{proof} Let $I$ be a proper ideal of a ring $R$. By Proposition \ref{2.82}, if $I\in\mathrm{Q_rMax}(R)\cup\mathrm{Q_iMax}(R)$ with $M:=\sqrt I$, then $\Lambda(I)=R\setminus M$, so that $R\setminus\Lambda(I) =M\in\mathrm{Max}(R)$ and $I$ is primal by the previous characterization. 
\end{proof}

\begin{remark}\label{2.88}(1) We define a proper ideal $I$ of a ring $ R$ as {\it semi-primal} if $R\setminus\Lambda(I)$ is a finite union of prime ideals. Then, a decomposed ideal is semi-primal by Proposition \ref{2.82}.

(2) An ideal $I$ of a ring $R$ is  primal (resp. semi-primal)  if and only if $\mathrm{t}(R/I)$ is local (resp. semi-local). Since $\mathrm{t}(R/I)$ is the localization of $R/I$ at the set of regular elements of $R/I$, that is $(R/I)\setminus \mathrm Z(R/I)$, we get the equivalence.

(3) A primal ideal $I$ of a ring $R$ does not need to be quasi-maximal, even if $R\setminus\Lambda(I)=M\in\mathrm{Max}(R)$. Take a local ring $(R,M)$ such that $M^3=0$, with $M^2\neq M,0$. Theorem \ref{2.3} shows that $0$ is not quasi-maximal, but is primal because $\mathrm Z(R)=M$.
\end{remark}

\begin{proposition}\label{2.86} Let $f:R\to S$ be a ring morphism, $J$ be an ideal of $S$ and $I:=f^{-1}(J)$. Then, there is a Max-upper $K$ in $S$ with $J\subseteq K$, such that $f^{-1}[\Lambda(K)]=\Lambda[f^{-1}(K)]$. Therefore there is a unique ring morphism $g:\mathrm{t}(R/I)\to\mathrm{t}(S/K)$ induced by $f$.
\end{proposition}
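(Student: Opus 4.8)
The plan is to obtain the Max-upper $K$ abstractly from Lemma \ref{3.53}, then verify the equality of saturated multiplicative sets by exploiting the maximality built into the notion of Max-upper together with Proposition \ref{2.85}, and finally to assemble $g$ from the induced map on residue rings via the universal property of localization. We may assume that $J$ is a proper ideal of $S$, since otherwise $I=R$ and the statement is vacuous; then $I:=f^{-1}(J)$ is proper and $\Lambda(I)$ is defined. First I would apply Lemma \ref{3.53}(1) to produce a Max-upper $K$ of $I$ containing $J$, that is, an ideal of $S$ maximal in $\mathcal F=\{K'\text{ ideal of }S\mid J\subseteq K',\ I=f^{-1}(K')\}$. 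By construction $J\subseteq K$ and $f^{-1}(K)=I$, so $K$ is proper and $\Lambda(K)$ is defined. It then remains to prove $f^{-1}[\Lambda(K)]=\Lambda[f^{-1}(K)]=\Lambda(I)$ and the existence and uniqueness of $g$.

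The heart of the argument is the equality $f^{-1}[\Lambda(K)]=\Lambda(I)$, and this is where the Max-upper property is indispensable. The inclusion $f^{-1}[\Lambda(K)]\subseteq\Lambda(I)$ is immediate from Proposition \ref{2.84} applied to $K$, using $f^{-1}(K)=I$. For the reverse inclusion, let $x\in\Lambda(I)$. Since $I=f^{-1}(K)$, Proposition \ref{2.85} applied with $K$ in place of $J$ gives $f^{-1}(K:f(x))=I$. As $K\subseteq K:f(x)$ always holds, we obtain $J\subseteq K\subseteq K:f(x)$ with $f^{-1}(K:f(x))=I$, so $K:f(x)\in\mathcal F$. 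The maximality of $K$ in $\mathcal F$ then forces $K:f(x)=K$, that is $f(x)\in\Lambda(K)$, whence $x\in f^{-1}[\Lambda(K)]$. This yields $\Lambda(I)\subseteq f^{-1}[\Lambda(K)]$ and hence the desired equality. I expect this step to be the only genuine obstacle: one must recognize that $K:f(x)$ is again a member of $\mathcal F$ and close the argument by maximality, after which everything reduces to the two preceding propositions.

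Finally, for the existence of $g$: since $f^{-1}(K)=I$ we have $f(I)\subseteq K$, so $f$ induces a ring morphism $\bar f:R/I\to S/K$. Recall that $\Lambda(I)$ (resp. $\Lambda(K)$) is exactly the set of elements whose image in $R/I$ (resp. $S/K$) is regular, and that $\mathrm t(R/I)=(R/I)_{\Lambda(I)}$ and $\mathrm t(S/K)=(S/K)_{\Lambda(K)}$. The equality just proved says that $f(\Lambda(I))\subseteq\Lambda(K)$, so $\bar f$ carries the regular elements of $R/I$ to regular elements of $S/K$; composing $\bar f$ with the canonical map $S/K\to\mathrm t(S/K)$ therefore sends the image of $\Lambda(I)$ into the units of $\mathrm t(S/K)$. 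By the universal property of localization this composite factors uniquely through $(R/I)_{\Lambda(I)}=\mathrm t(R/I)$, producing the unique ring morphism $g:\mathrm t(R/I)\to\mathrm t(S/K)$ induced by $f$.
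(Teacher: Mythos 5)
Your proof is correct and follows essentially the same route as the paper's: you obtain $K$ from Lemma \ref{3.53}, get the inclusion $f^{-1}[\Lambda(K)]\subseteq\Lambda[f^{-1}(K)]$ from Proposition \ref{2.84}, and prove the reverse inclusion exactly as the paper does, by applying Proposition \ref{2.85} to get $f^{-1}(K:f(x))=I$ and invoking the maximality of $K$ in $\mathcal F$ to force $K:f(x)=K$. Your final paragraph merely spells out the universal-property argument for $g$ that the paper leaves implicit, and your treatment of the degenerate case $J=S$ is a harmless extra precaution.
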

\begin{proof} By Lemma \ref{3.53}, there exists an ideal $K$ of $S$ which is a Max-upper of $I$. Now, by Proposition \ref{2.85}, $f^{-1}(K:f(x))=I$ for any $x\in\Lambda(I)$. As $K\subseteq K:f(x)$, we get that $ K=K:f(x)$ by maximality of $K$. Then, $f(x)\in\Lambda(K)$ and $x\in f^{-1}(\Lambda(K))$. It follows that $\Lambda[f^{-1}(K)]\subseteq f^{-1}[\Lambda(K)]\subseteq\Lambda[f^{-1}(K)]$ by Proposition \ref{2.84} and $f^{-1}[\Lambda(K)]=\Lambda[f^{-1}(K)]$ follows. 

Since $I=f^{-1}(K)$, there exists a unique ring morphism $g:\mathrm{t}(R/I)\to\mathrm{t}(S/K)$ induced by $f$.
\end{proof}

\begin{remark}\label{6.1} The use of 2-absorbing ideals recovers in another way the result gotten in Proposition \ref{2.87} saying that an ideal either inert or ramified is primal. Now \cite[Corollary 2.7]{Ba} shows that a 2-absorbing ideal $I$ such that $I\neq\sqrt I$ is primal, excluding the decomposed case. 
\end{remark}

\end{document}